\theoremstyle{plain}
\newtheorem{thm}{Theorem}[section]
\newtheorem*{thm*}{Theorem}
\newtheorem*{cor*}{Corollary}
\newtheorem{prop}[thm]{Proposition}
\newtheorem{lem}[thm]{Lemma}
\newtheorem{cor}[thm]{Corollary}
\newtheorem*{claim*}{Claim}
\theoremstyle{definition}
\newtheorem{defn}[thm]{Definition}
\newtheorem{ex}[thm]{Example}
\newtheorem{setting}[thm]{Setting}
\newtheorem{notation}[thm]{Notation}
\theoremstyle{remark}
\newtheorem{rem}[thm]{Remark}
\numberwithin{equation}{thm}
\newtheorem*{ac}{Acknowledgments}
\def\Min{\operatorname{Min}}
\def\Im{\operatorname{Im}}
\def\Ker{\operatorname{Ker}}
\def\Hom{\operatorname{Hom}}
\def\Max{\operatorname{Max}}
\def\Assh{\operatorname{Assh}}
\def\End{\mathrm{End}}
\def\m{\mathfrak m}
\def\n{\mathfrak n}
\def\p{\mathfrak p}
\newcommand{\Aut}{\mathrm{Aut}}
\newcommand{\rme}{\mathrm{e}}
\newcommand{\rmH}{\mathrm{H}}
\newcommand{\rmK}{\mathrm{K}}
\newcommand{\rmQ}{\mathrm{Q}}
\newcommand{\calF}{\mathcal{F}}
\newcommand{\calS}{\mathcal{S}}
\newcommand{\calY}{\mathcal{Y}}
\newcommand{\fka}{\mathfrak{a}}
\newcommand{\fkm}{\mathfrak{m}}
\newcommand{\fkn}{\mathfrak{n}}
\newcommand{\fkp}{\mathfrak{p}}
\newcommand{\fkq}{\mathfrak{q}}
\newcommand{\mapright}[1]{%
\smash{\mathop{%
\hbox to 1cm{\rightarrowfill}}\limits^{#1}}}
\newcommand{\mapleft}[1]{%
\smash{\mathop{%
\hbox to 1cm{\leftarrowfill}}\limits_{#1}}}
\def\depth{\operatorname{depth}}
\def\ch{\operatorname{ch}}
\def\GL{\operatorname{GL}}
\def\Ass{\operatorname{Ass}}
\def\height{\mathrm{ht}}
\def\Spec{\operatorname{Spec}}
\def\red{\operatorname{red}}
\title[On the ubiquity of Arf rings]{On the ubiquity of Arf rings}
\author[E. Celikbas]{Ela Celikbas}
\address{School of Mathematical and Data Sciences, West Virginia University,
Morgantown, WV 26506 U.S.A}
\email{ela.celikbas@math.wvu.edu}
\author[O. Celikbas]{Olgur Celikbas}
\address{School of Mathematical and Data Sciences, West Virginia University,
Morgantown, WV 26506 U.S.A}
\email{olgur.celikbas@math.wvu.edu}
\author[C. Ciuperc\u{a}]{C\u{a}t\u{a}lin Ciuperc\u{a}}
\address{Department of Mathematics 2750\\ North Dakota State University\\PO BOX 6050\\ Fargo, ND 58108-6050\\ U.S.A}
\email{catalin.ciuperca@ndsu.edu}
\author[Naoki Endo]{Naoki Endo}
\address{School of Political Science and Economics, Meiji University, 1-9-1 Eifuku, Suginami-ku, Tokyo 168-8555, Japan}
\email{endo@meiji.ac.jp}
\urladdr{https://www.isc.meiji.ac.jp/~endo/}
\author[S. Goto]{Shiro Goto}
\address{Department of Mathematics, School of Science and Technology, Meiji University, 1-1-1 Higashi-mita, Tama-ku, Kawasaki 214-8571, Japan}
\email{shirogoto@gmail.com}
\author[R. Isobe]{Ryotaro Isobe}
\address{General Education Division, Oshima College, Oshima-gun, Japan}
\email{r.isobe.math@gmail.com}
\author[N. Matsuoka]{Naoyuki Matsuoka}
\address{Department of Mathematics, School of Science and Technology, Meiji University, 1-1-1 Higashi-mita, Tama-ku, Kawasaki 214-8571, Japan}
\email{naomatsu@meiji.ac.jp}
\thanks{2010 {\em Mathematics Subject Classification.} 13A15, 13B22, 13B30.}
\thanks{{\em Key words and phrases.} Arf ring, weakly Arf ring, strictly closed ring}
\thanks{E. Celikbas was partially supported by West Virginia University Mathematics Excellence in Research Fund. This project started when E. and O. Celikbas visited Meiji University in July 2019. They are grateful for the kind hospitality of the Department of Mathematics, Meiji University. Endo was partially supported by JSPS Grant-in-Aid for Scientific Research (C) 23K03058 and Young Scientists 20K14299 and JSPS Overseas Research Fellowships. Goto was partially supported by JSPS Grant-in-Aid for Scientific Research (C) 21K03211. Isobe was partially supported by JSPS Grant-in-Aid for Young Scientists 21K13767 and Grant-in-Aid for JSPS Fellows 20J10517. Matsuoka was partially supported by JSPS Grant-in-Aid for Scientific Research (C) 18K03227.}
\begin{document}

\maketitle

\setlength{\baselineskip} {15.2pt}

\begin{abstract}
We introduce and develop the theory of weakly Arf rings, generalizing the Arf rings originally defined by J. Lipman in 1971.
We provide characterizations of the weakly Arf rings and study their relation with the Arf rings and the strictly closed rings.
Furthermore, we give various examples of weakly Arf rings that come from idealizations, fiber products,
determinantal rings, and invariant subrings.
\end{abstract}

{\footnotesize \tableofcontents}


\section{Introduction}\label{intro}

The {\it Arf rings}, which we focus on in the present paper, trace back to the classification of certain singular points of plane curves by C. Arf in 1949 (see, e.g., \cite{Duval}, \cite{Sertoz1}, and \cite{Sertoz2}).
In 1971, J. Lipman introduced the Arf rings by extracting the essence of the rings considered by C. Arf.  Using this notion, he proved that a one-dimensional complete Noetherian local domain $A$ with an algebraically closed residue field of characteristic zero has minimal multiplicity (i.e., the embedding dimension of $A$ equals the multiplicity of $A$), provided that $A$ is {\it saturated} in the sense of  O. Zariski \cite{Zariski}. The proof of this result depends on the fact that such a saturated ring is an Arf ring. 
The notion of Arf ring is defined for Noetherian semi-local rings $A$ satisfying that every localization $A_M$ at a maximal ideal $M$ is Cohen-Macaulay and of dimension one. 
In \cite[Theorem 2.2]{L}, it is proved that a ring $A$ is Arf if and only if every integrally closed open ideal is stable; equivalently, all {\it the local rings infinitely near to $A$}, i.e., the localizations of its blow-ups, have minimal multiplicity.
As J. Lipman mentioned in \cite{L}, the defining conditions of an Arf ring are technical, but they are convenient to work with, and easy to state.

\begin{defn}[\cite{Arf, L}]\label{1.1}
Let $A$ be a Noetherian semi-local ring such that $A_M$ is a one-dimensional Cohen-Macaulay local ring for every $M \in \Max A$. Then $A$ is called {\it an Arf ring} if the following conditions hold:
\begin{enumerate}[$(1)$]
\item Every integrally closed ideal $I$ in $A$ that contains a non-zerodivisor  has {\it a principal reduction}, i.e., $I^{n+1}=aI^n$ for some $n \ge 0$ and  $a\in I$. 
\item If $x, y, z \in A$ such that $x$ is a non-zerodivisor on $A$ and $y/x, z/x \in \rmQ(A)$ are integral over $A$, then $yz/x \in A$, where $ \rmQ(A)$  denotes the total ring of fractions of $A$.
\end{enumerate}
\end{defn}

A typical example of an Arf ring is a ring with multiplicity at most two; see \cite[Example, page 664]{L}, \cite[Proposition 2.8]{CCGT}.  As the Arf property is preserved by the standard procedures in ring theory, such as completion, localization, and faithfully flat extensions, examples of Arf rings are abundant; see \cite[Corollaries 2.5 and 2.7]{L}. 
Assume for simplicity that $A$ is a one-dimensional complete Noetherian local domain with algebraically closed field. 
As noted by J. Lipman in \cite{L}, among all the Arf rings between $A$ and its integral closure $\overline{A}$, there is a smallest one, called {\it the Arf closure}. Since the Arf closure maintains the multiplicity of $A$ and commutes with quadratic transforms, i.e., blow-ups of the maximal ideal, the multiplicity sequence of $A$ along the unique maximal ideal of $\overline{A}$ is the same as that of the Arf closure. This fact leads to characterizations of Arf rings by means of the  semigroup of values, which gives rise to the notion of Arf semigroups. The value semigroup of the Arf closure appears as {\it an Arf semigroup}, i.e., a numerical semigroup $H$ satisfying the following condition:
\begin{center}
If $x, y, z \in H$ such that $x \le y$ and $x\le z$, then $y+z-x \in H$.
\end{center}

The Arf property for numerical semigroups and algorithms to compute the Arf  closure of various rings, such as the coordinate rings of curves, were studied in \cite{AS, BF, RGGB}.
Furthermore, V. Barucci and R. Fr\"oberg \cite{BF} explored the question of when the Arf semigroups are almost symmetric, and gave a characterization of them. Recently, E. Celikbas, O. Celikbas, S. Goto, and N. Taniguchi (the fourth author of this paper) generalized their results; see \cite[Theorem 1.1, Corollary 1.2]{CCGT}.

The theory of Arf rings is, so to speak, a prototype of the theory of one-dimensional Cohen-Macaulay rings, and it has evolved into mostly the theory of numerical semigroups, i.e., the theory of singularities of plane curves,  and the study of Arf closures. Having said that, the theory of Arf rings might also develop in ring theory and one still needs to pursue the work of C. Arf and J. Lipman in the literature.



The main purpose of this paper is to inspect the theory of Arf rings in detail, and to attempt to motivate the interested readers to study in this direction. More precisely, we are concerned with the question of what happens if we remove the condition $(1)$ and assume only the condition $(2)$ in Definition \ref{1.1}. We call such a ring {\it a weakly Arf ring}.

\begin{defn}
A commutative ring $A$ is said to be {\it weakly Arf}, provided that $yz/x \in A$ whenever $x,y,z\in A$ such that $x \in A$ is a non-zerodivisor on $A$ and $y/x, z/x \in \rmQ(A)$ are integral over $A$. 
\end{defn}

The difference between Arf and weakly Arf rings is determined by the existence of principal reductions of integrally closed ideals. Thus, when $(A, \m)$ is a one-dimensional Cohen-Macaulay local ring, these notions are equivalent if the residue class field $A/\m$ of $A$ is infinite, or if $A$ is analytically irreducible, i.e., the $\m$-adic completion $\widehat{A}$ of $A$ is an integral domain. In particular, these notions coincide for numerical semigroup rings. 
However, Arf and weakly Arf rings are not equivalent in general as weakly Arf rings are defined in a more general situation. Besides, as J. Lipman pointed out, there is an example of a one-dimensional Cohen-Macaulay local ring that is weakly Arf, but not Arf; see \cite[page 661]{L}. In the present paper, we delve into this example by giving two different proofs (see Example \ref{9.6} and Corollary \ref{10.5}), and construct other examples to study the theory of Arf and weakly Arf rings. 

The investigation of weakly Arf rings opens the door leading to the frontier of this new topic and gives us new knowledge of Arf rings. For example, in Sections \ref{sec5} and \ref{sec6}, we obtain Arf rings arising from idealizations and fiber products. Besides, we provide an affirmative answer for the conjecture of O. Zariski, which states the Arf closure coincides with the strict closure; see Section \ref{sec7}.
We also observe that Arf rings appear in determinantal rings and invariant subrings, see Sections  \ref{sec11} and \ref{sec12}. In addition, Theorem \ref{ch} reveals that the Arf property depends on the characteristic of the ring.

\medskip

We now state our results more precisely, explaining how this paper is organized. 
In Section \ref{sec2}, we begin with the basic properties of the algebra $A^I$ of $A$ at an ideal $I$, which will be needed throughout this paper. In contrast with Arf rings, we give a characterization of the weakly Arf rings in terms of the stability of the integral closure of a principal ideal. Section \ref{sec3} provides further properties of the weakly Arf rings, including the fact that the weakly Arf property is inherited under cyclic purity maps. In particular, we give a partial answer to the question of when the rings of invariants are weakly Arf; see Corollary \ref{3.2}.

In Section \ref{sec4}, we explore the strict closedness of rings in connection with Arf and weakly Arf rings. Theorem \ref{Zariski-Lipman} shows that, if $A$ is a Noetherian ring which satisfies Serre's $(S_2)$ condition, then $A$ is strictly closed in its integral closure $\overline{A}$ if and only if $A$ is weakly Arf and $A_P$ is an Arf ring for every $P \in\Spec A$ with $\height_AP = 1$. Hence, if we additionally assume that $\height_AM \ge 2$ for every $M \in \Max A$, the strict closedness coincides with the weakly Arf property; see Corollary \ref{4.4}. 
As an application of Theorem \ref{Zariski-Lipman}, we prove the following.

\begin{thm}[Corollary \ref{4.7}]
Let $A$ be a Noetherian integral domain. Suppose that $A$ satisfies Serre's $(S_2)$ condition and $A$ contains an infinite field. Then $A$ is  weakly Arf if and only if so is the polynomial ring $A[X_1, X_2, \ldots, X_n]$ for every $n \ge 1$. 
\end{thm}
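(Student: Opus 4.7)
The statement factors into two implications, which I would handle with different tools. For the easy direction, the inclusion $A \hookrightarrow A[X_1,\dots,X_n]$ is split by the retraction $X_i \mapsto 0$, hence is pure and in particular cyclically pure. The descent property of weakly Arf rings under cyclically pure maps (announced in Section~\ref{sec3}) then immediately transports the weakly Arf property of $A[X_1,\dots,X_n]$ down to $A$.

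For the nontrivial direction I would induct on $n$, so the heart of the argument is the single-variable case: if $A$ is weakly Arf, then so is $A[X]$. Take $f,g,h \in A[X]$ with $f \ne 0$ and $u := g/f$, $v := h/f$ integral over $A[X]$. Using the classical identification $\overline{A[X]} = \overline{A}[X]$ for a Noetherian domain $A$, one has $u,v \in \overline{A}[X]$, so $p := gh/f = fuv$ lies a priori in $\overline{A}[X]$; the objective is to prove $p \in A[X]$. Here the hypothesis that $A$ contains an infinite field $k$ enters through specialization: since $f$ is a nonzero polynomial over the domain $A$, it has only finitely many roots in $k$, and for every remaining $\alpha \in k$ the value $f(\alpha)$ is a nonzero (hence non-zerodivisor) element of $A$, while $u(\alpha),v(\alpha) \in \overline{A}$ are integral over $A$. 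The weakly Arf hypothesis on $A$, applied to the triple $(f(\alpha), g(\alpha), h(\alpha))$, then yields $p(\alpha) = f(\alpha)u(\alpha)v(\alpha) \in A$.

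It remains to lift this pointwise information to the coefficients of $p$, and this is a Vandermonde interpolation. Setting $d := \deg p$ and choosing $d+1$ distinct $\alpha_0, \dots, \alpha_d \in k$ for which $p(\alpha_j) \in A$, the matrix $(\alpha_j^i)_{i,j}$ has determinant $\prod_{i<j}(\alpha_j-\alpha_i)$, a unit of $k \subseteq A$, so its inverse has entries in $A$; the coefficients of $p$ then emerge as $A$-linear combinations of the values $p(\alpha_j) \in A$, forcing $p \in A[X]$. I do not foresee a serious obstacle in this route; the delicate steps are the standard identification of $\overline{A[X]}$ and the combined use of $A$ being a domain and $k$ being infinite to guarantee enough ``good'' specializations $\alpha$. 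Let me remark that the Serre $(S_2)$ assumption does not seem to be invoked by this direct argument; it is presumably what enables the authors' more conceptual proof through Theorem~\ref{Zariski-Lipman}.
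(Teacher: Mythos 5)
Your argument is correct, and it is a genuinely different route from the paper's. The paper proves the hard direction by passing through the strict closure: under $(S_2)$ and the infinite-field hypothesis, Corollary~\ref{4.4} identifies weakly Arf with strictly closed, Lemma~\ref{4.5} shows $(A[X])^* = (A^*)[X]$ so that strict closedness transfers to $A[X]$, and Theorem~\ref{Zariski-Lipman} sends strictly closed back to weakly Arf. Your specialization-plus-Vandermonde argument bypasses all of this: evaluating $f,g,h$ at $\alpha \in k$ off the (finite) root set of $f$ reduces to the weakly Arf property of $A$, and then inverting a Vandermonde matrix (whose determinant is a nonzero element of $k$, hence a unit of $A$) lifts $p(\alpha_j) \in A$ for $\deg p + 1$ distinct $\alpha_j$ to $p \in A[X]$; the identification $\overline{A[X]} = \overline{A}[X]$ for a Noetherian domain (Mori--Nagata plus the Krull-domain case of the Bass lemma the paper cites) is the only nontrivial input. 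Your observation that $(S_2)$ is never used is correct: your proof establishes the hard implication for any Noetherian domain containing an infinite field, which is strictly more general than the stated corollary. What the paper's approach buys in exchange is the stronger structural conclusion $(A[X])^* = (A^*)[X]$ and the ability to handle the alternative hypothesis $\operatorname{ht}_A M \ge 2$ in place of the infinite field; your interpolation argument has no analogue when $k$ is finite.
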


In Sections \ref{sec5} and \ref{sec6}, we study the question of when  idealizations and fiber products are weakly Arf and Arf rings, which allows us to enrich the theory and produce concrete examples of such rings. The main results of these sections are the following.

\begin{thm}[Theorem \ref{5.2}]
Let $R$ be a Noetherian ring, and let $M$ be a finitely generated torsion-free $R$-module. Then the idealization $A=R\ltimes M$ of $M$ over $R$ is a weakly Arf ring if and only if $R$ is a weakly Arf ring and $M$ is an $\overline{R}$-module.  
\end{thm}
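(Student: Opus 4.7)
The plan is to exploit the explicit structure of the idealization $A = R \ltimes M$ and verify both directions by a direct calculation in $\rmQ(A)$. The first task is to pin down the basic invariants of $A$. Since $M$ is torsion-free over $R$, an element $(a, m) \in A$ is a non-zerodivisor precisely when $a$ is a non-zerodivisor in $R$; consequently $\rmQ(A) \cong \rmQ(R) \ltimes N$ where $N := M \otimes_R \rmQ(R)$, and $M$ embeds into $N$. The inverse in $\rmQ(A)$ of a non-zerodivisor $(a, m)$ is given by $(1/a,\ -m/a^2)$. Finally, an element $(\alpha, \mu) \in \rmQ(A)$ is integral over $A$ if and only if $\alpha$ is integral over $R$: the ``if'' direction uses that $\overline{R} \subseteq \overline{A}$ (identifying $R$ with $R \ltimes 0$) together with the fact that every element of the square-zero ideal $0 \ltimes N$ is nilpotent in $\rmQ(A)$, hence integral; the ``only if'' direction follows by pushing forward along the surjection $\rmQ(A) \twoheadrightarrow \rmQ(R)$ induced by $(r, m) \mapsto r$.

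With these preliminaries in hand, I take $x = (a, m)$ with $a$ a non-zerodivisor, $y = (b, n)$, and $z = (c, p)$ in $A$, and compute
\[
y/x = \bigl(b/a,\ n/a - bm/a^2\bigr), \qquad z/x = \bigl(c/a,\ p/a - cm/a^2\bigr),
\]
\[
yz/x = \bigl(bc/a,\ (b/a)p + (c/a)n - (b/a)(c/a)m\bigr).
\]
By the integrality criterion above, the hypothesis that $y/x$ and $z/x$ are integral over $A$ amounts exactly to $b/a,\, c/a \in \overline{R}$. For the ($\Leftarrow$) implication: if $R$ is weakly Arf then $bc/a \in R$, and each of the three terms comprising the second coordinate of $yz/x$ lies in $\overline{R} \cdot M$, which is contained in $M$ by the hypothesis that $M$ is an $\overline{R}$-module (interpreted inside $N$); hence $yz/x \in A$.

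For the ($\Rightarrow$) implication, restricting the weakly Arf property of $A$ to triples of the form $(a, 0), (b, 0), (c, 0)$ immediately yields that $R$ is weakly Arf. To prove that $M$ is an $\overline{R}$-module, fix $s = b/a \in \overline{R}$ with $a$ a non-zerodivisor of $R$, and fix $m \in M$; I apply the weakly Arf property of $A$ to $x = (a, 0)$, $y = (b, 0)$, $z = (0, m)$. The integrality hypotheses hold because $b/a \in \overline{R}$ and because $(0, m/a)$ is nilpotent in $\rmQ(A)$. The computation then gives $yz/x = (0,\ bm/a)$, so membership in $A$ forces $sm = bm/a \in M$. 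The main obstacle is the preliminary step: correctly identifying $\rmQ(A)$, using torsion-freeness to pin down non-zerodivisors, and establishing the reduction of integrality over $A$ to integrality of the first coordinate over $R$; once these are in place, the rest is essentially bookkeeping on the two coordinates of the idealization.
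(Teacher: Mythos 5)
Your proof is correct, and it takes a genuinely different route from the paper. The paper's argument runs through Theorem~\ref{2.4}: it converts the weakly Arf property into the statement that every $J \in \Lambda(A)$ is stable, then identifies $J = \overline{(a,0)A}$ explicitly as $\overline{(a)} \times M$ and tracks what stability means coordinatewise. You instead verify Definition~\ref{2.3} directly: you compute $\rmQ(A) = \rmQ(R)\ltimes N$, observe that $(\alpha,\mu)\in\rmQ(A)$ is integral over $A$ exactly when $\alpha\in\overline R$ (the ``if'' direction because $\overline A$ is a ring containing $\overline{R}\ltimes 0$ and all nilpotents, the ``only if'' by pushing forward along the projection), and then carry out the arithmetic of $yz/x$ in coordinates. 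The two approaches have roughly equal length here, but each buys something different. Your coordinate calculation avoids the Noetherian hypotheses built into Theorem~\ref{2.4}, so the equivalence $(1)\Leftrightarrow(3)$ actually goes through without assuming $R$ Noetherian or $M$ finitely generated; the paper's choice to go through $\Lambda(A)$ is dictated by what comes afterwards, since Lemmas~\ref{12.1}--\ref{12.2} and Proposition~\ref{12.3} describe \emph{all} integrally closed open ideals of $R\ltimes M$, which is exactly the bookkeeping needed to prove the Arf analogue (Theorem~\ref{12.4}), where one must control every integrally closed ideal in $\calF_A$ and not just those of the form $\overline{(f)}$. One small point worth spelling out if you write this up: the equivalence $(a,m)\in W(A)\Leftrightarrow a\in W(R)$ is not entirely obvious from torsion-freeness alone---it is proved in the paper (Lemma~\ref{12.2}) via associated primes and localization---so it merits a line or a citation rather than an assertion.
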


\begin{thm}[Theorem \ref{6.2}]
Let $(R, \m), (S, \n)$ be Noetherian local rings with a common residue class field $k=R/\m = S/\n$. Suppose that $\depth R>0$ and $\depth S > 0$. Then the fiber product $A = R\times_k S$ of $R$ and $S$ over $k$ is a weakly Arf ring if and only if so are the components $R$ and $S$.
\end{thm}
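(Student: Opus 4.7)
The plan is to reduce everything to componentwise statements in $R$ and $S$, using the decompositions $Q(A)=Q(R)\times Q(S)$ and $\overline{A}=\overline{R}\times\overline{S}$. To obtain these, I would first record that, since $\depth R>0$ and $\depth S>0$, the ideals $\m$ and $\n$ contain non-zerodivisors; a short argument then shows that an element $x=(x_{1},x_{2})\in A$ is a non-zerodivisor in $A$ if and only if each $x_{i}$ is a non-zerodivisor in its factor. Next, the inclusion $A\hookrightarrow R\times S$ has cokernel $k$ (represented by $(1,0)$), so it is module-finite; consequently $R\times S$ is integral over $A$, and passing to integral closures in $Q(A)=Q(R)\times Q(S)$ yields $\overline{A}=\overline{R\times S}=\overline{R}\times\overline{S}$. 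Finally, I would isolate the following observation for repeated use: if $a\in\m$ is a non-zerodivisor in $R$, then $a\overline{R}\cap R\subseteq\m$, since otherwise $a^{-1}$ would be integral over $R$, forcing $a$ to be a unit; the analogous statement holds in $S$.

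For the implication that $A$ weakly Arf forces $R$ weakly Arf (the case of $S$ being symmetric), let $a_{1}\in R$ be a non-zerodivisor and let $y_{1},z_{1}\in R$ with $\alpha:=y_{1}/a_{1}$ and $\beta:=z_{1}/a_{1}$ in $\overline{R}$. If $a_{1}\notin\m$ there is nothing to prove, so assume $a_{1}\in\m$; by the key observation, $y_{1},z_{1}\in\m$. Choose a non-zerodivisor $a_{2}\in\n$ (which exists since $\depth S>0$), and set $x=(a_{1},a_{2})$, $Y=(y_{1},0)$, $Z=(z_{1},0)$. All three lie in $A$ (the second components of $Y$ and $Z$ are $0$, and the first components reduce to $0$ in $k$), and $x$ is a non-zerodivisor. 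The quotients $Y/x=(\alpha,0)$ and $Z/x=(\beta,0)$ belong to $\overline{R}\times\overline{S}=\overline{A}$, so the weakly Arf hypothesis on $A$ yields $YZ/x=(y_{1}z_{1}/a_{1},0)\in A$; in particular $y_{1}z_{1}/a_{1}\in R$, as required.

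Conversely, assume $R$ and $S$ are weakly Arf and take $x,y,z\in A$ with $x$ a non-zerodivisor and $y/x,z/x$ integral over $A$. If the common residue $\bar x_{1}=\bar x_{2}$ is nonzero, both $x_{i}$ are units, so $x$ is a unit in $A$ and $yz/x\in A$ trivially; assume therefore $x_{1}\in\m$ and $x_{2}\in\n$. Writing $y/x=(\alpha_{1},\alpha_{2})$ and $z/x=(\beta_{1},\beta_{2})$ in $\overline{R}\times\overline{S}=\overline{A}$, the weakly Arf property of the factors gives $\rho:=y_{1}z_{1}/x_{1}=x_{1}\alpha_{1}\beta_{1}\in R$ and $\sigma:=y_{2}z_{2}/x_{2}=x_{2}\alpha_{2}\beta_{2}\in S$, so $yz/x=(\rho,\sigma)\in R\times S$. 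The core of the argument is to verify the gluing condition $\bar\rho=\bar\sigma$ in $k$: since $\alpha_{1}\beta_{1}\in\overline{R}$, one has $\rho\in x_{1}\overline{R}\cap R$, and the key observation forces $\rho\in\m$; symmetrically $\sigma\in\n$, so $\bar\rho=0=\bar\sigma$ and $yz/x=(\rho,\sigma)\in A$.

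The main technical step, and the only place where the depth hypothesis is really needed, is the observation $a\overline{R}\cap R\subseteq\m$ for $a\in\m$ a non-zerodivisor; once this is in hand, both directions are routine reductions to the components. In $(\Leftarrow)$ it is what guarantees the gluing condition for the fiber product, and in $(\Rightarrow)$ it is what makes the lifts $(y_{1},0),(z_{1},0)$ elements of $A$.
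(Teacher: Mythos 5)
Your proof is correct and takes a somewhat different route from the paper's. The paper first passes through its ideal-theoretic characterization of weakly Arf rings (Theorem \ref{2.4}: every $I \in \Lambda(A)$ satisfies $I^{2}=aI$), and then works out the product decomposition of integrally closed ideals $K=\overline{(\alpha)}=I_{1}\times I_{2}$ inside $A$, reducing stability of $K$ to stability of $I_{1}$ and $I_{2}$. You instead argue directly from the definition, at the level of elements, which is more elementary and does not need Theorem \ref{2.4} as a black box. Both arguments ultimately hinge on the same elementary fact, which you isolate as your ``key observation'': if $a\in\m$ is a non-zerodivisor then $a\overline{R}\cap R=\overline{(a)}\subseteq\m$ (equivalently, the integral closure of an ideal contained in a prime stays in that prime); in the paper this appears implicitly in the verification that $I_{1}\times I_{2}\subseteq\m\times\n\subseteq A$. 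Where your version differs most is the forward direction, where you choose the auxiliary lifts $x=(a_{1},a_{2})$, $Y=(y_{1},0)$, $Z=(z_{1},0)$ --- a clean element-level trick that has no direct counterpart in the paper's ideal computation. Your preliminary reductions ($\mathrm{Q}(A)=\mathrm{Q}(R)\times\mathrm{Q}(S)$, $\overline{A}=\overline{R}\times\overline{S}$, and the componentwise characterization of $W(A)$ using $\depth R,\depth S>0$) are all correct and match what the paper uses. Net gain: a self-contained proof that bypasses the stability characterization; what you lose is that the paper's ideal-theoretic version exposes more structure (e.g., the description of $\Lambda(A)$), which it reuses in the Arf-ring analogue Theorem \ref{6.10}.
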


In Section \ref{sec7}, we give a modification of Arf closures, called {\it weakly Arf closures}, to investigate weakly Arf rings in accordance with our situation. We prove in Theorem \ref{4.3} that the Arf property implies the strict closedness of the ring without assuming that the ring contains a field. This generalizes \cite[Theorem 4.6]{L}, and hence the strict closure coincides with the Arf closure. This gives a complete answer for the conjecture posed by O. Zariski; see \cite[page 651]{L}. 

\begin{thm}[Corollary \ref{C3}]
Let $A$ be a Noetherian semi-local ring such that $A_M$ is a one-dimensional Cohen-Macaulay local ring for every $M \in \Max A$. Suppose that $\overline{A}$ is a finitely generated $A$-module. Then the Arf closure of $A$ coincides with the strict closure of $A$ in $\overline{A}$.
\end{thm}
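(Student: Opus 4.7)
The plan is to establish the two inclusions ${}^*A \subseteq A^*$ and $A^* \subseteq {}^*A$, where $A^*$ denotes the Arf closure and ${}^*A$ the strict closure of $A$ in $\overline{A}$. Both subrings lie between $A$ and $\overline{A}$ and are module-finite over $A$ by the hypothesis on $\overline{A}$, hence each is a Noetherian semi-local ring whose localizations at maximal ideals are one-dimensional and Cohen--Macaulay. In particular, both satisfy Serre's $(S_2)$ condition and both have $\overline{A}$ as their integral closure.

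For the inclusion ${}^*A \subseteq A^*$, I invoke Theorem \ref{4.3}: the Arf closure $A^*$ is an Arf ring, and hence strictly closed in $\overline{A^*} = \overline{A}$. Since ${}^*A$ is by construction the smallest strictly closed subring of $\overline{A}$ containing $A$, we obtain ${}^*A \subseteq A^*$. This direction is where the main novelty of the paper (dropping the field hypothesis from Lipman's result) is used.

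For the reverse inclusion, the key is to show that ${}^*A$ is itself an Arf ring, because then the minimality of $A^*$ among Arf rings sandwiched between $A$ and $\overline{A}$ forces $A^* \subseteq {}^*A$. Since ${}^*A$ is strictly closed and satisfies $(S_2)$, Theorem \ref{Zariski-Lipman} applies: ${}^*A$ is weakly Arf, and $({}^*A)_P$ is an Arf ring for every $P \in \Spec({}^*A)$ with $\height P = 1$. As ${}^*A$ is one-dimensional at each maximal ideal, the height-one primes are precisely the maximal ideals, so $({}^*A)_M$ is Arf for every $M \in \Max({}^*A)$. The global weakly Arf property directly supplies condition $(2)$ of Definition \ref{1.1} for ${}^*A$, and what remains is to promote condition $(1)$ from the local data at each maximal ideal to the semi-local ring ${}^*A$.

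The main obstacle is precisely this local-to-global promotion of the principal-reduction condition: given an integrally closed open ideal $I \subseteq {}^*A$, one must exhibit a single element $a \in I$ such that $I^{n+1} = aI^n$ for some common $n$, given only that each $I({}^*A)_M$ admits its own principal reduction $a_M$ with exponent $n_M$. I expect to handle this by first noting that powers of a principal reduction remain principal reductions, so the $n_M$'s may be replaced by a common exponent, and then invoking prime avoidance among the maximal ideals together with the Cohen--Macaulay structure to produce a uniform $a \in I$ simultaneously reducing $I$ at every maximal ideal; an argument of this flavor is implicit in Lipman's \cite[Theorem 2.2]{L}. Once this is in place, ${}^*A$ is Arf, the two inclusions combine to give $A^* = {}^*A$, and Zariski's conjecture is resolved in full generality.
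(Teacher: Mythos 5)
Your first inclusion ${}^*A \subseteq A^*$ is correct and mirrors the paper's argument: the Arf closure $A^*$ is an Arf ring, so Theorem~\ref{4.3} makes it strictly closed in $\overline{A}$, and monotonicity of the strict-closure operation then forces ${}^*A \subseteq A^*$.

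The genuine gap is in the second inclusion. After applying Theorem~\ref{Zariski-Lipman} to ${}^*A$ you hold only that ${}^*A$ is weakly Arf and that $({}^*A)_M$ is Arf at every maximal ideal $M$; you then plan to recover the full Arf property for ${}^*A$ by promoting the local existence of principal reductions to a single global reduction via prime avoidance. That promotion is left as a hope rather than an argument, and it is exactly the delicate point on which the weakly-Arf/Arf dichotomy turns: Example~\ref{9.6} and Corollary~\ref{10.5} exhibit a local weakly Arf ring whose maximal ideal has no principal reduction, so prime avoidance alone cannot be trusted over finite residue fields. The implication ``weakly Arf $+$ locally Arf $\Rightarrow$ Arf'' in the semi-local one-dimensional Cohen--Macaulay setting is true, but its proof passes back through strict closedness (Theorem~\ref{Zariski-Lipman} in reverse followed by Theorem~\ref{4.3}), not through an elementary element-chasing argument.

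The fix is easy and makes the detour unnecessary: apply Theorem~\ref{4.3} to ${}^*A$ directly. Since ${}^*A$ is a module-finite birational extension of $A$, it is Noetherian semi-local with one-dimensional Cohen--Macaulay localizations, and it is strictly closed in $\overline{A}$ by Lipman's fact that $({}^*A)^* = {}^*A$ (a point you and the paper both use silently and should acknowledge); Theorem~\ref{4.3} then returns ``Arf'' in one step. The paper's own proof takes a different and even more economical route: it never shows ${}^*A$ is Arf at all. By Corollary~\ref{C1} the Arf closure coincides with the weakly Arf closure $A^a$, and Proposition~\ref{7.5}~(2) shows $A^a$ is contained in every weakly Arf intermediate ring, in particular in ${}^*A$, since strictly closed trivially implies weakly Arf. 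The payoff of that route is that it needs only the weak property of ${}^*A$, which is precisely what Theorem~\ref{Zariski-Lipman} hands you with no further work.
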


As an application of weakly Arf closures, in Section \ref{sec8}, we are concerned with core subalgebras of the polynomial ring $k[t]$ over a field $k$. The class of core subalgebras includes the semigroup rings $k[H]$ of a numerical semigroup $H$. 

Section \ref{sec9} is devoted to study the weakly Arf property for the algebra $A^I$ of an ideal $I$. For a Noetherian ring $A$, $\Max \Lambda(A)$ denotes the set of all the maximal elements in the set of all proper ideals that are the integral closure of a principal ideal generated by a non-zerodivisor on $A$ with respect to inclusion. By computing the set $\Max\Lambda(A)$, we give an example of a weakly Arf ring which is not Arf; see Example \ref{9.6}. 
Notice that, if $A$ is not integrally closed, there exists $M \in \Max \Lambda(A)$ such that $\mu_A(M) \ge 2$; see Proposition \ref{9.7}. 
Here, we denote by $\mu_A(N)$ the number of elements in a minimal system of generators of an $A$-module $N$.   
By using this phenomenon, we define $A_1$ to be $\overline{A}$, if $A$ is integrally closed. Otherwise, if $A \ne \overline{A}$, we define $A_1 = A^M = \bigcup_{n \ge 0}\left[M^n:M^n\right]$, where $M \in \Max \Lambda(A)$ such that $\mu_A(M) \ge 2$. Set $A_0 = A$, and for each $n \ge 1$, define recursively $A_n = \left(A_{n-1}\right)_1$. Hence, we have a chain of rings
$$
A = A_0 \subseteq A_1 \subseteq \cdots \subseteq A_n \subseteq \cdots \subseteq \overline{A}
$$
by the algebras $A^M$, where $M \in \Max \Lambda(A)$; see Definition \ref{9.9}. The main result of Section \ref{sec9} is the following, which extends the condition ${\rm (i)} \Leftrightarrow {\rm (iii)}$ as in \cite[Theorem 2.2]{L}. 

\begin{thm}[Theorem \ref{9.11}]
Let $A$ be a Noetherian ring, and consider the following conditions: 
\begin{enumerate}[$(1)$]
\item $A$ is a weakly Arf ring.
\item For every $M \in \Max \Lambda(A)$, $M:M$ is a weakly Arf ring and $M^2 = aM$ for some $a \in M$. 
\item  For every chain $A = A_0 \subseteq A_1 \subseteq \cdots \subseteq A_n \subseteq \cdots \subseteq \overline{A}$ obtained from Definition \ref{9.9}, $A_n$ is a weakly Arf ring for every $n \ge 0$.
\item For every chain $A = A_0 \subseteq A_1 \subseteq \cdots \subseteq A_n \subseteq \cdots \subseteq \overline{A}$ obtained from Definition \ref{9.9}, and for every $n \ge 0$ and $N \in \Max \Lambda (A_n)$, $N^2 = bN$ for some $b \in N$. 
\end{enumerate}
Then the implications $(1) \Leftrightarrow (2) \Leftrightarrow (3) \Rightarrow (4)$ hold. If $\dim A=1$, or $A_{\p}$ is quasi-unmixed for every $\p \in \Spec A$, the implication $(4) \Rightarrow (1)$ holds.  
\end{thm}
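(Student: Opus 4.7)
The plan is to leverage the characterization from Section~\ref{sec2} that $A$ is weakly Arf if and only if $\overline{(a)}^{2}=a\overline{(a)}$ for every non-zerodivisor $a\in A$. This ideal-theoretic form of the weakly Arf condition is the bridge to conditions (2) and (4), both of which speak about stability of integral closures of principal ideals at maximal elements of $\Lambda(A)$.

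For $(1)\Rightarrow(2)$, I fix $M=\overline{(a)}\in\Max\Lambda(A)$ for a non-zerodivisor $a$; the characterization yields $M^{2}=aM$ at once, and consequently $M:M$ coincides with $(1/a)M$ as a subring of $\overline{A}$. To see $M:M$ is weakly Arf, given $b,y,z\in M:M$ with $b$ a non-zerodivisor and $y/b,z/b\in\overline{A}$, multiplying by $a$ places $ab,ay,az$ in $A$, with $ab$ a non-zerodivisor of $A$ and $(ay)/(ab),(az)/(ab)\in\overline{A}$, so the weakly Arf property of $A$ gives $ayz/b\in A$; a short check using $yz/b\in\overline{A}$ confirms $ayz/b\in M=\overline{(a)}$, whence $yz/b\in(1/a)M=M:M$. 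For $(2)\Rightarrow(1)$, given a non-zerodivisor $x$ with $\overline{(x)}$ proper, I pick $M\in\Max\Lambda(A)$ containing $\overline{(x)}$ and write $M=\overline{(a)}$ with $M^{2}=aM$. For $y,z\in\overline{(x)}\subseteq M$, the weakly Arf property of $M:M$ applied to $x,y,z$ yields $yz/x\in M:M=(1/a)M$; combining with $yz\in M^{2}=aM$ and the relation $x/a\in\overline{A}$ pins down $yz/x$ inside $A\cap\overline{(x)}$, giving $\overline{(x)}^{2}=x\overline{(x)}$.

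The equivalence $(1)\Leftrightarrow(3)$ and the implication $(3)\Rightarrow(4)$ follow by straightforward iteration of $(1)\Leftrightarrow(2)$ along the chain of Definition~\ref{9.9}: starting from $A_{0}=A$ weakly Arf, each successor $A_{n+1}=(A_{n})^{M}=M:M$ is weakly Arf by the $(1)\Rightarrow(2)$ step applied to $A_{n}$; conversely, $A_{0}=A$ appears in every chain forced by (3); and (4) is the ``principal reduction'' half of (2) applied to each $A_{n}$.

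The principal obstacle is $(4)\Rightarrow(1)$ under the hypothesis $\dim A=1$ or $A_{\p}$ quasi-unmixed for every $\p\in\Spec A$. The crucial extra ingredient from these hypotheses is that, for every non-zerodivisor $x$, the element $x$ itself is a minimal reduction of $\overline{(x)}$, i.e., $\overline{(x)}^{n+1}=x\overline{(x)}^{n}$ for some $n\ge 1$. When $\overline{(x)}\in\Max\Lambda(A)$, (4) supplies $a\in\overline{(x)}$ with $\overline{(x)}^{2}=a\overline{(x)}$; iterating both identities produces $a^{n}\overline{(x)}=\overline{(x)}^{n+1}=xa^{n-1}\overline{(x)}$, and cancelling the non-zerodivisor $a^{n-1}$ forces $\overline{(x)}^{2}=x\overline{(x)}$, giving (1) via the characterization. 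For $\overline{(x)}\subsetneq M\in\Max\Lambda(A)$, I descend through the chain of $A_{i}$'s from Definition~\ref{9.9}, applying (4) at each level to reduce to the maximal case. The hard part is to control this descent and to make the minimal-reduction identity for $x$ transfer correctly across the chain; this is precisely what the one-dimensional or quasi-unmixed hypothesis buys, since the required minimal-reduction property for every non-zerodivisor is known to fail in general higher-dimensional non-quasi-unmixed settings.
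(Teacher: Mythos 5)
Your outline of $(1)\Leftrightarrow(2)\Leftrightarrow(3)\Rightarrow(4)$ is essentially the paper's route: $(1)\Rightarrow(2)$ via stability of elements of $\Lambda(A)$ (Theorem~\ref{2.4}) and the weakly Arf property of blow-ups (Proposition~\ref{9.1}, Corollary~\ref{9.2}), and the remaining implications by iterating along the chain and by triviality. Your $(2)\Rightarrow(1)$ sketch, though, is vaguer than the paper's: from $yz/x\in M:M=(1/a)M$ you have $yz=xm/a$ for some $m\in M$, and ``$x/a\in\overline{A}$'' does not by itself put $m/a$ in $A$. The paper routes this through Proposition~\ref{9.1} and Corollary~\ref{9.3}, which translate ``$M:M$ is weakly Arf'' into exactly the statement you need about all $x\in M\cap W(A)$, avoiding the ad hoc juggling. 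This is a fixable presentation issue.

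The genuine gap is $(4)\Rightarrow(1)$. First, the claim that the quasi-unmixed (or $\dim A=1$) hypothesis supplies ``a minimal reduction $x$ of $\overline{(x)}$'' is incorrect as motivation: in any Noetherian ring $(x)$ is automatically a reduction of its integral closure $\overline{(x)}$, and the independence of the reduction number on the choice of principal reduction is already stated in Section~\ref{sec2}; so the maximal case works with no extra hypothesis. Second, and more seriously, for $\overline{(x)}\subsetneq M$ you announce a ``descent through the chain of $A_i$'s'' but supply no mechanism to make it terminate or to propagate stability back down. The paper's argument here is a well-founded induction on the integer $\deg_A A/I$ (taking a hypothetical unstable $I\in\Lambda(A)$ of minimal degree, then maximal among such), passing from $I$ in $A$ to $L=I/b$ in $B=A^M$ and using the short exact sequence $0\to B/L\to A/I\to A/M\to 0$ to get a strict drop in degree. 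The hypothesis $\dim A=1$ or quasi-unmixedness of all $A_\p$ is used precisely to prove the Claim that $\deg_B B/L\le\deg_A B/L$, via Lemma~\ref{9.12} ($\dim B_P=\dim A_{P\cap A}$ for intermediate rings $B$), not for any reduction-existence statement. Without this degree comparison the descent has no measure and the induction cannot close; your proposal is missing this idea entirely.
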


In Section \ref{sec10}, we study Example \ref{9.6} in detail, determine all the integrally closed ideals that contain a non-zerodivisor, and give our second proof for this example. Furthermore, we provide other examples of weakly Arf rings that are not Arf. 
Inspired by Corollary \ref{3.2}, we note that some of the invariant subrings could be weakly Arf. In Section \ref{sec11},  we explore the weakly Arf property for the rings of invariant acting on subgroups of $\GL_2(k)$, where $k$ is a prime field of characteristic 2. Invariant subrings may appear as determinantal rings. Therefore, in view of Section \ref{sec11}, it seems worth studying when determinantal rings are weakly Arf; we do this in Section \ref{sec12}. Theorem \ref{ch} guarantees that the Arf property depends on the characteristic of the ring. 

In Section \ref{sec13}, we investigate the strict closedness and the Arf property of rings in connection with the direct summands, and prove the following.

\begin{thm}[Corollary \ref{Final}]
Let $R$ be a Noetherian semi-local ring such that $R_M$ is a one-dimensional Cohen-Macaulay local ring for every $M \in \Max R$. Suppose that $R$ is an Arf ring. Then, for every finite subgroup $G$ of $\Aut R$ such that the order of $G$ is invertible in $R$, $R^G$ is an Arf ring. 
\end{thm}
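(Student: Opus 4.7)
The plan is to combine the main theorem of Section~\ref{sec13}, which asserts descent of strict closedness and the Arf property under suitable direct summands, with the equivalence between being Arf and being strictly closed in the integral closure (Corollary~\ref{C3}). The bridge is the standard observation that, since $|G|$ is a unit of $R$, the Reynolds operator
\[
\rho\colon R \longrightarrow R^G, \qquad \rho(a)=\frac{1}{|G|}\sum_{g\in G} g(a),
\]
is an $R^G$-linear retraction of the inclusion $R^G\hookrightarrow R$; hence $R^G$ is a direct summand of $R$ as an $R^G$-module, and in particular the inclusion is cyclically pure.

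First I would verify that $R^G$ inherits the standing hypotheses. Each $a\in R$ satisfies the monic polynomial $\prod_{g\in G}(X-g(a))\in R^G[X]$, so $R$ is integral over $R^G$, and, using Noetherianity of $R$ together with the splitting by $\rho$, module-finite over $R^G$. Consequently $R^G$ is Noetherian and semi-local, and $\Max R^G$ equals the (finite) set of contractions of the elements of $\Max R$. For every $\mathfrak m\in\Max R^G$, the localization $(R^G)_\mathfrak m$ is a direct summand of the semi-local ring $T=(R^G\setminus\mathfrak m)^{-1}R$, which is one-dimensional Cohen--Macaulay; going-up gives $\dim (R^G)_\mathfrak m=1$, and Cohen--Macaulayness is inherited by direct summands, so $R^G$ fits the framework of Definition~\ref{1.1}.

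Next, by Corollary~\ref{C3} the assumption that $R$ is Arf is equivalent to $R$ being strictly closed in $\overline R$. Invoking the principal result of Section~\ref{sec13} to the direct summand $R^G\hookrightarrow R$, one deduces that $R^G$ is strictly closed in its own integral closure $\overline{R^G}$. Applying Corollary~\ref{C3} once more, this time to $R^G$, yields that $R^G$ is an Arf ring, as desired.

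The main obstacle I expect is not the group-theoretic step, which is standard once the Reynolds operator is in place, but ensuring that the Section~\ref{sec13} theorem is strong enough to produce full strict closedness, and not merely the weakly Arf property that Corollary~\ref{3.2} extracts from cyclic purity alone; equivalently, one must handle the height-one localizations, where weakly Arf does not a priori imply Arf. It is precisely the direct summand structure, rather than just purity, that permits this stronger conclusion. A minor secondary check is that $\overline{R^G}$ is a finitely generated $R^G$-module, which follows from finiteness of $\overline R$ over $R$ (available in the one-dimensional Cohen--Macaulay Arf semi-local setting) combined with finiteness of $R$ over $R^G$.
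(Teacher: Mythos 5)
Your overall strategy is precisely the paper's: exhibit $R^G$ as a direct summand of $R$ via the Reynolds operator, descend strict closedness along the direct summand, and convert back to the Arf property via the equivalence, for one‑dimensional Cohen--Macaulay semi-local rings, between being Arf and being strictly closed in the integral closure. In fact the paper obtains Corollary \ref{Final} simply by appealing to Theorem \ref{11.12}, whose proof first verifies $\height_{R^G}\fkp=1$ for every $\fkp\in\Max R^G$ (by an associated-primes argument rather than your going-up argument, though both work), then applies Theorem \ref{4.3} to translate the Arf hypothesis on $R$ into strict closedness, descends via Corollary \ref{11.11}, and applies Theorem \ref{4.3} once more to $R^G$. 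So the structure of your argument is faithful to the source.

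The genuine gap is in the bridge you choose: you substitute Corollary \ref{C3} for Theorem \ref{4.3}. Corollary \ref{C3} carries the extra hypothesis that $\overline{A}$ be a finitely generated $A$-module, which you dismiss as ``a minor secondary check,'' asserting that finiteness of $\overline{R}$ over $R$ is ``available in the one-dimensional Cohen--Macaulay Arf semi-local setting.'' That assertion is nowhere established under the stated hypotheses: a one-dimensional CM Noetherian semi-local ring need not have module-finite integral closure, and nothing in Definition \ref{1.1} supplies it. The paper deliberately avoids this difficulty by proving Theorem \ref{4.3} with no finiteness hypothesis at all (the replacement for Lipman's Lemma 4.7 asks only that the intermediate ring $C$, not $\overline{A}$, be a finitely generated $A$-module), and its proof of Theorem \ref{11.12} cites Theorem \ref{4.3}, not Corollary \ref{C3}. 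Replacing Corollary \ref{C3} with Theorem \ref{4.3} throughout closes your gap, after which your argument coincides with the paper's. Two smaller points: the descent theorems of Section \ref{sec13} run on condition $(\sharp)$ of Definition \ref{11.1}, not merely on cyclic purity or the direct-summand structure, so you should note that Lemma \ref{11.4} (or Lemma \ref{11.3} in the domain case) supplies $(\sharp)$ here; and your observation in the final paragraph that one needs the full direct-summand hypothesis rather than cyclic purity alone, because weakly Arf does not imply Arf at height one, is exactly the right thing to worry about and is handled by Theorem \ref{11.5} together with Theorem \ref{4.3}.
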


Throughout this paper, unless otherwise specified, let $A$ be an arbitrary commutative ring, let $W(A)$ be the set of non-zerodivisors on $A$, and let $\calF_A$ be the set of {\it open} ideals in $A$, i.e., the ideals of $A$ that contain a non-zerodivisor on $A$. For an ideal $I$ in $A$, we denote by $\overline{I}$ the integral closure of $I$ in $A$. We set $\Lambda(A) = \{\overline{(x)} \mid x \in W(A)\}$. For $A$-submodules $X$ and $Y$ of the total ring of fractions $\rmQ(A)$, we set $X:Y = \{a \in \rmQ(A) \mid a Y \subseteq X\}$.

For an $A$-module $M$, $\ell_A(M)$ denotes the length of $M$ and $\mu_A(M)$ stands for the number of elements of a minimal system of generators for $M$. When $A$ is a Noetherian local ring with maximal ideal $\m$, for each $\m$-primary ideal $I$ in $A$, let $\rme^0_{I}(A)$ denote the multiplicity of $A$ with respect to $I$. We also denote $\rme(A) = \rme^0_{\m}(A)$.


\section{Basic properties of weakly Arf rings}\label{sec2}

In this section we give the definitions and some basic properties of the Arf and weakly Arf rings. 
For each $I \in \calF_A$, there is a filtration of endomorphism algebras as follows:
$$
A \subseteq I:I \subseteq I^2: I^2 \subseteq \cdots \subseteq I^n:I^n \subseteq \cdots \subseteq\rmQ(A).
$$
Set
$
A^I = \bigcup_{n\geq 0}\left[I^n: I^n\right].
$
The ring $A^I$, an intermediate ring between $A$ and $\rmQ(A)$, coincides with the blow-up of $A$ at $I$ when $A$ is Noetherian and of dimension one.  For each $n > 0$, note that $I^n \in \calF_A$ and $A^I = A^{I^n}$. When $A$ is Noetherian and $I$ contains a principal reduction, $A^I$ is a module-finite extension over $A$, and $A \subseteq A^I \subseteq \overline{A}$  
where $\overline{A}$ denotes the integral closure of $A$ in $\rmQ(A)$. 
In particular, for a Noetherian semi-local ring $A$ such that $A_M$ is a one-dimensional Cohen-Macaulay local ring for every $M \in \Max A$, all the blow-ups $A^I$ of $A$ at $I \in \calF_A$ are finitely generated $A$-modules and $A \subseteq A^I \subseteq \overline{A}$, because there exists an integer $n >0$ such that $I^n$ contains a reduction; see \cite[Proof of Proposition 1.1]{L}.
Notice that, if $a \in I$ is a reduction of $I$, i.e., $I^{r+1} = a I^r$ for some $r\ge0$, then one has:
$$
A^I = A\left[\frac{I}{a}\right] = \frac{I^r}{a^r}, \text{ where } \frac{I}{a} = \left\{\frac{x}{a} ~\middle|~ x \in I\right\} \subseteq \rmQ(A).
$$
Moreover, if $a \in I$ is a reduction of $I$, then $A^I = I^n:I^n$ for every $n \ge r$. Hence, the reduction number $\red_Q(I)$ of $I$ with respect to $Q=(a)$, the minimum integer $r \ge 0$ satisfying the equality $I^{r+1} = QI^r$, is equal to the minimum integer $n \ge 0$ such that $A^I = I^n : I^n$. Therefore, $\red_Q(I)$ does not depend on the choice of a principal reduction $Q=(a)$ of $I$. 

An ideal $I \in \calF_A$ is called {\it stable} in $A$, if $A^I=I:I$, or, equivalently, $IA^I = I$. Moreover, an ideal $I \in \calF_A$ is stable if and only if $I^2 = aI$ for some $a \in I$. See \cite[Lemma 1.8]{L} for the proof of this fact.  

Before stating the definition of the weakly Arf rings, we recall the notion of Arf rings.

\begin{defn}[\cite{Arf, L}] \label{2.2}
Let $A$ be a Noetherian semi-local ring such that $A_M$ is a one-dimensional Cohen-Macaulay local ring for every $M \in \Max A$. Then $A$ is said to be {\it an Arf ring}, if the following conditions are satisfied:
\begin{enumerate}[$(1)$]
\item If $I \in \calF_A$ is an integrally closed ideal in $A$, then $I$ has a principal reduction.
\item If $x,y,z \in A$ such that $x \in W(A)$ and $y/x, z/x \in \overline{A}$, then $yz/x \in A$.
\end{enumerate}
\end{defn}

A Noetherian local ring $A$ with multiplicity at most two is Arf; see \cite[Example, page 664]{L}, \cite[Proposition 2.8]{CCGT}. 
Numerical semigroup rings provide numerous examples of Arf rings. Indeed, let $k[[t]]$ be the formal power series ring over a field $k$. For an integer $n \ge 2$, $A=k[[t^n, t^{n+1}, \ldots, t^{2n-1}]]$ is an Arf ring; see \cite[Example 4.7]{CCGT}. 
Idealizations and fiber products also produce Arf rings; see Section \ref{sec6}. For example, let $ A=k[[x,y,z]]/(x^3-yz, y^2-zx, z^2-x^2y)$, where $k[[x, y, z]]$ denotes the formal power series ring over a field $k$. Both the idealization $A\ltimes \mathfrak m \cong k[[x,y,z, u, v, w]]/I$ and the fiber product $A\times_k A\cong k[[x,y,z, u, v, w]]/J$ are Arf rings by Theorems~\ref{12.4} and \ref{6.10}, since $A$ is Arf and $\mathfrak m \overline{A}=\mathfrak m$, where $\mathfrak m$ is the maximal ideal of $A$ and 
\begin{eqnarray*}
I&=&(u,v,w)^2+(yu-xv, zu-yv, yv-xw)+(x^3-yz, y^2-zx, z^2-yx^2) \\ 
&&+ \ (x^2u-zv, zv-yw, 
x^2v-zw)\\ 
J&=&(x^3-yz, y^2-zx, z^2-yx^2)+(u^3-vw, v^2-wu, w^2-u^2v) \\ 
&&+  \ (x, y, z)\cdot (u, v, w).
\end{eqnarray*} 
Arf rings show up in many other places such as invariant subrings and determinantal rings as well; see Sections \ref{sec11} and \ref{sec12}. Furthermore, let us here mention that the Arf property depends on the characteristic of the rings; see Theorem \ref{ch}.

In what follows we present a weaker version of this concept in commutative rings that are not necessarily  Noetherian semi-local. In the rest of this section, we present several basic properties and characterizations of this new class of rings.

\begin{defn}\label{2.3}
Let $A$ be an arbitrary commutative ring. 
We say that $A$ is {\it a weakly Arf ring} if condition $(2)$ in Definition \ref{2.2} holds, i.e., if $x, y, z \in A$ such that $x \in W(A)$ and $y/x, z/x \in \overline{A}$, then $yz/x \in A$.
\end{defn}

Every Arf ring is weakly Arf. The difference between Arf and weakly Arf rings is only the existence of reductions of integrally closed ideals. Thus, for a one-dimensional Cohen-Macaulay local ring $A$ with maximal ideal $\m$, these notions are equivalent if the residue class field $A/\m$ of $A$ is infinite, or if $A$ is analytically irreducible, i.e., the $\m$-adic completion $\widehat{A}$ of $A$ is an integral domain. However, we will show in Sections 9 and 10 that weakly Arf rings are not necessarily Arf even though the local rings are Cohen-Macaulay and of dimension one; see Example \ref{9.6} and Remark \ref{10.7}. 

Every integrally closed ring is a weakly Arf ring; hence so is every ring whose total ring of fractions coincides with the ring itself (e.g., a Noetherian local ring $A$ with $\depth A=0$). 
Next we give a list of some of the examples of weakly Arf rings that follow from the results proved later; see Examples \ref{2.8b}, \ref{9.6}, \ref{9.15}, Corollaries \ref{3.2}, \ref{4.7}, \ref{5.8}, and Theorems \ref{6.10}, \ref{15.1}.

\begin{ex}
Let $k$ be a field. 
\begin{enumerate}[$(1)$]
\item Let $k[s, t]$ be the polynomial ring over $k$. Then, for each $\ell \ge 1$, $A = k[s, t^2, t^{2\ell +1}]$ is a weakly Arf ring.
\item Let $S = k[t]$ be the polynomial ring over $k$. Then, for each $\ell \ge 2$, $A = k[t^{\ell} + t^{\ell + 1}] + t^{\ell+2}S$ is a weakly Arf ring. 
\item Let $A$ be an integral domain and let $G$ be a finite subgroup of $\Aut A$. Suppose that the order of $G$ is invertible in $A$. If $A$ is a weakly Arf ring, then so is the invariant subring $A^G$. 
\item Let $A$ be a Noetherian integral domain with Serre's $(S_2)$ condition,  containing an infinite field. Then $A$ is a weakly Arf ring if and only if so is the polynomial ring $A[X_1, X_2, \ldots, X_n]$ for every $n \ge 1$. 
\item Let $S = k[t]$ be the polynomial ring over $k$ and set $R = k[t^2, t^{2\ell + 1}]~(\ell \ge 1)$. Then the idealization $A = R \ltimes S^{\oplus n}$ is a weakly Arf ring for every $n \ge 0$. 
\item Let $(R, \m)$ be a Noetherian local ring. Then $R$ is a weakly Arf ring if and only if so is the fiber product $A = R \times_{R/\m}R$. 
\item Let $k[X, Y, Z]$ be the polynomial ring over $k$. Then $A =k[X, Y, Z]/{\rm I}_2(
\begin{smallmatrix}
X & Y^2 & Z \\
0 & Z & Y
\end{smallmatrix}
)$
 is a weakly Arf ring, where  ${\rm I}_2(
M)$ denotes the ideal of $k[X, Y, Z]$ generated by $2\times 2$-minors of a matrix $M$. 
\item Let $k[[X, Y]]$ be the formal power series ring over $k$ and set $A = k[[X, Y]]/(XY(X+Y))$. If $k$ is a prime field of characteristic $2$, then $A$ is a weakly Arf ring, but not an Arf ring.
\end{enumerate}
\end{ex}

We now explore basic properties of weakly Arf rings. We begin with the following, which corresponds to the equivalence ${\rm (i)} \Leftrightarrow {\rm (ii)}$ of \cite[Theorem 2.2]{L} for Arf rings.

\begin{thm}\label{2.4}
Consider the following conditions:
\begin{enumerate}[$(1)$]
\item $A$ is a weakly Arf ring.
\item For every $I \in \Lambda(A)$, there exists $a \in I$ such that $I^2 = aI$.
\end{enumerate}
Then the implication $(1) \Rightarrow (2)$ holds and the converse holds if $A$ is Noetherian. 
\end{thm}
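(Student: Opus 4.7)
The plan is to handle the two directions separately. The forward implication $(1) \Rightarrow (2)$ should be a short direct computation, while the converse (under the Noetherian assumption) will hinge on an ``integrality-collapse'' substitution inside the endomorphism ring $A^I = I{:}I$.

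For $(1) \Rightarrow (2)$, I would fix $I = \overline{(x)} \in \Lambda(A)$ with $x \in W(A)$ and actually establish the stronger identity $I^2 = xI$. Any $y, z \in I$ satisfy $y/x, z/x \in \overline{A}$ by definition of the integral closure of the ideal $(x)$, so the weakly Arf hypothesis gives $yz/x \in A$. Since additionally $yz/x^2 = (y/x)(z/x) \in \overline{A}$, this upgrades to $yz/x \in x\overline{A} \cap A = \overline{(x)} = I$, yielding $I^2 \subseteq xI$; the reverse inclusion is immediate from $x \in I$. So (2) holds with $a = x$.

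For $(2) \Rightarrow (1)$, take $x \in W(A)$ and $y, z \in A$ with $y/x, z/x \in \overline{A}$, set $I := \overline{(x)}$ (so $y,z\in I$), and invoke (2) to pick $a \in I$ with $I^2 = aI$. Two preparatory observations will be needed. First, $a \in W(A)$, since $x^2 \in I^2 = aI$ forces $a$ to divide the non-zerodivisor $x^2$. Second, $I = aA^I$: for any $u \in I$ the inclusion $uI \subseteq I^2 = aI$ exhibits $u/a$ as an element of $I{:}I = A^I$, and the reverse containment $aA^I \subseteq I$ is clear. This is where Noetherianness enters essentially: it guarantees that $A^I$ is module-finite over $A$ (a finitely generated $A$-algebra inside $Q(A)$), so that $a$ remains a non-zerodivisor in $A^I$.

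The decisive step — the one I expect to be the main substantive idea — is an integrality substitution. Since $x \in I = aA^I$, write $x = a\xi$ for a unique $\xi \in A^I$. Because $a \in I = \overline{(x)}$, there is an integrality relation
\[
a^n + c_1 x a^{n-1} + c_2 x^2 a^{n-2} + \cdots + c_n x^n = 0, \qquad c_i \in A.
\]
Substituting $x = a\xi$ and cancelling the non-zerodivisor $a^n$ in $A^I$ collapses the equation to
\[
1 + c_1 \xi + c_2 \xi^2 + \cdots + c_n \xi^n = 0,
\]
or equivalently $\xi \cdot (c_1 + c_2\xi + \cdots + c_n\xi^{n-1}) = -1$, which exhibits $\xi$ as a unit of $A^I$. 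Hence $xA^I = a\xi A^I = aA^I = I$, and so $I^2 = (xA^I)\cdot I = x(A^I\cdot I) = xI$. Therefore $yz \in I^2 = xI \subseteq xA$, i.e.\ $yz/x \in A$, as required. The main obstacle is not the substitution itself but the bookkeeping around $A^I$: showing it is a module-finite $A$-algebra inside $Q(A)$ so that $a$ (and $a^n$) remain non-zerodivisors there and the cancellation step is legitimate; once $A^I$ is properly in place, the invertibility of $\xi$ falls out automatically from the integrality equation.
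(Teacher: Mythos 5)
Your forward direction $(1)\Rightarrow(2)$ is essentially the paper's argument, modulo cosmetic rearrangement (showing $I^2 = xI$ directly rather than passing through an intermediate $w$). The converse is where you genuinely diverge, and your route is cleaner. The paper's proof of $(2)\Rightarrow(1)$ goes through two external facts: first, that in a Noetherian ring $(x)$ is a reduction of $I=\overline{(x)}$ (because $I$ is finitely generated and each generator is integral over $(x)$, so the determinantal trick gives $I^{r+1}=xI^r$ for some $r$); second, that the reduction number with respect to a principal reduction is independent of the choice of that reduction (established earlier in the section via the characterization of $\red_Q(I)$ as the least $n$ with $A^I = I^n{:}I^n$). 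Combining these, $I^2=aI$ forces $I^2=xI$. Your argument bypasses both: you show $xB = aB$ (where $B = I{:}I$) directly, by writing $x=a\xi$ with $\xi\in B$ and then collapsing the integral dependence of $a$ on $(x)$ into the unit equation $1 + c_1\xi + \cdots + c_n\xi^n = 0$ in $B$. This is a nice, self-contained piece of algebra that makes the invertibility of $\xi$ visible at the level of the integrality relation itself, rather than deducing it from general reduction-number machinery. The steps $I = aB$ (from $uI\subseteq aI$ giving $u/a\in I{:}I$), $a\in W(A)$ (from $x^2\in aI$), and $I^2 = (xB)I = x(BI) = xI$ are all correct.

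One small but worth-noting point: your stated reason for invoking the Noetherian hypothesis is off. You say it is needed so that $a$ ``remains a non-zerodivisor in $A^I$''; but $A^I$ (equivalently, $B = I{:}I$) sits inside $\rmQ(A)$, where $a\in W(A)$ is already a unit, so $a$ is automatically a non-zerodivisor on $B$ with no finiteness hypothesis. In fact, once you unwind the argument, your proof does not appear to use Noetherianness anywhere: the integral dependence of $a$ over $(x)$ comes straight from $a \in \overline{(x)}$ by definition, $I = aB$ is pure algebra given $I^2 = aI$, and the cancellation of $a^n$ happens inside $\rmQ(A)$. So you have, perhaps inadvertently, proved a slightly stronger statement than the paper records. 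This is not a flaw, but you should correct the explanation of why Noetherianness would be needed, or simply note that your argument for $(2)\Rightarrow(1)$ does not actually require it.
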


\begin{proof}
$(1) \Rightarrow (2)$ Let $I \in \Lambda(A)$. We write $I = \overline{(a)}$ for some $a \in W(A)$. For each $y, z \in I$,  we have $y, z \in a \overline{A}$, because $\overline{(a)} = a\overline{A} \cap A$. This shows $y/a, z/a \in \overline{A}$, and we get $yz \in a A$ as $A$ is weakly Arf. Write $yz = a w$ for some $w \in A$. Then, since 
$$
\frac{w}{a} = \frac{y}{a}\cdot \frac{z}{a} \in \overline{A},
$$
we have $w \in a \overline{A} \cap A = \overline{(a)} = I$. Therefore, $I^2 = a I$, as desired.

$(2) \Rightarrow (1)$ Suppose that $A$ is Noetherian. Let $x, y, z \in A$ such that $x \in W(A)$ and $y/x, z/x \in \overline{A}$. Set $I = \overline{(x)}$. Then, by our hypothesis,  we can choose $a \in I$ such that $I^2 = aI$, that is, $A^I = I:I$. Remember that the reduction number does not depend on the choice of principal reductions of $I$. Thus, since $A$ is Noetherian, the ideal $(x)$ is a reduction of $I$; hence $I^2 =xI$. As $y/x, z/x \in \overline{A}$, we have $y, z \in x \overline{A} \cap A = \overline{(x)} = I$. Therefore, $yz \in I^2 = xI \subseteq xA$, which completes the proof.
\end{proof}

We  now consider the question of how the weakly Arf property is inherited under localizations. For a commutative ring $A$, we set 
$$
X_1(A) = \{P \in \Spec A \mid \depth A_P \le 1\}.
$$

\begin{lem}\label{2.6}
Let $A$ be a Noetherian ring. Then one has the equality
$$
\bigcap_{P \in X_1(A)} A_P = A \quad \text{in} \ \ \rmQ(A).
$$
\end{lem}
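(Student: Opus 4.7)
The plan is to prove the nontrivial inclusion $\bigcap_{P\in X_1(A)} A_P\subseteq A$ by a conductor-ideal argument, whose key technical input is that every associated prime of $A/J$, for an open ideal $J$, forces $\depth A_P\le 1$. The reverse containment is immediate, since $A\subseteq A_P$ for every prime $P$.

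Fix $\alpha\in\bigcap_{P\in X_1(A)} A_P\subseteq\rmQ(A)$ and consider its conductor $J=(A:_A\alpha)=\{a\in A\mid a\alpha\in A\}$. Writing $\alpha=b/s$ with $s\in W(A)$ yields $s\in J$, so $J\in\calF_A$. A direct check shows that for any prime $P$, the membership $\alpha\in A_P$ is equivalent to $J\not\subseteq P$. Hence it suffices to prove $J=A$; assume toward a contradiction that $J\subsetneq A$ and pick $P\in\Ass_A(A/J)$ of the form $P=(J:_A c)$ with $c\in A\setminus J$. Replace $\alpha$ by $\beta=c\alpha$: then $(A:_A\beta)=(J:_A c)=P$, $\beta\notin A$ (because $c\notin J$), and $\beta\in A_Q$ for every $Q\in X_1(A)$ (since $\alpha\in A_Q$ and $c\in A$). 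Writing $\beta=b'/s'$ with $s'\in W(A)$, we have $s'\in(A:_A\beta)=P$.

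The main step is to verify $\depth A_P=1$, so that $P\in X_1(A)$. Working in $A_P$: the identity $(A:_A\beta)=P$ says no $t\in A\setminus P$ satisfies $t\beta\in A$, hence $\beta\notin A_P$, so $b'\notin s'A_P$ and therefore $\overline{b'}\ne 0$ in $A_P/s'A_P$. On the other hand, $PA_P\cdot\beta\subseteq A_P$ translates to $PA_P\cdot b'\subseteq s'A_P$, so $\overline{b'}$ is annihilated by $PA_P$. Consequently $PA_P\in\Ass_{A_P}(A_P/s'A_P)$, which gives $\depth_{A_P}(A_P/s'A_P)=0$; combined with the $A_P$-regularity of $s'/1$, we obtain $\depth A_P=1$.

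With $P\in X_1(A)$ now in hand, the hypothesis applied to $\beta$ forces $\beta\in A_P$, equivalently $(A:_A\beta)=P\not\subseteq P$, a contradiction. Therefore $J=A$ and $\alpha\in A$. The only delicate point is the depth computation, and the preparatory passage $\alpha\rightsquigarrow c\alpha$ is what keeps that step clean: it replaces the possibly complicated conductor $J$ by the prime $P$ itself, so that a single non-zerodivisor $s'\in P$ suffices to witness $PA_P$ as an associated prime of $A_P/s'A_P$.
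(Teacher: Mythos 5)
Your proof is correct and follows essentially the same route as the paper's: both work with the conductor ideal $J=(A:_A\alpha)$ (which is $(a):_A b$ in the paper's notation), pick $P\in\Ass_A(A/J)$, show $\depth A_P=1$ so that $P\in X_1(A)$, and derive a contradiction from $\alpha\in A_P$ together with $J\subseteq P$. The only cosmetic difference is that you normalize by replacing $\alpha$ with $c\alpha$ and then exhibit a socle element of $A_P/s'A_P$ directly, whereas the paper gets the same depth conclusion by observing that the homothety $\widehat b:A/J\to A/(a)$ is injective, hence $P\in\Ass_A(A/(a))$.
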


\begin{proof}
In the ring $\rmQ(A)$, we consider $A$ as a subring of $A_P$ for every $P \in X_1(A)$; hence $A \subseteq \bigcap_{P \in X_1(A)} A_P$. Conversely, let $x \in \bigcap_{P \in X_1(A)} A_P$ and assume that $x \not\in A$. We write $x = b/a$, where $a, b \in A$ such that $a \in W(A)$. Then, since $x \not\in A$, we see that  $(a):_A b \subsetneq A$. Let $P \in \Ass_A(A/[(a):_A b])$. We then have $P \in \Ass_A(A/(a))$, because the homothety map $\widehat{b} : A/[(a):_A b] \to A/(a)$ is injective. Hence $\depth_{A_P} (A_P/aA_P) = 0$, so that $P \in X_1(A)$. Notice that  $\depth A_P = 1$, because $a \in A$ is a non-zerodivisor on $A$. In particular, $x \in A_P$. Choose $c \in A$ and $s \in A\setminus P$ such that 
$$
x = \frac{b}{a}= \frac{c}{s} \quad \text{in} \ \ A_P.
$$
Thus, there exists $u \in A\setminus P$ such that $u(bs - ac) = 0$, namely, $us \in (a):_A b \subseteq P$. Consequently, because $u \notin P$, we conclude that $s \in P$, which gives a contradiction. Hence  $\bigcap_{P \in X_1(A)} A_P = A$, as claimed.
\end{proof}

Recall that, for a Noetherian ring $A$ and $n \in \Bbb Z$, we say that {\it $A$ satisfies Serre's $(S_n)$ condition}, if $\depth A_P \ge \min \{n, \dim A_P\}$ for all $P \in \Spec A$. 
We then have the following. 

\begin{thm}\label{2.7}
Let $A$ be a Noetherian ring. Consider the following conditions:
\begin{enumerate}[$(1)$]
\item $A$ is a weakly Arf ring.
\item $S^{-1}A$ is a weakly Arf ring for every multiplicatively closed subset $S$ of $A$.
\item $A_P$ is a weakly Arf ring for every $P \in X_1(A)$.
\end{enumerate}
Then the implications $(2) \Rightarrow (3) \Rightarrow (1)$ hold. If $A$ satisfies $(S_1)$, then the implication $(1) \Rightarrow (2)$ holds. 
\end{thm}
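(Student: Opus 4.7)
The plan is to establish the implications in the stated order. The implication $(2) \Rightarrow (3)$ is immediate by taking $S = A \setminus P$ for $P \in X_1(A)$. For $(3) \Rightarrow (1)$, given $x \in W(A)$ and $y, z \in A$ with $y/x, z/x \in \overline{A}$, I would observe that $x$ remains a non-zerodivisor in each $A_P$ and that integrality is preserved under localization; the weakly Arf hypothesis on $A_P$ for each $P \in X_1(A)$ then yields $yz/x \in A_P$, and Lemma \ref{2.6} gives $yz/x \in \bigcap_{P \in X_1(A)} A_P = A$.

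For the implication $(1) \Rightarrow (2)$ under $(S_1)$, I would apply the characterization in Theorem \ref{2.4} to $S^{-1}A$: it suffices to show that every $J \in \Lambda(S^{-1}A)$ admits a principal reduction $J^2 = a' J$. Writing $J = \overline{(x/1)}$ with $x \in A$ and $x/1 \in W(S^{-1}A)$ (after absorbing a unit denominator), and using that the integral closure of a principal ideal commutes with localization in the Noetherian setting, one has $J = \overline{(x)}\, S^{-1}A$ where $\overline{(x)}$ is taken in $A$. The crux is then the following claim: under $(S_1)$, there exists $x^* \in W(A)$ with $x^*/1 = x/1$ in $S^{-1}A$.

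To prove the claim, I would write $\Ass A = \Min A = \{P_1, \ldots, P_r\}$ (using that $(S_1)$ excludes embedded primes) and let $(0) = Q_1 \cap \cdots \cap Q_r$ be a primary decomposition with $Q_i$ being $P_i$-primary. Partition the indices into $K = \{i : P_i \cap S = \emptyset\}$ and $L = \{1, \ldots, r\} \setminus K$, and set $I = \{i \in L : x \in P_i\}$; the hypothesis $x/1 \in W(S^{-1}A)$ ensures $x \notin P_i$ for $i \in K$. I would apply prime avoidance to select $\epsilon$ in the ideal $\bigl(\bigcap_{i \in K} Q_i\bigr) \cap \bigl(\bigcap_{i \in L \setminus I} P_i\bigr)$ that lies outside each $P_i$ with $i \in I$; this is possible because the minimality of the $P_i$ prevents this intersection from being contained in any $P_i$ with $i \in I$. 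A direct case check on each $P_j$ then verifies $x^* := x + \epsilon \in W(A)$. Meanwhile, $\epsilon \in \bigcap_{i \in K} Q_i$ is annihilated by $s^N$ for large $N$, where $s = \prod_{i \in L} s_i$ with $s_i \in P_i \cap S$, so $\epsilon/1 = 0$ in $S^{-1}A$.

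Given the claim, Theorem \ref{2.4} applied to $A$ yields $\overline{(x^*)}^2 = x^* \overline{(x^*)}$, which localizes (using $(x^*)S^{-1}A = (x)S^{-1}A$) to $J^2 = (x/1)\, J$; the converse direction of Theorem \ref{2.4} applied to the Noetherian ring $S^{-1}A$ then concludes that $S^{-1}A$ is weakly Arf. I expect the prime avoidance construction to be the main obstacle, since the $(S_1)$ hypothesis is used essentially there: it excludes embedded primes, allowing the clean separation of $\Min A$ into primes meeting $S$ and primes missing $S$ that makes the construction of $\epsilon$ possible.
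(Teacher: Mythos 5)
Your proof is correct. The implications $(2)\Rightarrow(3)$ and $(3)\Rightarrow(1)$ match the paper's treatment. For $(1)\Rightarrow(2)$ the paper works directly with the definition of weakly Arf: it takes $x,y,z\in S^{-1}A$ with $y/x,z/x\in \overline{S^{-1}A}=S^{-1}\overline{A}$, represents them with numerators in $A$, finds $s,t\in S$ with $sb/a, tc/a\in\overline{A}$, applies the weakly Arf property of $A$ to get $stbc\in aA$, and divides by the unit $st/1$. You instead go through the characterization in Theorem \ref{2.4}, showing every $J\in\Lambda(S^{-1}A)$ is stable, which requires the additional (standard) fact that integral closure of ideals commutes with localization. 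Both routes hinge on the same essential point, namely that under $(S_1)$ any $x/1\in W(S^{-1}A)$ with $x\in A$ can be replaced by some $x^*/1$ with $x^*\in W(A)$. The paper produces $x^*=a+k$ by prime avoidance using the kernel $K=\ker(A\to S^{-1}A)$ and the observation that $(a)+K\not\subseteq P_i$ for each associated (= minimal) prime; you unwind $K$ via a primary decomposition $(0)=\bigcap Q_i$ and carry out the prime avoidance with the explicit ideal $\bigl(\bigcap_{i\in K}Q_i\bigr)\cap\bigl(\bigcap_{i\in L\setminus I}P_i\bigr)$, verifying $\epsilon/1=0$ by an explicit power of $s=\prod_{i\in L}s_i$. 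Your version makes the role of $(S_1)$ and the primary decomposition more visible, at the cost of a more elaborate case analysis; the paper's version is shorter because it never needs to pass through Theorem \ref{2.4} or the localization of integral closures. Both are sound.
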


\begin{proof}
$(2) \Rightarrow (3)$ This is obvious.

$(3) \Rightarrow (1)$ Let $x, y, z \in A$ such that $x \in W(A)$ and $y/x, z/x \in \overline{A}$. We will show that $yz \in xA$. Indeed, for each $P \in X_1(A)$, we have $yz \in xA_P$, since $A_P$ is weakly Arf and $y/x, z/x \in (\overline{A})_P = \overline{A_P}$. Therefore, by Lemma \ref{2.6}, $$yz \in \bigcap_{P \in X_1(A)}x\cdot A_P = x \cdot\bigcap_{P \in X_1(A)}A_P =xA.$$ Hence $A$ is a weakly Arf ring. 

$(1) \Rightarrow (2)$ The proof is based on \cite[Lemma 4.10, part (ii), page 679]{L}. We assume that $A$ satisfies $(S_1)$. Let $x,y,z \in S^{-1}A$ with $x \in W(S^{-1}A)$ such that $y/x, z/x \in \overline{S^{-1}A}=S^{-1} \overline{A}$. Without loss of generality, we may assume that 
$$
x=\frac{a}{1},  \ \  y=\frac{b}{1}, \  \ z=\frac{c}{1}
$$ 
where $a, b, c \in A$ and $a \in W(A)$. In fact, since $x$ is regular in $S^{-1}A$, we may choose $a$ to be a regular  element of $A$. To see this, let $K$ be the kernel of the map $A\to S^{-1}A$. Let $P_1,\ldots, P_n$ be all the associated prime ideals of $A$; by our assumptions, they are all minimal prime ideals of $A$. Note that  $(a) + K \not \subseteq P_i$ for every $1 \le i \le n$. It then follows that  there exists $k \in K$ such that $a+k$ does not belong to any of the ideals $P_i$, i.e., $a+k $ is a regular element of $A$. Since  $x=a/1=(a+k)/1$ in $S^{-1}A$, we may replace $a$ by $a+k$ and assume that $a \in W(A)$. Now, because 
$$
\frac{y}{x} = \frac{b}{a},  \ \ \frac{z}{x} = \frac{c}{a} \ \in S^{-1}\overline{A}, 
$$
there exist $s, t \in S$ such that $sy/x=sb/a \in \overline{A}$ and $tz/x=tc/a \in \overline{A}$. Since $A$ is a weakly Arf ring and $a \in W(A)$, it follows that ${(st)bc} \in aA$, and therefore 
$$
\frac{st}{1}\cdot yz = \frac{st}{1}\cdot \frac{bc}{1} \in \frac{a}{1} {\cdot}(S^{-1}A) = x(S^{-1}A)
$$
which yields $yz \in x(S^{-1}A)$. Hence $S^{-1}A$ is a weakly Arf ring.
\end{proof}

Next we give an example that illustrates Theorem \ref{2.7}. Recall, for a Noetherian local ring $A$, we denote by $\rme(A)$ the multiplicity of $A$. 

\begin{ex}\label{2.8b}
Let $B = k[s,t]$ be the polynomial ring over a field $k$, and let $A = k[s, t^2, t^{2 \ell + 1}]~(\ell \ge1)$. Then $A$ is a two-dimensional Cohen-Macaulay ring  and $A$ is a weakly Arf ring.
\end{ex}


\begin{proof}
Since $B$ is a module-finite extension over the ring $A$, we have $\dim A = 2$. In addition, $A$ is a Cohen-Macaulay ring, since $A \cong k[X, Y, Z]/(X^{2\ell + 1} - Y^2)$. To show that $A$ is weakly Arf, by Theorem \ref{2.7}, it is enough to show that $A_P$ is weakly Arf for every $P \in X_1(A)$. In fact, for each $P \in X_1(A)$, the definition ensures that $\depth A_P \le 1$. We may assume that $\depth A_P = 1$, as the local ring $A_P$ with $\depth A_P=0$ is always weakly Arf.  Note that $A_P$ is a one-dimensional Cohen-Macaulay local ring with $\rme(A_P) \le 2$ and $B_P = \overline{A_P}$ is a DVR. In particular, $A_P$ is an Arf ring by \cite[Proposition 2.8]{CCGT}. Hence, $A_P$ is a weakly Arf ring, as well.
\end{proof}



The following corollary is a direct consequence of Theorem \ref{2.7}; see also \cite[Proposition 2.8]{CCGT}.

\begin{cor}\label{2.9}
Let $A$ be a Noetherian ring. Suppose that $A$ satisfies $(S_2)$ and $\rme(A_P) \le 2$ for every $P \in \Spec A$ with $\height_A P \le 1$. Then $A$ is a weakly Arf ring.
\end{cor}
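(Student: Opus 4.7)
The plan is to reduce to the local case via Theorem \ref{2.7} and then handle each local ring by appealing to the structural consequence of $(S_2)$ together with the multiplicity hypothesis. More precisely, the implication $(3) \Rightarrow (1)$ in Theorem \ref{2.7} says that it suffices to verify $A_P$ is weakly Arf for every $P \in X_1(A)$, that is, for every $P$ with $\depth A_P \le 1$.

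Fix such a $P$. First I would use the Serre $(S_2)$ hypothesis to control $\dim A_P$: the inequality $\depth A_P \ge \min\{2, \dim A_P\}$ combined with $\depth A_P \le 1$ forces $\dim A_P \le 1$, and in the case $\dim A_P = 1$ it forces $\depth A_P = 1$. So $A_P$ falls into exactly one of the following two cases.

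If $\dim A_P = 0$, then $A_P$ is Artinian local, so $\rmQ(A_P) = A_P$, and as noted right after Definition \ref{2.3} such a ring is automatically weakly Arf. If $\dim A_P = 1$, then $A_P$ is a one-dimensional Cohen-Macaulay local ring with $\height_A P \le 1$, so the hypothesis gives $\rme(A_P) \le 2$. By \cite[Proposition 2.8]{CCGT}, a one-dimensional Cohen-Macaulay local ring of multiplicity at most two is Arf, and in particular weakly Arf.

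In both cases $A_P$ is weakly Arf, so by Theorem \ref{2.7} the ring $A$ itself is weakly Arf. The only moving parts are the reduction to $X_1(A)$ and the $(S_2)$-based dimension squeeze; there is no real obstacle, since the hard work has already been done in Theorem \ref{2.7} and in the multiplicity-two result from \cite{CCGT}.
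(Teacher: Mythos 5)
Your proof is correct and follows essentially the same route as the paper's: reduce via Theorem \ref{2.7} to primes $P\in X_1(A)$, use $(S_2)$ to squeeze $\dim A_P\le 1$ (hence Cohen--Macaulay when $\dim A_P=1$), dispose of the zero-dimensional case trivially, and invoke \cite[Proposition 2.8]{CCGT} in the one-dimensional case. The paper is just slightly terser, writing ``we may assume $\dim A_P = 1$'' where you spell out the $\dim A_P=0$ case explicitly.
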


\begin{proof}
For every $P \in X_1(A)$, $A_P$ is Cohen-Macaulay, as $A$ satisfies $(S_2)$. We may assume $\dim A_P = 1$. The assumption shows that $A_P$ is an Arf ring by \cite[Proposition 2.8]{CCGT}. Therefore, $A$ is a weakly Arf ring.
\end{proof}

Closing this section, let us discuss the weakly Arf property in terms of the algebra $A^{I}$ of the ideal $I \in \Lambda(A)$. To do this, we need some auxiliaries. We will revisit the algebras $A^I$ in Section \ref{sec9}.

\begin{lem}\label{2.8a}
Suppose that the ideal $(a)$ is integrally closed for every $a \in W(A)$. Then $A$ is integrally closed. 
\end{lem}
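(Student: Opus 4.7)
My plan is to argue directly by taking an arbitrary element of $\overline{A}$ and showing it already lies in $A$, using the standard identification of the integral closure of a principal ideal.

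First I would recall the following well-known identification: for any $a \in W(A)$, the integral closure of the principal ideal $(a)$ in $A$ is
\[
\overline{(a)} \;=\; a\overline{A} \cap A.
\]
Indeed, if $y \in A$ satisfies $y^n + c_1 y^{n-1} + \cdots + c_n = 0$ with $c_i \in (a)^i = (a^i)$, then writing $c_i = a^i d_i$ and dividing by $a^n$ shows that $y/a \in \overline{A}$, so $y \in a\overline{A}$. Conversely, if $y = a\xi$ with $\xi \in \overline{A}$ satisfying $\xi^n + a_1\xi^{n-1} + \cdots + a_n = 0$ ($a_i \in A$), multiplying through by $a^n$ produces a monic integral relation for $y$ with $i$-th coefficient $a^i a_i \in (a)^i$.

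With this in hand, the proof is immediate. Take $x \in \overline{A} \subseteq \rmQ(A)$ and write $x = b/a$ with $a \in W(A)$ and $b \in A$. Then $b = ax \in a\overline{A} \cap A = \overline{(a)}$. The hypothesis gives $\overline{(a)} = (a)$, so $b = ac$ for some $c \in A$, whence $x = b/a = c \in A$. Therefore $\overline{A} \subseteq A$, and $A$ is integrally closed.

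The only mildly nontrivial step is the identification $\overline{(a)} = a\overline{A} \cap A$, and that step is entirely standard — no obstacle beyond writing it out. Everything else is formal manipulation in $\rmQ(A)$.
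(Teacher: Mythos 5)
Your proof is correct and follows exactly the same route as the paper: take $x \in \overline{A}$, write $x = b/a$ with $a \in W(A)$, observe $b \in a\overline{A}\cap A = \overline{(a)} = (a)$, and conclude $x \in A$. The only difference is that you also spell out the standard identification $\overline{(a)} = a\overline{A}\cap A$, which the paper uses without comment.
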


\begin{proof}
Let $x \in \overline{A}$ and write $x = b/a$ with $ b \in A$ and $a \in W(A)$. Since $b \in a \overline{A} \cap A = \overline{(a)} = (a)$, we have $x \in A$. Hence $A$ is integrally closed. 
\end{proof}

\begin{prop}\label{2.9}
Let $A$ be a Noetherian ring and assume that $A$ is a weakly Arf ring. For each $a \in W(A)$, we set $I = \overline{(a)}$ and $B = A^I$. Then the following assertions hold.
\begin{enumerate}[$(1)$]
\item $B=A^I$ is Noetherian and weakly Arf. 
\item $B=A$ if and only if $I = (a)$. 
\end{enumerate}
\end{prop}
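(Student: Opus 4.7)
The plan is to exploit the concrete fractional-ideal description $B = I/a$ inside $\rmQ(A)$ and transfer the weakly Arf condition through it.

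First I would apply Theorem \ref{2.4} to obtain some $b \in I$ with $I^2 = bI$. Since $A$ is Noetherian and $(a) \subseteq I = \overline{(a)}$, the ideal $(a)$ is automatically a reduction of $I$; combined with the invariance of the reduction number under the choice of principal reduction (recorded just above Definition \ref{2.2}), this forces $I^2 = aI$. Consequently $B = A^I = I/a$ inside $\rmQ(A)$, so $B$ is a finitely generated $A$-module, hence Noetherian. This also immediately gives (2): $B = A$ is equivalent to $I/a \subseteq A$, i.e.\ $I \subseteq (a)$, i.e.\ $I = (a)$; the reverse implication is trivial.

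For the weakly Arf property of $B$ in (1), I would take $x', y', z' \in B$ with $x' \in W(B)$ and $y'/x', z'/x' \in \overline{B}$, and note that $\overline{B} = \overline{A}$ since $A \subseteq B \subseteq \overline{A}$. Writing $x' = u/a$, $y' = v/a$, $z' = w/a$ with $u, v, w \in I$, the chain $a \in W(A) \subseteq W(B)$ gives $u = ax' \in W(B) \cap A$, and the inclusion $A \hookrightarrow B$ then forces $u \in W(A)$. The fractions $v/u = y'/x'$ and $w/u = z'/x'$ lie in $\overline{A}$, so the weakly Arf property of $A$ provides $\alpha \in A$ with $vw = u\alpha$. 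Then $y'z'/x' = \alpha/a$, and since $y' \in B \subseteq \overline{A}$ and $z'/x' \in \overline{A}$, the product $y'z'/x' = \alpha/a$ lies in $\overline{A}$; hence $\alpha \in a\overline{A} \cap A = \overline{(a)} = I$, giving $y'z'/x' = \alpha/a \in I/a = B$ as required.

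The main point requiring care will be the clean isolation of the equality $I^2 = aI$ from Theorem \ref{2.4} (which only supplies $I^2 = bI$ for an unspecified $b \in I$) via the reduction-number invariance, together with the careful passage $x' \in W(B) \Rightarrow u \in W(A)$ carried out inside $\rmQ(A)$. Once these two technical points are in place, the verification that $B$ is weakly Arf becomes essentially a transcription of the weakly Arf property of $A$ through the isomorphism of fractional ideals $B = I/a$.
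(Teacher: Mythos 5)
Your proof is correct and follows the same strategy as the paper's: establish $I^2 = aI$ from Theorem \ref{2.4} together with the invariance of the reduction number, identify $B = A^I = I/a$ as a fractional ideal, and transfer the weakly Arf condition through this description. The one genuine (if small) divergence is the final step. The paper sets $J = \overline{(u)}$, invokes the weakly Arf property a second time to conclude $J^2 = uJ$, and then pulls an element $i \in J \subseteq I$ out of $vw = ui$. You instead verify directly that the element $\alpha = vw/u \in A$ already lies in $I = a\overline{A}\cap A$, by refactoring $\alpha/a = (v/a)(w/u) = y'\cdot(z'/x') \in \overline{A}\cdot\overline{A}$. Your route is marginally leaner, using the weakly Arf hypothesis on $A$ exactly once and avoiding the auxiliary ideal $J$; the paper's route generalizes more naturally to the argument in Proposition \ref{9.1}, which it foreshadows. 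Both are valid, and the identification $B = I/a$, the verification $W(A)\subseteq W(B)$, and the passage $u\in W(B)\cap A\Rightarrow u\in W(A)$ are all correctly handled.
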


\begin{proof}
First notice that $B$ is Noetherian since $B$ is a module-finite extension over $A$. 
As $I \in \Lambda(A)$, we have $I^2 = a I$ by Theorem \ref{2.4}. Hence $B = A^I = I:I$. Moreover, $B = A\left[\frac{I}{a}\right] = \frac{I}{a}$ in $\rmQ(A)$. Thus we get $B = A$ if and only if $I = (a)$ which proves the assertion $(2)$. Although the assertion $(1)$ immediately follows from Proposition \ref{9.1}, we now give a brief proof of this statement here. Let $\alpha, \beta, \gamma \in B$ such that $\alpha \in W(B)$ and $\beta/\alpha, \gamma/\alpha \in \overline{B}$. Since $B = \frac{I}{a}$, we can write 
$$
\alpha = \frac{x}{a}, \ \ \beta = \frac{y}{a}, \ \  \gamma = \frac{z}{a}
$$ 
together with $x, y, z \in I$. Note that $x \in W(A)$. Since $\beta/\alpha = y/x, \gamma/\alpha=z/x$ in $\overline{B} = \overline{A}$ and $A$ is weakly Arf, we get $yz \in xA$. We set $J = \overline{(x)}$. Then $y, z \in  x\overline{A} \cap A = J$ and $J^2 = xJ$, because $J \in \Lambda(A)$ and $\red_{(x)}(J)$ does not depend on the choice of reductions $(x)$ of $J$. Hence there exists $i \in J$ such that $yz = x i$. Therefore
$$
\frac{\beta\gamma}{\alpha} = \frac{yz}{ax} = \frac{i}{a} \in \frac{J}{a} \subseteq \frac{\overline{I}}{a} = \frac{I}{a} = B
$$
which shows $B$ is a weakly Arf ring.  
\end{proof}

We are now ready to prove the last result of this section. For an $A$-module $M$, we denote by $\ell_A(M)$ the length of $M$ as an $A$-module. 

\begin{thm}\label{2.10}
Let $A$ be a Noetherian ring. Suppose that $\overline{A}$ is a finitely generated $A$-module and $0 < \ell_A(\overline{A}/A) < \infty$.  If $A$ is a weakly Arf ring, then
there exist subrings $A_0, A_1, \ldots, A_{\ell}$ of $\rmQ(A)$ satisfying the  conditions
\begin{enumerate}[$(1)$]
\item $A=A_0 \subsetneq A_1 \subsetneq \cdots \subsetneq A_{\ell} = \overline{A}$,
\item $A_i = {(A_{i-1})}^{I_{i-1}}$, and $I_{i-1} = \overline{a_{i-1}A_{i-1}}$ for some $a_{i-1} \in W(A_{i-1})$.
\end{enumerate}
\end{thm}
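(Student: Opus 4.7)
The plan is to prove the theorem by induction on $n := \ell_A(\overline{A}/A) \ge 1$, producing one new subring at each stage via the algebra $B^{\overline{(a)}}$ construction controlled by Proposition \ref{2.9}.

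For the first step, since $A \subsetneq \overline{A}$, the contrapositive of Lemma \ref{2.8a} furnishes some $a_0 \in W(A)$ with $(a_0) \subsetneq \overline{(a_0)}$. Setting $I_0 := \overline{(a_0)}$ and $A_1 := A^{I_0}$, Proposition \ref{2.9}(2) gives $A \subsetneq A_1$, while Proposition \ref{2.9}(1) guarantees that $A_1$ is a Noetherian weakly Arf ring. Because $A_1 \subseteq \overline{A}$ and $\overline{A}$ is integral over $A$ (hence over $A_1$), one has $\overline{A_1} = \overline{A}$, which remains a finitely generated $A_1$-module.

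To iterate, I would apply the induction hypothesis to $A_1$. The key point is the strict decrease of the measure: every $A_1$-submodule of $\overline{A_1}/A_1 = \overline{A}/A_1$ is \emph{a fortiori} an $A$-submodule, so
\[
\ell_{A_1}(\overline{A_1}/A_1) \;\le\; \ell_A(\overline{A}/A_1) \;=\; n - \ell_A(A_1/A) \;\le\; n-1.
\]
If this quantity equals zero then $A_1 = \overline{A}$, and the two-term chain $A = A_0 \subsetneq A_1 = \overline{A}$ already satisfies conditions $(1)$ and $(2)$. Otherwise the induction hypothesis, applied to the Noetherian weakly Arf ring $A_1$ with $0 < \ell_{A_1}(\overline{A_1}/A_1) < n$, yields a chain $A_1 = B_0 \subsetneq B_1 \subsetneq \cdots \subsetneq B_m = \overline{A_1} = \overline{A}$ with $B_i = (B_{i-1})^{\overline{b_{i-1} B_{i-1}}}$ for some $b_{i-1} \in W(B_{i-1})$; prepending $A$ and relabelling produces the required chain for $A$.

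The argument is essentially bookkeeping, with no major obstacle. The one subtle point to record is that the induction measure depends on the base ring, so one cannot realize the chain by refining a single composition series of $\overline{A}/A$ over $A$; the measure must instead be reset to $\ell_{A_i}(\overline{A_i}/A_i)$ after each blow-up, and the displayed inequality $\ell_{A_1} \le \ell_A$ for a common $A_1$-module is precisely what guarantees termination.
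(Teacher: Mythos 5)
Your proof is correct and follows essentially the same route as the paper: apply (the contrapositive of) Lemma \ref{2.8a} to find $a_0 \in W(A)$ with $(a_0) \subsetneq \overline{(a_0)}$, pass to $A_1 = A^{\overline{(a_0)}}$ via Proposition \ref{2.9}, observe $\ell_{A_1}(\overline{A_1}/A_1) \le \ell_A(\overline{A}/A_1) < \ell_A(\overline{A}/A)$, and iterate. The only difference is that you have spelled out the induction and the length-drop step in more detail than the paper does, which is a minor expository improvement rather than a different argument.
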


\begin{proof}
Let $n = \ell_A(\overline{A}/A) > 0$. Since $A$ is not integrally closed, by Lemma \ref{2.8a} we can choose $a \in W(A)$ such that the ideal $(a)$ is not integrally closed. We set $I=\overline{(a)}$. Proposition \ref{2.9} guarantees that $A^I = I:I$ is Noetherian, weakly Arf, and $A \subsetneq A^I \subseteq \overline{A}$. Notice that $\overline{A_1} = \overline{A}$ is a module-finite extension over $A_1$ and $\ell_{A_1}(\overline{A_1}/A_1) \le \ell_A(\overline{A}/A_1) < n$. Thus, by repeating this procedure, we get the required subrings $A_0, A_1, \ldots, A_{\ell}$ of $\rmQ(A)$. 
\end{proof}


\section{Further properties of weakly Arf rings}\label{sec3}

The purpose of this section is to develop the theory of weakly Arf rings. 
Let us begin with the following, which plays an important role in our discussion.

\begin{prop}\label{3.1}
Let $\varphi : A \to B$ be a homomorphism of rings. Suppose that $aB \cap A = (a)$ and $\varphi(a) \in W(B)$ for every $a \in W(A)$. If $B$ is a weakly Arf ring, then so is $A$.  
\end{prop}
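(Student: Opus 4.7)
The plan is to transport the data along $\varphi$, invoke the weakly Arf property of $B$, and then pull the conclusion back via the cyclic purity hypothesis $aB\cap A=(a)$.

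First I would fix $x,y,z\in A$ with $x\in W(A)$ and $y/x,\,z/x\in\overline{A}\subseteq\rmQ(A)$ integral over $A$. Clearing denominators in an integrality equation for $y/x$ yields an identity $y^n+a_1xy^{n-1}+\cdots+a_nx^n=0$ inside $A$ itself, and similarly one gets such an identity for $z/x$. Applying $\varphi$ produces the same-shape equations in $B$. Because $\varphi(x)\in W(B)$ by hypothesis, I can pass to $\rmQ(B)$ and divide by $\varphi(x)^n$, obtaining that $\varphi(y)/\varphi(x)$ and $\varphi(z)/\varphi(x)$ are integral over $\varphi(A)$, hence over $B$, so they lie in $\overline{B}$.

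Since $B$ is weakly Arf, I then conclude $\varphi(y)\varphi(z)/\varphi(x)\in B$, that is, $\varphi(yz)\in\varphi(x)B$. Reading the cyclic purity hypothesis as $\varphi^{-1}(\varphi(x)B)=xA$ for $x\in W(A)$, this forces $yz\in(x)$, whence $yz/x\in A$. That is exactly the weakly Arf condition for $A$.

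The only delicate point is the transport of integrality from $\rmQ(A)$ to $\rmQ(B)$: since $\varphi$ need not be injective, there is no a priori induced map $\rmQ(A)\to\rmQ(B)$ to quote, and I must clear denominators by hand. The hypothesis $\varphi(a)\in W(B)$ for $a\in W(A)$ is exactly what allows the denominator $\varphi(x)^n$ to be reintroduced after applying $\varphi$, and the cyclic purity hypothesis is exactly what lets the membership $\varphi(yz)\in\varphi(x)B$ descend to a membership in $(x)\subseteq A$. Apart from this bookkeeping, the argument is a direct chase with no substantive obstacle.
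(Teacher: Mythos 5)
Your proof is correct and follows essentially the same line as the paper's: transport the data to $B$ via $\varphi$, apply the weakly Arf property there, and pull back using $aB\cap A=(a)$. The one remark worth correcting is your claim that ``since $\varphi$ need not be injective, there is no a priori induced map $\rmQ(A)\to\rmQ(B)$'': in fact the map does exist regardless of injectivity. Since $\rmQ(A)=W(A)^{-1}A$ and $\varphi(W(A))\subseteq W(B)$ consists of units of $\rmQ(B)$, the universal property of localization gives a unique extension $\rmQ(A)\to\rmQ(B)$, which is precisely what the paper invokes; your hand-clearing of denominators is just this construction unwound, so it is not wrong, merely an unnecessary detour.
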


\begin{proof}
Let $x, y, z \in A$ such that $x \in W(A)$ and $y/x, z/x \in \overline{A}$. The map $\varphi : A \to B$ naturally extends to a map $\varphi : \rmQ(A) \to \rmQ(B)$ of the total rings of fractions, because $\varphi(a) \in W(B)$ for every $a \in W(A)$. Since $B$ is weakly Arf, we get $\varphi(x) \in W(B)$, and $\varphi(y/x), \varphi(z/x) \in \overline{B}$. Thus we conclude that $\varphi(yz) = \varphi(y) \varphi(z) \in \varphi(x) B = xB$. 
This implies $yz \in xB \cap A = (x)$. Therefore $A$ is a weakly Arf ring.
\end{proof}

We summarize some consequences of Proposition \ref{3.1}.

\begin{cor}\label{3.2} The following assertions hold.
\begin{enumerate}[$(1)$]
\item Let $A$ be an integral domain and let $R \subseteq A$ be a subring of $A$ such that $R$ is a direct summand of $A$ as an $R$-module. If $A$ is a weakly Arf ring, then so is $R$. 
\item Let $A$ be an integral domain and let $G$ be a finite subgroup of $\operatorname{Aut}A$. Suppose that the order of $G$ is invertible in $A$. If $A$ is a weakly Arf ring, then so is $R=A^G$.
\item Let $\varphi: R \to A$ be a faithfully flat homomorphism of rings. If $A$ is a weakly Arf ring, then so is $R$. 
\item Let $n >0$ and let $A=R[X_1, X_2, \ldots, X_n]$ be the polynomial ring over a ring $R$. If $A$ is a weakly Arf ring, then so is $R$. 
\item Let $M$ be a torsion-free module over a commutative ring $R$. If the idealization $A = R \ltimes M$ is a weakly Arf ring, then so is $R$. 
\end{enumerate}
\end{cor}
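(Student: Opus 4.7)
The plan is to derive all five parts from Proposition \ref{3.1}, applied to a canonical ring homomorphism $\varphi \colon R \to A$ (writing $R, A$ for the rings of the Corollary, so that they play the roles of $A, B$ in the Proposition). In every case, the work reduces to verifying the two hypotheses of that proposition: that $aA \cap R = aR$ for every $a \in W(R)$, and that $\varphi(a) \in W(A)$ for every such $a$. Once both are in hand, the conclusion that $R$ is weakly Arf is immediate from Proposition \ref{3.1}.

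For part $(1)$, the direct summand hypothesis furnishes an $R$-linear retraction $\pi \colon A \to R$ with $\pi|_R = \mathrm{id}_R$; applying $\pi$ to any element of $aA \cap R$ immediately yields membership in $aR$. The non-zerodivisor condition is automatic: being a subring of the integral domain $A$, the ring $R$ is itself a domain, so $a \neq 0$ in $R$ forces $a \neq 0$ in $A$, hence $\varphi(a) \in W(A)$. Part $(2)$ then reduces to $(1)$ via the Reynolds operator $\rho(x) = |G|^{-1}\sum_{g \in G} g(x)$, which is well-defined because $|G|$ is invertible in $A$, is $R$-linear for $R = A^G$, and satisfies $\rho|_R = \mathrm{id}_R$, exhibiting $R$ as a direct summand of $A$.

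For $(3)$, faithful flatness of $\varphi$ supplies both hypotheses: the standard fact that $R/I \hookrightarrow A/IA$ for every ideal $I$ of $R$ yields $aA \cap R = aR$, while applying flatness to the injection $R \xrightarrow{\cdot a} R$ preserves injectivity and gives $\varphi(a) \in W(A)$. Part $(4)$ follows at once, since $R[X_1, \ldots, X_n]$ is free, hence faithfully flat, over $R$. For $(5)$, take $\varphi \colon R \hookrightarrow A = R \ltimes M$, $r \mapsto (r, 0)$. Torsion-freeness of $M$ enters precisely here: if $a \in W(R)$ and $(a, 0)(r, m) = (ar, am) = 0$, then $r = 0$ from $a \in W(R)$ and $m = 0$ from torsion-freeness, so $\varphi(a) \in W(A)$; and the contraction $(a, 0)A \cap R = aR$ can be read off directly from the componentwise multiplication rule $(a,0)(s,m') = (as, am')$. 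I anticipate no substantive obstacle in any part; the subtlest step is invoking purity to verify contraction in $(1)$, but this follows directly from the existence of the retraction $\pi$.
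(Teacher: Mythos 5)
Your proposal is correct and takes the same route as the paper: the paper itself offers no separate proof of Corollary~\ref{3.2} beyond the remark that the five assertions are consequences of Proposition~\ref{3.1}, and your argument simply spells out, case by case, the verification of the two hypotheses of that proposition (the contraction condition $aA\cap R = aR$ and the preservation of non-zerodivisors). Each verification is accurate, including the use of the retraction in $(1)$, the Reynolds operator in $(2)$, faithful flatness and freeness in $(3)$--$(4)$, and torsion-freeness of $M$ in $(5)$ both for $\varphi(a)\in W(A)$ and for the contraction.
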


We now examine the weakly Arf property in terms of the completion.

\begin{prop}\label{3.3}
Let $(A,\m)$ be a one-dimensional Noetherian local ring. Then the following conditions are equivalent.
\begin{enumerate}[$(1)$]
\item $A$ is a weakly Arf ring.
\item The $\m$-adic completion $\widehat{A}$ of $A$ is a weakly Arf ring.
\end{enumerate} 
\end{prop}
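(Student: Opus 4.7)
The implication $(2)\Rightarrow(1)$ is immediate from Corollary \ref{3.2}(3), as the canonical homomorphism $A \to \widehat{A}$ is faithfully flat. For $(1)\Rightarrow(2)$, the plan is to invoke Theorem \ref{2.4}: it suffices to show $\overline{x\widehat{A}}^2 = x\,\overline{x\widehat{A}}$ for every $x \in W(\widehat{A})$. The case $\depth A = 0$ is trivial, since then $\m \in \Ass_A(A)$ forces $W(A)\subseteq A\setminus\m = A^{\times}$, rendering the weakly Arf condition vacuous for both $A$ and $\widehat{A}$; henceforth I assume $A$ (equivalently $\widehat{A}$) is Cohen-Macaulay of dimension one.

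I would first reduce to the case $x \in W(A)$. Since $(x)$ is $\m\widehat{A}$-primary in the one-dimensional Cohen-Macaulay ring $\widehat{A}$, I can choose $M \gg 0$ with $\m^M\widehat{A}\subseteq x\m\widehat{A}$; density of $A$ in $\widehat{A}$ then produces $a \in A$ with $a - x \in \m^M\widehat{A}\subseteq x\m\widehat{A}$, so $a = x(1+w)$ for some $w \in \m\widehat{A}$. Since $1+w$ is a unit of $\widehat{A}$, it follows that $a\widehat{A} = x\widehat{A}$, $a \in W(A)$, and $\overline{a\widehat{A}} = \overline{x\widehat{A}}$, so the task becomes proving $\overline{a\widehat{A}}^2 = a\,\overline{a\widehat{A}}$ for every $a \in W(A)$.

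Fixing such $a$ and setting $I = \overline{aA}$, Theorem \ref{2.4} applied to $A$ yields $I^2 = aI$, so that $B = I:I = I/a$ is a module-finite $A$-subalgebra of $\overline{A}$ with $I = aB$. Base-changing gives $\widehat{B} = B\otimes_A\widehat{A}$, which is module-finite and integral over $\widehat{A}$ and embeds into $\rmQ(\widehat{A})$ as an intermediate ring $\widehat{A}\subseteq \widehat{B}\subseteq \overline{\widehat{A}}$. The crux is the identity $\overline{a\widehat{A}} = I\widehat{A} = a\widehat{B}$; granting this, flatness of $\widehat{A}/A$ together with $I^2 = aI$ yields
\begin{equation*}
\overline{a\widehat{A}}^2 \;=\; I^2\widehat{A} \;=\; aI\widehat{A} \;=\; a\,\overline{a\widehat{A}},
\end{equation*}
and a second application of Theorem \ref{2.4} finishes the proof.

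The main obstacle will be establishing $\overline{a\widehat{A}} = \overline{aA}\cdot\widehat{A}$: integral closure of ideals does not commute with completion in general, and here the weakly Arf hypothesis on $A$ must be used essentially, since it collapses $\overline{aA}$ to the principal ideal $aB$ inside the module-finite extension $B/A$, where the principal presentation is preserved under faithfully flat base change. The inclusion $a\widehat{B}\subseteq\overline{a\widehat{A}}$ is automatic from integrality, while for the reverse one argues that any $\xi \in \overline{\widehat{A}}$ with $a\xi \in \widehat{A}$ is forced into $\widehat{B}$ by the stability $I = aB$ combined with the module-finite structure of $B$ over $A$.
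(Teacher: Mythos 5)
Your proof has the same skeleton as the paper's: reduce $x \in W(\widehat{A})$ to some $a \in W(A)$ with $a\widehat{A} = x\widehat{A}$; establish $\overline{a\widehat{A}} = \overline{aA}\cdot\widehat{A}$; and then transport the stability $\overline{aA}^2 = a\,\overline{aA}$ coming from Theorem \ref{2.4} applied to $A$ across the flat extension $A\to\widehat{A}$. Your reduction step (approximating $x$ by $a\in A$ to within $x\m\widehat{A}$ so that $a/x$ is a unit) is a clean alternative to the paper's, which instead sets $\q = \alpha\widehat{A}\cap A$ and argues $\mu_A(\q)=1$ by flatness.

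However, there is a genuine gap at the step you yourself flag as the crux, and your sketch for closing it rests on a misconception. You assert that ``integral closure of ideals does not commute with completion in general, and here the weakly Arf hypothesis on $A$ must be used essentially'' to get $\overline{a\widehat{A}} = \overline{aA}\cdot\widehat{A}$. In fact this identity holds for \emph{any} $\m$-primary ideal in a Noetherian local ring, with no weakly Arf input; it is exactly Remark \ref{3.4} of the paper. Its proof uses only the standard bijection between $\m$-primary ideals of $A$ and $\widehat{\m}$-primary ideals of $\widehat{A}$ together with the reduction characterization of integral closure: if $J = \overline{\fka\widehat{A}}$ and $I = J\cap A$, then $J = I\widehat{A}$, and from $J^{n+1} = (\fka\widehat{A})J^n$ one reads off $I^{n+1}=\fka I^n$ by faithful flatness, so $\fka$ is a reduction of $I$, whence $I\subseteq\overline{\fka}$ and $J\subseteq\overline{\fka}\widehat{A}$. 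Your attempted alternative --- that any $\xi\in\overline{\widehat{A}}$ with $a\xi\in\widehat{A}$ is ``forced into $\widehat{B}$ by the stability $I=aB$ combined with the module-finite structure of $B$ over $A$'' --- does not constitute a proof: the hypotheses $\xi\in\overline{\widehat{A}}$ and $a\xi\in\widehat{A}$ merely restate $a\xi\in\overline{a\widehat{A}}$, which is what one started with, and module-finiteness of $B$ over $A$ gives no leverage on $\xi$, which lives in $\overline{\widehat{A}}$ rather than in $B\otimes_A\widehat{A}$ (in general $\overline{\widehat{A}}$ is not controlled by $\overline{A}\otimes_A\widehat{A}$). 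Once Remark \ref{3.4} is supplied in place of this sketch, the rest of your argument closes exactly as in the paper.
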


\begin{proof}
$(2) \Rightarrow (1)$ This follows from Corollary \ref{3.2} (3). 

$(1) \Rightarrow (2)$ Let $J \in \Lambda(\widehat{A})$ and write $J = \overline{(\alpha)}$, where $\alpha \in W(\widehat{A})$. We may assume that $\alpha$ is not a unit in $\widehat{A}$. Hence, $J$ is an $\widehat{\m}$-primary ideal of $\widehat{A}$. We are now going to show that $J$ is stable, that is, $J^2 = \alpha J$. To do this, let $I = J \cap A$. Then $I$ is an $\m$-primary ideal of $A$ and $J = I\widehat{A}$. Moreover, since
$$
I \widehat{A} \subseteq \overline{I} \widehat{A} \subseteq \overline{I \widehat{A}} = \overline{J} = J = I \widehat{A},
$$
we obtain $I \widehat{A} = \overline{I} \widehat{A}$. Thus $I$ is integrally closed in $A$, because $\widehat{A}$ is faithfully flat over $A$. Set $Q=\alpha \widehat{A}$ and $\fkq = Q \cap A$. Then $\fkq \widehat{A} = Q$, so that $\mu_A(\fkq)=1$, namely, $\fkq = aA$ for some $a \in A$. Hence $Q= \alpha \widehat{A} = \fkq \widehat{A} = a \widehat{A}$. 
In this case, one can show $I = \overline{aA}$. Indeed, let us consider the equalities
$$
I \widehat{A} = J = \overline{(\alpha)} = \overline{\alpha \widehat{A}} = \overline{a \widehat{A}} = \overline{a A}\cdot\widehat{A}
$$
where the last equality follows from Remark \ref{3.4} below. Again, since $\widehat{A}$ is faithfully flat over $A$, we get
$I = \overline{aA}$.  Remember that $\alpha$ forms a non-zerodivisor on $\widehat{A}$. It follows that $a$ is a non-zerodivisor on $A$. Thus, $I \in \Lambda(A)$ and hence $I^2 = x I$ for some $x \in I$ (use Theorem \ref{2.4}). In particular, $I^2 = a I$. Consequently, the equalities 
$$
J^2 = (I\widehat{A})^2 = a (I \widehat{A}) = \alpha (I \widehat{A}) = \alpha J
$$
yield that $\widehat{A}$ is a weakly Arf ring. This completes the proof.
\end{proof}

In the proof of Proposition \ref{3.3}, we use the following fact. 

\begin{rem}\label{3.4}
Let $(A, \m)$ be a Noetherian local ring, let $\fka$ be an $\m$-primary ideal of $A$. Then $\overline{\fka}\widehat{A} = \overline{\fka \widehat{A}}$.
\end{rem}

\begin{proof}
Let $J = \overline{\fka \widehat{A}}$. Then $J$ is an $\widehat{\m}$-primary ideal of $\widehat{A}$ and $\overline{\fka}\widehat{A} \subseteq J$. Since $\fka \widehat{A}$ is a reduction of $J$, we can choose an integer $n \ge 0$ satisfying $J^{n+1} = (\fka \widehat{A})J^n$. Let $I = J \cap A$. Then $I$ is an $\fkm$-primary ideal of $A$ and $J = I \widehat{A}$. The equality $I^{n+1} \widehat{A} = (\fka I^n)\widehat{A}$ implies $I^{n+1} = \fka I^n$, that is, $\fka$ is a reduction of $I$. Hence $I \subseteq \overline{\fka}$, so that we have $J = I \widehat{A} \subseteq \overline{\fka}\widehat{A}$, as desired.
\end{proof}

By Corollary \ref{3.2} (3), the implication $(2) \Rightarrow (1)$ in Proposition \ref{3.3} holds if $A$ is a Noetherian local ring of arbitrary dimension. However, the implication $(1) \Rightarrow (2)$ does not hold in the case $\dim R \ge 2$.

\begin{rem}
Let ${\Bbb C}[[t, s]]$ be the formal power series ring over the field ${\Bbb C}$ and set $R={\Bbb C}[[t^4, t^5, t^6, s]]$. Then, since $R$ is an isolated complete intersection singularity with $\dim R=2$, by \cite{PS} we can choose a UFD $A$ such that $R \cong \widehat{A}$. Note that $A$ is a weakly Arf ring, because it is a normal domain. If $\widehat{A}$ is weakly Arf, then the faithfully flat homomorphism $S={\Bbb C}[[t^4, t^5, t^6]] \to R \cong \widehat{A}$ guarantees that $S$ is weakly Arf. Hence $S$ is an Arf ring. This is impossible, because $S$ does not have minimal multiplicity. 
\end{rem}

The following example shows that, for a weakly Arf ring $A$, the ring $A/(x)$ is not necessarily weakly Arf for a general non-zerodivisor $x$ on $A$, and vice versa. 


\begin{ex}\label{3.7}
Let $U = k[X, Y, Z]$ be the polynomial ring over a field $k$. Then the following hold.
\begin{enumerate}[$(1)$]
\item $U/(X^3-Z^2)$ is a weakly Arf ring.
\item $U/(X^3-Z^2, Y^2-XZ)$ is not a weakly Arf ring.
\end{enumerate}
\end{ex}

\begin{proof}
Let $t$ denote an indeterminate over $k$. Note that we have $U/(X^3 - Z^2) \cong k[t^2, t^3, Y]$. By Example \ref{2.8b}, $U/(X^3-Z^2)$ is a weakly Arf ring. However, $U/(X^3-Z^2, Y^2-XZ) \cong k[t^4, t^5, t^6]$ is not weakly Arf, because $(t^4)$ is a reduction of $\m$, but $\m^2 \ne t^4 \m$, where $\m = (t^4, t^5, t^6)$.
\end{proof}

In addition, let $k[[t]]$ be the formal power series ring over a field $k$ and consider the ring $A=k[[t^4, t^5, t^6]]$. Then $A$ is not weakly Arf since $A$ is not an Arf ring and every open ideal of $A$ has a principal reduction. On the other hand, as $\depth A/(x) =0$, $A/(x)$ is a weakly Arf ring for every $x \in W(A)$.

\medskip

The next proposition plays a crucial role in our arguments; for example, in the proofs of Example \ref{9.6} and Corollary \ref{10.6} in Sections \ref{sec9} and \ref{sec10}.

\begin{prop}\label{3.8}
Let $\{A_i\}_{i \in \Lambda}~(\Lambda \ne \emptyset)$ be a family of commutative rings and let $A = \prod_{i \in \Lambda}A_i$. Then $A$ is a weakly Arf ring if and only if so is $A_i$ for every $i \in \Lambda$. 
\end{prop}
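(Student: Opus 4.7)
The plan is to reduce the weakly Arf condition on the product to a coordinate-wise check. First I would record the structural facts for $A = \prod_{i \in \Lambda} A_i$: an element $a = (a_i)$ lies in $W(A)$ if and only if $a_i \in W(A_i)$ for every $i \in \Lambda$ (if some $a_{i_0}$ kills a nonzero $c \in A_{i_0}$, then $a$ kills the tuple with $c$ at coordinate $i_0$ and zeros elsewhere), and consequently $\rmQ(A) = \prod_{i \in \Lambda} \rmQ(A_i)$ since any tuple $(a_i/b_i)$ with $b_i \in W(A_i)$ is represented by the global fraction $(a_i)/(b_i)$. Reading off coordinates in a monic dependence relation also shows $\overline{A} \subseteq \prod_{i \in \Lambda} \overline{A_i}$; we do not need the reverse inclusion.

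For the direction that each $A_i$ being weakly Arf implies $A$ is weakly Arf, take $x, y, z \in A$ with $x \in W(A)$ and $y/x,\, z/x \in \overline{A}$. Then for every $i \in \Lambda$ we have $x_i \in W(A_i)$ and $y_i/x_i,\, z_i/x_i \in \overline{A_i}$, so the hypothesis on $A_i$ yields $y_i z_i / x_i \in A_i$ for each $i$, and collecting coordinates gives $yz/x \in A$.

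The converse is the more delicate direction. Fix $i_0 \in \Lambda$ and take $x, y, z \in A_{i_0}$ with $x \in W(A_{i_0})$ and $y/x,\, z/x \in \overline{A_{i_0}}$. I would lift to $A$ by setting $\tilde{x}, \tilde{y}, \tilde{z} \in A$ with $\tilde{x}_{i_0} = x$, $\tilde{y}_{i_0} = y$, $\tilde{z}_{i_0} = z$ and $\tilde{x}_i = 1$, $\tilde{y}_i = \tilde{z}_i = 0$ for $i \ne i_0$, so that $\tilde{x} \in W(A)$ while $\tilde{y}/\tilde{x}$ and $\tilde{z}/\tilde{x}$ are supported only at $i_0$. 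The central technical step is to verify these lifts lie in $\overline{A}$: given a monic relation $p(T) = T^n + a_{n-1} T^{n-1} + \cdots + a_0 \in A_{i_0}[T]$ with $p(y/x) = 0$, lift the non-leading coefficients by zero to obtain $\widetilde{a}_j \in A$ with $(\widetilde{a}_j)_{i_0} = a_j$ and $(\widetilde{a}_j)_i = 0$ for $i \ne i_0$, and set $\widetilde{p}(T) = T^n + \widetilde{a}_{n-1} T^{n-1} + \cdots + \widetilde{a}_0 \in A[T]$, which is monic in $A$ because its leading coefficient is the global unit $1_A$. A coordinate-wise evaluation shows $\widetilde{p}(\tilde{y}/\tilde{x}) = 0$: at $i_0$ this is precisely $p(y/x) = 0$, while at $i \ne i_0$ the argument is $0$ and every $\widetilde{a}_j$ vanishes there; the same construction handles $\tilde{z}/\tilde{x}$. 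Invoking the weakly Arf hypothesis on $A$ yields $\tilde{y}\tilde{z}/\tilde{x} \in A$, and reading off the $i_0$-coordinate delivers $yz/x \in A_{i_0}$.

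The only genuine obstacle is this construction of the global integral dependence in the converse direction, since one must preserve the leading coefficient as the unit of $A$; padding only the non-leading coefficients with zeros achieves this cleanly. Everything else is routine coordinate-wise bookkeeping, and no finiteness assumption on $\Lambda$ is required.
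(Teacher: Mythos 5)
Your proof is correct and takes essentially the same approach as the paper: establish the coordinate-wise description of $W(A)$, $\rmQ(A)$, and the inclusion $\overline{A} \subseteq \prod_i \overline{A_i}$; handle the easy direction by projecting to each coordinate; and handle the converse by lifting $x,y,z \in A_{i_0}$ to $A$, verifying integrality of the lifted fractions, applying the weakly Arf hypothesis for $A$, and reading off the $i_0$-coordinate. The one small difference is cosmetic: the paper lifts $y$ and $z$ with $1$'s in the other coordinates and then verifies integrality of $\widetilde{y}/\widetilde{x}$ via the decomposition $\widetilde{y/x} = \overline{y/x} + \alpha$ with $\overline{y/x} \in \overline{A}$ and $\alpha \in A$, whereas you lift with $0$'s and check the monic dependence directly; both are equally valid and amount to the same calculation.
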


\begin{proof}
Notice that, for each $a = (a_i)_{i \in \Lambda} \in A$, we have $a \in W(A)$ if and only if $a_i \in W(A_i)$ for every $i \in \Lambda$. Then  $\rmQ(A) = \prod_{i \in \Lambda}\rmQ({A_i})$ and $\overline{A} \subseteq \prod_{i \in \Lambda}\overline{A_i}$.
For each $i \in \Lambda$ and $x \in \rmQ(A_i)$, we define $\widetilde{x} \in \rmQ(A)$ as follows: 
$$
\left(\widetilde{x}\right)_j=
\begin{cases}
\ x & (j=i)\\
\ 1 & (j\ne i)
\end{cases}
\quad \text{in} \ \ \rmQ(A_j) \ \ \text{for each}\ \ j \in \Lambda.
$$
Hence, if $x \in \overline{A_i}$, then $\widetilde{x} \in \overline{A}$, because $\widetilde{x} = \overline{x} + \alpha$, where 
\begin{center}
$\left(\overline{x}\right)_j=
\begin{cases}
\ x & (j=i)\\
\ 0 & (j\ne i)
\end{cases}$ \quad
and \quad
$\alpha_j=
\begin{cases}
\ 0 & (j=i)\\
\ 1 & (j\ne i)
\end{cases}$
\ \ for each $j \in \Lambda$.
\end{center}

We now assume that $A$ is a weakly Arf ring. Fix $i \in \Lambda$. Let $x, y, z \in A_i$ such that $x \in W(A_i)$ and $y/x, z/x \in \overline{A_i}$. One can show that  $\widetilde{x} \in W(A)$ and $\widetilde{y}/\widetilde{x}, \widetilde{z}/\widetilde{x} \in \overline{A}$. Hence, $\widetilde{y} \cdot \widetilde{z} \in \widetilde{x} A$, which yields $yz \in x A_i$. Therefore $A_i$ is a weakly Arf ring. Conversely, suppose that $A_i$ is weakly Arf for every $i \in \Lambda$. Let $a, b, c \in A$ such that $a \in W(A)$ and $b/a, c/a \in \overline{A}$. Write $a = (a_i)_{i \in \Lambda}$, $b = (b_i)_{i \in \Lambda}$, and $c = (c_i)_{i \in \Lambda}$, where $a_i, b_i, c_i \in A_i$. For each $i \in \Lambda$, passing to the $i$-th projection $p: \rmQ(R) \to \rmQ(R_i),\  x \mapsto x_i$, we see that
$$
\frac{b_i}{a_i}, \  \frac{c_i}{a_i}  \in \overline{A_i}.
$$ 
This gives $b_i c_i \in a_i A_i$. Therefore, $bc \in aA$, and hence $A$ is a weakly Arf ring.
\end{proof}

If $\Lambda$ is a finite set, then $\overline{A} = \prod_{i \in \Lambda}\overline{A_i}$. However, the next example shows that the equality is false in general.

\begin{ex}
Let $B = k[t]$ be the polynomial ring over a field $k$. For each $n>0$, we set  $A_n = k[t^n, t^{n+1}]$. Then $\overline{A} \subsetneq \prod_{n>0}\overline{A_n}$, where $A = \prod_{n>0} A_n$.
\end{ex}

\begin{proof}
For each $n >0$, we set $x_n = t$ and $x = (x_n) \in \rmQ(A)$. Then, since $\overline{A_n} = B$, we have $x \in \prod_{n>0}\overline{A_n}$. Suppose that $x \in \overline{A}$. Then there exists an integral equation
$$
x^{\ell} + c_1x^{\ell -1} + \cdots + c_{\ell} = 0
$$
where $\ell >0$ and $c_i \in A$ for all $1 \le i \le \ell$. For each $n >0$, in $A_n$ we have 
$$
t^{\ell} + [c_1]_nt^{\ell -1} + \cdots + [c_{\ell}]_n = \left[x^{\ell} + c_1x^{\ell -1} + \cdots + c_{\ell}\right]_n =0,
$$
which implies $[c_i]_n \ne  0$ for some $1 \le i \le \ell$. As $[c_i]_n \in A_n$, we see that $i \in H_n$, where $H_n = \left<n, n+1\right>$ denotes the numerical semigroup generated by $n$ and $n+1$. Therefore, $n \le i \le \ell$ which gives a contradiction. Hence, $x \notin \overline{R}$, as desired.
\end{proof}

In this paper we will frequently refer to the examples arising from the idealizations and the amalgamated duplications. Let $A$ be a commutative ring and let $I$ be an arbitrary ideal of $A$. For an element $\alpha \in R$, we set $R(\alpha)=A\oplus I$ as an additive group and define the multiplication on $R(\alpha)$ by
$$
(a, x) \cdot (b, y) = (ab, ay + bx + \alpha (xy))
$$
for $(a, x), (b, y) \in R(\alpha)$. If $\alpha = 0$, then $R(0) = A \ltimes I$ is called {\it the idealization $I$ over $A$}, which is introduced by M. Nagata (\cite[Page 2]{N}). If $\alpha = 1$, then $R(1) = A \bowtie I$ is called {\it the amalgamated duplication of $A$ along $I$} in \cite{marco}. Notice that the amalgamated duplication $A \bowtie I$ behaves very much the same way as the idealization $A \ltimes I$ (see e.g., \cite{marco2, marco, EGI, Shapiro}). Moreover, the amalgamated duplication contains the fiber product of the two copies of the natural homomorphism $A \to A/I$ via the identification:
$A \bowtie I \cong A \times_{A/I} A$, where $(a,i) \mapsto (a, a+i)$. 
Later we will deal with the idealizations and the fiber products; see Sections 5, 6, and 13. 

Furthermore, we have an isomorphism $A \bowtie A \cong A \times A$ of rings. 
As a special case of the amalgamated duplications, we immediately get the following. 


\begin{cor}
$A \bowtie A$ is a weakly Arf ring if and only if so is $A$.
\end{cor}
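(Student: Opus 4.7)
The plan is to reduce the claim to a direct application of Proposition~\ref{3.8} via the ring isomorphism $A \bowtie A \cong A \times A$ recorded in the paragraph immediately preceding the corollary. Since the weakly Arf property depends only on the isomorphism class of the ring (it is phrased purely in terms of non-zerodivisors, integral elements of the total ring of fractions, and divisibility, all of which are preserved under ring isomorphism), the problem becomes: show that the product $A \times A$ is weakly Arf if and only if $A$ is weakly Arf.

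First, I would briefly justify the isomorphism $A \bowtie A \cong A \times A$, or simply invoke it as already explained in the text. The map $(a,i) \mapsto (a, a+i)$ given in the excerpt furnishes a bijective ring homomorphism when the ideal is the whole ring $A$, and the inverse is $(a,b) \mapsto (a, b-a)$. Then I would transport the weakly Arf question across this isomorphism.

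Next, I would apply Proposition~\ref{3.8} with the finite index set $\Lambda = \{1,2\}$ and $A_1 = A_2 = A$. The proposition tells us that $\prod_{i\in\Lambda} A_i$ is weakly Arf precisely when each factor is; with both factors equal to $A$, this is the same as saying $A \times A$ is weakly Arf if and only if $A$ is weakly Arf. Combining the two steps gives the equivalence in the corollary.

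I do not foresee a real obstacle here: the work has already been done in Proposition~\ref{3.8}, and the only content in this corollary is the observation that $A \bowtie A$ collapses to the product $A \times A$. The only minor point to check is that the bijection $(a,i) \mapsto (a,a+i)$ respects the multiplication $(a,x)\cdot(b,y) = (ab, ay+bx+xy)$ of the amalgamated duplication (i.e.\ the case $\alpha = 1$ with $I = A$), which is a direct verification since $(a+x)(b+y) = ab + (ay+bx+xy)$.
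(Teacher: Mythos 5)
Your proof is correct and is essentially the paper's own argument: the text immediately preceding the corollary records the isomorphism $A \bowtie A \cong A \times A$ and presents the corollary as an instance of Proposition~\ref{3.8}. Your verification that $(a,i)\mapsto(a,a+i)$ is a ring isomorphism in the case $I=A$ is a correct and appropriate fill-in of the detail the paper leaves implicit.
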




\section{Strict closures and polynomial extensions}\label{sec4}

We start this section by recalling the definition of the strict closure of a ring introduced by J. Lipman \cite{L}. 
For a commutative ring $A$, we define
$$
A^{*}=\left\{ x \in \overline{A} \mid x \otimes 1 = 1 \otimes x \text{ in } \overline{A} \otimes_A \overline{A} \right\} \ \subseteq \ \overline{A}
$$
which forms a subring of $\overline{A}$, containing $A$. The ring $A^{*}$ is  called {\it the strict closure of $A$ in $\overline{A}$}, and we say that $A$ is {\it strictly closed in $\overline{A}$}, if the equality $A=A^{*}$ holds.

We begin with the following.

\begin{lem}\label{4.1}
Let $A$ be a Noetherian ring. If $A$ satisfies $(S_1)$, then 
$(S^{-1}A)^* = S^{-1} A^*$ in $\rmQ(S^{-1}A)$ for every multiplicatively closed subset $S$ of $A$. 
\end{lem}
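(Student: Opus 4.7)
The plan is to reduce the claim to two compatibilities that hold under the $(S_1)$ hypothesis: (a) localization commutes with the total quotient ring construction, and (b) localization commutes with integral closure inside the total quotient ring. Once these are in place, the desired equality of strict closures follows from a tensor-product manipulation using exactness of localization.

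First I would show that the canonical ring map $S^{-1}\rmQ(A) \to \rmQ(S^{-1}A)$ is an isomorphism. The critical input is the same prime-avoidance trick used in the proof of Theorem \ref{2.7}: since $(S_1)$ forces every associated prime of $A$ to be minimal, every non-zerodivisor of $S^{-1}A$ can be represented as $a/1$ with $a \in W(A)$, by adjusting modulo the kernel of $A \to S^{-1}A$. This yields surjectivity; injectivity comes from the observation that any $\xi \in \rmQ(A)$ annihilated by some $s \in S$ becomes zero after inverting $S$.

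Next I would establish $\overline{S^{-1}A} = S^{-1}\overline{A}$ inside the common ring identified above. The inclusion $S^{-1}\overline{A} \subseteq \overline{S^{-1}A}$ is immediate by scaling integral equations. For the reverse, given $y \in \overline{S^{-1}A}$ satisfying
\[
y^n + (b_1/t)\,y^{n-1} + \cdots + (b_n/t) = 0, \qquad b_i \in A,\ t \in S,
\]
the substitution $z = ty$ converts this into a monic equation in $z$ with coefficients in $A$; writing $z = \xi/u$ with $\xi \in \rmQ(A)$ and $u \in S$, and then clearing via one more element $w \in S$, produces $w\xi \in \overline{A}$, so that $y = (w\xi)/(wut) \in S^{-1}\overline{A}$.

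With these two identifications, the main claim follows by a direct tensor computation. Exactness of localization gives $\overline{S^{-1}A} \otimes_{S^{-1}A} \overline{S^{-1}A} \cong S^{-1}(\overline{A} \otimes_A \overline{A})$ as $S^{-1}A$-modules. For $y \in (S^{-1}A)^*$, write $y = x/s$ with $x \in \overline{A}$ and $s \in S$; the relation $y \otimes 1 = 1 \otimes y$ then translates to $s'(x \otimes 1 - 1 \otimes x) = 0$ in $\overline{A} \otimes_A \overline{A}$ for some $s' \in S$, so $s'x \in A^*$ and $y \in S^{-1}A^*$. The reverse inclusion is obtained by reading the same computation backwards. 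The main obstacle is the second step: the clearing-of-denominators manipulation must be arranged so that the final equation lives in $\rmQ(A)$ rather than only in $S^{-1}\rmQ(A)$, which is precisely what forces the double substitution $y \mapsto ty \mapsto w\xi$ and relies on the identification of total quotient rings from the first step.
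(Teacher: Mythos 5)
Your proposal is correct and follows the same route as the paper: identify $\rmQ(S^{-1}A)$ with $S^{-1}\rmQ(A)$ using the $(S_1)$ hypothesis (mirroring the regularizing trick from the proof of Theorem~\ref{2.7}), note $\overline{S^{-1}A}=S^{-1}\overline{A}$, and then reduce the relation $\xi\otimes 1 = 1\otimes\xi$ in $\overline{S^{-1}A}\otimes_{S^{-1}A}\overline{S^{-1}A}\cong S^{-1}\bigl(\overline{A}\otimes_A\overline{A}\bigr)$ to the existence of some $t\in S$ with $t(x\otimes 1 - 1\otimes x)=0$, i.e.\ $tx\in A^*$. The only difference is cosmetic: you spell out the denominator-clearing argument for $\overline{S^{-1}A}\subseteq S^{-1}\overline{A}$, which the paper treats as a standard fact and simply cites.
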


\begin{proof}
Since $A$ satisfies $(S_1)$, every associated prime ideal of $A$ is minimal prime, so $S^{-1}\rmQ(A) = \rmQ(S^{-1}A)$. Remember that $S^{-1}\overline{A} = \overline{S^{-1}A}$. Let $\xi \in S^{-1}\overline{A}$ and write $\xi = x/s$ where $x \in \overline{A}, s \in S$. We identify $\xi = x/s$ with $(1/s) \otimes x$ via the natural isomorphism $S^{-1}\overline{A} \cong S^{-1} A \otimes_A \overline{A}$. We also have the canonical isomorphisms
$$
S^{-1}\overline{A} \otimes_{S^{-1}A} S^{-1}\overline{A} \cong (S^{-1} A \otimes_A \overline{A}) \otimes_{S^{-1}A} (S^{-1} A \otimes_A \overline{A}) \cong S^{-1}A \otimes_A (\overline{A} \otimes_A \overline{A}),
$$
whence the elements $\xi \otimes 1$ and $1 \otimes \xi$ in $S^{-1}\overline{A} \otimes_{S^{-1}A} S^{-1}\overline{A}$ correspond to the elements 
$\left(1/s\right)\left[1 \otimes (x \otimes 1)\right]$ and $\left(1/s\right)\left[1 \otimes (1 \otimes x)\right]$ in $S^{-1}A \otimes_A (\overline{A} \otimes_A \overline{A})$, respectively. Therefore
\begin{center}
$\xi \in (S^{-1}A)^*$ if and only if $1 \otimes (x \otimes 1) = 1 \otimes (1 \otimes x)$ in $S^{-1}A \otimes_A (\overline{A} \otimes_A \overline{A})$. 
\end{center}
The latter condition is equivalent to saying that there exists $t \in S$ such that $t\cdot(x\otimes 1) = t\cdot (1 \otimes x)$ in $\overline{A} \otimes_A \overline{A}$, i.e.,  $tx \in A^*$. It then follows that $\xi \in (S^{-1}A)^*$ if and only if $\xi = x/s \in S^{-1}A^*$, as desired. 
\end{proof}

Recall that, for a commutative ring $A$, we set $X_1(A) =\{P \in \Spec A \mid \depth A_P \le 1\}$; see Section 2. The following ensures that the strict closedness is a local condition. 

\begin{prop}\label{4.2}
Let $A$ be a Noetherian ring. Suppose that $A$ satisfies $(S_1)$. Then 
the following conditions are equivalent.
\begin{enumerate}[$(1)$]
\item $A$ is strictly closed in $\overline{A}$.  
\item $A_P$ is strictly closed in $\overline{A_P}$ for every $P \in X_1(A)$.
\end{enumerate}
\end{prop}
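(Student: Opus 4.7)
The plan is to derive both implications from Lemma~\ref{4.1}, which under the $(S_1)$ hypothesis yields the compatibility $(A^*)_P = (A_P)^*$ for every prime $P$ (apply the lemma to $S = A \setminus P$).

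For $(1) \Rightarrow (2)$, fix any $P \in X_1(A)$. Since $A = A^*$, localizing and invoking Lemma~\ref{4.1} gives $(A_P)^* = (A^*)_P = A_P$, so $A_P$ is strictly closed in $\overline{A_P}$.

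For the more substantive direction $(2) \Rightarrow (1)$, I would adapt the strategy used in the proof of Lemma~\ref{2.6}. Take $x \in A^*$ and write $x = b/a$ with $a \in W(A)$ and $b \in A$. Assuming for contradiction that $x \notin A$, the colon $(a):_A b$ is a proper ideal, and so it admits an associated prime $P \in \Ass_A(A/[(a):_A b])$. The homothety $\widehat{b}\colon A/[(a):_A b] \to A/(a)$ is injective, which forces $P \in \Ass_A(A/(a))$ and hence $\depth_{A_P}(A_P/aA_P) = 0$. Since $a$ remains a non-zerodivisor on $A_P$, this shows $\depth A_P = 1$, and therefore $P \in X_1(A)$.

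Now the hypothesis $(A_P)^* = A_P$ combined with Lemma~\ref{4.1} yields
$$
\frac{x}{1} \;\in\; (A^*)_P \;=\; (A_P)^* \;=\; A_P.
$$
Hence $b/a = c/s$ in $A_P$ for some $c \in A$ and $s \in A \setminus P$, which produces $u \in A \setminus P$ with $usb = uca$, i.e., $us \in (a):_A b$. But $us \notin P$, contradicting $(a):_A b \subseteq P$. The only delicate point is arranging that the associated prime $P$ actually lies in $X_1(A)$; this is precisely where the $(S_1)$ hypothesis enters, ensuring that $a$ stays regular after localization so that $\depth A_P = 1$ rather than $0$. Once that is verified, Lemma~\ref{4.1} does all the remaining work.
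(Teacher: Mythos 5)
Your proof is correct and follows essentially the same route as the paper's: both directions are reduced to Lemma~\ref{4.1}, and the $(2)\Rightarrow(1)$ direction locates a prime $P\in X_1(A)$ via an associated prime of a suitable colon ideal, then uses $(A_P)^*=(A^*)_P$ to reach a contradiction. (The paper picks $P\in\Ass_A(A^*/A)$ and writes $P=A:_Ax$, while you take $P\in\Ass_A(A/[(a):_Ab])$; both lead to $P\in\Ass_A(A/(a))$ and $(a):_Ab\subseteq P$, so the rest of the argument is identical.)

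One small point of calibration in your closing remark: the $(S_1)$ hypothesis is \emph{not} what ensures that $a$ stays regular on $A_P$, nor that $\depth A_P=1$. Since localization is flat, a non-zerodivisor on $A$ automatically remains a non-zerodivisor on $A_P$, with no hypothesis; and $A_P/aA_P\neq 0$ because $P\in\Ass_A(A/(a))$, so $\depth A_P=1$ follows unconditionally. The place where $(S_1)$ genuinely enters is inside Lemma~\ref{4.1} itself, to guarantee $\rmQ(S^{-1}A)=S^{-1}\rmQ(A)$ and hence the commutation $(A_P)^*=(A^*)_P$ that your step relies on. Since you do invoke Lemma~\ref{4.1}, the proof stands; the attribution of where $(S_1)$ is used is just slightly misplaced.
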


\begin{proof}
By Lemma \ref{4.1}, we only need to prove the implication $(2) \Rightarrow (1)$. Suppose the contrary, i.e., we assume that $A \subsetneq A^*$. Choose $P \in \Ass_A(A^*/A)$ and write $P = A:_A x$ for some $x \in A^*$. By setting $x = b/a$ with $a \in W(A)$ and $b \in A$, we then have $P = (a):_A b$ and the homothety map $\widehat{b} : A/P \to A/(a)$ is injective. Since $P \in \Ass_A(A/(a))$, we have $\depth_{A_P}(A_P/aA_P) = 0$. This implies $\depth A_P = 1$. In particular, $P \in X_1 (A)$, and hence $A_P$ is strictly closed. 
Therefore, because $x/1 \in (A_P)^* = A_P$, we can write $x/1 = y/s$ with $y \in A, s \in A\setminus P$, so there exists $t \in A\setminus P$ such that $t(sx) = t y \in A$. This implies $ts \in P$, which is a contradiction. Hence $A$ is strictly closed in $\overline{A}$. 
\end{proof}

The relation between Arf rings and the rings that are strictly closed in their integral closure was explored in \cite[Section 4]{L}. More precisely, for a Noetherian semi-local ring $A$ such that $A_M$ is a one-dimensional Cohen-Macaulay local ring for every $M \in \Max A$, J. Lipman proved that,  if $A$ is strictly closed in $\overline{A}$, then $A$ is an Arf ring, and the converse holds if $A$ contains a field; see \cite[Proposition 4.5, Theorem 4.6]{L}. 

Let us first emphasize that the Arf property implies the strict closedness of the ring without assuming that the ring contains a field. Indeed, to show  \cite[Theorem 4.6]{L}, J. Lipman only used the assumption that the ring contains a field in the proof of \cite[Lemma 4.7]{L}. We prove that this lemma also holds in the case where the ring does not contain a field. 

\begin{lem}
Let $(A, \m)$ be a one-dimensional Cohen-Macaulay local ring. Suppose that $\m^2 = z\m$ for some $z\in \m$. Set $A_1 = A^{\m} = \m z^{-1}$.  Let $A_1 \subseteq C \subseteq \overline{A}$ be an intermediate ring such that $C$ is a finitely generated $A$-module and let $\alpha : C\otimes_A C \to C \otimes_{A_1} C$ be an $A$-algebra map such that $\alpha(x \otimes y) = x \otimes y$ for every $x, y \in C$. Then $\Ker \alpha = (0):_{C \otimes_AC} z$. 
\end{lem}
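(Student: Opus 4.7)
My proof would proceed by establishing the two containments separately. The easy containment $\Ker\alpha\subseteq(0:_{C\otimes_A C}z)$ follows by direct computation on the generators of the ideal $\Ker\alpha$, namely the elements $a\otimes 1-1\otimes a$ for $a\in A_1$. Since $A_1=\m z^{-1}$, I would write $a=u/z$ with $u\in\m\subseteq A$, so that
\[
z(a\otimes 1-1\otimes a)=(za)\otimes 1-1\otimes(za)=u\otimes 1-1\otimes u=0,
\]
the last equality because $u\in A$ allows it to cross the tensor. Hence $z\cdot\Ker\alpha=0$.

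For the reverse containment $(0:_{C\otimes_A C}z)\subseteq\Ker\alpha$, the plan is to apply the snake lemma to the short exact sequence
\[
0\to\Ker\alpha\to C\otimes_A C\xrightarrow{\alpha}C\otimes_{A_1}C\to 0
\]
with the multiplication-by-$z$ endomorphism on each term. Using $z\cdot\Ker\alpha=0$, the snake sequence collapses to
\[
0\to\Ker\alpha\to(0:_{C\otimes_A C}z)\xrightarrow{\bar\alpha}(0:_{C\otimes_{A_1}C}z)\xrightarrow{\delta}\Ker\alpha,
\]
and the desired equality becomes equivalent to the injectivity of the connecting morphism $\delta$, given by $\delta(\eta)=z\xi$ for any lift $\xi\in C\otimes_A C$ of $\eta$ (well-defined because $z\cdot\Ker\alpha=0$).

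The heart of the argument is to prove $\delta$ is injective. My plan is to exploit that $z$ is a non-zero-divisor on $C$ (since $C\subseteq\overline{A}\subseteq\rmQ(A)$ and $z\in W(A)$), together with the explicit $A$-algebra presentation $A_1=A[u_1/z,\ldots,u_n/z]$ where $u_1,\ldots,u_n$ generate $\m$. Choosing an $A$-module generating set for $C$ that extends a generating set of $A_1$ over $A$, I would write $\xi\in(0:_{C\otimes_A C}z)$ in elementary tensors and use the relation $z\xi=0$ to force the off-diagonal contributions into the $(C\otimes_A C)$-ideal generated by the elements $(u_i/z)\otimes 1-1\otimes(u_i/z)$, i.e., into $\Ker\alpha$.

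The main obstacle I foresee is that $C\otimes_{A_1}C$ itself carries nontrivial $z$-torsion in general: for $c\in C\setminus A_1$ with $zc\in A_1$, the element $c\otimes 1-1\otimes c\in C\otimes_{A_1}C$ is nonzero but $z$-annihilated. Distinguishing this ``benign'' torsion, which must be shown to lie in the image of $\bar\alpha$ only through lifts that are themselves not $z$-torsion in $C\otimes_A C$, from the $\Ker\alpha$-torsion is the technical crux; it is precisely here that the stability hypothesis $\m^2=z\m$ (equivalently $\m=zA_1$) enters, ensuring that the generators $u_i/z$ capture exactly the right amount of ``new torsion'' modulo $A$.
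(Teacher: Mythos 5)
Your proof of the easy containment $\Ker\alpha\subseteq(0:_{C\otimes_A C}z)$ is correct and matches the paper's: since $\Ker\alpha$ is generated by $a\otimes 1-1\otimes a$ for $a\in A_1=\m z^{-1}$, and $za\in\m\subseteq A$, each such generator is killed by $z$.

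The hard containment is where a genuine gap remains. Your snake-lemma reduction to ``$\delta$ injective'' is algebraically correct, but it is an \emph{exact reformulation} of what you want to prove, not a simplification: by exactness $\Ker\delta=\Im\bar\alpha$, so $\delta$ injective $\Longleftrightarrow$ $\bar\alpha=0$ $\Longleftrightarrow$ $(0:_{C\otimes_A C}z)\subseteq\Ker\alpha$. Nothing is gained. The remainder of your argument is a sketch that never materializes; you yourself identify the obstacle (separating the ``benign'' $z$-torsion of $C\otimes_{A_1}C$ from what must lie in $\Ker\alpha$) without resolving it, and your claim that choosing generators and forcing ``off-diagonal contributions'' into $\Ker\alpha$ will work is never substantiated. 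The missing idea is the one the paper uses: take a \emph{minimal} free presentation $A^{\oplus\ell}\xrightarrow{\varphi}A^{\oplus n}\xrightarrow{\varepsilon}C\to 0$ of $C$ over $A$, so that all entries $a_{ij}$ of $\varphi$ lie in $\m$. Tensoring with $C$ over $A$, one lifts a relation $z\xi=0$ to the free module $C\otimes A^{\oplus\ell}$, reads off $zc_i=\sum_j a_{ij}d_j$, and because $a_{ij}\in\m$ the elements $b_{ij}:=a_{ij}z^{-1}$ land in $A_1$. This lets one rewrite $\xi=\sum_j\sum_i (b_{ij}d_j)(1\otimes b_i)$, and after applying $\alpha$ the coefficients $b_{ij}\in A_1$ may be moved across the tensor, where $\sum_i b_{ij}b_i=z^{-1}\varepsilon(\varphi(f_j))=0$. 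This is where minimality (entries in $\m$) and the stability $\m=zA_1$ do concrete work — a mechanism your sketch gestures at but never operationalizes.
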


\begin{proof}
Notice that $\Ker \alpha = (x \otimes 1 - 1 \otimes x \mid x \in A_1)$. This implies $\Ker \alpha \subseteq (0):_{C \otimes_AC} z$, as $x \in A_1$ and $A_1 = \m z^{-1}$. Let us make sure of the opposite inclusion. Let $n = \mu_R(C)$ and choose a minimal free presentation
$$
A^{\oplus \ell} \overset{\varphi}{\to} A^{\oplus n} \overset{\varepsilon}{\to} C \to 0
$$
of $C$ as an $A$-module, where $\ell \ge 0$. This induces an exact sequence
$$
C \otimes_AA^{\oplus \ell} \overset{C \otimes \varphi}{\longrightarrow} C \otimes_AA^{\oplus n} \overset{C \otimes \varepsilon}{\longrightarrow} C \otimes_AC \longrightarrow 0
$$
of $A$-modules. Note that if $C$ is a free $A$-module, then $n=1$ and $C = A = A_1$. Hence, we may assume that $C$ is not $A$-free, i.e., $\ell >0$. 
Let $\{f_j\}_{1 \le j \le \ell}$ and $\{e_i\}_{1 \le i \le n}$ be the standard bases of $A^{\oplus \ell}$ and $A^{\oplus n}$, respectively. Let us write $\varphi = \left[a_{ij}\right]_{1 \le i \le n, 1 \le j \le \ell}$, where $a_{ij} \in \m$. Thus, for each $1 \le j \le \ell$, we have 
$$
\varphi(f_j) = \sum_{i=1}^{n}a_{ij} e_i. 
$$
Let $\xi \in C \otimes_A C$ such that $\xi z =0$. We write $\xi = \sum_{i=1}^nc_i (1 \otimes b_i)$, where $c_i \in C$ and $b_i = \varepsilon(e_i)$. Since $\xi z = \sum_{i=1}^n (zc_i)(1 \otimes b_i) =0$, we can choose $\eta \in C \otimes_AA^{\oplus \ell}$ such that  
$$
(C \otimes \varphi)(\eta) = \sum_{i=1}^n (zc_i)(1 \otimes e_i).
$$
Let us write $\eta = \sum_{j=1}^{\ell}d_j(1 \otimes f_j)$ with $d_j \in C$. As $C \otimes_AA^{\oplus \ell} \cong C^{\oplus \ell}$, the equalities
$$
\sum_{i=1}^n (zc_i)(1 \otimes e_i) = \sum_{j=1}^{\ell}d_j(1 \otimes \varphi(f_j)) = \sum_{j=1}^{\ell}d_j\left[1 \otimes \left(\sum_{i=1}^{n}a_{ij} e_i \right)\right] = \sum_{i=1}^n \left(\sum_{j=1}^{\ell} a_{ij}d_j\right) (1 \otimes e_i)
$$
yield that 
$$
zc_i = \sum_{j=1}^{\ell} a_{ij}d_j
$$
for every $1 \le i \le n$. Hence, in $C \otimes_A C$, we have 
$$
\xi = \sum_{i=1}^n c_i (1 \otimes b_i) = \sum_{i=1}^n\left(\sum_{j=1}^{\ell} a_{ij}d_j\right)(1 \otimes b_i) =  \sum_{j=1}^{\ell}\left(\sum_{i=1}^{n} b_{ij}d_j\right)(1 \otimes b_i)  
$$
where $b_{ij} = a_{ij} z^{-1} \in \m z^{-1}=A_1$. Therefore, in $C \otimes_{A_1} C$, we obtain
$$
\alpha(\xi) = \sum_{j=1}^{\ell}\left(\sum_{i=1}^{n} (d_j \otimes b_{ij} b_i)\right) = \sum_{j=1}^{\ell}\left[ d_j \otimes \left(\sum_{i=1}^n b_{ij}b_i \right) \right] = 0
$$
where the last equality comes from the fact that
$$
\sum_{i=1}^n b_{ij}b_i = \frac{1}{z} \left(\sum_{i=1}^n a_{ij}b_i\right) =  \frac{1}{z} \varepsilon(\varphi (f_j)) =0.
$$
This completes the proof.
\end{proof}

Consequently we have the following. 
 
\begin{thm}[{cf. \cite[Proposition 4.5, Theorem 4.6]{L}}]\label{4.3}
Let $A$ be a Noetherian semi-local ring such that $A_M$ is a one-dimensional Cohen-Macaulay local ring for every $M \in \Max A$. Then $A$ is strictly closed in $\overline{A}$ if and only if $A$ is an Arf ring. 
\end{thm}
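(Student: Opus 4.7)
The forward implication (strictly closed $\Rightarrow$ Arf) requires no new work: this is Lipman's Proposition 4.5 in \cite{L}, whose proof makes no use of a field. For the converse, the plan is to replay Lipman's inductive argument for \cite[Theorem 4.6]{L} essentially verbatim, substituting the preceding lemma for his original Lemma 4.7, which was the sole step at which the assumption ``$A$ contains a field'' was invoked.

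First I would reduce to the local case via Proposition 4.2, applicable because a one-dimensional Cohen-Macaulay ring satisfies $(S_1)$ and because the Arf property localizes. So assume $(A,\m)$ is one-dimensional, Cohen-Macaulay, and Arf, and induct on $n=\ell_A(\overline{A}/A)$; the base case $n=0$ is trivial. For the inductive step, the Arf hypothesis produces $z\in\m$ with $\m^2=z\m$, so the first blow-up $A_1=A^{\m}=\m z^{-1}$ is again Arf by \cite[Theorem 2.2]{L}, and $\ell_{A_1}(\overline{A}/A_1)<n$. Induction gives $A_1^{*}=A_1$ inside $\overline{A_1}=\overline{A}$. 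Now take $x\in A^{*}$. The canonical surjection $\overline{A}\otimes_A\overline{A}\twoheadrightarrow\overline{A}\otimes_{A_1}\overline{A}$ sends $x\otimes 1-1\otimes x$ to itself, which is zero in the source and hence zero in the target, so $x\in A_1^{*}=A_1$; write $x=a/z$ with $a\in\m$. It remains to show $a\in zA$, i.e., $x\in A$. Here the preceding lemma enters: taking $C=\overline{A}$, it identifies the kernel of the surjection above with $(0):_{\overline{A}\otimes_A\overline{A}}z$. Plugging this identification into Lipman's concluding manipulation in \cite[Theorem 4.6]{L} should yield the desired containment $a\in zA$.

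The principal obstacle is the final step: Lipman originally closed the argument using the algebra structure provided by his coefficient field, and one must verify that the more abstract description of the kernel furnished by the preceding lemma—namely, the equivalence of vanishing under the surjection and annihilation by $z$ in $\overline{A}\otimes_A\overline{A}$—is already enough to run his diagram chase. Since the preceding lemma was crafted precisely to replace Lipman's field-dependent identification of the same kernel, the transfer should be mechanical; the subtlety lies in confirming that no hidden use of a splitting $A\to k\to A$ persists in his computation. The remaining ingredients—Proposition 4.2 for the reduction to local, \cite[Theorem 2.2]{L} for the Arf-ness of $A_1$, and the induction on $\ell_A(\overline{A}/A)$—are entirely standard.
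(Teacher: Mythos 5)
Your proposal takes exactly the paper's route: the paper's sole new ingredient for Theorem \ref{4.3} is the preceding field-free version of Lipman's Lemma 4.7, after which it simply invokes Lipman's proof of \cite[Prop.\ 4.5, Thm.\ 4.6]{L} (``Consequently we have the following''), which is what you propose to do. Your worry about a hidden use of the coefficient field elsewhere in Lipman's Theorem 4.6 is unnecessary, since the paper explicitly records that Lemma 4.7 was the unique step where the field hypothesis was invoked.
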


In our more general context, the following still holds.

\begin{thm}\label{Zariski-Lipman}
Let $A$ be a Noetherian ring. Suppose that $A$ satisfies $(S_2)$. Then the following conditions are equivalent.
\begin{enumerate}[$(1)$]
\item $A$ is strictly closed in $\overline{A}$.
\item $A$ is a weakly Arf ring and $A_P$ is an Arf ring for every $P \in \Spec A$ with $\height_AP=1$.
\end{enumerate}
\end{thm}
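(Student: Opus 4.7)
The plan is to localize at primes in $X_1(A) = \{P \in \Spec A \mid \depth A_P \le 1\}$ and reduce to the codimension-one case already settled by Theorem \ref{4.3}. Because $A$ satisfies $(S_2)$, a prime $P$ lies in $X_1(A)$ if and only if $\height_A P \le 1$: primes of height $\ge 2$ automatically have depth $\ge 2$, while the inequality $\depth A_P \ge \min\{2,\dim A_P\}$ forces the height-one localizations to be one-dimensional Cohen-Macaulay local rings and the height-zero localizations to be Artinian local rings.

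For $(1) \Rightarrow (2)$, I would first apply Proposition \ref{4.2} to obtain that $A_P$ is strictly closed in $\overline{A_P}$ for every $P \in X_1(A)$. When $\dim A_P = 0$ the ring $A_P$ is Artinian local, hence $\rmQ(A_P) = A_P = \overline{A_P}$, so $A_P$ is trivially weakly Arf; when $\dim A_P = 1$, Theorem \ref{4.3} applied to the local ring $A_P$ shows that $A_P$ is an Arf ring, in particular weakly Arf. Invoking Theorem \ref{2.7}, which is available since $(S_2)$ implies $(S_1)$, I conclude that $A$ itself is weakly Arf. The statement about height-one primes is then the direct translation of Theorem \ref{4.3}: at such $P$, $A_P$ is a one-dimensional Cohen-Macaulay local ring that is strictly closed in its integral closure, hence Arf.

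For $(2) \Rightarrow (1)$, I would again use Proposition \ref{4.2} and check strict closedness at each $P \in X_1(A)$. The height-zero case is immediate since $\overline{A_P} = A_P$. For a prime $P$ with $\height_A P = 1$, the localization $A_P$ is one-dimensional Cohen-Macaulay, and by hypothesis it is Arf; Theorem \ref{4.3} applied to $A_P$ then yields that $A_P$ is strictly closed in $\overline{A_P}$, as required.

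The argument is essentially a bookkeeping exercise once the three tools above are in place; there is no genuine obstacle. The only point requiring care is to verify the structure of $X_1(A)$ under $(S_2)$ and to dispose of the $\dim A_P = 0$ case by hand, since Theorem \ref{4.3} is formulated for one-dimensional Cohen-Macaulay localizations and does not literally cover the Artinian situation.
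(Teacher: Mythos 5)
Your proof is correct, and for the implication $(2) \Rightarrow (1)$ it is essentially the paper's argument: reduce via Proposition~\ref{4.2} to $P \in X_1(A)$, dispose of the depth-zero case by hand, and invoke Theorem~\ref{4.3} in the one-dimensional Cohen-Macaulay case.

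For $(1) \Rightarrow (2)$, however, you take a genuinely different route to the weakly Arf claim. You localize, establish the weakly Arf property of each $A_P$ with $P \in X_1(A)$, and then reassemble via Theorem~\ref{2.7}. The paper instead proves that $A$ is weakly Arf \emph{directly}, without any localization: given $x,y,z \in A$ with $x \in W(A)$ and $y/x, z/x \in \overline{A}$, one computes in $\overline{A} \otimes_A \overline{A}$ that
$$\frac{yz}{x} \otimes 1 = \frac{y}{x} \otimes \left(x \cdot \frac{z}{x}\right) = \left(\frac{y}{x} \cdot x\right) \otimes \frac{z}{x} = 1 \otimes \frac{yz}{x},$$
so $yz/x \in A^\ast = A$. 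This observation is shorter, does not require the $(S_2)$ hypothesis or Theorem~\ref{2.7}, and in fact shows that strict closedness implies the weakly Arf property for an arbitrary commutative ring. Your route is sound but spends more machinery on this step; it does have the minor advantage of treating the weakly Arf and Arf parts of $(2)$ in a uniform way, while the paper handles them separately (the Arf part via Proposition~\ref{4.2} together with Lipman's result that strict closedness implies Arf).
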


\begin{proof}
$(1) \Rightarrow (2)$ Let $x,y,z \in A$ with $x \in W(A)$ such that $y/x, z/x \in \overline{A}$. In the ring $\overline{A}\otimes_A \overline{A}$, we have 
$$
\frac{yz}{x} \otimes 1=\frac{y}{x} \otimes \left(x \cdot \frac{z}{x}\right)=\left(\frac{y}{x}\cdot x\right) \otimes  \frac{z}{x}=1 \otimes \frac{yz}{x}.
$$
If $A$ is strictly closed, then $yz/x \in A$. Hence $A$ is a weakly Arf ring. Let us make sure of the Arf property for the local ring $A_P$. For each $P \in \Spec A$ with $\height_AP = 1$, $A_P$ is strictly closed; see Proposition \ref{4.2}. Hence, by \cite[Proposition 4.5]{L}, we conclude that $A_P$ is an Arf ring.

$(2) \Rightarrow (1)$ By Proposition \ref{4.2}, it is enough to show that $A_P$ is strictly closed for every $P \in X_1(A)$. As $P \in X_1(A)$, we have $\depth A_P \le 1$. If $\depth A_P = 0$, then it coincides with its total ring of fractions. This yields that $A_P$ is strictly closed. We assume  that $\depth A_P=1$. Then, because $A$ satisfies $(S_2)$, we see that $A_P$ is a Cohen-Macaulay local ring with $\dim A_P=1$. Hence, by Theorem \ref{4.3}, $A_P$ is strictly closed. This completes the proof.
\end{proof}

Recall that, for a given one-dimensional Cohen-Macaulay local ring $(A, \m)$, if $A$ is an Arf ring, then $A$ is weakly Arf. The converse holds if every integrally closed $\m$-primary ideal $I$ in $A$ contains a principal reduction, i.e., there exists $a \in I$ such that $(a)$ is a reduction of $I$ (see Definitions \ref{2.2} and \ref{2.3}). In particular, the notions of Arf ring and  weakly Arf ring coincide, if the ring possesses an infinite residue class field.
Hence we get the following. 

\begin{cor}\label{4.4}
Let $A$ be a Noetherian ring. Suppose that $A$ satisfies $(S_2)$ and one of the following conditions:
\begin{enumerate}[$(1)$]
\item $A$ contains an infinite field.
\item $\height_AM \ge 2$ for every $M \in \Max A$.
\end{enumerate}
Then $A$ is strictly closed in $\overline{A}$ if and only if $A$ is a weakly Arf ring, or, equivalently $A_P$ is an Arf ring for every $P \in \Spec A$ with $\height_AP=1$. 
\end{cor}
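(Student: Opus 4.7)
The plan is to pivot off Theorem~\ref{Zariski-Lipman}, which under the $(S_2)$ hypothesis already identifies the strictly closed rings with those that are simultaneously weakly Arf and Arf at every height one prime. So the task reduces to showing that hypotheses (1) or (2) make the two conditions ``$A$ is weakly Arf'' and ``$A_P$ is Arf for every height one $P$'' equivalent; once this is done, the triple equivalence stated in the corollary follows at once.

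For the direction \emph{weakly Arf on $A$ implies $A_P$ Arf at every height one prime}, I would first localize. Since the $(S_2)$ condition forces $\depth A_P = 1$ whenever $\height_A P = 1$, we have $P \in X_1(A)$, and Theorem~\ref{2.7} then yields that $A_P$ is weakly Arf. As $A_P$ is a one-dimensional Cohen-Macaulay local ring, the remark preceding the corollary shows that it will actually be Arf as soon as its residue field $A_P/PA_P$ is infinite. Under hypothesis (1) this is automatic, since $A_P/PA_P$ is an extension of the ambient infinite field. Under hypothesis (2), a height one prime $P$ cannot be maximal, so $A/P$ is a Noetherian integral domain of positive Krull dimension, hence infinite (a finite integral domain is a field, of dimension zero), and therefore $A_P/PA_P = \rmQ(A/P)$ is infinite.

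For the reverse direction \emph{$A_P$ Arf at every height one prime implies $A$ weakly Arf}, I would invoke Theorem~\ref{2.7} directly. For each $P \in X_1(A)$, either $\height_A P = 0$, in which case $A_P$ equals its total ring of fractions and is trivially weakly Arf, or $\height_A P = 1$, in which case $A_P$ is Arf and hence weakly Arf by assumption. Theorem~\ref{2.7} then lifts this to $A$ itself.

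I do not foresee any real obstacle: the proof is essentially bookkeeping built on top of Theorems~\ref{Zariski-Lipman} and~\ref{2.7}, and the only slightly subtle step is the infiniteness of the residue field at height one primes under hypothesis (2), which reduces to the standard fact that a Noetherian integral domain of positive Krull dimension is infinite.
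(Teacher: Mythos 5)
Your proof is correct and takes essentially the same route as the paper: reduce to Theorem~\ref{Zariski-Lipman}, use Theorem~\ref{2.7} to relate the weakly Arf property of $A$ to that of the localizations at $X_1(A)$, and under hypothesis (2) argue that a height-one prime is non-maximal so $A/P$ (hence the residue field of $A_P$) is infinite, giving Arf $=$ weakly Arf for $A_P$. You simply spell out a few steps (the reverse direction, the infiniteness argument) that the paper leaves implicit.
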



\begin{proof}
By Theorem \ref{Zariski-Lipman}, it is enough to show that $A$ is weakly Arf if and only if $A_P$ is an Arf ring for every $P \in \Spec A$ with $\height_AP=1$. Recall that the weakly Arf property of $A$ and that of $A_P$ for every $P \in X_1(A)$ coincide; see Theorem \ref{2.7}. Hence 
the assertion follows, if $A$ contains an infinite field.
Suppose now that $\height_AM \ge 2$ for every $M \in \Max A$ and that $A$ is a weakly Arf ring. Let $P \in \Spec A$ with $\height_AP=1$. Choose a maximal ideal $M$ in $A$ such that $P \subsetneq M$. Then $\dim A/P \ge 1$ and hence the cardinality of $A/P$ is infinite. Thus $A_P$ is a one-dimensional Cohen-Macaulay local ring with infinite residue class field and that is weakly Arf. Hence $A_P$ is an Arf ring. 
\end{proof}

In Corollary \ref{4.4}, we cannot remove the assumption that $\height_AM \ge 2$ for every $M \in \Max A$ unless $A$ contains an infinite field as we show in the following.

\begin{rem}
For $i=1, 2$, let $A_i$ be a Cohen-Macaulay local ring with $\dim A_i = i$. 
Then $A = A_1 \times A_2$ is a Cohen-Macaulay ring with $\dim A=2$. If $A_1$ and $A_2$ are weakly Arf rings, then so is $A$; see Proposition \ref{3.8}. Moreover, the strict closedness of $A$ leads us to obtain that both $A_1$ and $A_2$ are strictly closed in $\overline{A_1}$ and $\overline{A_1}$, respectively. Therefore, if $A_1$ is not strictly closed, then $A$ cannot be strictly closed in $\overline{A}$ even if $A_2$ is strictly closed. 
\end{rem}


\begin{cor}
Let $(A, \m)$ be a Noetherian local ring with $\dim A \ge 2$. Suppose that $A$ satisfies $(S_2)$. Then $A$ is strictly closed in $\overline{A}$ if and only if $A$ is a weakly Arf ring. 
\end{cor}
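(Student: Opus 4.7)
The plan is to deduce this statement directly from Corollary \ref{4.4}, which characterizes strict closedness in $\overline{A}$ in terms of the weakly Arf property under the hypothesis that $A$ satisfies $(S_2)$ together with one of two additional conditions: either $A$ contains an infinite field, or $\height_A M \ge 2$ for every maximal ideal $M$ of $A$.

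I would simply observe that our local hypothesis forces the second of these two conditions. Namely, since $(A,\m)$ is local, we have $\Max A = \{\m\}$, and hence
$$
\height_A \m \;=\; \dim A \;\ge\; 2
$$
by assumption. Thus the maximal-height hypothesis of Corollary \ref{4.4} holds trivially, and combined with the assumption that $A$ satisfies $(S_2)$ the corollary applies verbatim.

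Applying Corollary \ref{4.4} then yields that $A$ is strictly closed in $\overline{A}$ if and only if $A$ is a weakly Arf ring (equivalently, $A_P$ is an Arf ring for every height-one prime $P$). Since this is a bookkeeping consequence rather than a new result, there is no real obstacle; the only content is noticing that locality together with $\dim A \ge 2$ is precisely what supplies the $\height_A M \ge 2$ hypothesis of Corollary \ref{4.4}, so no appeal to an infinite residue field is needed.
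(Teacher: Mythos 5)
Your proposal is correct and is exactly the intended deduction: the paper states this corollary without proof because, as you observe, a local ring $(A,\m)$ with $\dim A \ge 2$ automatically satisfies condition $(2)$ of Corollary \ref{4.4} (namely $\height_A M \ge 2$ for every $M \in \Max A$, since $\Max A = \{\m\}$ and $\height_A\m = \dim A$), so the statement is an immediate specialization.
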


In the rest of this section, we explore the weakly Arf property for  polynomial extensions.

\begin{lem}\label{4.5}
Let $\rmQ(A)[X]$ be the polynomial ring over $\rmQ(A)$ and set $B = A[X]$. Suppose $\overline{B}  \subseteq \rmQ(A)[X]$ $($e.g., $A$ is an integral domain$)$. Then $B^* = (A^*)[X]$. Therefore, $B$ is strictly closed in $\overline{B}$ if and only if $A$ is strictly closed in $\overline{A}$. 
\end{lem}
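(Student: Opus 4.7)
The plan is first to identify the integral closure $\overline{B}$ explicitly, then compute the tensor product $\overline{B}\otimes_{B}\overline{B}$, and finally read off the strict closure of $B$ coefficient-wise.

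First I would show that the hypothesis $\overline{B}\subseteq\rmQ(A)[X]$ actually forces $\overline{B}=\overline{A}[X]$. The inclusion $\overline{A}[X]\subseteq\overline{B}$ is automatic because $\overline{A}$ is integral over $A$ and $X$ is tautologically integral over $A[X]$. Conversely, given $f=\sum a_i X^i\in\rmQ(A)[X]$ satisfying a monic equation with coefficients in $A[X]=B$, a standard Gauss-type argument (clearing denominators and tracking the leading coefficient in each degree) shows that every $a_i\in\rmQ(A)$ is integral over $A$, so $f\in\overline{A}[X]$.

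Next I would compute $\overline{B}\otimes_{B}\overline{B}$. Using $\overline{A}[X]=\overline{A}\otimes_{A}A[X]$ and associativity of the tensor product,
\[
\overline{B}\otimes_{B}\overline{B}
=\bigl(\overline{A}\otimes_{A}A[X]\bigr)\otimes_{A[X]}\bigl(\overline{A}\otimes_{A}A[X]\bigr)
\cong\bigl(\overline{A}\otimes_{A}\overline{A}\bigr)\otimes_{A}A[X]
\cong\bigl(\overline{A}\otimes_{A}\overline{A}\bigr)[X],
\]
and under this identification a polynomial $f=\sum a_i X^i\in\overline{A}[X]=\overline{B}$ satisfies
\[
f\otimes 1-1\otimes f=\sum_{i}\bigl(a_i\otimes 1-1\otimes a_i\bigr)X^i
\]
in $\bigl(\overline{A}\otimes_{A}\overline{A}\bigr)[X]$. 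Since the polynomial ring on the right is a free module over $\overline{A}\otimes_A\overline{A}$ on $\{X^i\}_{i\geq 0}$, this difference vanishes if and only if $a_i\otimes 1=1\otimes a_i$ in $\overline{A}\otimes_{A}\overline{A}$ for every $i$, i.e.\ if and only if each $a_i\in A^{\ast}$. Thus $B^{\ast}=A^{\ast}[X]$.

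For the last sentence, the equivalence is immediate by comparing coefficients: $B=B^{\ast}$ reads $A[X]=A^{\ast}[X]$, and since $A\subseteq A^{\ast}$ sit inside $\rmQ(A)$, restriction to constant polynomials forces $A=A^{\ast}$; the converse is obvious. I do not expect any real obstacle here---the only point that requires care is the first step, verifying $\overline{B}=\overline{A}[X]$ from the hypothesis, but once one has the identification of the tensor product as a polynomial ring the rest is formal.
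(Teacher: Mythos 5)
Your proof takes essentially the same route as the paper's: establish $\overline{B}=\overline{A}[X]$, compute $\overline{B}\otimes_B\overline{B}\cong(\overline{A}\otimes_A\overline{A})[X]$ via the same chain of base-change isomorphisms, and compare coefficients to conclude $B^{*}=A^{*}[X]$, then specialize to constants for the last sentence. The one place you diverge is the justification of $\overline{B}=\overline{A}[X]$: the paper delegates this to a cited proposition of Bass, whereas you invoke a ``Gauss-type argument (clearing denominators and tracking the leading coefficient in each degree)'' that is gestured at rather than carried out. That sketch is not as routine as you suggest --- a naive leading-coefficient induction does not obviously close, because the top-degree contribution to an integral dependence relation $f^{n}+g_{n-1}f^{n-1}+\cdots+g_{0}=0$ need not come from $f^{n}$ alone when the $g_{i}\in A[X]$ have large degree; the usual clean argument (in the domain case, which is the ``e.g.'') instead passes through the normality of $\overline{A}[X]$ and observes that $f\in\rmQ(A)(X)$ is integral over it. Everything after this step --- the tensor-product computation and the coefficientwise identification of $B^{*}$ --- is correct and matches the paper's argument exactly.
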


\begin{proof}
Since $W(A) \subseteq W(B)$, we may consider $\rmQ(A) \subseteq \rmQ(B)$. As $B \subseteq \rmQ(A)[X] \subseteq \rmQ(B)$, $\rmQ(B)$ coincides with the total ring of fractions of $\rmQ(A)[X]$. By \cite[(5.12) Proposition]{Bass}, we see that $\overline{B} = \overline{A}[X]$, so we then have the canonical isomorphisms
\begin{eqnarray*}
\overline{B} \otimes_B \overline{B} &=&  \overline{A}[X] \otimes_{A[X]} \overline{A}[X] \cong (A[X] \otimes_A \overline{A})\otimes_{A[X]} (A[X] \otimes_A \overline{A}) \\
&\cong& A[X] \otimes_A(\overline{A}\otimes_A\overline{A}) \cong (\overline{A}\otimes_A\overline{A})[X].
\end{eqnarray*}
Then the element $\alpha X^n \otimes \beta X^{\ell}$ in $\overline{B} \otimes_B \overline{B}$ corresponds to $X^{n+\ell}\otimes \left(\alpha \otimes \beta\right)$ in $(\overline{A}\otimes_A\overline{A})[X]$ via the above isomorphisms, where $\alpha, \beta \in \overline{A}$ and $n, \ell \ge 0$. Let $\xi \in \overline{B}$. We  write $\xi = \sum_{i=1}^{\ell}\alpha_iX^i$ with $\alpha_i \in \overline{A}$. 
Then
\begin{center}
$\xi \in B^*$ if and only if $\sum_{i=1}^{\ell} X^i \otimes (\alpha_i \otimes 1) = \sum_{i=1}^{\ell} X^i \otimes (1 \otimes \alpha_i)$,
\end{center}
which is equivalent to saying that $\alpha_i \otimes 1 = 1 \otimes \alpha_i$ for all $1 \le i \le \ell$, i.e., $\alpha_i \in A^*$. Hence $B^* = (A^*)[X]$, as desired.
\end{proof}

\begin{cor}\label{polynomial}
Let $A$ be an integral domain. If $A$ is strictly closed in $\overline{A}$, then the polynomial ring $A[X_1, X_2, \ldots, X_n, \ldots]$ is a weakly Arf ring. 
\end{cor}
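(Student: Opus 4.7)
The plan is to reduce to the finite-variable case via Lemma \ref{4.5} together with a finite-support argument on polynomials. First I would apply Lemma \ref{4.5} inductively to deduce that, since $A$ is an integral domain and is strictly closed in $\overline{A}$, each $B_n := A[X_1, X_2, \ldots, X_n]$ is an integral domain and is strictly closed in $\overline{B_n}$. The hypothesis $\overline{B_k} \subseteq \rmQ(B_{k-1})[X_k]$ needed to invoke Lemma \ref{4.5} at step $k$ holds because $B_{k-1}$ is a domain, so the Bass identification $\overline{B_k} = \overline{B_{k-1}}[X_k]$ used in that lemma applies.

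Next I would record the (unconditional) implication ``strictly closed $\Rightarrow$ weakly Arf.'' For any ring $R$, given $x \in W(R)$ and $y/x, z/x \in \overline{R}$, the calculation
\[
\frac{yz}{x} \otimes 1 \;=\; \frac{y}{x} \otimes \left(x \cdot \frac{z}{x}\right) \;=\; \left(\frac{y}{x} \cdot x\right) \otimes \frac{z}{x} \;=\; 1 \otimes \frac{yz}{x}
\]
in $\overline{R} \otimes_R \overline{R}$ shows that $yz/x \in R^\ast$; this is precisely the opening move in the proof of Theorem \ref{Zariski-Lipman}, (1) $\Rightarrow$ (2), and it uses neither the Noetherian nor the $(S_2)$ assumption. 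Hence if $R = R^\ast$, then $R$ is weakly Arf. Applying this to each $B_n$, I obtain that every $B_n$ is weakly Arf.

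It remains to pass to $B := A[X_1, X_2, \ldots]$, which is an integral domain as a directed union of the $B_n$. Let $x \in B \setminus \{0\}$ and $y, z \in B$ with $y/x, z/x \in \overline{B}$. Each of $x$, $y$, $z$ involves only finitely many indeterminates, and each coefficient of a fixed integral equation satisfied by $y/x$ (respectively $z/x$) over $B$ involves only finitely many indeterminates; therefore there is $N \ge 1$ such that $x, y, z \in B_N$ and such that $y/x, z/x$ are integral over $B_N$. Since $y/x, z/x \in \rmQ(B_N)$ and $B_N$ is a domain, this gives $y/x, z/x \in \overline{B_N}$, and because $B_N$ is weakly Arf we conclude $yz/x \in B_N \subseteq B$.

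No step looks genuinely difficult: the inductive application of Lemma \ref{4.5} is routine, the tensor-product identity is a three-line manipulation already implicit in the paper, and the transition from finitely many to countably many indeterminates is a standard finite-support observation. The only mildly delicate point is verifying that the integral equations for $y/x$ and $z/x$ can be taken over some $B_N$, but this is immediate because a polynomial equation has only finitely many coefficients.
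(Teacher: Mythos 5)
Your proposal is correct and follows essentially the same route as the paper: reduce to a finite-variable subring $B_N$ by finite support, invoke Lemma \ref{4.5} inductively to get that each $B_n$ is strictly closed, and use the unconditional implication (strictly closed $\Rightarrow$ weakly Arf) given by the $\overline{R}\otimes_R\overline{R}$ computation. The paper's proof is terser but identical in substance.
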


\begin{proof}
Let $T = A[X_1, X_2, \ldots, X_n, \ldots]$ be the polynomial ring over $A$. 
Let $x, y, z \in T$ such that $x \ne 0$ and $y/x, z/x \in \overline{T}$. Choose an integer $n \gg0$ such that $x, y, z \in T_n$ and $y/x, z/x \in \overline{T_n}$, where $T_n = A[X_1, X_2, \ldots, X_n]$. By Lemma \ref{4.5}, $T_n$ is strictly closed, and hence it is weakly Arf. Hence $yz \in xT_n \subseteq xT$, so that $T$ is a weakly Arf ring. 
\end{proof}

By Corollary \ref{3.2} (4), if the polynomial ring $A[X]$ over a commutative ring $A$ is weakly Arf, then so is $A$. For the converse, we have the following.


\begin{thm}\label{4.6}
Let $A$ be a Noetherian ring. Suppose that $A$ satisfies $(S_2)$, the polynomial ring $\rmQ(A)[X]$ is integrally closed $($e.g., $A$ is an integral domain$)$, and one of the following conditions:
\begin{enumerate}[$(1)$]
\item $A$ contains an infinite field.
\item $\height_AM \ge 2$ for every $M \in \Max A$.
\end{enumerate}
Then $A$ is a weakly Arf ring if and only if so is $A[X]$. 
\end{thm}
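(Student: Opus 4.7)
The ``if'' direction is immediate: if $A[X]$ is a weakly Arf ring, then so is $A$ by Corollary \ref{3.2}~(4). The content of the theorem lies in the ``only if'' direction, and my plan is to pass through the notion of strict closedness rather than try to verify the definition of weakly Arf directly on polynomial expressions.

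Concretely, suppose $A$ is a weakly Arf ring. Because $A$ is Noetherian, satisfies $(S_2)$, and satisfies at least one of (1) and (2), I would apply Corollary \ref{4.4} to deduce that $A$ is strictly closed in $\overline{A}$. The next step is to transport this strict closedness to $B = A[X]$. For this, the hypothesis that $\rmQ(A)[X]$ is integrally closed gives the inclusion $\overline{B} \subseteq \rmQ(A)[X]$ required by Lemma \ref{4.5}, and that lemma then yields strict closedness of $A[X]$ in $\overline{A[X]}$.

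Finally, I would complete the argument by observing that strict closedness \emph{always} forces the weakly Arf property, with no further hypothesis required. This is the tensor identity
\[
\frac{yz}{x}\otimes 1 \;=\; \frac{y}{x}\otimes\left(x\cdot\frac{z}{x}\right) \;=\; \left(\frac{y}{x}\cdot x\right)\otimes \frac{z}{x} \;=\; 1\otimes\frac{yz}{x}
\]
already used in the first step of the proof of Theorem \ref{Zariski-Lipman}: if $x \in W(A[X])$ and $y/x,z/x\in\overline{A[X]}$, then $yz/x \in (A[X])^{\ast} = A[X]$. Applying this to $A[X]$ in place of $A$ yields that $A[X]$ is weakly Arf, which finishes the proof.

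The only delicate point I anticipate is a careful verification that the stated hypothesis ``$\rmQ(A)[X]$ is integrally closed'' really delivers the inclusion $\overline{A[X]} \subseteq \rmQ(A)[X]$ needed in Lemma \ref{4.5} (this is immediate in the integral-domain case flagged by the ``e.g.''). Once this technicality is settled, the proof is a clean chaining of Corollary \ref{3.2}, Corollary \ref{4.4}, Lemma \ref{4.5}, and the strictly-closed-implies-weakly-Arf step inside the proof of Theorem \ref{Zariski-Lipman}, with no new computation required.
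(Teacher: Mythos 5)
Your proposal is correct and follows essentially the same route as the paper's proof: reduce to strict closedness via Corollary \ref{4.4}, transfer strict closedness between $A$ and $A[X]$ via Lemma \ref{4.5} (using $\overline{A[X]} \subseteq \rmQ(A)[X]$), and conclude that $A[X]$ is weakly Arf because strict closedness always implies the weakly Arf property. The only difference is presentational: the paper leaves the final ``strictly closed $\Rightarrow$ weakly Arf'' step implicit in the phrase ``hence the assertion follows from Lemma \ref{4.5},'' whereas you spell it out via the tensor identity from the proof of Theorem \ref{Zariski-Lipman} — a useful clarification, since Corollary \ref{4.4} cannot be applied directly to $A[X]$ (its hypothesis (2) need not pass to the polynomial ring).
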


\begin{proof}
By Corollary \ref{4.4}, $A$ is weakly Arf if and only if it is strictly closed in $\overline{A}$. We set $B = A[X]$. Then $\overline{B} \subseteq \rmQ(A)[X]$. Hence the assertion follows from Lemma \ref{4.5}.
\end{proof}

To sum up this kind of arguments, we finally get the following.

\begin{cor}\label{4.7}
Let $A$ be a Noetherian integral domain. Suppose that $A$ satisfies $(S_2)$ and one of the following conditions:
\begin{enumerate}[$(1)$]
\item $A$ contains an infinite field.
\item $\height_AM \ge 2$ for every $M \in \Max A$.
\end{enumerate}
Then $A$ is a weakly Arf ring if and only if so is the polynomial ring $A[X_1, X_2, \ldots, X_n]$ for every $n \ge 1$. 
\end{cor}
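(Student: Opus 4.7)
The plan is to deduce both implications from earlier results without iterating Theorem \ref{4.6}.

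For the forward direction, suppose $A$ is weakly Arf. Because $A$ is a Noetherian integral domain satisfying $(S_2)$ together with hypothesis (1) or (2), Corollary \ref{4.4} gives that $A$ is strictly closed in $\overline{A}$. Since $A$ is a domain, the technical inclusion $\overline{A[X]}\subseteq\rmQ(A)[X]$ required by Lemma \ref{4.5} holds automatically, so iterating Lemma \ref{4.5} in the variables $X_1,\ldots,X_n$ shows that $A[X_1,X_2,\ldots,X_n]$ is strictly closed in its own integral closure. Strict closedness always forces the weakly Arf property---via the tensor identity $\frac{yz}{x}\otimes 1 = 1 \otimes \frac{yz}{x}$ used at the opening of the proof of Theorem \ref{Zariski-Lipman}---so $A[X_1,\ldots,X_n]$ is weakly Arf for every $n\geq 1$. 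Equivalently, this direction can be packaged as a single appeal to Corollary \ref{polynomial}.

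The converse direction is immediate: Corollary \ref{3.2}(4) applied with $R=A$ yields that $A$ inherits the weakly Arf property from $A[X_1,\ldots,X_n]$, and it requires no additional hypothesis on $A$.

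The point to watch---rather than a genuine obstacle---is the choice of route for the forward direction. An inductive proof using Theorem \ref{4.6} would compel one to verify that hypothesis (1) or (2) of that theorem passes from $A[X_1,\ldots,X_{i-1}]$ to $A[X_1,\ldots,X_i]$ at each step. The transfer of the domain property, $(S_2)$, and hypothesis (1) is routine, but hypothesis (2), concerning heights of maximal ideals, would require an additional dimension-theoretic argument for polynomial rings over Noetherian domains that need not be Jacobson. Going through strict closedness via Corollary \ref{4.4} and Lemma \ref{4.5} sidesteps this entirely, since Lemma \ref{4.5} makes no assumption on maximal ideals.
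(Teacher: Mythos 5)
Your proof is correct and takes essentially the route the paper intends: pass to strict closedness via Corollary~\ref{4.4}, iterate Lemma~\ref{4.5} to show each $A[X_1,\ldots,X_n]$ is strictly closed (hence weakly Arf), and use Corollary~\ref{3.2}(4) for the converse. One small imprecision: Corollary~\ref{polynomial} as literally stated concludes only that the \emph{infinite} polynomial ring is weakly Arf, so it is not quite a single black-box appeal for each finite $n$; what you actually need is the intermediate step of its proof (each $T_n$ strictly closed), which your explicit iteration of Lemma~\ref{4.5} already supplies. Your cautionary remark about inducting directly on Theorem~\ref{4.6} is apt -- carrying hypothesis (2) from $A$ to $A[X]$ would require ruling out height-one maximal ideals of $A[X]$ contracting to $(0)$, i.e.\ observing that a Noetherian domain with all maximal ideals of height $\ge 2$ is not a Goldman domain; the strict-closedness route via Lemma~\ref{4.5} sidesteps that extra dimension-theoretic check entirely.
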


As we show next, the `only if' part of Theorem \ref{4.6} is false in general.

\begin{prop}\label{4.8}
Let $(A, \m)$ be an Artinian local ring and let $B = A[X]$ be the polynomial ring over $A$. Then $B$ is a weakly Arf ring if and only if $\m^2 = (0)$. Hence, if $\m^2 \ne (0)$, then $B$ is not a weakly Arf ring, but $A$ is weakly Arf. 
\end{prop}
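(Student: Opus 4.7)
My plan is to reduce modulo the nilradical of $B$. Since $A$ is Artinian local, $\m$ equals the nilradical of $A$, so $\mathrm{nil}(B) = \m[X]$ and $B/\m[X] = k[X]$ where $k = A/\m$. The ring $k[X]$ is a PID, hence integrally closed in $k(X)$, and this is the fact that the ``if'' direction will exploit; the hypothesis $\m^{2} = (0)$ will then ensure that any two nilpotent error terms in $B$ have vanishing product. Before either direction, I would record that the last sentence of the proposition is automatic: since $A$ is Artinian local, $\depth A = 0$ and $\mathrm{Q}(A) = A$, so the weakly Arf condition on $A$ is vacuous.

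For the ``if'' direction, assume $\m^{2} = (0)$. My first step is to observe that every element of $\m[X]$ is nilpotent and hence a zero-divisor in $B$, so $W(B) \cap \m[X] = \emptyset$; this lets the canonical surjection $\pi \colon B \twoheadrightarrow k[X]$ extend to a ring homomorphism $\pi \colon \mathrm{Q}(B) \to k(X)$. Since $k[X]$ is integrally closed, applying $\pi$ to any integrality relation over $B$ yields one over $k[X]$, giving $\pi(\overline{B}) \subseteq k[X]$. Given $x \in W(B)$ and $y, z \in B$ with $y/x, z/x \in \overline{B}$, I then deduce that $\pi(x)$ divides both $\pi(y)$ and $\pi(z)$ in $k[X]$, and by lifting the quotients to elements $P, Q \in B$ I obtain $y = xP + n$ and $z = xQ + n'$ with $n, n' \in \m[X]$. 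Expanding,
\[
yz = x^{2} PQ + x(Pn' + Qn) + nn',
\]
and the hypothesis $\m^{2} = (0)$ forces $nn' \in \m^{2}[X] = (0)$, so $yz \in xB$ and hence $yz/x \in B$.

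For the ``only if'' direction I would argue by contrapositive: assume $\m^{2} \ne (0)$ and choose $m \in \m$ with $m^{2} \ne 0$. The witnesses are $x = X$ and $y = z = m$. Clearly $X \in W(B)$, and since $m$ is nilpotent, say $m^{n} = 0$, we have $(m/X)^{n} = 0$ in $\mathrm{Q}(B)$, so $m/X$ satisfies the monic relation $T^{n} = 0$ over $B$ and lies in $\overline{B}$. If $B$ were weakly Arf, I would then have $m^{2}/X = yz/x \in B$, hence $m^{2} \in XB$; but $m^{2}$ is a nonzero constant polynomial while every nonzero element of $XB$ has $X$-degree at least one, a contradiction.

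The only real subtlety is in the ``if'' direction, namely verifying that $\pi$ descends to $\mathrm{Q}(B)$ and carries $\overline{B}$ into $k[X]$. Once that is set up, the entire argument rests on the single vanishing $nn' = 0$ provided by $\m^{2} = (0)$.
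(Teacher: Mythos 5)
Your proof is correct, and it follows a somewhat different route than the paper's. The paper works at the ideal level: it sets $I=\overline{fB}$ for $f \in W(B)$, shows $\m B \subseteq I$ (since $\m$ is nilpotent) and that $I C = fC$ in $C=B/\m B \cong k[X]$ (since $fC$ is integrally closed there), deduces $I = fB + \m B$ and hence $I^2 = fI + \m^2 B$, and then invokes Theorem \ref{2.4}, both to conclude weakly Arf from $I^2 = fI$ when $\m^2 = 0$, and, in the converse, to force $I^2 = XI$ and thereby $\m^2 \subseteq XI$. You instead work directly with the defining condition, pushing $\overline{B}$ into $k[X]$ via the reduction map $\pi$ and writing $y = xP + n$, $z = xQ + n'$ with $n, n' \in \m[X]$; the hypothesis $\m^2 = (0)$ kills the term $nn'$ so that $yz \in xB$. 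For the converse you give the explicit witnesses $x = X$, $y = z = m$ with $m^2 \ne 0$. Both proofs rest on the same structural observation — $B$ is $k[X]$ plus nilpotent noise from $\m[X]$, and $\m^2 = (0)$ makes that noise vanish under multiplication — but yours avoids the detour through Theorem \ref{2.4} entirely and is element-wise rather than ideal-theoretic, which makes it a bit more self-contained, while the paper's version makes the connection to the stability of integrally closed principal ideals (the theme of the section) more visible.

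One very small stylistic remark: in the last step of the converse you should make explicit that a nonzero \emph{constant} such as $m^2$ cannot lie in $XB$ because every element of $XB$ has vanishing constant term — which you essentially do, but phrased as ``$X$-degree at least one,'' which is slightly off since $XB$ also contains $0$. The intended claim (nonzero constants are not in $XB$) is of course correct.
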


\begin{proof}
Let $f \in W(B)$. Set $I = \overline{fB}$. Then one has $\m B \subseteq I$, as $\m$ is nilpotent. We set $C = B/\fkm B \cong (A/\fkm)[X]$. Because $fB$ is a reduction of $I$, we then have 
$$
IC \subseteq \overline{fC} = fC
$$
whence $IC = fC$. Therefore, $I = fB + \m B$. Since $I^2 = fI + \m^2 B$, we get $I^2 = fI$ provided $\m^2 = (0)$. Hence $B$ is a weakly Arf ring. Conversely, suppose that $B$ is weakly Arf. Then, by taking $f=X$ and applying the above argument, we get $I^2 = XI + \m^2 B$. Since $I \in \Lambda(B)$, Theorem \ref{2.4} shows that $I^2 = XI$. Hence 
 $\m^2 \subseteq XI$, i.e., $\m^2 =(0)$.
\end{proof}

As an application of Theorem \ref{4.6}, we obtain the following results related to the core subalgebras in the polynomial ring. We will use Proposition \ref{4.9} in Section \ref{sec8}.

\begin{prop}\label{4.9}
Let $S=k[t]$ be the polynomial ring over a field $k$ and let $k \subseteq R \subseteq S$ be an intermediate ring such that $t^nS \subseteq R$ for some $n >0$. Then $R$ is a weakly Arf ring if and only if $R_{\fkm}$ is an Arf ring, where $\fkm = tS \cap R \in \Max R$.
\end{prop}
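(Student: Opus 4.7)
The plan is to identify the localizations of $R$ at maximal ideals other than $\fkm$ as DVRs (hence trivially Arf), reduce the weakly Arf condition on $R$ to the single local ring $R_\fkm$ via Theorem~\ref{2.7}, and then upgrade weakly Arf to Arf at $R_\fkm$ by establishing analytic irreducibility.

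First, I would record the basic ring-theoretic structure. Since $t^n S \subseteq R \subseteq S$ and $S/t^n S \cong k^{\oplus n}$, the $k$-vector space $S/R$ is finite-dimensional, so $S$ is a module-finite extension of $R$; Eakin--Nagata then forces $R$ to be Noetherian. Because $t = t^{n+1}/t^n \in \rmQ(R)$, we have $\rmQ(R) = k(t) = \rmQ(S)$, and since $S$ is integrally closed, $\overline{R} = S$. Thus $R$ is a one-dimensional Noetherian domain. The key structural point is that $\fkm$ is the unique maximal ideal of $R$ containing $t^n$: any $N \in \Max R$ with $t^n \in N$ lifts to a maximal ideal $M$ of $S$ with $t^n \in M$, forcing $M = tS$ and hence $N = tS \cap R = \fkm$. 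Conversely, for $N \in \Max R$ with $N \neq \fkm$, the element $t^n$ is a unit in $R_N$, so $t = t^{n+1}(t^n)^{-1} \in R_N$ and $S \subseteq R_N \subseteq k(t)$; since $R_N$ is a local overring of the PID $S$, we obtain $R_N = S_M$ for the unique $M \in \Max S$ lying over $N$, hence a DVR (and a fortiori Arf).

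Second, applying Theorem~\ref{2.7} to $R$ (a domain, hence satisfying $(S_1)$) reduces the weakly Arf condition to checking it at each $P \in X_1(R) = \{(0)\} \cup \Max R$. Since $R_{(0)} = k(t)$ and each $R_N$ with $N \in \Max R \setminus \{\fkm\}$ is a DVR, all of these are trivially weakly Arf, so $R$ is weakly Arf if and only if $R_\fkm$ is weakly Arf. It remains to pass from weakly Arf to Arf at $R_\fkm$, which is the main obstacle since $k$ need not be infinite; I would handle it through analytic irreducibility, invoking the observation from the introduction that the notions of weakly Arf and Arf coincide for a one-dimensional Cohen--Macaulay local ring that is analytically irreducible.

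Third, to establish analytic irreducibility of $R_\fkm$, I would observe that $tS$ is the unique maximal ideal of $S$ over $\fkm$, so the module-finite integral closure is $\overline{R_\fkm} = S \otimes_R R_\fkm = S_{tS}$, a DVR. The quotient $\overline{R_\fkm}/R_\fkm$ is a finitely generated $R_\fkm$-torsion module, hence of finite length; completing the exact sequence $0 \to R_\fkm \to \overline{R_\fkm} \to \overline{R_\fkm}/R_\fkm \to 0$ yields an injection $\widehat{R_\fkm} \hookrightarrow \widehat{\overline{R_\fkm}} \cong k[[t]]$, so $\widehat{R_\fkm}$ is a domain, and the desired equivalence follows.
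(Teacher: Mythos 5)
Your proof is correct, and it differs from the paper's in one genuine respect: the reverse implication. For that direction the paper routes through strict closure — if $R_\fkm$ is Arf then $R_\fkm$ is strictly closed by Lipman's theorem, $R_P$ is regular (hence strictly closed) for every $P \ne \fkm$ because the conductor $R:S \supseteq t^nS$ escapes $P$, so Proposition~\ref{4.2} gives that $R$ is strictly closed, which in turn implies $R$ is weakly Arf. You instead observe that $R_\fkm$ Arf implies $R_\fkm$ weakly Arf and then invoke the $(3) \Rightarrow (1)$ direction of Theorem~\ref{2.7} directly, bypassing strict closure entirely. Both are valid: your route is more self-contained, using a single appeal to Theorem~\ref{2.7} for both directions, while the paper's is consistent with the emphasis on strict closure that runs through Sections~\ref{sec4}, \ref{sec7}, and \ref{sec8} and reuses Proposition~\ref{4.2}. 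You also spell out the analytic irreducibility step (the injection $\widehat{R_\fkm} \hookrightarrow k[[t]]$ obtained by completing $0 \to R_\fkm \to \overline{R_\fkm} \to \overline{R_\fkm}/R_\fkm \to 0$), which the paper leaves to the reader after the terse remark that $R_\fkm \subseteq S_N$ forces every ideal of $R_\fkm$ to have a principal reduction. A minor simplification you could adopt from the paper: rather than identifying $R_N$ for $N \ne \fkm$ as a local overring of the PID $S$, note directly that the conductor $R:S \supseteq t^nS$ is not contained in $N$, so $R_N = S_N$ is a DVR.
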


\begin{proof}
We denote by $N = tS$ the maximal ideal of $S$. Since $R_{\fkm} \subseteq S_{\fkm} = S_{N}$ and every ideal of $R_{\m}$ contains a principal reduction, $R_\fkm$ is an Arf ring if and only if $R_\fkm$ is weakly Arf. Hence, if $R$ is a weakly Arf ring, then the local ring $R_{\m}$ is Arf; see Theorem \ref{2.7}.  
Conversely, if $R_{\m}$ is an Arf ring, then it is strictly closed by \cite[Theorem 4.6]{L}. Note that, for each $P \in \Spec R$ such that $P \ne \fkm$, we have that  $R_P$ is regular. Indeed, if $P \ne \fkm$, then $R:S \not\subseteq P$, because $R:S$ contains $t^n S$. Hence $R_P = S_P$, so that it is regular with $\dim R_P \le 1$. Therefore, if $R_{\m}$ is an Arf ring, then by Proposition \ref{4.2} we conclude that $R$ is strictly closed; hence $R$ is weakly Arf. This completes the proof.
\end{proof}

We will occasionally refer to  the examples arising from numerical semigroup rings.  
Let $0 < a_1, a_2, \ldots, a_\ell \in \Bbb Z~(\ell >0)$ be positive integers such that $\mathrm{GCD}(a_1, a_2, \ldots, a_\ell)=1$. We set 
$$
H = \left<a_1, a_2, \ldots, a_\ell\right>=\left\{\sum_{i=1}^\ell c_ia_i \mid 0 \le c_i \in \Bbb Z~\text{for~all}~1 \le i \le \ell \right\}
$$
and call it {\it the numerical semigroup generated by $\{a_i\}_{1 \le i \le \ell}$}. Let $S = k[t]$ be the polynomial ring over a field $k$. We set $R = k[H] = k[t^{a_1}, t^{a_2}, \ldots, t^{a_\ell}]$ in $S$ and call it {\it the semigroup ring of $H$ over $k$}. The ring  $R$ is a one-dimensional Cohen-Macaulay domain with $\overline{R} = S$ and $\m = (t^{a_1},t^{a_2}, \ldots, t^{a_\ell} ) = tS \cap R \in \Max R$.

With this notation, we end this section with the following. 

\begin{cor}\label{4.10}
Let $S = k[t]$ be the polynomial ring over an infinite field $k$. Let $R=k[H]$ be the semigroup ring of a numerical semigroup $H$. If $R_{\fkm}$ is an Arf ring, then the polynomial ring $R[X_1, X_2, \ldots, X_n]$ is a weakly Arf ring for every $n >0$, where $\fkm = tS \cap R$.
\end{cor}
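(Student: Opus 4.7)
The proof is a short chain of two applications of results already established in the excerpt. First I would reduce the Arf hypothesis on the local ring $R_{\fkm}$ to the weakly Arf property of the whole ring $R$, and then lift that property to any polynomial extension by invoking the polynomial-ring version proved earlier.

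The plan is as follows. Since $H$ is a numerical semigroup, $\mathrm{GCD}$ of its generators is $1$, so $H$ contains every sufficiently large positive integer; equivalently, $t^N S \subseteq R$ for some $N > 0$. Together with the ambient inclusion $k \subseteq R \subseteq S$, this places $R$ exactly in the setting of Proposition \ref{4.9}. By assumption $R_{\fkm}$ is Arf, so Proposition \ref{4.9} immediately yields that $R$ itself is a weakly Arf ring.

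Next I would verify that $R$ meets the hypotheses of Corollary \ref{4.7}. The ring $R = k[H]$ is a Noetherian integral domain, because it is a finitely generated subalgebra of the domain $S = k[t]$. It contains the infinite field $k$. Finally, $R$ is a one-dimensional Cohen--Macaulay ring (the standard fact for numerical semigroup rings), so for any $P \in \Spec R$ we have $\dim R_P \le 1$, and therefore
\[
\depth R_P \;\ge\; \dim R_P \;=\; \min\{2,\dim R_P\},
\]
which is precisely Serre's $(S_2)$ condition. Applying Corollary \ref{4.7} now gives that $R[X_1,X_2,\ldots,X_n]$ is a weakly Arf ring for every $n \ge 1$, which is the desired conclusion.

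There is essentially no obstacle here: the content of the corollary is simply the concatenation of Proposition \ref{4.9} (to get from a single localization being Arf to $R$ being weakly Arf globally) with Corollary \ref{4.7} (to propagate the weakly Arf property to polynomial rings in the presence of an infinite field and $(S_2)$). The only point that requires a moment of care is confirming that $t^N S \subseteq R$ for some $N$, so that Proposition \ref{4.9} applies, and that $R$ is Cohen--Macaulay of dimension one so that $(S_2)$ is automatic; both are standard features of numerical semigroup rings and involve no real computation.
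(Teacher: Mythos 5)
Your proof is correct and follows exactly the route the paper intends: the corollary appears immediately after Proposition \ref{4.9} precisely so that the Arf hypothesis on $R_{\fkm}$ can be converted into the weakly Arf property of $R$, and the infinite-field and one-dimensional Cohen--Macaulay (hence $(S_2)$) hypotheses are what let Corollary \ref{4.7} lift that property to the polynomial extensions. No gaps.
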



\section{Arf and weakly Arf rings arising from idealizations}\label{sec5}

In this section, we study the problem of when the idealizations $A = R \ltimes M$ of torsion-free modules $M$ over Noetherian rings $R$ are Arf and weakly Arf. The main goal is to enrich the theory by providing 
concrete examples of weakly Arf rings. 
Throughout this section, let $R$ be a Noetherian ring, and let  $M$ be a finitely generated torsion-free $R$-module. Let $A=R \ltimes M$ be the idealization of $M$ over $R$. Thus $A = R \oplus M$ is considered as an $R$-module and the multiplication in $A$ is given  by
$$
(a,x)\cdot(b,y) = (ab, bx + ay)
$$
for each $(a, x), (b, y) \in A$; see also Section 3. Note that it follows that  $A$ is Noetherian and  $\fka = (0) \times M$ is an ideal of $A$ with $\fka^2=(0)$. Hence, when $(R, \m)$ is a local ring, then so is $A = R \ltimes M$ with the maximal ideal $\fkm \times M$ since  $R \cong (R \ltimes M)/\fka$. 

In this section, we consider $M$ as an $R$-submodule of $L = \rmQ(R) \otimes_R M$, since $M$ is torsion-free. Notice that $\rmQ(A) = \rmQ(R) \ltimes  L$ and $\overline{A} = \overline{R} \ltimes L$.

\begin{lem}\label{5.1}
$M$ is an $\overline{R}$-module if and only if $\overline{(a)}\cdot M = aM$ for every $a \in W(R)$.
\end{lem}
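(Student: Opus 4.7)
The plan is to prove the two implications by exploiting the inclusion $M \subseteq L = \rmQ(R)\otimes_R M$ and the description $\overline{(a)} = a\overline{R}\cap R$ valid for every $a\in W(R)$.

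For the forward direction, assume $M$ is an $\overline{R}$-module and fix $a\in W(R)$. The inclusion $aM \subseteq \overline{(a)}M$ is immediate because $a \in \overline{(a)}$. Conversely, any $b\in \overline{(a)}$ can be written as $b = a\xi$ with $\xi \in \overline{R}$, so for every $m\in M$ one has $bm = a(\xi m) \in aM$, since $\xi m\in M$ by hypothesis. This gives $\overline{(a)}M = aM$.

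For the reverse direction, assume $\overline{(a)}M = aM$ for every $a\in W(R)$. Let $\xi\in\overline{R}$ and $m\in M$; the goal is to show $\xi m\in M$ (inside $L$). Write $\xi = b/a$ with $a\in W(R)$ and $b\in R$. From $\xi\in\overline{R}$ we get $b = a\xi \in a\overline{R}\cap R = \overline{(a)}$, so $bm \in \overline{(a)}M = aM$. Hence $bm = am'$ for some $m'\in M$. Since $L$ is a module over $\rmQ(R)$ and $a$ is invertible in $\rmQ(R)$, we can cancel $a$ in $L$, obtaining $\xi m = m' \in M$.

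There is no serious obstacle; the only subtle point is the cancellation of $a$ in the last step, which is why the passage through $L = \rmQ(R)\otimes_R M$ (and the torsion-freeness of $M$) is needed to make sense of $\xi m = bm/a$ unambiguously.
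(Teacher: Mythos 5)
Your proof is correct and follows essentially the same route as the paper's: both directions hinge on the identity $\overline{(a)} = a\overline{R}\cap R$, with the forward direction using $\overline{R}\cdot M = M$ and the reverse direction writing $\xi = b/a$, observing $b\in\overline{(a)}$, and cancelling $a$ inside $L$. The paper phrases the forward inclusion as $\overline{(a)}M \subseteq a\overline{R}M = aM$ rather than factoring each element as $a\xi$, but this is only a cosmetic difference.
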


\begin{proof}
Suppose that $M$ is an $\overline{R}$-module. For each $a \in W(R)$, we have $\overline{(a)} = a \overline{R} \cap R \subseteq a \overline{R}$. Hence $\overline{(a)}\cdot M \subseteq a \overline{R} \cdot M = aM$. In particular $\overline{(a)} \cdot M = aM$. Conversely, let $x \in \overline{R}$. We write $x = b/a$ with $a \in W(R)$ and $b \in R$. Then $b \in a \overline{R} \cap R =\overline{(a)}$. Hence $bM\subseteq \overline{(a)}M = aM$, which implies $xM \subseteq M$. Therefore, $M$ is an $\overline{R}$-module.
\end{proof}

We are now ready to prove the following characterization of the weakly Arf property of the idealization $A = R \ltimes M$. 

\begin{thm}\label{5.2}
The following conditions are equivalent.
\begin{enumerate}[$(1)$]
\item $A = R \ltimes M$ is a weakly Arf ring.
\item $R$ is a weakly Arf ring and $\overline{(a)}\cdot M = aM$ for every $a \in W(R)$.
\item $R$ is a weakly Arf ring and $M$ is an $\overline{R}$-module.
\end{enumerate}
\end{thm}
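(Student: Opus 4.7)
\medskip

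\noindent\emph{Proof plan.} Since Lemma~\ref{5.1} already gives the equivalence $(2)\Leftrightarrow(3)$, and Corollary~\ref{3.2}(5) shows that $(1)$ forces $R$ to be weakly Arf, the content of the theorem is the equivalence, under the standing hypothesis that $R$ is weakly Arf, between $A$ being weakly Arf and $M$ being stable under $\overline{R}$. The whole argument is to be carried out inside the decompositions $\rmQ(A)=\rmQ(R)\ltimes L$ and $\overline{A}=\overline{R}\ltimes L$ already recorded at the start of the section, where $L=\rmQ(R)\otimes_{R}M$.

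The key preliminary I would establish first is the identity $W(A)=W(R)\ltimes M$: that is, $(x,m)\in W(A)$ if and only if $x\in W(R)$. The inclusion $\supseteq$ uses the torsion-freeness of $M$ in a routine way (if $xr=0$ then $r=0$, and then $xs=0$ forces $s=0$). The inclusion $\subseteq$ follows because any element of $W(A)$ becomes a unit in $\rmQ(A)=\rmQ(R)\ltimes L$, and a direct calculation shows that $(x,m)\in \rmQ(R)\ltimes L$ is a unit precisely when $x$ is a unit in $\rmQ(R)$, with explicit inverse $(1/x,-m/x^{2})$. This yields the clean quotient formula
\[
\frac{(y,n)}{(x,m)}=\Bigl(\frac{y}{x},\ \frac{n}{x}-\frac{ym}{x^{2}}\Bigr)\qquad\text{in}\ \rmQ(A),
\]
and from $\overline{A}=\overline{R}\ltimes L$ one reads off that $(y,n)/(x,m)\in\overline{A}$ if and only if $y/x\in\overline{R}$, since the $L$-component is automatically in $L$.

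For $(3)\Rightarrow(1)$, take $X=(x,m),\ Y=(y,n),\ Z=(z,p)$ with $X\in W(A)$ and $Y/X,\ Z/X\in\overline{A}$. By the preceding paragraph $x\in W(R)$ and $y/x,z/x\in\overline{R}$, and expanding in coordinates gives
\[
\frac{YZ}{X}=\Bigl(\frac{yz}{x},\ \frac{y}{x}\,p+\frac{z}{x}\,n-\frac{y}{x}\cdot\frac{z}{x}\,m\Bigr).
\]
The first coordinate lies in $R$ because $R$ is weakly Arf, and each summand of the second coordinate is the product of an element of $\overline{R}$ with an element of $M$, hence lies in $M$ by hypothesis $(3)$. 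For $(1)\Rightarrow(2)$, given $a\in W(R)$, $b\in\overline{(a)}$ (so that $b/a\in\overline{R}$), and $m\in M$, apply the weakly Arf property of $A$ to the well-chosen triple $X=(a,0),\ Y=(b,0),\ Z=(0,m)$: one checks $Y/X=(b/a,0)\in\overline{A}$ and $Z/X=(0,m/a)\in\overline{A}$, and the conclusion $YZ/X=(0,bm/a)\in A$ forces $bm\in aM$. This gives $\overline{(a)}M\subseteq aM$, with the reverse inclusion immediate. The principal obstacle I expect is pinning down $W(A)=W(R)\ltimes M$ together with the inverse formula $(x,m)^{-1}=(1/x,-m/x^{2})$, because once these are secured the remainder of the argument is coordinate-wise bookkeeping using the weakly Arf property of $R$ and the $\overline{R}$-module structure on $M$.
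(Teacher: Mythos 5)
Your proof is correct, but it follows a genuinely different route from the paper's. You work directly from the element-wise definition of weakly Arf rings (Definition~\ref{2.3}), deriving the explicit inverse formula $(x,m)^{-1}=(1/x,\,-m/x^{2})$ in $\rmQ(A)$, the resulting quotient formula $(y,n)/(x,m)=\bigl(y/x,\ n/x-ym/x^{2}\bigr)$, and the observation that membership of such a quotient in $\overline{A}=\overline{R}\ltimes L$ is governed solely by the first coordinate; everything then reduces to coordinate-wise bookkeeping. The paper works instead at the ideal level: it invokes Theorem~\ref{2.4} (for $A$ Noetherian, weakly Arf $\Leftrightarrow$ every $I\in\Lambda(A)$ is stable), computes that $J:=\overline{\alpha A}$ splits as $I\times M$ with $I=\overline{(a)}$, and notes that the stability $J^{2}=\alpha J$ decouples into $I^{2}=aI$ and $IM=aM$. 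One consequence is that in $(1)\Rightarrow(2)$ the paper obtains both \emph{$R$ weakly Arf} and \emph{$\overline{(a)}M=aM$} from this single splitting, whereas you cite Corollary~\ref{3.2}(5) for the former and use the well-chosen triple $(a,0),(b,0),(0,m)$ for the latter. Your argument is more elementary and self-contained---it avoids Theorem~\ref{2.4} and the independence of reduction numbers from the chosen principal reduction---and it makes the mechanism transparent; the paper's version is more compact because one ideal computation handles both halves of condition $(2)$ simultaneously.
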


\begin{proof}
$(2) \Leftrightarrow (3)$ This follows from Lemma \ref{5.1}.

$(1) \Rightarrow (2)$ Let $a \in W(R)$. Set $I = \overline{(a)}$. Then, since $\alpha =(a, 0)$ is a non-zerodivisor on $A$, we get $J = \overline{\alpha A} \in \Lambda (A)$. Hence, by Theorem \ref{2.4}, $J^2 = \alpha J = (a, 0)J = aJ$, so that $J = \overline{aA}$. Therefore we have the equalities
$$
J = \overline{aA} = a \overline{A} \cap A = (a \overline{R} \times a L) \cap A = (a \overline{R} \cap R) \times M = I \times M
$$
whence $J^2 = I^2 \times IM$ and $aJ = a I \times aM$. Consequently, $I^2 = aI$ and $IM = aM$. In particular, $R$ is a weakly Arf ring.

$(2) \Rightarrow (1)$ Let $\alpha \in W(A)$. We write $\alpha = (a, b)$, where  $a \in R$ and $b \in M$. Then, because $\alpha \in A$ is a unit in $\rmQ(A)$, so is $a \in R$ in $\rmQ(R)$, i.e., $a \in W(R)$. Let $J = \overline{\alpha A}$. 
As the element $(0, b)$ is nilpotent, we have $J = \overline{(a, 0)} = \overline{aA}$.  Hence, by setting $I = \overline{(a)}$, we obtain
$$
J = \overline{aA} = a \overline{A} \cap A = (a \overline{R} \times a L) \cap A = (a \overline{R} \cap R) \times M = I \times M.
$$
Thus, $J^2 = I^2 \times IM = aI \times aM = aJ$, where the second equality comes from the weakly Arf property for $R$ and our hypothesis. Hence, $J^2 =  a J = \alpha J$, because $\red_{(\alpha)}(J) = \red_{(a)}(J) = 1$. This implies $A$ is a weakly Arf ring. 
\end{proof}

As a consequence of Theorem~\ref{5.2}, we have the following corollaries. 

\begin{cor}\label{5.3}
$R \ltimes R$ is a weakly Arf ring if and only if $R$ is integrally closed. 
\end{cor}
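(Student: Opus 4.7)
The plan is to apply Theorem~\ref{5.2} directly with $M = R$. First, I would observe that $R$ is a finitely generated torsion-free module over itself (assuming $R$ is Noetherian, as in the hypotheses of Theorem~\ref{5.2}), so Theorem~\ref{5.2} applies and yields that $R \ltimes R$ is weakly Arf if and only if $R$ is weakly Arf and $R$ is an $\overline{R}$-submodule of $\rmQ(R) \otimes_R R = \rmQ(R)$.

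Next, I would unwind the condition ``$R$ is an $\overline{R}$-module'' inside $\rmQ(R)$. Since the $R$-module structure on $R \subseteq \rmQ(R)$ is induced by multiplication in $\rmQ(R)$, the requirement $\overline{R}\cdot R \subseteq R$ is exactly $\overline{R} \subseteq R$, i.e., $R = \overline{R}$. Thus the second condition of Theorem~\ref{5.2}(3) is equivalent to $R$ being integrally closed in $\rmQ(R)$.

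Finally, I would note that if $R$ is integrally closed then $R$ is automatically weakly Arf (this was remarked in Section~\ref{sec2}: every integrally closed ring is weakly Arf, because the condition $y/x, z/x \in \overline{R} = R$ immediately forces $yz/x = x \cdot (y/x)(z/x) \in R$). Consequently, the weakly Arf hypothesis on $R$ in Theorem~\ref{5.2}(3) is redundant once $R = \overline{R}$ is assumed, and the equivalence collapses to: $R \ltimes R$ is weakly Arf if and only if $R$ is integrally closed. There is no significant obstacle here; the corollary is essentially a direct specialization of Theorem~\ref{5.2}, and the only subtlety is noticing that integral closedness alone already implies the weakly Arf condition on $R$.
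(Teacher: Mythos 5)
Your proposal is correct and is precisely the intended derivation: the paper states Corollary~\ref{5.3} as a direct consequence of Theorem~\ref{5.2} without written details, and your specialization to $M=R$, the identification of ``$R$ is an $\overline{R}$-module'' with $R=\overline{R}$, and the observation that integral closedness already implies weakly Arf exactly fill in that gap.
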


Notice that, if $M$ is a torsion-free $\overline{R}$-module, then it is torsion-free as an $R$-module. 


\begin{cor}\label{5.4}
Let $N$ be a finitely generated torsion-free $\overline{R}$-module. Then the following conditions are equivalent.
\begin{enumerate}[$(1)$]
\item $A = R \ltimes N$ is a weakly Arf ring.
\item $R$ is a weakly Arf ring.
\end{enumerate}
\end{cor}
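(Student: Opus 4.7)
The plan is to derive the corollary directly from Theorem \ref{5.2} together with Lemma \ref{5.1}, using the fact that the hypothesis already guarantees the ``extra'' module-theoretic condition appearing in Theorem \ref{5.2}.

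First I would observe that the setup of Theorem \ref{5.2} applies to $A=R\ltimes N$. The corollary assumes $N$ is a finitely generated torsion-free $\overline{R}$-module, and as noted immediately before the statement, any such $N$ is also torsion-free as an $R$-module (and is of course finitely generated over $R$, since $\overline{R}$ is module-finite over $R$ in the situations of interest; more generally, one may simply regard $N$ as an $R$-submodule of $L=\rmQ(R)\otimes_R N$ exactly as in Section \ref{sec5}). Hence $A=R\ltimes N$ fits the running hypotheses of the section.

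Next I would invoke Lemma \ref{5.1}: because $N$ is by assumption a module over $\overline{R}$, the equality $\overline{(a)}\cdot N=aN$ holds automatically for every $a\in W(R)$. Thus condition (2) (equivalently (3)) of Theorem \ref{5.2} reduces to the single requirement that $R$ be weakly Arf. Applying Theorem \ref{5.2} in both directions then yields: $A=R\ltimes N$ is weakly Arf $\iff$ $R$ is weakly Arf and $N$ is an $\overline{R}$-module $\iff$ $R$ is weakly Arf, where the last equivalence uses the standing hypothesis on $N$. This proves both implications of the corollary.

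There is essentially no obstacle here, since all the work is done by Theorem \ref{5.2} and Lemma \ref{5.1}; the only thing to check is that the hypothesis ``$N$ is a torsion-free $\overline{R}$-module'' is strictly stronger than ``$N$ is a torsion-free $R$-module satisfying $\overline{(a)}N=aN$ for every $a\in W(R)$,'' which is exactly the content of Lemma \ref{5.1}. Thus the corollary is a clean specialization of Theorem \ref{5.2} to the case in which the module structure on $N$ already extends to $\overline{R}$.
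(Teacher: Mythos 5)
Your proof is correct and is essentially the paper's intended argument: the paper states Corollary \ref{5.4} without proof, immediately after observing that a torsion-free $\overline{R}$-module is torsion-free over $R$, so the reader is meant to apply Theorem \ref{5.2} and note that condition (3) of that theorem collapses to ``$R$ is weakly Arf'' once $N$ is assumed to be an $\overline{R}$-module. Your write-up spells out exactly this specialization.
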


Hence, if $L$ is a fractional ideal of $\overline{R}$, then $A = R \ltimes L$ is a weakly Arf ring if and only if so is $R$. In particular, we have the following.  

\begin{cor}\label{5.5}
$A = R \ltimes (R:\overline{R})$ is a weakly Arf ring if and only if so is $R$.
\end{cor}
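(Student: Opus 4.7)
The plan is to deduce Corollary \ref{5.5} as an immediate consequence of Corollary \ref{5.4} applied to $N = \fkc$, where $\fkc := R:\overline{R}$ is the conductor of $\overline{R}$ in $R$. Thus the only real content is verifying that $\fkc$ satisfies the hypotheses of Corollary \ref{5.4}, namely that it is a finitely generated torsion-free $\overline{R}$-module. Once this is done, Corollary \ref{5.4} directly yields the desired equivalence.

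For the verification, I would record the standard properties of the conductor. First, $\fkc$ is an ideal of $R$, since any $c \in \fkc$ satisfies $c = c \cdot 1 \in c\overline{R} \subseteq R$. Second, $\fkc$ is also an ideal of $\overline{R}$: for $c \in \fkc$ and $r \in \overline{R}$, one has $(rc)\overline{R} = c(r\overline{R}) \subseteq c\overline{R} \subseteq R$, so $rc \in \fkc$. Third, $\fkc$ is finitely generated as an ideal of $R$ because $R$ is Noetherian; since $R \subseteq \overline{R}$, the same generators exhibit $\fkc$ as a finitely generated $\overline{R}$-module. Finally, $\fkc$ is torsion-free both as an $R$-module and as an $\overline{R}$-module, because it is an ideal of each: any $a \in W(\overline{R})$ (resp.\ $a \in W(R)$) acts injectively on $\overline{R}$ (resp.\ $R$), and hence on its submodule $\fkc$.

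The statement then follows by invoking Corollary \ref{5.4} with $N = \fkc$. Alternatively, one can argue directly via Theorem \ref{5.2}: since $\fkc$ is stable under multiplication by $\overline{R}$, Lemma \ref{5.1} gives $\overline{(a)} \cdot \fkc = a\fkc$ for every $a \in W(R)$, which is precisely condition (2) of Theorem \ref{5.2}. There is no substantive obstacle here; the proof is a clean application of the structural characterization established in the preceding results, and the content lies entirely in identifying the conductor as a distinguished example of a torsion-free $\overline{R}$-module inside $R$.
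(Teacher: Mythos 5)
Your proof is correct and matches the paper's route: the paper derives Corollary~\ref{5.5} from Corollary~\ref{5.4} via the remark that a fractional ideal of $\overline{R}$ is in particular a finitely generated torsion-free $\overline{R}$-module, and you simply spell out the (elementary) verifications for $\fkc = R:\overline{R}$ that the paper leaves implicit. The only thing worth noting is that one should also observe $\fkc$ is finitely generated as an $R$-module (being an ideal of the Noetherian ring $R$), so that the standing hypothesis of Section~\ref{sec5} is met; you do record this.
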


For a given numerical semigroup $H$, the integral closure of the semigroup ring $k[H]$ over a field $k$ coincides with the polynomial ring with one variable; see the discussion before Corollary \ref{4.10}. Hence, by Corollaries \ref{5.4} and \ref{5.5}, one can construct numerous examples of weakly Arf rings obtained from $k[H]$.





\begin{cor}\label{5.8}
Let $(R, \m)$ be a Noetherian local domain and let $M$ be a finitely generated torsion-free $R$-module. Suppose that $R$ is a weakly Arf ring and $\overline{R}$ is a DVR.  
Then the following conditions are equivalent.
\begin{enumerate}[$(1)$]
\item $A = R \ltimes M$ is a weakly Arf ring.
\item $M \cong \overline{R}^{\oplus n}$ as an $R$-module for some $n \ge 0$. 
\end{enumerate}
\end{cor}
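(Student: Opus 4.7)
The plan is to reduce the statement to Theorem \ref{5.2} together with the structure theorem for finitely generated modules over a discrete valuation ring. Since $R$ is already assumed to be weakly Arf, Theorem \ref{5.2} tells us that $A = R \ltimes M$ is weakly Arf if and only if $M$ carries an $\overline{R}$-module structure (extending the $R$-module structure). So the whole content of the corollary is to show that, under the running hypotheses, a finitely generated torsion-free $R$-module $M$ is an $\overline{R}$-module if and only if $M \cong \overline{R}^{\oplus n}$ for some $n \ge 0$.

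For the implication $(2) \Rightarrow (1)$, I would simply note that $\overline{R}^{\oplus n}$ is obviously an $\overline{R}$-module, and it is torsion-free as an $R$-module because $\overline{R} \subseteq \rmQ(R)$. Hence $A = R \ltimes \overline{R}^{\oplus n}$ satisfies the hypothesis of Theorem \ref{5.2} (or directly Corollary \ref{5.4}), and is therefore weakly Arf.

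For the harder direction $(1) \Rightarrow (2)$, I would first invoke Theorem \ref{5.2} to conclude that the $R$-module structure on $M$ extends to $\overline{R}$. Any $R$-generating set of $M$ then generates $M$ as an $\overline{R}$-module, so $M$ is finitely generated over $\overline{R}$. Next I would verify that $M$ is torsion-free over $\overline{R}$: the natural map $M \to \rmQ(R) \otimes_R M = L$ is injective because $M$ is torsion-free over $R$, and $L$ is a $\rmQ(R)$-vector space which is also the total ring of fractions of $\overline{R}$, so the $\overline{R}$-action on $M$ has no nonzero annihilators. At this point $M$ is a finitely generated torsion-free module over the DVR $\overline{R}$, hence free of some finite rank $n$, so $M \cong \overline{R}^{\oplus n}$.

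There is no real obstacle: once Theorem \ref{5.2} is applied, the argument is just the structure theorem for modules over a PID, plus the mild bookkeeping needed to transfer torsion-freeness from $R$ to $\overline{R}$. The only place to be careful is that ``torsion-free over $R$'' does not automatically mean ``torsion-free over $\overline{R}$'' in general, but the embedding $M \hookrightarrow L$ into a $\rmQ(R)$-module supplies this for free in our setting.
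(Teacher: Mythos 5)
Your proposal is correct and follows essentially the same route as the paper: apply Theorem \ref{5.2} to reduce condition $(1)$ to the statement that $M$ is an $\overline{R}$-module, then invoke the structure theory for finitely generated modules over a DVR. Your added verification that $M$ is torsion-free over $\overline{R}$ (via the embedding $M \hookrightarrow L = \rmQ(R) \otimes_R M$ and the equality $\rmQ(\overline{R}) = \rmQ(R)$) is a useful detail that the paper leaves implicit.
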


\begin{proof}
Notice that $\depth R >0$ and $\depth_RM>0$. Hence the condition $(1)$ is equivalent to the condition that $M$ is an $\overline{R}$-module, i.e., $M$ is $\overline{R}$-free.  
\end{proof}

\begin{ex}
Let $S=k[[t]]$ be the formal power series ring over a field $k$. We set $R= k[[t^2, t^3]]$. Then $A = R \ltimes S^{\oplus n}$ is a weakly Arf ring for every $n \ge 0$. In addition, $R \ltimes (t^2, t^3)$ is a weakly Arf ring, because $(t^2, t^3) = t^2 S \cong S$ as an $R$-module.
\end{ex}

An almost Gorenstein ring is one of the generalizations of a Gorenstein ring, defined by a certain embedding of the rings into their canonical modules. The theory was initially established by V. Barucci and R. Fr{\"o}berg \cite{BF} in the case where the local rings are analytically unramified of dimension one. However, since the notion given by \cite{BF} was not flexible for the analysis of analytically ramified case, in 2013,  S. Goto, N. Matsuoka and T. T. Phuong \cite{GMP} extended the notion over one-dimensional Cohen-Macaulay local rings. Two years later, in 2015, S. Goto, R. Takahashi, and N. Taniguchi \cite{GTT} gave the definition for the almost Gorenstein rings of arbitrary dimension, by using the notion of Ulrich modules. The reader may consult \cite{BF, GMP, GTT} for basic properties of such rings.

Recently, E. Celikbas, O. Celikbas, S. Goto, and N. Taniguchi \cite{CCGT} explored the question of when the Arf rings are almost Gorenstein. In particular, a criterion for the idealization $R\ltimes \m$ to be an almost Gorenstein Arf ring is given in \cite[Corollary 4.11]{CCGT}.  

\medskip

Let us now investigate what happens if the idealization $R\ltimes \m$ is an Arf ring. 


\begin{cor}\label{AGARF}
Let $(R, \m)$ be a one-dimensional Cohen-Macaulay local ring, possessing an infinite residue class field. If $R \ltimes \m$ is an Arf ring, then $R$ is an almost Gorenstein Arf ring, and $\m:\m$ is a Gorenstein ring.
\end{cor}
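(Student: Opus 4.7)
My plan is to unpack the Arf hypothesis on $A = R \ltimes \m$ via the results of Sections \ref{sec3} and \ref{sec5}, and then to translate the resulting structural information about $\m$ inside $\overline{R}$ into the almost Gorenstein property by means of a canonical fractional ideal.

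First I would observe that $R$ itself is Arf. Since $A$ is Arf it is in particular weakly Arf, so Corollary \ref{3.2}(5) applied to $M = \m$ (which is torsion-free over $R$, as it contains a non-zerodivisor in the one-dimensional Cohen-Macaulay ring $R$) yields that $R$ is weakly Arf. Because the residue field $R/\m$ is infinite, every integrally closed $\m$-primary ideal of $R$ admits a principal reduction, so the weakly Arf and Arf properties coincide on $R$, giving $R$ Arf. Next, I apply Theorem \ref{5.2} with $M = \m$: the weakly Arf property of $A = R \ltimes \m$ forces $\m$ to be an $\overline{R}$-module, i.e.\ $\m\overline{R} = \m$, which is the same as $\overline{R} \subseteq \m:\m$. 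The reverse inclusion $\m:\m \subseteq \overline{R}$ is the standard determinant trick using that $\m$ is a faithful ideal, so $\m:\m = \overline{R}$.

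With this identification, the Gorenstein statement follows at once. Since $R$ is Arf with infinite residue field, the integrally closed ideal $\m$ has a principal reduction $(a)$ with $\m^2 = a\m$, so $\m:\m = R[\m/a] = \m/a$ is finitely generated as an $R$-module; combined with $\m:\m = \overline{R}$, this shows $\overline{R}$ is a finite $R$-module. It is then a one-dimensional reduced Noetherian normal semi-local ring, hence a finite product of DVRs, which is Gorenstein.

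Finally, to see $R$ is almost Gorenstein I would invoke the existence of a canonical fractional ideal $K_R$ with $R \subseteq K_R \subseteq \overline{R}$, which is available because $R$ is a one-dimensional Cohen-Macaulay local ring that is reduced with $\overline{R}$ finite over $R$. Then $K_R \subseteq \overline{R} = \m:\m$ gives $\m K_R \subseteq \m \subseteq R$, so in the exact sequence $0 \to R \to K_R \to C \to 0$ the cokernel $C = K_R/R$ is annihilated by $\m$. Thus $C$ is either zero (corresponding to $R$ being Gorenstein) or a nonzero $R/\m$-vector space, which is the strongest form of an Ulrich $R$-module; by the definition of \cite{GMP}, $R$ is almost Gorenstein. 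The main obstacle I anticipate is merely bookkeeping the standard hypotheses (reducedness of $R$, finiteness of $\overline{R}$, existence of $K_R$ as a fractional ideal in $\overline{R}$); all of these drop out of the identification $\m:\m = \overline{R}$ together with the Arf structure of $R$, so there is no serious conceptual difficulty beyond citing the appropriate facts.
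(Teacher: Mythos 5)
Your proposal is correct and tracks the paper's proof in all essentials: unwind the Arf hypothesis on $A=R\ltimes\m$ via Theorem~\ref{5.2} to obtain that $R$ is weakly Arf (hence Arf, by the infinite residue field) and $\m\overline{R}=\m$; deduce $\overline{R}=\m:\m$, so that $\overline{R}$ is module-finite over $R$ and $R$ is analytically unramified; then take a canonical fractional ideal $R\subseteq K\subseteq\overline{R}$ and read off almost Gorensteinness from $\m K\subseteq\m$. The one genuine divergence is the Gorenstein statement for $\m:\m$: the paper cites GMP Theorem~5.1, which applies to almost Gorenstein rings with minimal multiplicity, whereas you argue directly that $\m:\m=\overline{R}$ is a reduced, one-dimensional, module-finite, integrally closed Noetherian semi-local ring and therefore regular, hence Gorenstein. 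Your route is more elementary and self-contained, at the cost of needing to justify reducedness of $R$ (the paper records this as \emph{$\widehat{R}$ is reduced}, a consequence of $\overline{R}$ being finite over a one-dimensional Cohen--Macaulay local ring); you gesture at this but should make it explicit, since regularity of $\overline{R}$ rests on it. Two further small remarks: the Arf property of $R$ is not needed to see that $\m:\m$ is a finite $R$-module, since $\m:\m\cong\End_R(\m)$ is automatically finitely generated; and $\overline{R}$ is a finite product of semi-local PIDs rather than literally a product of DVRs, though the regularity and Gorenstein conclusions are unaffected.
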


\begin{proof}
Suppose that $R \ltimes \fkm$ is an Arf ring. Then $R \ltimes \m$ is weakly Arf; so we obtain the condition that $R$ is weakly Arf and $\m \overline{R} = \m$. Then $\overline{R}$ is a module-finite extension over $R$, because $\overline{R} = \m:\m$ is the endomorphism algebra of $\m$. Hence, the $\m$-adic completion $\widehat{R}$ of $R$ is reduced. By \cite[Proposition 2.7, Corollary 2.8]{GMP}, we choose an $R$-submodule $K$ of $\overline{R}$ such that $R\subseteq K \subseteq \overline{R}$ and $K \cong \rmK_R$ as an $R$-module, where $\rmK_R$ denotes the canonical module of $R$. As $\m K \subseteq \m$, we conclude that $R$ is an almost Gorenstein ring (\cite[Theorem 3.11]{GMP}). Since $R$ has an infinite residue class field, it is Arf; hence $R$ has minimal multiplicity. Thus $\m^2 = a\m$ for some $a \in \m$. The Gorenstein property for the algebra $\m:\m$ follows from \cite[Theorem 5.1]{GMP}. This completes the proof.
\end{proof}


Inspired by Corollary \ref{AGARF}, in what follows, we study the Arf rings obtained by idealization more generally. 
We denote by $A = R\ltimes M$ the idealization of $M$ over $R$.  Consider the canonical ring homomorphisms 
$
\varphi : R \to A, \ a \mapsto (a, 0), \ p: A \to R, \ (a, b) \mapsto a.
$

\begin{lem}\label{12.1}
The following assertions hold true. 
\begin{enumerate}[$(1)$]
\item Let $I$ be an ideal of $R$ and set $L = I \times M$. Then $L$ is an ideal of $A$, and $L$ is integrally closed in $A$ if and only if $I$ is integrally closed in $R$.
\item Conversely, let $L$ be an ideal of $A$. Then $L$ is integrally closed in $A$ if and only if there exists an integrally closed ideal $I$ in $R$ such that $L=I\times M$. When this is the case, one has $I = \varphi^{-1}(L)$.
\end{enumerate}
\end{lem}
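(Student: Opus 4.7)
The plan is to exploit the nilpotent ideal $\fka = 0 \times M$ of $A$. Since $\fka^2 = 0$, every element $(0,m) \in \fka$ satisfies the monic relation $x^2 = 0$, which is an equation of integral dependence over \emph{any} ideal $L$ of $A$. Hence $\fka \subseteq \overline{L}$ for every ideal $L$ of $A$. This simple observation is what forces integrally closed ideals of $A$ to have the form $I \times M$.

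To prepare for (1), I would first record the explicit formula $L^k = I^k \times I^{k-1}M$ for the powers of $L = I \times M$ (with $k \ge 1$). The inclusion $L^k \subseteq I^k \times I^{k-1}M$ is immediate from the multiplication formula in $A$, and the reverse inclusion follows by writing $(i^k, 0) = (i,0)^k$ and $(0, i^{k-1}m) = (i,0)^{k-1}(0,m)$. I would also use the surjective ring homomorphism $p : A \to R$, $(a,b) \mapsto a$, which satisfies $p(L^k) = I^k$.

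For part (1), assume $L = I \times M$ and argue both directions via $p$. If $I$ is integrally closed and $(a,b) \in \overline{L}$, choose an integral dependence $(a,b)^n + \sum_{k=1}^n c_k (a,b)^{n-k} = 0$ with $c_k \in L^k$, and apply $p$ to obtain $a^n + \sum_{k=1}^n p(c_k)\, a^{n-k} = 0$ with $p(c_k) \in I^k$; so $a \in \overline{I} = I$ and therefore $(a,b) \in I \times M = L$. Conversely, if $L$ is integrally closed and $a \in \overline{I}$, lift the integral dependence of $a$ over $I$ coefficient-wise to $A$ (replace each $i_k \in I^k$ by $(i_k, 0) \in L^k$); this gives $(a,0) \in \overline{L} = L$, whence $a \in I$.

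For part (2), the implication $(\Leftarrow)$ is exactly part (1). For $(\Rightarrow)$, suppose $L \subseteq A$ is integrally closed. By the opening remark, $\fka \subseteq \overline{L} = L$, so $L$ decomposes as $L = I \times M$, where $I := \{a \in R \mid (a,0) \in L\} = \varphi^{-1}(L)$ is an ideal of $R$. Applying the forward direction of part (1) then yields that $I$ must be integrally closed in $R$.

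I do not expect a serious obstacle here; the only point that needs a moment of care is the computation of $L^k$ and the verification that lifting/projecting integral dependence equations through $p$ and $\varphi$ respects the filtrations $\{I^k\}$ and $\{L^k\}$. Everything else follows mechanically from nilpotence of $\fka$.
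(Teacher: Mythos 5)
Your proof is correct and follows essentially the same route as the paper: the decisive observation in both is that $\fka=0\times M$ is nilpotent, hence lies in the integral closure of every ideal of $A$, which forces integrally closed ideals to split as $I\times M$; and both directions of part (1) are handled by pushing/lifting integral dependence equations through the projection $p$ and the section $\varphi$. The only difference is cosmetic — you make explicit the filtration computation $L^k = I^k \times I^{k-1}M$ and the reason $\fka\subseteq\overline{L}$, points the paper leaves implicit.
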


\begin{proof}
$(1)$ Notice that $L$ is an $R$-submodule of $A$. For each $\alpha \in A$, $\ell \in L$, we write $\alpha = (a, x)$ and $\ell = (i, y)$ with $a \in R, i \in I, x, y \in M$. Then $$
\alpha \ell = (a, x)(i, y) = (ai, ay + ix) \in L
$$
which yields that $L$ is an ideal of $A$. Suppose that $L$ is integrally closed in $A$. Then, for every $b \in \overline{I}$, we get $(b, 0) \in \overline{IA} \subseteq \overline{L} = L$. Hence, $b \in I$, i.e., $I$ is an integrally closed ideal in $R$. On the other hand, we assume that $I$ is integrally closed. For every $\alpha=(a, x) \in \overline{L}$, via the projection $p:A \to R$, we obtain $a \in \overline{LR} = \overline{I} = I$. Therefore, $\alpha \in I \times M = L$, whence $L$ is integrally closed. 

$(2)$ Let us make sure of the `only if' part. Since $L$ is integrally closed, it contains $(0) \times M$. Let $I = \varphi^{-1}(L)$. Then, it is straightforward to check that $L = I \times M$. Hence, by $(1)$, we conclude that $I$ is integrally closed. 
\end{proof}

Let us note the following. 

\begin{lem}\label{12.2}
For each $f = (a, x) \in A$, the following assertions hold true. 
\begin{enumerate}[$(1)$]
\item $f \in W(A)$ if and only if $a \in W(R)$.
\item $\overline{(f)} = \overline{(a,0)A}$ in $A$.
\end{enumerate}
\end{lem}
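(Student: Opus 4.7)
For part (1), I would work directly from the multiplication rule $(a,x)(b,y)=(ab,\,ay+bx)$ in $A$. For the forward implication, assume $a \in W(R)$ and that $(a,x)(b,y)=0$; then $ab=0$ forces $b=0$, so $ay+bx=ay=0$, and since $M$ is a torsion-free $R$-module, multiplication by $a$ is injective on $M$, giving $y=0$. Hence $(a,x)\in W(A)$. For the converse, assume $a\notin W(R)$ and pick $b\in R\setminus\{0\}$ with $ab=0$. I would split into two cases: if $bx=0$, then $(b,0)$ is a nonzero element of $A$ annihilated by $f$; if $bx\neq 0$, then the element $(0,bx)\in A$ is nonzero and
$$
(a,x)\cdot(0,bx)=(0,\,a\cdot bx+0\cdot x)=(0,\,abx)=(0,0),
$$
so again $f$ is a zerodivisor. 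The only real care needed is this second sub-case, which is the mild technical point of the argument.

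For part (2), the key observation is that $f-(a,0)=(0,x)$ satisfies $(0,x)^{2}=(0,0)$, so $f$ and $(a,0)$ differ by a nilpotent element of $A$. Since the integral closure of any ideal in a commutative ring contains every nilpotent (as $z^{n}=0$ is itself an equation of integral dependence over any ideal), we have $(0,x)\in\sqrt{0}_{A}\subseteq \overline{(a,0)A}$, and hence $f=(a,0)+(0,x)\in\overline{(a,0)A}$; this gives $\overline{fA}\subseteq\overline{(a,0)A}$. Symmetrically, $(0,x)\in\sqrt{0}_{A}\subseteq\overline{fA}$ so $(a,0)=f-(0,x)\in\overline{fA}$, yielding the reverse inclusion $\overline{(a,0)A}\subseteq\overline{fA}$. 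Combining these proves the equality $\overline{(f)}=\overline{(a,0)A}$. No further computation or appeal to the structure of $M$ is required for (2) beyond the nilpotence of $(0,x)$, so this part is essentially automatic once the nilpotent shift is identified.
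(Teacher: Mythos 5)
Your proof is correct. For part (1) your argument takes a genuinely more elementary route than the paper's: the paper proves both implications by passing to associated primes of $A = R\ltimes M$, using the description $P = \fkp\times M$ with $\fkp\in\Spec R$, the identification $A_P \cong R_\fkp \ltimes M_\fkp$, and depth considerations to decide when $P\in\Ass A$. Your direct element-level computation with the multiplication $(a,x)(b,y)=(ab,\,ay+bx)$, together with the case split on whether $bx$ vanishes, avoids that machinery entirely and is self-contained; the torsion-freeness of $M$ enters in the same place in both proofs (a non-zerodivisor on $R$ acts injectively on $M$), but your version makes that dependence and the exact location of the non-trivial sub-case transparent. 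For part (2) the paper's proof is the single sentence that it ``follows from the fact that $(0,x)$ is nilpotent,'' and your write-up is just the spelled-out version of exactly that: since $(0,x)^2=0$, the element $(0,x)$ lies in $\overline{J}$ for every ideal $J$ (take the integral equation $z^n=0$ with all coefficients $0\in J^i$), so $f=(a,0)+(0,x)$ and $(a,0)=f-(0,x)$ each land in the integral closure of the ideal generated by the other, forcing $\overline{(f)}=\overline{(a,0)A}$. So (1) is a different and arguably cleaner approach, while (2) coincides with the paper's.
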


\begin{proof}
$(1)$ Suppose that $f \in W(A)$ and $a \not\in W(R)$.  Choose $\fkp \in \Ass R$ such that $a \in \fkp$. Let $P = \fkp \times M$. We then have an isomorphism
$$
A_P \cong R_{\fkp} \ltimes M_{\fkp}
$$ 
of $R_{\fkp}$-algebras, which implies $P \in \Ass A$. This is impossible, because $f \in P$. Hence $a \in W(R)$. Conversely, we assume that $f \notin W(A)$ and $a \in W(R)$. Take $P \in \Ass A$ such that $f \in P$. Let us write $P = \fkp \times M$ with $\fkp \in \Spec R$. Then $a \in \fkp$. Since $a \in W(R)$ and $M$ is torsion-free, we see that $a$ is a non-zerodivisor on $M$. Therefore we get $\depth_{R_\fkp} M_{\fkp}>0$, whence $\depth A_P>0$. This makes a contradiction. 

$(2)$ This follows from the fact that $(0, x)$ is nilpotent. 
\end{proof}

We then have the following. 

\begin{prop}\label{12.3}
$\Lambda(A) = \{I \times M \mid I \in \Lambda (R)\}$.
\end{prop}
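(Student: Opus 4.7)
The plan is to prove both inclusions by reducing to the single computation of $\overline{(a,0)A}$ for $a \in W(R)$, which already appeared inside the proof of Theorem \ref{5.2}. For any $f \in W(A)$, writing $f = (a,x)$ with $a \in R$ and $x \in M$, Lemma \ref{12.2} gives $a \in W(R)$ and $\overline{(f)} = \overline{(a,0)A}$, so the whole proposition boils down to identifying $\overline{(a,0)A}$ with $\overline{(a)} \times M$.

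For the inclusion $\Lambda(A) \subseteq \{I \times M \mid I \in \Lambda(R)\}$, I would use the general identity $\overline{(a,0)A} = (a,0)\overline{A} \cap A$, valid for principal ideals generated by non-zerodivisors. Since $\overline{A} = \overline{R} \ltimes L$ with $L = \rmQ(R) \otimes_R M$, one has $(a,0)\overline{A} = a\overline{R} \times aL$. The key observation is that $aL = L$: because $a$ is invertible in $\rmQ(R)$ and $L$ carries the natural $\rmQ(R)$-module structure (here the torsion-freeness of $M$ enters), multiplication by $a$ is an automorphism of $L$. Intersecting with $A = R \times M$ and noting $M \subseteq L$ then yields
\[
\overline{(a,0)A} = (a\overline{R} \cap R) \times M = \overline{(a)} \times M,
\]
an ideal of the desired form since $\overline{(a)} \in \Lambda(R)$.

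For the reverse inclusion, given $I = \overline{(a)} \in \Lambda(R)$ with $a \in W(R)$, I would set $f = (a,0) \in A$; Lemma \ref{12.2}(1) gives $f \in W(A)$, and the same computation produces $\overline{(f)} = I \times M$, so $I \times M \in \Lambda(A)$. There is no serious obstacle here; the argument is essentially a repackaging of the pieces already present in Lemmas \ref{12.1}--\ref{12.2} and in the proof of Theorem \ref{5.2}. The only subtle point is the collapse $aL = L$, which is precisely where the torsion-free hypothesis on $M$ is used.
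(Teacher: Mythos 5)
Your proposal is correct and follows essentially the same approach as the paper: both reduce everything to Lemma~\ref{12.2} and the identity $\overline{(a,0)A}=\overline{(a)}\times M$. The paper's proof of Proposition~\ref{12.3} obtains this identity via the sandwich $I\times M\subseteq\overline{fA}\subseteq I\times M$ (using Lemma~\ref{12.1}) rather than re-running the explicit computation $(a,0)\overline{A}\cap A$ from the proof of Theorem~\ref{5.2}, but the content is the same; one small quibble is that torsion-freeness is what makes $M\hookrightarrow L$ well-defined, while $aL=L$ holds for any $\rmQ(R)$-module.
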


\begin{proof}
Let $I \in \Lambda(R)$ and write $I = \overline{(a)}$ with $a \in W(R)$. We consider $L = I\times M$. By Lemma \ref{12.1} (1), $L$ is an integrally closed ideal in $A$. Let $f = (a, 0) \in L$. Then $f$ is a non-zerodivisor on $A$ and $(0)\times M \subseteq \overline{fA}\subseteq \overline{L} = L$.  Moreover, we have that $I \times (0) \subseteq IA \subseteq \overline{aA}  = \overline{fA}$. Hence, $L = I \times M \subseteq \overline{fA} \subseteq L$, i.e., $L = \overline{fA} \in \Lambda (A)$. 

Conversely, let $L \in \Lambda(A)$. Since $L$ is integrally closed, we can choose an integrally closed ideal $I$ in $R$ such that $L = I \times M$ and $I = \varphi^{-1}(L)$. 
Let $f \in W(A)$ such that $L = \overline{fA}$. Write $f = (a, x)$ where $a \in I, x \in M$. Then, by Lemma \ref{12.2}, we get $L = \overline{(a, 0)A}$ and $a \in W(R)$. Hence $I = \overline{(a)} \in \Lambda(R)$, which gives a desired equality. 
\end{proof}

Similarly for the weakly Arf property, we have the following. 

\begin{thm}\label{12.4}
The following conditions are equivalent.
\begin{enumerate}[$(1)$]
\item $A = R \ltimes M$ is an Arf ring.
\item $R$ is an Arf ring and $\overline{(a)}\cdot M = aM$ for every $a \in W(R)$.
\item $R$ is an Arf ring and $M$ is an $\overline{R}$-module.
\end{enumerate}
\end{thm}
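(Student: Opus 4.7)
The overall plan is to bootstrap off Theorem \ref{5.2}, which is the weakly Arf analogue, and to handle separately the remaining principal reduction clause of Definition \ref{2.2}(1). The equivalence $(2) \Leftrightarrow (3)$ is identical to the one in Theorem \ref{5.2} and is immediate from Lemma \ref{5.1}, so I would concentrate on the equivalence $(1) \Leftrightarrow (2)$. The bridge between integrally closed open ideals of $A$ and those of $R$ is provided by Lemma \ref{12.1} and Proposition \ref{12.3}, which show that every integrally closed ideal of $A$ in $\calF_A$ has the form $L = I \times M$ with $I$ an integrally closed ideal of $R$ lying in $\calF_R$, and conversely.

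For $(1) \Rightarrow (2)$, since an Arf ring is in particular weakly Arf, Theorem \ref{5.2} already gives that $R$ is weakly Arf and $\overline{(a)} M = aM$ for every $a \in W(R)$. To upgrade $R$ to an Arf ring, I would take any integrally closed $I \in \calF_R$, form $L = I \times M$, and note that $L$ is integrally closed in $A$ by Lemma \ref{12.1}(1) and belongs to $\calF_A$ because $(b,0) \in L \cap W(A)$ whenever $b \in I \cap W(R)$, by Lemma \ref{12.2}(1). The Arf hypothesis on $A$ produces $n \geq 0$ and $f = (a,x) \in L$ with $L^{n+1} = f L^n$. Using the direct computation $L^k = I^k \times I^{k-1} M$ for $k \geq 1$, the first component of $L^{n+1} = f L^n$ reads $I^{n+1} = a I^n$ with $a \in I$, producing the sought principal reduction of $I$. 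The degenerate case $n=0$ forces $L = fA$, hence $I = (a)$ is itself principal, which is also admissible.

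For $(2) \Rightarrow (1)$, Theorem \ref{5.2} already yields that $A$ is weakly Arf. Given an integrally closed $L \in \calF_A$, Lemma \ref{12.1}(2) writes $L = I \times M$ with $I = \varphi^{-1}(L)$ integrally closed in $R$, and $I \in \calF_R$ because any non-zerodivisor in $L$ has first component in $I \cap W(R)$ by Lemma \ref{12.2}(1). Since $R$ is Arf, $I$ admits a principal reduction $(b)$, whence $I = \overline{(b)} \in \Lambda(R)$; Theorem \ref{2.4} applied to the weakly Arf ring $R$, combined with the choice-independence of reduction numbers recalled in Section \ref{sec2}, then forces $I^2 = bI$. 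Setting $f = (b, 0) \in L \cap W(A)$, I would compute
$$
L^2 = I^2 \times IM = bI \times IM \quad \text{and} \quad fL = bI \times bM,
$$
and invoke Lemma \ref{5.1} (or equivalently the hypothesis $(2)$) with $I = \overline{(b)}$ to collapse $IM = \overline{(b)} M = bM$. This exhibits $f$ as a principal reduction of $L$ in $A$, completing the Arf property.

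The main obstacle lies in the second component when matching $L^{n+1} = f L^n$. In the forward direction it is harmless because only the first component is needed to extract the principal reduction in $R$. In the reverse direction one must instead close the second-component equation $IM = bM$, and this is exactly where the $\overline{R}$-module structure of $M$ (equivalently, $\overline{(a)} M = aM$ for all $a \in W(R)$) becomes indispensable; without this hypothesis, $L^2 = fL$ would fail even though $I^2 = b I$ holds in $R$.
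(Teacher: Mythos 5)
Your proof is correct but takes a genuinely different, more modular route than the paper's. The paper's proof of $(1) \Rightarrow (3)$ works directly with stability: for each integrally closed $I \in \calF_R$ it sets $L = I \times M$, uses the Arf property of $A$ to get $L^2 = fL$, replaces $f=(a,x)$ by $(a,0)$ via Lemma \ref{12.2}(2) and choice-independence of reduction numbers, and then reads off $I^2 = aI$ \emph{and} $IM = aM$ simultaneously from $L^2 = aL$; the $\overline{R}$-module structure of $M$ is then extracted in a second pass with $I = \overline{(a)}$. It never invokes Theorem \ref{5.2}. You instead let Theorem \ref{5.2} dispatch the weakly Arf half of Definition \ref{2.2} in both directions — $(1)\Rightarrow(2)$ inherits $R$ weakly Arf and $\overline{(a)}M=aM$ for free, and $(2)\Rightarrow(1)$ inherits $A$ weakly Arf for free — and isolate the principal-reduction clause, closing it in the forward direction by projecting $L^{n+1} = fL^n$ to the first component to get $I^{n+1} = aI^n$, and in the reverse direction by matching $L^2 = bI \times IM$ against $fL = bI \times bM$ via Lemma \ref{5.1}. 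This is a cleaner factorization and, in the forward direction, avoids the paper's element-replacement step entirely. The one thing you leave implicit is the preliminary verification, which the paper makes explicitly at the start of its proof, that $A = R\ltimes M$ is a Noetherian semi-local ring with $A_\fkn$ one-dimensional Cohen-Macaulay for every $\fkn \in \Max A$ if and only if $R$ satisfies the same structural hypothesis (using $\Max A = \{\m \times M \mid \m \in \Max R\}$, $A_{\m\times M} \cong R_\m \ltimes M_\m$, and the torsion-freeness of $M$); since the very meaning of "Arf ring" in Definition \ref{2.2} presupposes this setup, it must be transferred in both directions before the stability arguments can legitimately be applied.
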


\begin{proof}
Since $R$ is Noetherian and $M$ is finitely generated as an $R$-module, $A=R \ltimes M$ is a Noetherian ring with $\dim A = \dim R$. Notice that $A$ is semi-local if and only if so is $R$. Moreover, we have $\Max A = \{\m \times M \mid \m \in \Max R\}$, and for each $\m \in \Max R$, the equality $\dim A_{\n} = \dim R_{\m}$ holds, where $\n = \m \times M$. Hence, $A_\n$ is a one-dimensional Cohen-Macaulay local ring if and only if so is $R_\m$.

$(2) \Leftrightarrow (3)$ This follows from Lemma \ref{5.1}.

$(1) \Rightarrow (3)$ Suppose that $A$ is an Arf ring. For each integrally closed ideal $I \in \calF_R$ in $R$, we set $L = I \times M$. Then $L \in \calF_A$ is an integrally closed ideal in $A$, so that $L^2 = f L$ for some $f \in L$. By setting $f =(a, x) \in W(A)$ with $a \in I, x \in M$, we have $a \in W(R)$ and $L = \overline{fA} = \overline{(a, 0)}A$.  Hence $(a, 0)$ is a reduction of $L$, so that $L^2 = aL$. Therefore $I^2 = a I$ and $IM = aM$. In particular, $R$ is an Arf ring. 
We are now going to prove that $M$ is an $\overline{R}$-module.
Let $\varphi \in \overline{R}$ and write $\varphi = x/a$ where $x \in R$ and $a \in W(R)$. Then $x = a\varphi \in a\overline{R} \cap R = \overline{(a)}$. We set $I = \overline{(a)}$ and $L = I \times M$. Then $L$ is the integral closure of $(a, 0)A$. Applying the above argument, we get $IM = aM$, whence $a \varphi M \subseteq a M$. Thus, $\varphi M \subseteq M$, i.e., $\overline{R}\cdot M = M$.

$(3) \Rightarrow (1)$ We assume that $R$ is an Arf ring and $\overline{R}\cdot M = M$. Let $L \in \calF_A$ be an integrally closed ideal of $A$. Choose an integrally closed ideal $I$ in $R$ such that $L = I \times M$. We see that  $I$ contains a non-zerodivisor on $R$. The Arf property of $R$ implies that  $I^2 = a I$ for some $a \in I$. Hence $a \in W(R)$ and $I = \overline{(a)}$. By setting $f= (a, 0) \in L$, we obtain 
\begin{center}
$L^2 = I^2 \times IM$ and $fL = a I \times aM$. 
\end{center}
Since $IM \subseteq (a \overline{R})M = aM$, we have $IM = aM$. Hence $L^2 = fL$, i.e., $A$ is an Arf ring. 
\end{proof}

\begin{cor}
Let $N$ be a finitely generated torsion-free $\overline{R}$-module. Then the following conditions are equivalent.
\begin{enumerate}[$(1)$]
\item $A = R \ltimes N$ is an Arf ring.
\item $R$ is an Arf ring.
\end{enumerate}
\end{cor}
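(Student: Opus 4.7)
The plan is to deduce this corollary directly from Theorem~\ref{12.4} by checking that its hypotheses apply to $A = R \ltimes N$ and that the module-theoretic condition trivializes.

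First I would verify that $N$ is a finitely generated torsion-free $R$-module, so that the idealization $R \ltimes N$ is actually one to which Theorem~\ref{12.4} applies. Since $R \subseteq \overline{R}$ and $N$ is torsion-free over $\overline{R}$, any $a \in W(R)$ remains a non-zerodivisor on $\overline{R}$ (as $\overline{R}$ is integral over $R$ inside $\rmQ(R)$), hence a non-zerodivisor on $N$. Together with finite generation over $\overline{R}$ (and module-finiteness of $\overline{R}$ over $R$ in the Noetherian semi-local one-dimensional Cohen-Macaulay setting implicit in being Arf), this gives that $N$ is a finitely generated torsion-free $R$-module.

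Next I would apply Theorem~\ref{12.4} to $M = N$. The equivalence $(1) \Leftrightarrow (3)$ there states that $A = R \ltimes N$ is Arf if and only if $R$ is Arf and $N$ is an $\overline{R}$-module. But $N$ is an $\overline{R}$-module by hypothesis, so the second clause is automatic. Hence the Arf property of $A$ reduces precisely to the Arf property of $R$, which is what we want.

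There is essentially no obstacle here; the only subtlety is the preliminary observation that a torsion-free $\overline{R}$-module is automatically torsion-free over the subring $R$, which is needed so that Theorem~\ref{12.4} can even be invoked. Once that is noted, the corollary follows immediately from the $(1) \Leftrightarrow (3)$ equivalence in Theorem~\ref{12.4}.
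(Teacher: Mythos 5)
Your proof is correct and is essentially the paper's intended one: the corollary is stated without proof as an immediate consequence of Theorem~\ref{12.4}, together with the observation (recorded just before the weakly Arf analogue, Corollary~\ref{5.4}) that a torsion-free $\overline{R}$-module is automatically torsion-free over $R$, after which clause ``$N$ is an $\overline{R}$-module'' in condition $(3)$ of Theorem~\ref{12.4} is vacuously satisfied. One small caveat about your preliminary step: module-finiteness of $\overline{R}$ over $R$ is not a formal consequence of the Arf setting (Definition~\ref{2.2} imposes no such condition), so finite generation of $N$ over $R$ is better viewed as part of the section's standing hypothesis that the idealized module is a finitely generated $R$-module rather than something deduced from $R$ being Arf.
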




\section{Arf and weakly Arf rings arising from fiber products}\label{sec6}

In this section we investigate the Arf and weakly Arf properties of local rings obtained from fiber products. First, we recall the definition and basic properties of fiber products. Let $R$, $S$, and $T$ be arbitrary commutative rings, and let $f:R \to T$ and $g: S \to T$ be homomorphism of rings. {\it The fiber product $R\times_TS $ of $R$ and $S$ over $T$ with respect to $f$ and $g$} is the set 
$$
A:=R\times_TS = \{(a, b) \in R \times S \mid f(a) = g(b)\}
$$
which forms a subring of $B=R \times S$. We then have a commutative diagram
\[
\xymatrix{
          &
R \ar[rd]^{f}  &  \\
A \ar[rd]_{p_2}\ar[ur]^{p_1} &  & T\\
& S \ar[ur]_{g}
}
\]
of rings, where $p_1 : A \to R, (x, y)\mapsto x$ and $p_2: A \to S, (x, y)\mapsto y$ stand for the projections. Hence we get an exact sequence
$$
0 \longrightarrow A \overset{i}{\longrightarrow} B \overset{\varphi}{\longrightarrow} T
$$
of $A$-modules, where $\varphi = \left[\begin{smallmatrix}
f \\
-g
\end{smallmatrix}\right]$. The map $\varphi$ is surjective if either $f$ or $g$ is surjective. Moreover, if both $f$ and $g$ are surjective, then $T$ is cyclic as an $A$-module, so that $B$ is a module-finite extension over $A$. 
Therefore we have the following (see e.g., \cite{AAM}).

\begin{lem}\label{6.1}
Suppose that $f:R \to T$ and $g: S \to T$ are surjective. Then the following assertions hold true. 
\begin{enumerate}[$(1)$]
\item $A$ is a Noetherian ring if and only if $R$ and $S$ are Noetherian rings.
\item $(A, J)$ is a local ring if and only if  $(R, \m)$ and $(S, \n)$ are local rings. When this is the case, $J = (\m \times \n) \cap A$.
\item If $(R, \m)$ and $(S, \n)$ are Cohen-Macaulay local rings with $\dim R=\dim S = d>0$ and $\depth T\ge d-1$, then $(A, J)$ is a Cohen-Macaulay local ring and $\dim A = d$.
\end{enumerate}
\end{lem}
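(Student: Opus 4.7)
The plan leverages the short exact sequence $0 \to A \overset{i}{\to} B \overset{\varphi}{\to} T \to 0$ of $A$-modules together with two structural observations. First, the cyclic quotient $B/A \cong T$ forces $B = R \times S$ to be module-finite over $A$: as an $A$-module, $B$ is generated by $1$ and $(1, 0)$. Second, each projection $p_i$ ($i = 1, 2$) is surjective onto $R$ (resp.\ $S$), because given $r \in R$, surjectivity of $g$ yields $s \in S$ with $g(s) = f(r)$, whence $(r, s) \in A$ maps to $r$ under $p_1$.

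For (1), Noetherianness of $R$ and $S$ makes $B = R \times S$ Noetherian, and the Eakin--Nagata theorem then upgrades this to $A$ by module-finiteness; the converse follows since $R$ and $S$ are quotients of $A$ via $p_1, p_2$. For (2), one first observes that $(a, b) \in A$ is a unit iff $a$ is a unit in $R$ and $b$ is a unit in $S$. When $(R, \m)$ and $(S, \n)$ are local, the common quotient $T$ is local with maximal ideal simultaneously equal to $f(\m)$ and $g(\n)$; combined with the relation $f(a) = g(b)$ and the fact that $\ker g \subseteq \n$, this forces $a \in \m \Leftrightarrow b \in \n$ for any $(a,b) \in A$. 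Hence the non-units form the ideal $J = (\m \times \n) \cap A$, which is thus the unique maximal ideal of $A$. The converse is immediate from the surjectivity of $p_1, p_2$.

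For (3), the plan is to apply the standard depth lemma to $0 \to A \to B \to T \to 0$ viewed as $A$-modules, yielding $\depth A \ge \min\{\depth_A B, \ \depth_A T + 1\}$. The main computational point --- and the step I expect to require the most care --- is to verify that $\depth_A B = d$ and $\depth_A T = \depth T \ge d - 1$. For the former, $B$ is semi-local with maximal ideals $\m \times S$ and $R \times \n$, and $JB$ has radical equal to the Jacobson radical of $B$; localizing at either maximal ideal recovers $R_\m$ or $S_\n$, each Cohen--Macaulay of dimension $d$, so $\depth_A B = d$. For the latter, the action of $J$ on $T$ factors through the surjection $A \to R \twoheadrightarrow T$ with $JT$ the maximal ideal of $T$, so depth over $A$ coincides with depth over $T$. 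These inputs give $\depth A \ge d$, while integrality of $B$ over $A$ yields $\dim A = \dim B = d$. Hence $\depth A = \dim A = d$, so $(A, J)$ is Cohen--Macaulay of dimension $d$.
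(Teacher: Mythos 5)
Your proposal is correct. The paper does not actually supply an argument for Lemma~\ref{6.1}; it observes that when $f$ and $g$ are surjective, $T$ is a cyclic $A$-module and $B$ is module-finite over $A$, and then refers the reader to the reference \cite{AAM}. Your write-up fills in exactly the details that citation is standing in for, starting from the same data --- the exact sequence $0 \to A \to B \to T \to 0$ of $A$-modules together with module-finiteness of $B$ over $A$ --- and using Eakin--Nagata for (1), the unit criterion plus $f(\m) = g(\n)$ for (2), and the depth lemma applied to that exact sequence for (3). All of these steps check out: in particular $B = A\cdot 1 + A\cdot(1,0)$, the ring $B/JB$ is a finite-dimensional vector space over $A/J$, and $\dim A = \dim B = d$ by integrality.

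One small refinement worth recording: you do not need to pass to radicals to identify $JB$. For $a \in \m$, surjectivity of $g$ and $f(\m)=g(\n)$ give $b' \in \n$ with $(a,b') \in J$, so $(a,0) = (a,b')\cdot(1,0) \in JB$; symmetrically $(0,b) \in JB$ for $b \in \n$, whence $JB = \m \times \n$ exactly. This makes the computation $\depth_A B = \min\{\depth R, \depth S\} = d$ immediate, since $B_{\m\times S} \cong R$ and $B_{R\times\n} \cong S$. Likewise $J$ maps onto the maximal ideal of $T$, so $\depth_A T = \depth T$ as you claim. The depth lemma then gives $\depth A \ge \min\{d, (d-1)+1\} = d = \dim A$, so $A$ is Cohen--Macaulay of dimension $d$.
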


With this notation the first main result in this section is stated as follows.

\begin{thm}\label{6.2}
Let $(R, \m)$ and $(S, \n)$ be Noetherian local rings with a common residue class field $k=R/\m = S/\n$. Suppose that $\depth R>0$ and $\depth S > 0$. Then the following conditions are equivalent. 
\begin{enumerate}[$(1)$]
\item $A = R\times_k S$ is a weakly Arf ring.
\item $R$ and $S$ are weakly Arf rings. 
\end{enumerate}
\end{thm}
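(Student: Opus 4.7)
The plan is to analyze $A$ through its module-finite integral extension $B := R \times S$ (finite over $A$ because $T=k$ is cyclic as an $A$-module, by the discussion preceding Lemma \ref{6.1}), whose weakly Arf property is equivalent to both $R$ and $S$ being weakly Arf by Proposition \ref{3.8}. Two preliminary facts underlie everything: (i) $W(A) = W(B) \cap A$, so that $(x_1,x_2) \in A$ is a non-zerodivisor on $A$ iff $x_1 \in W(R)$ and $x_2 \in W(S)$ (the containment $W(A) \subseteq W(B)$ uses $\depth R, \depth S > 0$ to manufacture zero-divisor witnesses inside $A$ whenever such witnesses exist in $B$); and (ii) since $B$ is integral over $A$, we have $\overline{A} = \overline{B} \cap \rmQ(A)$ inside $\rmQ(B) = \rmQ(R) \times \rmQ(S)$, together with $\overline{B} = \overline{R} \times \overline{S}$. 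In particular, every element of $\overline{A}$ admits a componentwise description.

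For the implication $(2) \Rightarrow (1)$, I take $\alpha,\beta,\gamma \in A$ with $\alpha = (x_1,x_2) \in W(A)$ and $\beta/\alpha, \gamma/\alpha \in \overline{A}$. By (i) and (ii), the components $\beta_i/x_i$ and $\gamma_i/x_i$ lie in $\overline{R}$ or $\overline{S}$, so by the weakly Arf property of $R$ and of $S$, the element $\delta := \beta\gamma/\alpha = (\beta_1\gamma_1/x_1,\ \beta_2\gamma_2/x_2)$ already lies in $B$. The crux is to upgrade $\delta \in B$ to $\delta \in A$, i.e., to check that $f(\beta_1\gamma_1/x_1) = g(\beta_2\gamma_2/x_2)$ in $k$. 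When $x_1 \notin \m$, both $x_1$ and $x_2$ are units (since $f(x_1)=g(x_2)\ne 0$), and the equality reduces to straightforward $k$-arithmetic using $f(\beta_1)=g(\beta_2)$, $f(\gamma_1)=g(\gamma_2)$, and $f(x_1)=g(x_2)$. The main obstacle is the case $x_1 \in \m$ (equivalently $x_2 \in \n$): here I will invoke the fact that $\m$, being prime, is integrally closed, whence $\beta_1 \in \overline{x_1 R} \subseteq \overline{\m} = \m$ and similarly $\gamma_1 \in \m$. Then $\beta_1\gamma_1/x_1 = (\beta_1/x_1)\cdot\gamma_1 \in \overline{R}\cdot\gamma_1 \cap R \subseteq \overline{\gamma_1 R} \subseteq \m$, and symmetrically on the $S$-side, so both images in $k$ vanish.

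For the converse $(1) \Rightarrow (2)$, it suffices by symmetry to show $R$ is weakly Arf. Given $x,y,z \in R$ with $x \in W(R)$ and $y/x, z/x \in \overline{R}$, the conclusion $yz/x \in R$ is trivial when $x$ is a unit, so I assume $x \in \m$; integral closedness of $\m$ forces $y,z \in \overline{xR} \subseteq \m$. Using $\depth S > 0$, I choose $x_2 \in \n \cap W(S)$, and lift to $\widetilde\alpha := (x,x_2)$, $\widetilde\beta := (y,0)$, $\widetilde\gamma := (z,0)$, all lying in $A$ because their $k$-components vanish on both sides. Then $\widetilde\alpha \in W(A)$ by (i), and $\widetilde\beta/\widetilde\alpha = (y/x,0)$, $\widetilde\gamma/\widetilde\alpha = (z/x,0)$ belong to $\overline{B}\cap \rmQ(A) = \overline{A}$; the weakly Arf hypothesis on $A$ then yields $\widetilde\beta\widetilde\gamma/\widetilde\alpha = (yz/x,0) \in A$, from which $yz/x \in R$ follows immediately.
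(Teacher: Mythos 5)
Your argument is correct, and it takes a genuinely different route from the paper's. The paper works at the ideal level: it invokes the characterization of Theorem \ref{2.4}, picks $K \in \Lambda(A)$, computes $K = I_1 \times I_2$ with $I_1 = \overline{(x)} \subseteq \m$ and $I_2 = \overline{(y)} \subseteq \n$, and shows $K^2 = \alpha K$ directly from stability of $I_1$ and $I_2$; the converse is the mirror computation. You instead work with the bare element-level definition of weakly Arf, routing everything through the product $B = R \times S$ (Proposition \ref{3.8}) and then facing the extra task of descending $\beta\gamma/\alpha$ from $B$ to $A$, i.e., checking the two residues agree in $k$. That descent is the one step the ideal-level proof sidesteps, and you handle it cleanly via the case split on $x_1 \in \m$ versus $x_1 \notin \m$, using the integral closedness of prime ideals to force $\beta_1\gamma_1/x_1 \in \overline{(\gamma_1)} \subseteq \m$ and symmetrically on the $S$-side. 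Your direction $(1) \Rightarrow (2)$ is likewise more direct than the paper's: you lift the test triple $(x,y,z)$ to $A$ by padding the second coordinate with zero (legitimate precisely because $y, z \in \overline{(x)} \subseteq \m$), which is a nice shortcut. What the paper's ideal-theoretic approach buys is avoiding the residue-matching entirely, at the cost of leaning on Theorem \ref{2.4}; what your element-level approach buys is a self-contained argument that stays at the level of the definition. Both hinge on the same structural facts ($\rmQ(A) = \rmQ(R) \times \rmQ(S)$, $\overline{A} = \overline{R} \times \overline{S}$, and the characterization of $W(A)$ forced by $\depth R, \depth S > 0$), but they deploy them differently.
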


\begin{proof}
Notice that $\depth A >0$. We denote by $J = \m \times \n$ the maximal ideal of $A$. Since $B=R\times S$ is a module-finite birational extension over $A$, we get $\rmQ(A) = \rmQ(R) \times \rmQ(S)$ and $\overline{A} = \overline{R} \times \overline{S}$ as well. 

$(2) \Rightarrow (1)$ Let $K \in \Lambda(A)$. We choose $\alpha \in W(A)$ such that $K = \overline{(\alpha)}$.  May assume $\alpha \in J = \m \times \n$. Let us write $\alpha = (x, y)$ with $x \in \fkm$ and $y \in \fkn$.  Since $\alpha \in W(A)$, we have $x \in W(R)$ and $y \in W(S)$. Set $I_1 = \overline{(x)}$ in $R$ and $I_2 = \overline{(y)}$ in $S$. Then 
$$
K = \overline{(\alpha)} = \alpha \overline{A} \cap A = \left[ (x, y)(\overline{R} \times \overline{S}) \cap B\right] \cap A = \left[(x \overline{R} \cap R) \times (y \overline{S} \cap S)\right] \cap A = I_1 \times I_2
$$
where the last equality follows from the fact that $I_1 \times I_2 \subseteq \m \times \n = J \subseteq A$. Since $R$ and $S$ are weakly Arf, by Theorem \ref{2.4}, we get $I_1^2 = xI_1$ and $I_2^2 = yI_2$. This yields
$$
K^2 = I_1^2 \times I_2^2  = (xI_1) \times (yI_2) = (x, y) (I_1 \times I_2) = \alpha K
$$
so that $A$ is a weakly Arf ring.

$(1) \Rightarrow (2)$ Let $I_1 \in \Lambda (R)$. Set $I_1 = \overline{(x)}$ with $x \in W(R)$. We may assume $x \in \m$. Hence $I_1 \subseteq \m$. As $\depth S>0$, we can choose $y \in \n$ such that $y \in W(S)$. We put $\alpha = (x, y)$. Then $\alpha \in W(A)$. Let $K = \overline{(\alpha)}$ be the integral closure of the ideal $(\alpha)$ in $A$. The weakly Arf property for $A$ leads us to obtain $K^2 = \alpha K$. As in the proof of $(2) \Rightarrow (1)$, we have $K = \overline{\alpha A} =  I_1 \times \overline{(y)}$. Hence the equalities 
$K^2 = I_1^2 \times (\overline{(y)})^2$ and $\alpha K = (x I_1) \times  (y\overline{(y)})$ induce $I_1^2 = xI_1$. Therefore $R$ is weakly Arf. Similarly,  $S$ is a weakly Arf ring. 
\end{proof}

When $R = S$, we can remove the condition $\depth R > 0$. 
For an integer $i$, an ideal $I$ and a module $M$ over a Noetherian ring $R$, we denote by 
${\rmH}^i_I(M)$ the $i$-th local cohomology module of $M$ with respect to $I$.

\begin{cor}\label{6.3}
Let $(R, \m)$ be a Noetherian local ring. Then 
$A = R\times_{R/\m} R$ is weakly Arf if and only if so is $R$. 
\end{cor}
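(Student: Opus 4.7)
The plan is to reduce to Theorem \ref{6.2} whenever possible and handle the remaining case by a triviality argument. Set $A=R\times_{R/\m} R$ with maximal ideal $J=\m\times\m$, and recall that the exact sequence $0\to A\to R\times R\to R/\m\to 0$ makes $B=R\times R$ a module-finite birational extension of $A$.

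First I would split on $\depth R$. If $\depth R>0$, then the hypothesis of Theorem \ref{6.2} is satisfied with $S=R$, and the equivalence ``$A$ is weakly Arf $\iff$ $R$ is weakly Arf'' is immediate from that theorem.

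The remaining case is $\depth R=0$, where I will show that both $R$ and $A$ coincide with their total rings of fractions and hence are trivially weakly Arf (every integrally closed ring is weakly Arf, as noted after Definition \ref{2.3}). For $R$ this is easy: $\depth R=0$ means $\m\in\Ass R$, so $\m$ consists entirely of zerodivisors, forcing $W(R)\subseteq R\setminus\m$ to be units and $\rmQ(R)=R$. To transfer this to $A$, choose $0\neq x\in R$ with $\m x=0$; since $x$ cannot be a unit, $x\in\m$, and therefore $(x,0)\in A$. For every $(r,s)\in J=\m\times\m$ one has $(r,s)\cdot(x,0)=(rx,0)=0$, so $J\cdot(x,0)=0$ and thus $J\in\Ass A$. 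Consequently $\depth A=0$, every non-zerodivisor of $A$ lies outside $J$ and is a unit, and $\rmQ(A)=A=\overline{A}$. Hence $A$ is (trivially) weakly Arf, and the equivalence holds vacuously in this case.

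Combining the two cases yields the corollary. The only mildly subtle step is the verification that $\depth R=0$ forces $\depth A=0$, i.e., producing a nonzero element of $A$ annihilated by the full maximal ideal $J$; the element $(x,0)$ above does the job once one checks it belongs to $A$, which is where the condition $x\in\m$ (forced by $\m x=0$ and $x\neq 0$) is essential. No computation with integrally closed ideals is required beyond what is already encapsulated in Theorem \ref{6.2}.
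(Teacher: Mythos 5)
Your proof takes a genuinely different and more elementary route from the paper's. Where the paper applies the local cohomology functor $\rmH^0_J(-)$ to the exact sequence $0\to A\to R\times R\to R/\m\to 0$ and derives a contradiction from the injection $\rmH^0_\m(R)\times\rmH^0_\m(R)\hookrightarrow R/\m$ (a length-$\geq 2$ module cannot embed in a length-$1$ module), you instead produce an explicit nonzero element of the socle of $A$, which is cleaner and avoids the local cohomology machinery. Both arguments ultimately reach the same conclusion, that $\depth A=0$, and then invoke the observation that a ring equal to its total quotient ring is trivially weakly Arf.

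There is, however, one small gap in your argument. You claim that the element $x$ with $\m x=0$ ``cannot be a unit.'' This is false when $R$ is a field: then $\m=(0)$, every nonzero $x$ satisfies $\m x=0$, every such $x$ is a unit, and $(x,0)=(1,0)\notin A$ because $1\not\equiv 0$ in $R/\m=R$, so your socle element does not exist. The inference ``$x$ not a unit'' really uses $\m\neq (0)$, since $\m x=0$ with $x$ a unit forces $\m=0$. The fix is immediate: dispose of the field case first (if $\m=(0)$, then $A=R\times_R R$ is the diagonal, hence isomorphic to $R$, a field, and both sides of the equivalence hold trivially), and then your argument applies verbatim when $\m\neq(0)$. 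The paper's local cohomology argument does not require this case split, since $\rmH^0_\m(R)=R$ when $R$ is a field and the length comparison still yields a contradiction.
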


\begin{proof}
By Theorem \ref{6.2}, we may assume $\depth R = 0$, i.e, $R$ is a weakly Arf ring. To prove $\depth A=0$, we assume $\depth A > 0$. Then $\rmH^0_J (A) = (0)$, where $J = \m \times \m$.  The exact sequence 
$0 \to A \to B \to R/\fkm \to 0$ of $A$-modules gives rise to a sequence
$$
0 \to \rmH_J^0(A) \to \rmH_\m^0(R) \times \rmH_\m^0(R) \to R/\fkm.
$$
This makes a contradiction, because $\rmH_\m^0(R) \ne (0)$. Hence $\depth A=0$. Therefore $A$ is a weakly Arf ring. 
\end{proof}

Hence we have the following.

\begin{cor}
Let $(R, \m)$ be a Noetherian local ring. Then $R \times_{R/\mathfrak{m}} R \times_{R/\mathfrak{m}} \cdots \times_{R/\mathfrak{m}} R$ is weakly Arf provided that $R$ is a weakly Arf ring.
\end{cor}

Let us now explore a gluing which is a special kind of fiber product. 

\begin{thm} Let $S$ be a Noetherian semi-local ring. Suppose that $S$ is integrally closed and it contains a field $k$. Let $J$ be an ideal of $S$ and assume the following conditions:
\begin{enumerate}[$(1)$]
\item $J \subseteq \operatorname{J}(S)$,
\item $J \cap \operatorname{W}(S) \ne \emptyset$, and
\item $\operatorname{dim}_k S/J < \infty$,
\end{enumerate}
where $\operatorname{J}(S)$ denotes the Jacobson radical of $S$. 
Then $R= k + J$ is a weakly Arf ring.
\end{thm}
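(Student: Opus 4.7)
The plan is to verify Definition 2.3 directly, after first identifying $\overline{R}$ with $S$. Observe that $J$ is a proper ideal of $S$ (since $J \subseteq \J(S)$) and $k$ is a field, so $k \cap J = 0$; hence $R = k \oplus J$ as $k$-modules and $R/J \cong k$. Lifting a finite $k$-basis of $S/J$ to elements $s_1,\dots,s_n \in S$, every $s \in S$ satisfies $s - \sum_i c_i s_i \in J \subseteq R$ for some $c_i \in k$, so $S = R + R s_1 + \cdots + R s_n$ is module-finite (hence integral) over $R$; in particular $R$ is Noetherian by Eakin--Nagata.

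Next I would show $\overline{R} = S$. Fix $a_0 \in J \cap W(S)$; since $R \hookrightarrow S$, one has $a_0 \in W(R)$ as well. For any $a \in W(R)$ and $s \in S$ with $as = 0$, the element $a_0 s$ lies in $a_0 S \subseteq J \subseteq R$ and is annihilated by $a$, so $a_0 s = 0$ and hence $s = 0$; thus $W(R) \subseteq W(S)$ and $\rmQ(R)$ embeds canonically in $\rmQ(S)$. Moreover, any $s \in S$ satisfies $a_0 s \in J \subseteq R$, so $s = (a_0 s)/a_0 \in \rmQ(R)$ and $S \subseteq \rmQ(R)$. Since $S$ is integral over $R$, contained in $\rmQ(R) \subseteq \rmQ(S)$, and integrally closed in $\rmQ(S)$, the equality $\overline{R} = S$ follows.

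Finally I would verify the weakly Arf property. Take $x,y,z \in R$ with $x \in W(R)$ and $u := y/x$, $v := z/x$ in $\overline{R} = S$; the goal is $xuv \in R$. I would split on whether $x$ lies in $J$. If $x \in J$, then $xuv \in JS = J \subseteq R$ immediately. If $x \notin J$, then writing $x = c + j$ with $c \in k^{\times}$ and $j \in J$, the image $\bar x = c$ is invertible in $S/J$ because $k \hookrightarrow S/J$; from $\bar x\, \bar u = \bar y \in R/J = k$ we deduce $\bar u \in k \subseteq S/J$, which means $u - d \in J$ for some $d \in k$ and hence $u \in k + J = R$. The same argument yields $v \in R$, so $xuv \in R$. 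The main technical obstacle is the identification $\overline{R} = S$, which relies essentially on the hypothesis $J \cap W(S) \ne \emptyset$; once that is in place, the weakly Arf condition reduces to the clean case analysis modulo $J$ above.
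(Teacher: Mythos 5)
Your proof is correct and follows the same overall strategy as the paper: identify $\overline{R}$ with $S$ (via the module-finite inclusion $R \subseteq S$ and the non-zerodivisor $a_0 \in J$), and then exploit the crucial inclusion $xS \subseteq J \subseteq R$ for $x \in J$. The only real difference is organizational: you verify Definition~\ref{2.3} directly, splitting on $x \in J$ versus $x \notin J$ and using $R/J \cong k \hookrightarrow S/J$ to handle the latter case, whereas the paper first establishes that $R$ is local with maximal ideal $J$ (so nonunits lie in $J$) and then applies the characterization of Theorem~\ref{2.4}, showing $\overline{(a)} = aS$ and $(aS)^2 = a\cdot aS$ for $a \in J \cap W(R)$; the two verifications reduce to the same computation.
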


\begin{proof}
Notice that $S$ is a module-finite extension over $R$. Indeed, because of the condition $(3)$, we see that $\ell_R(S/J)<\infty$. It then follows that $S/J$ is a finitely generated $R$-module. The natural surjection $S/J \to S/R$ guarantees that $S/R$ is a finitely generated $R$-module, whence so is $S$. Therefore, $R$ is a Noetherian ring with $\dim R= \dim S$. In addition, we claim that $R$ is a local ring with maximal ideal $J$. In fact, note that $J \in \Max R$. The condition $(1)$ ensures  $J \subseteq M$ for every $M \in \Max S$, so that $J \subseteq M \cap R$. Hence $J=M \cap R$, as claimed. 
By the condition $(2)$, we obtain $\rmQ(R) = \rmQ(S)$ and hence $S = \overline{R}$. Let $I \in \Lambda(R)$. Set $I = \overline{(a)}$ for some $a \in W(R)$. We may assume that $a \in R$ is not a unit in $R$, i.e., $a \in J$. Then $\overline{(a)} = a\overline{R} \cap R = a S$, because $a S\subseteq J \subseteq R$. Therefore
$$
I^2 =(aS)^2 = a(aS) =a I.
$$
Hence $R$ is a weakly Arf ring.
\end{proof}

Let us note some examples. 

\begin{ex}
Let $k[[X,Y]]$ be the formal power series ring over a field $k$. We consider the subring $S = k[[X^4,X^3Y,X^2Y^2, XY^3, Y^4]]$ and ideal $J = (X^4, X^3Y, XY^3, Y^4)$. Then $R= k + J$ is a weakly Arf ring.
\end{ex}

\begin{ex}\label{ex 4.3}
Let $S= k[[X_1, X_2, \ldots, X_d, Y_1, Y_2, \ldots, Y_d]]$~$(d \ge 1)$ be the formal power series ring over a field $k$ and $J = (X_1, X_2, \ldots, X_d) \cap (Y_1, Y_2, \ldots, Y_d)$. Then $R = S/J$ is a weakly Arf ring.
\end{ex}


Next we explore the Arf property for fiber products. 
In what follows, let $(R, \m)$, $(S, \n)$ be Noetherian local rings with a common residue class field $k = R/\m = S/\n$. As in Theorem \ref{6.2}, we assume that $\depth R>0$ and $\depth S>0$. We set $A = R\times_k S \subseteq B = R \times S$. 

To state the second main result, let us begin with the following. 

\begin{lem}\label{6.8}
Let $I$ be a proper ideal of $R$ and let $J$ be a proper ideal of $S$. We set $K = I \times J$ which forms an ideal of $A$. Then $K$ is integrally closed in $A$ if and only if $I$ and $J$ are integrally closed in $R$ and $S$, respectively. 
\end{lem}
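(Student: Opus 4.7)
The plan is to exploit the componentwise nature of $A \subseteq B = R \times S$ together with the projections $p_1 \colon A \to R$ and $p_2 \colon A \to S$, translating integral dependence equations in $A$ into componentwise integral dependence equations in $R$ and $S$ and vice versa. The key preliminary observation I will record is that $K = I \times J$ really is an ideal of $A$ (since $I \subseteq \m$ and $J \subseteq \n$ force $K \subseteq \m \times \n \subseteq A$), and that $K^n \subseteq I^n \times J^n$ in $B$, because a generator $(a_1,b_1)\cdots(a_n,b_n)$ of $K^n$ with $a_j \in I$, $b_j \in J$ equals $(a_1\cdots a_n,\,b_1\cdots b_n) \in I^n \times J^n$.

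For the \emph{if} direction, suppose $I$ and $J$ are integrally closed in $R$ and $S$. Let $(x,y) \in \overline{K}$ in $A$, and pick a monic dependence relation
\[
(x,y)^n + \kappa_1 (x,y)^{n-1} + \cdots + \kappa_n = 0, \qquad \kappa_i \in K^i.
\]
Writing $\kappa_i = (a_i,b_i)$ with $a_i \in I^i$ and $b_i \in J^i$ via the inclusion $K^i \subseteq I^i \times J^i$, and projecting to each component, I obtain monic integral dependence equations for $x$ over $I$ in $R$ and for $y$ over $J$ in $S$. Thus $x \in \overline{I} = I$ and $y \in \overline{J} = J$, so $(x,y) \in K$.

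For the \emph{only if} direction, assume $K$ is integrally closed. Given $x \in \overline{I}$ in $R$, fix an integral equation $x^n + c_1 x^{n-1} + \cdots + c_n = 0$ with $c_i \in I^i$. Since $I \subseteq \m$, each $c_i \in \m$, and a standard argument (using that $R/\m$ is a field) forces $x \in \m$, so $(x,0) \in A$. I then need to lift the equation to $A$: observe that for any $a \in I$ the element $(a,0)$ lies in $K = I \times J$ (indeed $a \in I$ and $0 \in J$), so writing each $c_i \in I^i$ as a sum of products of $i$ elements of $I$ shows $(c_i,0) \in K^i$. Consequently
\[
(x,0)^n + (c_1,0)(x,0)^{n-1} + \cdots + (c_n,0) = 0
\]
holds in $A$ (the first components give the original equation, the second components are all zero), exhibiting $(x,0)$ as integral over $K$. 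Hence $(x,0) \in \overline{K} = K = I \times J$, giving $x \in I$, and therefore $I$ is integrally closed; by symmetry so is $J$.

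The only slightly delicate step is verifying that $(c_i,0) \in K^i$ rather than merely in $I^i \times 0$: this uses that $(a,0) \in K$ for every $a \in I$, which is why $I \subseteq \m$ (so that $(a,0)$ actually belongs to $A$) plays an essential role. Everything else is a direct transfer through the projections.
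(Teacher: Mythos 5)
Your proof is correct and follows essentially the same route as the paper's: both exploit the product decomposition $B = R \times S$ and the componentwise behaviour of $K = I \times J$ (so $K^i \subseteq I^i \times J^i$ and, conversely, $(c,0) \in K^i$ for $c \in I^i$), with integral dependence relations transferred via the projections. Your version merely makes explicit the step that $(x,0)$ is integral over $K$, which the paper leaves to the reader after observing $(x,0) \in \m \times \n \subseteq A$.
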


\begin{proof}
Note that $K \subseteq \m \times \n \subsetneq A$ and $K$ is an ideal of $B$, so $K$ is an ideal of $A$. If $I$ and $J$ are integrally closed, then so is $K$ in $B$. Hence, $K$ is integrally closed in $A$. Conversely, we assume $K = \overline{K}$ in $A$. Let us choose $x \in \overline{I}$. Then $x \in \m$, so that $(x, 0) \in \m \times \n \subseteq A$. Hence $(x, 0) \in \overline{I \times J} = \overline{K} = K$, which implies $x \in I$. Therefore, $I$ is integrally closed in $R$. Similarly, we have $J = \overline{J}$ in $S$, as desired. 
\end{proof}

We apply Lemma \ref{6.8} to get the following.

\begin{cor}\label{6.9}
Let $K$ be a proper ideal of $A$. Then $K$ is integrally closed in $A$ if and only if there exist a proper integrally closed ideal $I$ of $R$ and a proper integrally closed ideal $J$ of $S$ such that $K = I \times J$. 
\end{cor}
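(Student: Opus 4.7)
The $(\Leftarrow)$ direction is immediate from Lemma \ref{6.8}, so the content lies entirely in the forward implication. The plan is as follows.

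First, since $R$ and $S$ are local with residue field $k$, the inclusion $\m\times\n\subseteq A$ holds, and $A$ is local with maximal ideal $J_A=\m\times\n$ by Lemma \ref{6.1}(2). Because $K$ is proper, this gives $K\subseteq \m\times\n$. Define $I=p_1(K)\subseteq \m$ and $J=p_2(K)\subseteq \n$; these are proper ideals of $R$ and $S$ respectively (using that $p_1,p_2$ are surjective ring homomorphisms from $A$). The inclusion $K\subseteq I\times J$ is tautological.

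The heart of the argument is the reverse inclusion $I\times J\subseteq K$, and for this it suffices to show that whenever $(x,y)\in K$ one has $(x,0)\in K$ and $(0,y)\in K$; then every element $(x,y)\in I\times J$ splits as $(x,0)+(0,y)$ with each summand in $K$ (witnessed by some $(x,y')\in K$ and some $(x',y)\in K$). To produce $(x,0)\in K$, set $\alpha=(x,0)$, which lies in $A$ because $x\in\m$. The decisive computation is the identity
\[
\alpha^{2}=(x^{2},0)=(x,y)\cdot(x,0)=(x,y)\cdot\alpha,
\]
which yields the monic relation $\alpha^{2}-(x,y)\alpha=0$ with coefficient $-(x,y)\in K$. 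This is an equation of integral dependence of $\alpha$ on $K$, so the hypothesis that $K$ is integrally closed in $A$ forces $\alpha\in K$. Then $(0,y)=(x,y)-(x,0)\in K$ as well.

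Combining these facts yields $K=I\times J$, and Lemma \ref{6.8} then guarantees that $I$ and $J$ are integrally closed in $R$ and $S$ respectively. The only non-routine step is spotting the right integral dependence relation $\alpha^{2}=(x,y)\alpha$; once this is in hand the rest is bookkeeping with the local structure of $A$ and the projections $p_1,p_2$.
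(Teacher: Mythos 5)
Your proof is correct, and it takes a genuinely different path from the paper's. The paper argues via the integral closure of the ring: using $\overline{A}=\overline{R}\times\overline{S}$, it decomposes the extended ideal $K\overline{A}=I_1\times J_1$ (ideals of a direct product split), and then writes $K=\overline{K}=K\overline{A}\cap A=(I_1\cap R)\times(J_1\cap S)$. You instead stay entirely inside $A$: from $(x,y)\in K$ and $\alpha=(x,0)\in A$ (legitimate since $K\subseteq\m\times\n$) you observe the monic relation $\alpha^2-(x,y)\alpha=0$ with the linear coefficient in $K$, so $\alpha$ is integral over $K$, and integral closedness forces $(x,0)\in K$; the rest is additive bookkeeping. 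Your route is more elementary and self-contained --- in particular it avoids the identification $\overline{K}=K\overline{A}\cap A$, which the paper uses implicitly and which is not a purely formal fact --- while the paper's route is more structural and makes the resulting $I,J$ transparent in terms of contractions of ideals of $\overline{R},\overline{S}$. Both correctly reduce the final claim (that $I,J$ are integrally closed) to Lemma~\ref{6.8}.
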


\begin{proof}
We only prove the `only if' part. Suppose that $K$ is integrally closed in $A$. Then $K = \overline{K} = K \overline{A} \cap A$. Since $\overline{A} = \overline{R} \times \overline{S}$, we choose ideals $I_1$ and $J_1$ in $\overline{R}$ and $\overline{S}$ satisfying $K\overline{A} = I_1 \times J_1$. Therefore
\begin{eqnarray*}
K &=& (I_1 \times J_1) \cap A = [(I_1 \times J_1) \cap B] \cap A \\
   &=& [(I_1 \cap R) \times (J_1 \cap S)] \cap A \\
   &=& (I_1 \cap R) \times (J_1 \cap S)
\end{eqnarray*}
where the last equality follows from the fact that $I_1$ and $J_1$ are proper ideals in $\overline{R}$ and $\overline{S}$, respectively. By setting $I = I_1 \cap R$ and $J = J_1 \cap S$, we have $K = I \times J$, and the assertion follows from Lemma \ref{6.8}.
\end{proof}

We are now ready to prove the second main result in this section. 

\begin{thm}\label{6.10}
Let $(R, \m)$, $(S, \n)$ be Noetherian local rings with a common residue class field $k = R/\m = S/\n$. Suppose that $\depth R>0$, $\depth S>0$, and  $\dim R = \dim S = 1$.  Then the following conditions are equivalent.
\begin{enumerate}[$(1)$]
\item $A = R \times_k S$ is an Arf ring.
\item $R$ and $S$ are Arf rings. 
\end{enumerate}
\end{thm}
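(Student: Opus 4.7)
The ring $A=R\times_k S$ is a one-dimensional Cohen–Macaulay local ring by Lemma \ref{6.1}(3), so the statement makes sense as an equivalence of Arf properties. The weakly Arf half of the equivalence is already supplied by Theorem \ref{6.2}. Thus the only remaining task is to compare the existence of principal reductions of integrally closed open ideals on the two sides. The decisive structural input will be Corollary \ref{6.9}, which tells us that every proper integrally closed ideal of $A$ has the product shape $K=I\times J$ with $I\subsetneq R$ and $J\subsetneq S$ integrally closed.

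For the implication $(2)\Rightarrow(1)$, I would first invoke Theorem \ref{6.2} to conclude that $A$ is weakly Arf. Then, given an integrally closed ideal $K\in\calF_A$, I would use Corollary \ref{6.9} to write $K=I\times J$ with integrally closed proper ideals $I\subseteq\m$ and $J\subseteq\n$. Any non-zerodivisor $(x,y)\in K$ of $A$ forces $x\in W(R)$ and $y\in W(S)$, so $I\in\calF_R$ and $J\in\calF_S$. Since $R$ and $S$ are Arf, I obtain principal reductions $I^2=aI$ and $J^2=bJ$ with $a\in I$, $b\in J$. Because $a\in\m$ and $b\in\n$ both map to $0$ in $k$, the element $\alpha:=(a,b)$ lies in $A$, and the computation
\[
K^2=I^2\times J^2=(aI)\times(bJ)=(a,b)(I\times J)=\alpha K
\]
exhibits the required principal reduction of $K$ in $A$.

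For the implication $(1)\Rightarrow(2)$, Theorem \ref{6.2} already gives that $R$ and $S$ are weakly Arf. Now let $I\in\calF_R$ be integrally closed. The key observation is that the maximal ideal $\n$ of $S$ is itself integrally closed: one always has $\overline{\n}\subseteq\sqrt{\n}=\n$. Since $\depth S>0$, $\n$ contains a non-zerodivisor; combined with $I\in\calF_R$, the product $K:=I\times\n$ lies in $\calF_A$ and is integrally closed in $A$ by Lemma \ref{6.8}. The Arf hypothesis on $A$ provides $\alpha=(a,b)\in K$ with $K^2=\alpha K$, and comparing components yields $I^2=aI$, i.e.\ a principal reduction of $I$ in $R$. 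The symmetric argument handles $S$.

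The main substantive obstacle would be the shape of integrally closed ideals in the fiber product, but this is already resolved by Lemma \ref{6.8} and Corollary \ref{6.9}; once those are in hand, the passage between principal reductions in $A$ and in the two factors is essentially bookkeeping. The small but necessary observations to keep in mind are that $I\subseteq\m$ and $J\subseteq\n$ guarantee $(a,b)\in A$ for any chosen reductions, and that $\n$ (and symmetrically $\m$) is automatically integrally closed, which is what makes $I\times\n$ a legitimate test ideal in the only-if direction.
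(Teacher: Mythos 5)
Your proof is correct and follows essentially the same route as the paper: both directions hinge on the decomposition of integrally closed ideals via Lemma \ref{6.8} and Corollary \ref{6.9}, with the test ideal $I\times\n$ in the forward direction and componentwise reductions in the converse. Your version is slightly more explicit (noting $\overline{\n}=\n$, that $(a,b)\in A$, and citing Theorem \ref{6.2} for the weakly Arf half of the Arf definition), but the argument is the same.
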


\begin{proof}
Suppose that $A$ is Arf. Let $I$ be an integrally closed proper ideal in $R$ such that $I \in \calF_R$, i.e., it contains a non-zerodivisor on $R$. We set $K = I \times \n$. Then $K$ is integrally closed in $A$, so that it is stable, i.e., $K^2 = \alpha K$ for some $\alpha \in K$. Write $\alpha = (x, y)$ with $x \in I$ and $y \in \n$. We then have $I^2 = xI$. Hence $R$ is an Arf ring. Similarly, $S$ is Arf. 
On the other hand, suppose that both $R$ and $S$ are Arf rings. Let $K \in \calF_A$ be an integrally closed proper ideal in $A$. By Corollary \ref{6.9}, we may choose integrally closed ideals $I$ in $R$ and $J$ in $S$ such that $K = I \times J$. Since $I$ and $J$ are stable, $K$ is also stable. Hence $A$ is an Arf ring. 
\end{proof}

Closing this section, let us note an example of a weakly Arf ring obtained from the amalgamated duplication. Let $R$ be a commutative ring and $I \in \calF_R$. We set $A = R \bowtie I$ the amalgamated duplication of $R$ with respect to $I$. Notice that $\rmQ(A) = \rmQ(R) \bowtie \rmQ(R)$ and $\overline{A} = \overline{R}  \bowtie  \overline{R}$.

With this notation we have the following. 

\begin{prop}
Suppose that $I$ is an ideal of $\overline{R}$. Then $A= R \bowtie I$ is  weakly Arf if and only if so is $R$.
\end{prop}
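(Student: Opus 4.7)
The plan is to use the embedding $A = R\bowtie I\hookrightarrow R\times R$ together with the identifications $\rmQ(A)=\rmQ(R)\times\rmQ(R)$ and $\overline{A}=\overline{R}\times\overline{R}$ noted just before the statement. Writing elements of $A$ as pairs $(r,r+i)$ with $r\in R$ and $i\in I$, an element $(a_1,a_2)\in A$ lies in $W(A)$ if and only if $a_1,a_2\in W(R)$, and an element $(\xi_1,\xi_2)\in\rmQ(A)$ is integral over $A$ if and only if $\xi_1,\xi_2\in\overline{R}$.

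For the direction ``$A$ weakly Arf $\Rightarrow$ $R$ weakly Arf'', I would apply Proposition \ref{3.1} to the diagonal ring homomorphism $\Delta\colon R\to A$, $r\mapsto(r,r)$. For $a\in W(R)$, one has $\Delta(a)=(a,a)\in W(A)$, because $(a,a)(r,r+j)=0$ forces $ar=0$ and $aj=0$, hence $r=j=0$. Moreover, if $(a,a)(r,r+j)=(ar,ar+aj)$ belongs to $\Delta(R)$, then $aj=0$ forces $j=0$, so the element equals $\Delta(ar)\in \Delta(aR)$; thus $\Delta(a)A\cap\Delta(R)=\Delta(aR)$. Proposition \ref{3.1} then yields the conclusion. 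Note that this implication does not require the hypothesis that $I$ is an ideal of $\overline{R}$.

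For the converse, let $\alpha=(a_1,a_2),\ \beta=(b_1,b_2),\ \gamma=(c_1,c_2)\in A$ with $\alpha\in W(A)$ and $\beta/\alpha,\gamma/\alpha\in\overline{A}$. Setting $\xi_j:=b_j/a_j$ and $\eta_j:=c_j/a_j$ in $\overline{R}$ for $j=1,2$, the weak Arf property of $R$ applied in each coordinate gives $a_j\xi_j\eta_j=b_jc_j/a_j\in R$, so $\beta\gamma/\alpha\in R\times R$. What remains is to show that
$$
a_2\xi_2\eta_2-a_1\xi_1\eta_1\in I,
$$
which will place $\beta\gamma/\alpha$ in $A$. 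Using the decompositions
$$
a_2\xi_2\eta_2-a_1\xi_1\eta_1 = (a_2-a_1)\xi_2\eta_2 + a_1(\xi_2\eta_2-\xi_1\eta_1),
$$
$$
\xi_2\eta_2-\xi_1\eta_1 = \xi_1(\eta_2-\eta_1) + \eta_1(\xi_2-\xi_1) + (\xi_2-\xi_1)(\eta_2-\eta_1),
$$
together with the key identities
$$
a_1(\xi_2-\xi_1)=(b_2-b_1)-(a_2-a_1)\xi_2,\qquad a_1(\eta_2-\eta_1)=(c_2-c_1)-(a_2-a_1)\eta_2,
$$
each of which lies in $I+I\cdot\overline{R}=I$ by the hypothesis $I\overline{R}\subseteq I$, one sees that every summand on the right-hand side of the first decomposition is a product of an element of $I$ with an element of $\overline{R}$. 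Since $I\overline{R}\subseteq I$, each summand is in $I$, which completes the proof.

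The main obstacle is purely the algebra in the last step: once the two identities for $a_1(\xi_2-\xi_1)$ and $a_1(\eta_2-\eta_1)$ are in hand, every cross-term in $a_1(\xi_2\eta_2-\xi_1\eta_1)$ must be rewritten so that one of these $I$-elements (or $a_2-a_1$) appears as a factor, after which $I\overline{R}\subseteq I$ absorbs the remaining $\overline{R}$-factor. The rest is bookkeeping.
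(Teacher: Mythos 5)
Your proof is correct and follows essentially the same approach as the paper's. The forward direction is the same direct-summand argument (your direct appeal to Proposition~\ref{3.1} is in fact a bit cleaner than the paper's citation of Corollary~\ref{3.2}(1), since $R\bowtie I$ need not be a domain), and for the converse your computation in the fiber-product coordinates $(a_1,a_2)=(a,a+x)$ is, after unwinding the identification, the same verification the paper carries out in the $(a,x)$-coordinates of $R\bowtie I$: your key identities $a_1(\xi_2-\xi_1)\in I$ and $a_1(\eta_2-\eta_1)\in I$ correspond exactly to the paper's observation that $ay,az\in I$, and the absorption $I\overline{R}\subseteq I$ plays the identical role in both.
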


\begin{proof}
Since $R$ is a direct summand of $A$ as an $R$-module, by Corollary \ref{3.2} (1), if $A$ is weakly Arf, then so is $R$. Conversely, assume that $R$ is a weakly Arf ring. Let $\alpha, \beta, \gamma \in A$ such that $\alpha \in W(A)$ and $\beta/\alpha, \gamma/\alpha \in \overline{A}$.  Write $\alpha = (a, x)$, $\beta/\alpha = (b, y)$, and $\gamma/\alpha = (c, z)$, where $a \in R$, $x \in I$, and $b, c, y, z \in \overline{R}$. Then
$$
\beta = (a, x)(b, y) = (ab, ay + xb + xy), \quad \gamma = (a, x)(c, z) = (ac, az + xc + xz)
$$
so that $ay, az \in I$, because $I$ is an ideal of $\overline{R}$. As $ab, ac \in a\overline{R}$, the weakly Arf property of $R$ guarantees that $abc = (ab\cdot ac)/a \in R$. Hence
\begin{eqnarray*}
\frac{\beta\gamma}{\alpha} &=& \alpha (b, y)(c,z) = (a, x) (bc, bz + yc + yz) \\
& = & (abc, abz + ayc + ayz + xbc + xbz + xyc + xyz) \in  A
\end{eqnarray*}
which shows that $A$ is a weakly Arf ring. 
\end{proof}

\begin{cor}
Let $R$ be a Noetherian ring. Suppose that $\overline{R}$ is a module-finite extension over $R$. For each $f \in W(R) \cap (R:\overline{R})$, we set $I = f\overline{R}$. Then $A = R \bowtie I$ is weakly Arf if and only if so is $R$. 
\end{cor}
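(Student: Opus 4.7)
The plan is to deduce this corollary directly from the preceding proposition by verifying that the ideal $I = f\overline{R}$ satisfies all the required hypotheses, namely that $I$ is an ideal of $R$ belonging to $\calF_R$ and is simultaneously an ideal of $\overline{R}$. Once these points are checked, the corollary becomes a plug-in application of the previous result.

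First I would check that $I \subseteq R$. Since $f \in R:\overline{R}$, we have $f\overline{R} \subseteq R$ by the very definition of the colon ideal, so $I$ is indeed a subset of $R$. Next, $I$ is trivially an ideal of $\overline{R}$ because it is defined as $f\overline{R}$. To see that $I$ is an ideal of $R$, note that for every $r \in R$ and $i = f\alpha \in I$ (with $\alpha \in \overline{R}$) we have $r \in \overline{R}$ (as $R \subseteq \overline{R}$), hence $r\alpha \in \overline{R}$, and so $ri = f(r\alpha) \in f\overline{R} = I$. Finally, because $1 \in \overline{R}$ we have $f = f \cdot 1 \in I$, and since $f \in W(R)$ by hypothesis, $I$ contains a non-zerodivisor on $R$, so $I \in \calF_R$. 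In particular the amalgamated duplication $A = R \bowtie I$ is defined in the sense used earlier in the paper.

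With the hypotheses of the preceding proposition verified, that result applies verbatim: since $I$ is an ideal of $\overline{R}$ contained in $\calF_R$, the ring $A = R \bowtie I$ is weakly Arf if and only if $R$ is weakly Arf, which is precisely the claim.

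There is essentially no obstacle here; the only thing to be careful about is the (easy) verification that $I = f\overline{R}$ is simultaneously an ideal of $R$ and of $\overline{R}$, which is forced by the choice $f \in W(R) \cap (R:\overline{R})$. The whole content of the corollary is that this particular construction provides a natural and readily available source of ideals $I$ to which the preceding proposition can be applied, using that $\overline{R}$ being module-finite over $R$ guarantees that $R:\overline{R}$ contains non-zerodivisors (for instance, any common denominator of a finite generating set of $\overline{R}$ over $R$).
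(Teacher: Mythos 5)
Your proof is correct and is the natural (indeed essentially forced) argument: the paper states this corollary without proof, and the intended reading is exactly what you carried out — verify that $I=f\overline{R}$ is an ideal of $\overline{R}$ contained in $R$, hence an ideal of $R$, and that $f\in I\cap W(R)$ puts $I\in\calF_R$, after which the preceding proposition applies. Your closing remark that module-finiteness of $\overline{R}$ over $R$ ensures $W(R)\cap(R:\overline{R})\neq\emptyset$ (via a common denominator of a finite module generating set) is a correct and worthwhile observation explaining why the hypothesis is non-vacuous.
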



\section{Weakly Arf closures}\label{sec7}

As proved by J. Lipman in \cite{L}, among all the Arf rings between a ring and its integral closure, there is a smallest one,  called {\it the Arf closure}. The reader may refer to \cite{L} for its basic properties. 
In this section we slightly modify this closure, and use it in our investigation of the weakly Arf rings. 


For a Noetherian ring $A$, we define $\calY_A$ to be the set of all weakly Arf rings $B$ such that $B$ is an intermediate ring between $A$ and $\overline{A}$ and is a module-finite extension over the ring $A$. Suppose that $\calY_A \ne \emptyset$. Notice that this assumption is automatically satisfied if $\overline{A}$ is a module-finite extension over $A$. However, the set $\calY_A$ could be non-empty, even though $\overline{A}$ is not finitely generated, as we show next.


\begin{rem}\label{7.1}
Let $S=k[[t]]$ be the formal power series ring over a field $k$, $R_0 = k[[t^4, t^5, t^6]]$ and $R_1 = k[[t^4, t^5, t^6, t^7]]$ be semigroup rings. We consider the idealizations $A = R_0\ltimes S$ and $B=R_1 \ltimes S$. Then $\overline{A} = S \ltimes \rmQ(S)$ is not finitely generated as an $A$-module. However, $B$ is a module-finite extension over $A$ and is a weakly Arf ring; see Corollary \ref{5.8}. 
\end{rem}

Inspired by Lipman's construction of the Arf closures, we set
$$
A_1 = A\left[IA^I \mid I \in \Lambda(A)\right] \subseteq \rmQ(A)
$$
which forms a subring of $\rmQ(A)$, containing $A$. We then have $A_1 \subseteq B$ for every $B \in \calY_A$. Indeed, for each $I \in \Lambda(A)$, we choose $a \in W(A)$ such that $I = \overline{(a)}$. Let $B \in \calY_A$. Set $J = \overline{aB}$. By Theorem \ref{2.4}, we have $J^2 = aJ$, because $J \in \Lambda(B)$. Hence, $B^J = B\left[\frac{J}{a}\right] = \frac{J}{a}$ and $B^J \supseteq A\left[\frac{I}{a}\right] = A^I$, so that $I A^I \subseteq I B^J \subseteq JB^J = J \subseteq B$. Thus $A_1 \subseteq B$, as claimed. 
In particular, $A_1$ is Noetherian.

\begin{defn}\label{7.2}
For each $n \ge 0$, we define recursively
$$
A_n=
\begin{cases}
\ A & (n=0),\\
\ \left[A_{n-1}\right]_1 & (n >0).
\end{cases}
$$
Then, for each $B \in \calY_A$, we have a chain 
$$
A=A_0 \subseteq A_1 \subseteq \cdots \subseteq A_n \subseteq \cdots \subseteq B
$$
of rings. Set ${\rm Arf}(A) = \bigcup_{n \ge 0} A_n$. Notice that ${\rm Arf}(A)$ coincides with the Arf closure when $A$ is a Noetherian semi-local ring such that every localization of $A$ at maximal ideal $M$ is a one-dimensional Cohen-Macaulay local ring and $\overline{A}$ is a finitely generated $A$-module. 
\end{defn}

We then have the following (see \cite[Proposition-Definition 3.1]{L}).

\begin{prop}\label{7.3} 
Let $A$ be a Noetherian ring. Suppose that $\calY_A \ne \emptyset$ $($e.g., $\overline{A}$ is a finitely generated $A$-module$)$. Then ${\rm Arf}(A)$ is a weakly Arf ring, and ${\rm Arf}(A) \subseteq B$  for every $B \in \calY_A$.
\end{prop}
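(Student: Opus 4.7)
The plan is to prove the two parts simultaneously. First I would verify by induction on $n$ that $A_n \subseteq B$ for every $B \in \calY_A$, so that ${\rm Arf}(A) \subseteq B$. Then, fixing one $B \in \calY_A$, I would use Noetherianness to see that the chain $A_0 \subseteq A_1 \subseteq \cdots$ stabilizes, and finally show that at the stable value every $I \in \Lambda(\cdot)$ is stable, so that ${\rm Arf}(A)$ is weakly Arf by Theorem \ref{2.4}.

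For the inclusion, I induct on $n$. The case $n=0$ is trivial. Assuming $A_n \subseteq B$, I claim $B \in \calY_{A_n}$: it is weakly Arf by hypothesis; it is finitely generated over $A_n$ because it is finitely generated over $A \subseteq A_n$; and $A_n \subseteq B \subseteq \overline{A} = \overline{A_n}$, the last equality following from transitivity of integrality. Hence the argument displayed just before Definition \ref{7.2} applies verbatim with $A$ replaced by $A_n$, yielding $A_{n+1} = (A_n)_1 \subseteq B$. Taking the union over $n$ gives ${\rm Arf}(A) \subseteq B$.

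Next, fix one $B \in \calY_A$, which exists by hypothesis. The chain $\{A_n\}_{n \ge 0}$ consists of $A$-submodules of the finitely generated $A$-module $B$; since $A$ is Noetherian, the chain stabilizes at some $A_N = A_{N+1}$, and consequently ${\rm Arf}(A) = A_N$. To see $A_N$ is weakly Arf, I fix $I = \overline{(x)} \in \Lambda(A_N)$ with $x \in W(A_N)$ and show $I^2 = xI$. Since $A_N$ is Noetherian and $I$ is integral over $(x)$, $(x)$ is a reduction of $I$, so $I^{r+1} = xI^r$ for some $r \ge 0$ and $A_N^I = A_N[I/x] = I^r/x^r$. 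A direct computation with $I^{r+1} = xI^r$ yields $IA_N^I = xA_N^I$. The equality $A_{N+1} = A_N$ forces $IA_N^I \subseteq A_N$. Since $A_N^I$ is integral over $A_N$, every $\alpha \in A_N^I$ satisfies
\[
x\alpha \in x\overline{A_N} \cap A_N = \overline{(x)} = I,
\]
whence $xA_N^I \subseteq I$. Together with $I \subseteq IA_N^I = xA_N^I$, this forces $IA_N^I = I$, and therefore
\[
I^2 = I \cdot (IA_N^I) = I \cdot xA_N^I = x(IA_N^I) = xI,
\]
as required.

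The main obstacle is extracting the sharp containment $IA_N^I \subseteq I$ from the weaker $IA_N^I \subseteq A_N$ supplied by stabilization: this is where one must use that $A_N^I$ is integral over $A_N$, so that $xA_N^I$ actually lands inside the integral closure $\overline{(x)}$ of $(x)$ in $A_N$, rather than merely inside $A_N$. Once this is in hand, everything else is either routine bookkeeping or a formal application of Theorem \ref{2.4}.
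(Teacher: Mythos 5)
Your proof is correct and follows essentially the same path as the paper's: stabilization of the chain at $A_N$, the equality $x\overline{A_N}\cap A_N=\overline{(x)}$ to get $xA_N^I\subseteq I$, and $I\subseteq xA_N^I$ via $I/x\subseteq A_N^I$, yielding $I A_N^I=I$. The only cosmetic differences are that you spell out the induction establishing $A_n\subseteq B$ (which the paper leaves implicit after treating the $n=1$ case before Definition~\ref{7.2}) and that you obtain $IA_N^I=xA_N^I$ directly from the reduction number rather than first proving $I=xA_N^I$.
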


\begin{proof} 
Let $B \in \calY_A$. As $A \subseteq {\rm Arf}(A) \subseteq B$, the ring ${\rm Arf}(A)$ is a finitely generated $A$-module and ${\rm Arf}(A) = A_n$ for sufficiently large $n$. Let $I \in \Lambda(A_n)$. We write $I = \overline{xA_n}$ for some $x \in W(A_n)$. Then $I \left(A_n\right)^I \subseteq A_{n+1} = A_n$. For each $y \in \left(A_n\right)^I$, we have $xy \in x \overline{A_n} \cap A_n = \overline{xA_n} = I$, because $\overline{A_n} = \overline{\left(A_n\right)^I}$. Hence $x \left(A_n\right)^I \subseteq I$. On the other hand, because $xA_n$ is a reduction of $I$, we obtain 
$$
\left(A_n\right)^I = \left(A_n\right)\left[\frac{I}{x}\right] \supseteq \frac{I}{x}
$$
which yields $I \subseteq x \left(A_n\right)^I$. It follows that  $I \left(A_n\right)^I = x \left(A_n\right)^I $. Therefore, $I \left(A_n\right)^I  \subseteq I$, i.e., $I \left(A_n\right)^I =I$. Thus $I$ is stable, and hence ${\rm Arf}(A) = A_n$ is a weakly Arf ring. 
\end{proof}


Although the Arf closures behave well and play one of the central roles in Lipman's paper, the notion is defined in a bit restricted situation. In the following, we define the closure in a more relaxed situation. 

\medskip

In what follows, let $A$ denote an arbitrary commutative ring. We consider 
$$
A_1 = A\left[\frac{yz}{x} ~\middle|~ x \in W(A), y, z \in A \ \text{such that} \ \frac{y}{x}, \frac{z}{x} \in \overline{A} \ \right] \ \ \text{in} \ \ \rmQ(A).
$$
Then $A \subseteq A_1 \subseteq \overline{A}$ is an intermediate ring between $A$ and $\overline{A}$.

\begin{defn}
For each $n \ge 0$, we define recursively
$$
A_n=
\begin{cases}
\ A & (n=0),\\
\ \left[A_{n-1}\right]_1 & (n >0).
\end{cases}
$$
Notice that, for each $n \ge 0$, $\rmQ(A_n) = \rmQ(A)$, $\overline{A_n} = \overline{A}$, and we have a chain 
$$
A=A_0 \subseteq A_1 \subseteq \cdots \subseteq A_n \subseteq \cdots \subseteq \overline{A}
$$
of rings. Set $A^a = \bigcup_{n \ge 0} A_n$ and call it {\it the weakly Arf closure} of $A$.
\end{defn}

\begin{prop}\label{7.5}
The following assertions hold true. 
\begin{enumerate}[$(1)$]
\item $A^a$ is a weakly Arf ring. 
\item For every intermediate ring $A \subseteq B \subseteq \overline{A}$ such that $B$ is weakly Arf, one has $A^a \subseteq B$. In particular, $A^a \subseteq A^*$.
\end{enumerate}
\end{prop}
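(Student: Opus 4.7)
My plan for part (1) is to exploit the directed-union structure $A^a = \bigcup_{n \ge 0} A_n$ together with the equalities $\rmQ(A_n) = \rmQ(A)$ and $\overline{A_n} = \overline{A}$ observed right after the definition of $A^a$. Given $\alpha,\beta,\gamma \in A^a$ with $\alpha \in W(A^a)$ and $\beta/\alpha, \gamma/\alpha \in \overline{A^a} = \overline{A}$, I would pick $n$ large enough that $\alpha,\beta,\gamma \in A_n$. Since $\alpha$ is a unit of $\rmQ(A) = \rmQ(A_n)$, it is automatically a non-zerodivisor on $A_n$; moreover $\beta/\alpha, \gamma/\alpha \in \overline{A_n}$. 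The very definition of $A_{n+1} = (A_n)_1$ then places $\beta\gamma/\alpha$ inside $A_{n+1} \subseteq A^a$, proving that $A^a$ is weakly Arf.

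For part (2), I would proceed by induction on $n$, showing $A_n \subseteq B$ for every weakly Arf intermediate ring $A \subseteq B \subseteq \overline{A}$. The base $n=0$ is immediate. For the inductive step, a generic generator of $A_{n+1}$ over $A_n$ has the form $yz/x$ with $x \in W(A_n)$, $y, z \in A_n$, and $y/x, z/x \in \overline{A_n} = \overline{A}$. Using $A_n \subseteq B$, I would verify the weakly Arf hypothesis for $B$: $y, z \in B$ trivially; $x$ is a unit of $\rmQ(A)$ and hence a non-zerodivisor on the subring $B \subseteq \overline{A} \subseteq \rmQ(A)$, so $x \in W(B)$; and elements of $\overline{A}$ are integral over $A \subseteq B$, so $y/x, z/x \in \overline{A} \subseteq \overline{B}$. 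The weakly Arf property of $B$ then delivers $yz/x \in B$, closing the induction and yielding $A^a \subseteq B$ upon taking unions.

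For the final clause $A^a \subseteq A^*$, I would apply (2) with $B = A^*$, which requires first establishing that $A^*$ is itself weakly Arf. My strategy is to prove that strict closure is idempotent, i.e., $(A^*)^* = A^*$. The key observation is that the canonical surjection $\overline{A} \otimes_A \overline{A} \twoheadrightarrow \overline{A} \otimes_{A^*} \overline{A}$ is actually an isomorphism: the additional tensor relations imposed by enlarging the base from $A$ to $A^*$ are exactly $a \otimes 1 = 1 \otimes a$ for $a \in A^*$, which already hold in $\overline{A} \otimes_A \overline{A}$ by the very definition of $A^*$. Consequently $A^*$ is strictly closed in $\overline{A}$, and the tensor-product computation from the $(1)\Rightarrow(2)$ direction of Theorem \ref{Zariski-Lipman} (which does not rely on the $(S_2)$ hypothesis used elsewhere there) upgrades this to the weakly Arf property for $A^*$.

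I expect the main obstacle to lie in this last step — the proof that $A^*$ is weakly Arf via the idempotency of strict closure, which rests on the tensor-product identification above. Everything else is routine bookkeeping about non-zerodivisors and integral-element data along the chain $A \subseteq A_n \subseteq B \subseteq \overline{A}$ of subrings of $\rmQ(A)$.
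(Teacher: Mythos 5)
Your arguments for part (1) and for the main assertion of part (2) are exactly the paper's: pick $n \gg 0$ so that the data live in $A_n$, and close the induction on $n$ using the identifications $\rmQ(A_n) = \rmQ(A)$, $\overline{A_n} = \overline{A}$. The one place your write-up does more than the paper is the clause ``in particular, $A^a \subseteq A^*$''. The paper's proof of (2) stops after establishing $A_n \subseteq B$ for weakly Arf $B$ and never actually verifies that $A^*$ qualifies as such a $B$; you explicitly supply the missing justification. Your idempotency argument is correct: the base-change map $\overline{A} \otimes_A \overline{A} \twoheadrightarrow \overline{A} \otimes_{A^*} \overline{A}$ has kernel generated by $(a \otimes 1 - 1 \otimes a)(u \otimes v)$ for $a \in A^*$, $u, v \in \overline{A}$, and every such generator vanishes by the definition of $A^*$; hence the map is an isomorphism, $\overline{A^*} = \overline{A}$, and so $(A^*)^* = A^*$. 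Then the short tensor calculation in the $(1)\Rightarrow(2)$ direction of Theorem~\ref{Zariski-Lipman} — which, as you correctly observe, uses nothing about $(S_2)$ — shows that the strictly closed ring $A^*$ is weakly Arf, so (2) applies with $B = A^*$. Alternatively one could avoid invoking idempotency entirely and prove $A_n \subseteq A^*$ by a direct induction, replacing the role of ``$x,y,z \in A$'' in the tensor calculation by ``$x,y,z \in A^*$'' via the inductive hypothesis $A_{n-1} \subseteq A^*$; this is slightly shorter but less informative. Either way your proof is sound, and the extra paragraph you add is a genuine improvement over the paper's terse ``in particular''.
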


\begin{proof}
$(1)$ Let $x, y, z \in A^a$ such that $x \in W(A^a)$ and $y/x, z/x \in \overline{A^a}$. For large $n \gg 0$, we have $x, y, z \in A_n$, whence $x \in W(A_n)$ and $y/x, z/x \in \overline{A_n}$. Hence $yz/x \in A_{n+1} \subseteq A^a$, so that $A^a$ is a weakly Arf ring. 

$(2)$ Firstly, we check that $A_n \subseteq B$ for every $n \ge 0$. 
Suppose $n > 0$ and the assertion holds for $n-1$. Let $x, y, z \in A_{n-1}$ such that $x \in W(A_{n-1})$ and $y/x, z/x \in \overline{A_{n-1}}$. Since $A_{n-1} \subseteq B$, we have $x, y, z \in B$. Because $\rmQ(B) = \rmQ(A)$, we have $x \in W(B)$. Moreover, $y/x, z/x \in \overline{B} = \overline{A}$, which yields that $yz/x \in B$. Therefore $A_n = (A_{n-1})_1 \subseteq B$. 
\end{proof}

Hence we have the following.

\begin{cor}\label{C1}
Suppose that $A$ is Noetherian and $\calY_A \ne \emptyset$ $($e.g., $\overline{A}$ is a finitely generated $A$-module$)$. Then
${\rm Arf}(A) = A^a$. 
\end{cor}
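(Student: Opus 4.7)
The plan is to establish the equality by combining the two minimality properties already proved: Proposition~\ref{7.3} says that ${\rm Arf}(A)$ is the smallest element of $\calY_A$, while Proposition~\ref{7.5}(2) says that $A^a$ is contained in every weakly Arf intermediate ring between $A$ and $\overline{A}$. The strategy is thus to play these two universal properties against each other.

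First I would prove the easy inclusion $A^a \subseteq {\rm Arf}(A)$. By Proposition~\ref{7.3}, under the standing hypothesis $\calY_A \neq \emptyset$, the ring ${\rm Arf}(A)$ is a weakly Arf intermediate ring between $A$ and $\overline{A}$. Since $A^a$ is by definition contained in every such intermediate ring (Proposition~\ref{7.5}(2)), this inclusion is immediate.

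For the reverse inclusion ${\rm Arf}(A) \subseteq A^a$, the plan is to verify that $A^a$ itself belongs to $\calY_A$, and then invoke the minimality statement of Proposition~\ref{7.3}. The weakly Arf property and the inclusions $A \subseteq A^a \subseteq \overline{A}$ are already given by Proposition~\ref{7.5}(1) and the construction. The only nontrivial point is that $A^a$ must be a module-finite extension of $A$. For this, pick any $B \in \calY_A$ (which exists by hypothesis). Then $B$ is a weakly Arf intermediate ring between $A$ and $\overline{A}$, so Proposition~\ref{7.5}(2) forces $A^a \subseteq B$. Since $B$ is a finitely generated module over the Noetherian ring $A$, every $A$-submodule of $B$ is again finitely generated; in particular $A^a$ is a finitely generated $A$-module. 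Hence $A^a \in \calY_A$, and Proposition~\ref{7.3} yields ${\rm Arf}(A) \subseteq A^a$.

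I do not anticipate any real obstacle: both candidates satisfy minimality conditions among overlapping families, and the nontrivial step (finite generation of $A^a$) reduces to Noetherianness of $A$ once the hypothesis $\calY_A \neq \emptyset$ supplies a finitely generated bounding ring $B$. The proof is essentially a two-line sandwich argument.
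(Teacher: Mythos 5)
Your proof is correct and follows exactly the route the paper leaves implicit: Proposition~\ref{7.5}(2) gives $A^a \subseteq {\rm Arf}(A)$ once one observes ${\rm Arf}(A)$ is a weakly Arf intermediate ring (Proposition~\ref{7.3}), while the reverse inclusion follows by showing $A^a \in \calY_A$ via the Noetherian-submodule argument and then applying the minimality in Proposition~\ref{7.3}. The observation that finite generation of $A^a$ over $A$ follows because $A^a$ is an $A$-submodule of the finitely generated module $B$ is the only point requiring care, and you handle it correctly.
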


\begin{cor}\label{C2}
Let $A$ be a Noetherian ring. Suppose that $\overline{A}$ is a finitely generated $A$-module and one of the following conditions hold:
\begin{enumerate}[$(1)$]
\item $A$ contains an infinite field.
\item $\height_AM \ge 2$ for every $M \in \Max A$.
\end{enumerate}
If $A^a$ satisfies $(S_2)$, then  $A^a = A^*$.
\end{cor}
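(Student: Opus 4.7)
The plan is to sandwich
\begin{equation*}
A^a \;\subseteq\; A^* \;\subseteq\; (A^a)^* \;=\; A^a
\end{equation*}
and conclude equality throughout. The inclusion $A^a \subseteq A^*$ is Proposition~\ref{7.5}(2). The inclusion $A^* \subseteq (A^a)^*$ comes from the canonical surjection
\begin{equation*}
\overline{A} \otimes_{A} \overline{A} \;\twoheadrightarrow\; \overline{A} \otimes_{A^a} \overline{A} \;=\; \overline{A^a} \otimes_{A^a} \overline{A^a}
\end{equation*}
induced by $A \subseteq A^a$: if $x\otimes 1 = 1\otimes x$ holds in the source then the same equality holds in the target, so every $x \in A^*$ lies in $(A^a)^*$.

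The heart of the proof is therefore showing $(A^a)^* = A^a$, i.e.\ that $A^a$ is strictly closed in $\overline{A^a} = \overline{A}$. I would apply Corollary~\ref{4.4} to $A^a$ in place of $A$. Since $\overline A$ is module-finite over $A$, the intermediate ring $A^a$ is itself module-finite over $A$, hence Noetherian, and $\overline{A^a} = \overline{A}$ is module-finite over $A^a$. By hypothesis $A^a$ satisfies $(S_2)$, and Proposition~\ref{7.5}(1) shows it is weakly Arf. It remains to check one of conditions (1) or (2) for $A^a$. If $A$ contains an infinite field $k$, then so does $A^a$, giving condition (1). If $\height_A M \ge 2$ for every $M \in \Max A$, then for any $N \in \Max A^a$ the integrality of $A \subseteq A^a$ forces $M = N \cap A \in \Max A$ with $\height_A M \ge 2$; combined with the iterated blow-up description of $A^a = \mathrm{Arf}(A)$ from Corollary~\ref{C1} and the $(S_2)$ hypothesis on $A^a$, one deduces $\height_{A^a} N \ge 2$. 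Corollary~\ref{4.4} applied to $A^a$ then yields $A^a = (A^a)^*$, closing the sandwich.

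The main obstacle is the transfer of condition (2) from $A$ to $A^a$, since going-down can fail for general integral extensions and so heights of contracted maximal ideals need not be preserved. Ruling out spurious height-one maximal ideals in $A^a$ requires combining $(S_2)$ on $A^a$ (through a connectedness-type argument on the punctured spectrum) with the explicit Lipman-type construction of $A^a$ as the union of the chain $A = A_0 \subseteq A_1 \subseteq \cdots \subseteq A^a$ of blow-ups at integrally closed principal ideals given in Definition~\ref{7.2}, which controls the dimensions of the intermediate rings.
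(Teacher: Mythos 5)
Your overall strategy matches the paper's: sandwich $A^a \subseteq A^* \subseteq (A^a)^* = A^a$, get $A^a \subseteq A^*$ from Proposition~\ref{7.5}(2), get the monotonicity $A^* \subseteq (A^a)^*$ from the canonical surjection $\overline{A}\otimes_A\overline{A}\twoheadrightarrow \overline{A}\otimes_{A^a}\overline{A}$, and then reduce to showing $A^a$ is strictly closed via Corollary~\ref{4.4}. That part is all correct and is exactly what the paper does.

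The gap is in the transfer of condition $(2)$ from $A$ to $A^a$, and you have identified the right obstacle but not actually closed it. For an integral extension $A\subseteq A^a$ and $N\in\Max A^a$ with $M=N\cap A$, incomparability only gives $\height_{A^a}N\le\height_A M$, which is the \emph{wrong} direction for the conclusion you need; going down could reverse this, but, as you note, it is not available in this generality. Your proposed fix — ``combine $(S_2)$ on $A^a$ with a connectedness-type argument on the punctured spectrum and the iterated blow-up description'' — is left as a sketch, and it is not clear it works as stated: $(S_2)$ of $A^a$ is a local condition (a height-one maximal ideal would just force the corresponding localization to be Cohen–Macaulay of dimension one, which is perfectly consistent with $(S_2)$), so it does not by itself exclude low-dimensional maximal ideals, and connectedness of the punctured spectrum is a separate matter not implied by $(S_2)$. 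The paper closes the gap cleanly by citing Lipman's Theorem~4.2, which asserts precisely that $\dim (A^a)_N = \dim A_{N\cap A}$ for every $N\in\Max A^a$; this equality is a nontrivial fact about Arf closures (built from the specific structure of the blow-up chain, not from general integrality or Serre conditions), and it is the ingredient your argument is missing. Without it, or a complete replacement, the proof of case $(2)$ is incomplete.
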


\begin{proof}
Notice that $A^a$ is Noetherian and weakly Arf. If $A$ contains an infinite field, then so does $A^a$. On the other hand, if we assume $\height_AM \ge 2$ for every $M \in \Max A$, then $\height_{A^a}N \ge 2$ for every $N \in \Max A^a$. Indeed, let $N \in \Max A^a$ and set $M = N \cap A$. By \cite[Theorem 4.2]{L}, we then have 
$$
\dim(R^a)_N = \dim R_M
$$
which yields $\height_{A^a}N \ge 2$. Then, by Corollary \ref{4.4}, $A^a$ is strictly closed in $\overline{A^a}$. Hence $A^a = (A^a)^* \supseteq A^*$. Thus $A^a = A^*$, as desired. 
\end{proof}



Finally we reach the goal of this section by giving  an affirmative answer for the conjecture posed by O. Zariski; see \cite[page 651]{L}. 

\begin{cor}\label{C3}
Let $A$ be a Noetherian semi-local ring such that $A_M$ is a one-dimensional Cohen-Macaulay local ring for every $M \in \Max A$. Suppose that $\overline{A}$ is a finitely generated $A$-module. Then the Arf closure of $A$ coincides with the strict closure of $A$ in $\overline{A}$.
\end{cor}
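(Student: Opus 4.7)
Write $C$ for the Arf closure of $A$ in $\overline{A}$ and $A^{*}$ for the strict closure. The plan is to establish the two inclusions $A^{*} \subseteq C$ and $C \subseteq A^{*}$ separately, leveraging Theorem \ref{4.3}, which gives a characteristic-free equivalence between the Arf property and strict closedness under the hypotheses in force.

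For $A^{*} \subseteq C$, I would first invoke the defining property of $C$: it is an Arf ring, and $A \subseteq C \subseteq \overline{A}$ with $\overline{A}$ module-finite over $A$, so the hypotheses of Theorem \ref{4.3} descend to $C$. Theorem \ref{4.3} then yields $C^{*} = C$, where $C^{*}$ denotes the strict closure of $C$ in $\overline{C} = \overline{A}$. The inclusion $A \hookrightarrow C$ induces a canonical surjective ring homomorphism
$$
\pi \colon \overline{A}\otimes_{A}\overline{A} \twoheadrightarrow \overline{A}\otimes_{C}\overline{A}
$$
sending $x \otimes 1$ to $x \otimes 1$ and $1 \otimes x$ to $1 \otimes x$. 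Therefore any $x \in A^{*}$ satisfies $x \otimes 1 = 1 \otimes x$ in $\overline{A}\otimes_{C}\overline{A}$ as well, giving $A^{*} \subseteq C^{*} = C$.

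For $C \subseteq A^{*}$, the heart of the argument is to show that $A^{*}$ is itself an Arf ring; once this is known, the minimality of the Arf closure forces $C \subseteq A^{*}$. Note that $A^{*}$ is an $A$-submodule of the module-finite $A$-module $\overline{A}$, hence itself module-finite over $A$, and so inherits the semi-local Noetherian and $1$-dimensional Cohen--Macaulay-localization hypotheses. By Theorem \ref{4.3} applied to $A^{*}$, it therefore suffices to show that $A^{*}$ is strictly closed in $\overline{A^{*}} = \overline{A}$, i.e.\ $(A^{*})^{*} = A^{*}$. The key step I would carry out is to prove that the natural surjection
$$
\rho \colon \overline{A}\otimes_{A}\overline{A} \twoheadrightarrow \overline{A}\otimes_{A^{*}}\overline{A}
$$
is in fact an isomorphism. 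Its kernel is generated by the relations $(ax)\otimes y - x\otimes(ay)$ for $a \in A^{*}$ and $x,y \in \overline{A}$, but the defining identity $a \otimes 1 = 1 \otimes a$ in $\overline{A}\otimes_{A}\overline{A}$ for $a \in A^{*}$ gives, via the ring structure on $\overline{A}\otimes_{A}\overline{A}$,
$$
(ax)\otimes y = (a \otimes 1)(x \otimes y) = (1 \otimes a)(x \otimes y) = x \otimes (ay),
$$
so $\ker \rho = 0$. Consequently the defining condition for $A^{*}$ and that for $(A^{*})^{*}$ coincide, whence $(A^{*})^{*} = A^{*}$; Theorem \ref{4.3} then asserts that $A^{*}$ is Arf, and minimality of $C$ completes the proof.

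The main obstacle, and the step whose verification I expect to require the most care, is the identification $\overline{A}\otimes_{A}\overline{A} \cong \overline{A}\otimes_{A^{*}}\overline{A}$ via the vanishing of the kernel of $\rho$; once this idempotence $(A^{*})^{*} = A^{*}$ is in hand, both inclusions follow directly from Theorem \ref{4.3} together with the minimality characterization of the Arf closure. This is precisely the place where the new characteristic-free form of Theorem \ref{4.3} is essential, since the classical argument of Lipman required the ring to contain a field.
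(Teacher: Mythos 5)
Your proof is correct and takes a genuinely more explicit route than the paper's two-line argument. The paper deduces the result from Proposition \ref{7.5} together with Theorem \ref{4.3} and Corollary \ref{C1}: it obtains ${\rm Arf}(A)\subseteq A^*$ by passing through the weakly-Arf-closure filtration $A=A_0\subseteq A_1\subseteq\cdots$ of Section \ref{sec7}, citing the ``in particular, $A^a\subseteq A^*$'' clause of Proposition \ref{7.5}(2), which is stated there but not explicitly justified (it requires $A^*$ to be weakly Arf). You instead prove the inclusion ${\rm Arf}(A)\subseteq A^*$ by showing $A^*$ is itself an Arf ring---via the idempotence $(A^*)^*=A^*$, which you extract from the isomorphism $\overline{A}\otimes_A\overline{A}\cong\overline{A}\otimes_{A^*}\overline{A}$ using $a\otimes 1=1\otimes a$ for $a\in A^*$---and then invoking Theorem \ref{4.3} together with the minimality property of the Arf closure. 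Your idempotence computation is precisely the ingredient the paper leaves implicit: once $(A^*)^*=A^*$, the implication ``strictly closed $\Rightarrow$ weakly Arf'' from the proof of Theorem \ref{Zariski-Lipman} gives $A^*$ weakly Arf, which is what Proposition \ref{7.5}(2) needs. The other inclusion $A^*\subseteq{\rm Arf}(A)$ is established the same way in both proofs, via the surjection $\overline{A}\otimes_A\overline{A}\twoheadrightarrow\overline{A}\otimes_{{\rm Arf}(A)}\overline{A}$ and the fact that ${\rm Arf}(A)$, being Arf, is strictly closed. Overall the paper's proof is shorter but leans on the Section \ref{sec7} machinery, whereas yours is self-contained given Theorem \ref{4.3} and the existence and minimality of the Arf closure.
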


\begin{proof}
Since ${\rm Arf}(A)$ is an Arf ring, it is strictly closed in $\overline{A}$ by Theorem \ref{Zariski-Lipman}. Hence the assertion follows from Proposition \ref{7.5}.
\end{proof}



\section{Core subalgebras of polynomial rings}\label{sec8}

In this section, we study the weakly Arf property and the weakly Arf closure  for certain subalgebras of the polynomial ring with one indeterminate over a field $k$. The class of subalgebras discussed in this section includes the semigroup rings $k[H]$ of numerical semigroups $H$.

Throughout this section, let $S = k[t]$ denote the polynomial ring over a field $k$, and let $R$ be a $k$-subalgebra of $S$. We say that $R$ is {\it a core of $S$}, if $t^nS \subseteq R$ for some integer $n > 0$. If $R$ is a core of $S$, then 
$$
k[t^n, t^{n+1}, \ldots, t^{2n -1}] \subseteq R \subseteq S,
$$
and a given $k$-subalgebra $R$ of $S$ is a core if and only if $R \supseteq k[H]$ for some numerical semigroup $H$. Thus, once $R$ is a core of $S$, $R$ is a finitely generated $k$-algebra of dimension one, and $S$ is a birational module-finite extension of $R$ with $t^nS \subseteq R:S$. Although typical examples of cores are the semigroup rings $k[H]$ of  numerical semigroups $H$, cores of $S$ do not necessarily arise from the semigroup rings $k[H]$ in general. Let us note the simplest example.

\begin{ex}\label{8.1}
Let $R = k[t^2+t^3] + t^4S$. Then $R \ne k[H]$ for any numerical semigroup $H$.
\end{ex}

The goal of this section is to prove the following.

\begin{thm}\label{8.2}
The following assertions hold true. 
\begin{enumerate}[$(1)$]
\item $R^a = R^*$.
\item The following conditions are equivalent. 
\begin{enumerate}[$(a)$]
\item $R$ is a weakly Arf ring.
\item $R$ is strictly closed in $\overline{R}$.
\item $R_{\m}$ is an Arf ring, where $\m = tS \cap R$. 
\end{enumerate}
\item If $R$ is a weakly Arf ring, then the polynomial ring $T= R[X_1, X_2, \dots, X_n]$ is strictly closed for every $n>0$. In particular, $T$ is a weakly Arf ring. 
\end{enumerate}
\end{thm}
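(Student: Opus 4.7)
The plan is to establish (2) first and then derive (1) and (3) from it. The equivalence (a) $\Leftrightarrow$ (c) is exactly Proposition \ref{4.9}, so only the link to (b) requires new input. The direction (b) $\Rightarrow$ (a) is cheap and general: if $R$ is strictly closed and $x,y,z \in R$ with $x \in W(R)$ and $y/x, z/x \in \overline{R}$, then the tensor identity
$$
\frac{yz}{x}\otimes 1 \;=\; \frac{y}{x}\otimes\left(x\cdot\frac{z}{x}\right) \;=\; \left(x\cdot\frac{y}{x}\right)\otimes\frac{z}{x} \;=\; 1\otimes\frac{yz}{x}
$$
in $\overline{R}\otimes_R\overline{R}$ forces $yz/x \in R^{*} = R$. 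For (a) $\Rightarrow$ (b), using Proposition \ref{4.9} it suffices to prove (c) $\Rightarrow$ (b). I would note that $R_\m$ is a one-dimensional Cohen-Macaulay local ring whose integral closure $S_{tS}$ is a DVR, so Theorem \ref{4.3} converts the Arf property of $R_\m$ into strict closedness of $R_\m$; for any other prime $P \in \Spec R \setminus \{\m\}$, the conductor $R:S$ contains $t^n S$ and hence escapes $P$, so $R_P = S_P$, which is regular and therefore strictly closed. Proposition \ref{4.2} then assembles these local strict closedness statements into the global strict closedness of $R$.

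For (1), the inclusion $R^a \subseteq R^{*}$ is Proposition \ref{7.5}(2). For the reverse, note that $R^a$ is itself a core of $S$, since $R \subseteq R^a \subseteq \overline{R} = S$ and the containment $t^n S \subseteq R \subseteq R^a$ persists. Because $R^a$ is weakly Arf by Proposition \ref{7.5}(1), the already-established part (2) applied to $R^a$ shows that $R^a$ is strictly closed in $\overline{R^a} = S$, whence $R^a = (R^a)^{*}$. The canonical surjection $\overline{R}\otimes_R\overline{R} \twoheadrightarrow \overline{R^a}\otimes_{R^a}\overline{R^a}$ sends the defining relation of $R^{*}$ into that of $(R^a)^{*}$, so $R^{*} \subseteq (R^a)^{*} = R^a$.

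For (3), part (2) immediately yields that $R$ is strictly closed. Because $R$ is a domain we have $\overline{R[X]} \subseteq \mathrm{Q}(R)[X]$, so Lemma \ref{4.5} gives strict closedness of $R[X_1]$; iterating $n$ times yields strict closedness of $T = R[X_1,\ldots,X_n]$. Applying the tensor identity displayed above with $T$ in place of $R$ shows that any strictly closed ring is weakly Arf, which settles the \emph{in particular} clause. I do not foresee a serious obstacle; each step is a direct application of a result already in the paper. The one mildly subtle point is the functorial behavior of the strict-closure relation under enlargement of the base ring used in (1), but this is routine from the right exactness of the tensor product.
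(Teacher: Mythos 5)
Your proposal is correct. The underlying machinery is identical to the paper's — Proposition \ref{4.9}, the local decomposition via Proposition \ref{4.2}, the Arf/strictly-closed equivalence for one-dimensional Cohen--Macaulay local rings (Theorem \ref{4.3}), Proposition \ref{7.5}, and Lemma \ref{4.5} — but the logical ordering is reversed. The paper does the ``heavy lifting'' (localize at the distinguished maximal ideal, use the Arf-implies-strictly-closed equivalence there, note that all other localizations coincide with DVR localizations of $S$, and globalize via Proposition \ref{4.2}) on the ring $R^a$ in order to establish part $(1)$ first, and then extracts part $(2)$ from $(1)$ combined with Proposition \ref{7.5}. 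You instead run that same local analysis on $R$ itself to obtain part $(2)$ directly (in the (c)$\Rightarrow$(b) direction), and then derive part $(1)$ by applying part $(2)$ to the core $R^a$ together with the easy functoriality $R^* \subseteq (R^a)^*$ coming from the surjection $\overline{R}\otimes_R\overline{R} \twoheadrightarrow \overline{R}\otimes_{R^a}\overline{R}$. Both orderings are sound; yours has the minor advantage of making $(2)$ self-contained rather than a corollary of $(1)$, while the paper's ordering front-loads the statement about closures that is arguably of independent interest. Your (b)$\Rightarrow$(a) via the tensor identity and your handling of part $(3)$ are exactly as in the paper.
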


\begin{proof}
$(1)$ Since $R \subseteq R^a \subseteq R^* \subseteq \overline{R}$, we obtain $R^a$ is a core of $S$. By Proposition \ref{7.5}, $R^a$ is a weakly Arf ring, and so is the local ring $\left(R^a\right)_M$, where $M = t S \cap R^a \in \Max R^a$. Besides, every ideal in $\left(R^a\right)_M$ has a principal reduction. This implies $\left(R^a\right)_M$ is an Arf ring. Let $P \in \Spec R^a$ such that $P \ne M$ and $\height_{R^a} P = 1$. Then $\left(R^a\right)_P$ is a DVR (see the proof of Proposition \ref{4.9}). By Theorem \ref{Zariski-Lipman}, we conclude that $R^a$ is strictly closed. Hence $R^a = \left(R^a\right)^* \supseteq R^*$, so that $R^a = R^*$.

$(2)$ If $R$ is strictly closed in $\overline{R}$, then $R$ is a weakly Arf ring. The latter condition is equivalent to saying that $R_{\m}$ is Arf; see Proposition \ref{4.9}. If $R$ is a weakly Arf ring, then by Proposition \ref{7.5} (2), we get $R \subseteq R^* = R^a \subseteq R$. Hence $R$ is strictly closed in $\overline{R}$. 

$(3)$ This follows from Lemma \ref{4.5} and the fact that $R$ is an integral domain.
\end{proof}

Let us note one example. 

\begin{ex}
Let $n \ge 3$ and  $R =k[t^n, t^{n+1}, \ldots, t^{2n-1}]$. Then $R$ is a weakly Arf ring, and hence $R =\left(k[t^n, t^{n+1}, \ldots, t^{2n-2}]\right)^*$.
\end{ex}

\begin{proof}
Let $V=k[[t]]$ be the formal power series ring over $k$. The semigroup ring $A = k[[t^n, t^{n+1}, \ldots, t^{2n-1}]]$ is an Arf ring. See \cite[Example 4.7]{CCGT}
for the proof. In particular, $A$ is a weakly Arf ring. Since $A$ is given by the $\m R_{\m}$-adic completion of the local ring $R_{\m}$, by Proposition \ref{3.3} (or Corollary \ref{3.2} (3)), $R_{\m}$ is weakly Arf, where $\m = tS \cap R$.
 Thus, $R$ is also a weakly Arf ring, so that it is strictly closed in $\overline{R}$. 
Consider $T = k[t^n, t^{n+1}, \ldots, t^{2n-2}]$ in $S=k[t]$. Then the local ring $T_{\n}$ is not an Arf ring, where $\n = tS \cap T$. Hence $T$ is not strictly closed in $\overline{T}$. Therefore, by Theorem \ref{8.2}, we conclude that
$T \subsetneq T^* =T^a \subseteq R^* = R$. This shows $R = T^a =T^*$, since $\ell_T(R/T)=1$. This completes the proof.
\end{proof}


\section{Characterization in terms of the algebras $A^I$}\label{sec9}

As J. Lipman proved in \cite{L}, a ring $A$ is Arf if and only if all the local rings infinitely near to $A$, i.e., the localizations of the blow-ups of $A$, have minimal multiplicity. See \cite[Theorem 2.2]{L} for details. In this section, we investigate a characterization of weakly Arf rings in terms of the algebras $A^I$ of $I$ in $\Lambda(A)$. 




\begin{prop}\label{9.1}
Let $I \subseteq A$ be an ideal of $A$. Suppose that $I = \overline{(a)}$ and $I^2 = aI$ for some $a \in I \cap W(A)$. Set $B = I:I ~(= A^I)$. Then the following conditions are equivalent. 
\begin{enumerate}[$(1)$]
\item $B$ is a weakly Arf ring. 
\item If $x \in I \cap W(A)$, $y, z \in A$ such that $y/x, z/x \in \overline{A}$, then $yz/x \in A$. 
\end{enumerate}
\end{prop}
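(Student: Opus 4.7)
The plan is to exploit the concrete description $B = A^{I} = I : I = (1/a)I$ as a subset of $\rmQ(A)$, which follows from the stability hypothesis $I^{2} = aI$ together with $a \in W(A)$. Standard facts about module-finite birational extensions give $\rmQ(B) = \rmQ(A)$ and $\overline{B} = \overline{A}$, and every non-zerodivisor of $A$ remains a non-zerodivisor in $B$. The proof will move back and forth between elements of $B$ (written as $i/a$ with $i \in I$) and triples from $I \cap W(A)$, $I$, $I$ inside $A$; this is the dictionary that translates the two formulations into each other.

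For the implication $(2)\Rightarrow(1)$, I would take $\alpha,\beta,\gamma \in B$ with $\alpha \in W(B)$ and $\beta/\alpha,\gamma/\alpha \in \overline{B}=\overline{A}$, and write $\alpha = x/a$, $\beta = y/a$, $\gamma = z/a$ with $x,y,z \in I$. Because $a \in W(A)$ and $\alpha$ is a unit in $\rmQ(A)$, also $x \in W(A)$, and $\beta/\alpha = y/x$, $\gamma/\alpha = z/x$ lie in $\overline{A}$. Hypothesis $(2)$ then yields $yz/x = v \in A$. The product $\beta\gamma/\alpha = v/a$ equals $(\beta/\alpha)(\gamma/\alpha)\cdot\alpha \in \overline{A}$, so $v \in a\overline{A}\cap A = \overline{(a)} = I$, whence $\beta\gamma/\alpha = v/a \in I/a = B$, proving weakly Arf.

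For the implication $(1)\Rightarrow(2)$, I would start with $x \in I \cap W(A)$ and $y,z \in A$ with $y/x,z/x \in \overline{A}$. Since $x \in I = a\overline{A}\cap A$, we have $x\overline{A}\subseteq a\overline{A}$, so $y,z \in x\overline{A}\cap A \subseteq a\overline{A}\cap A = I$. In particular $x/a$, $y/a$, $z/a$ all lie in $B = I/a$. Now apply the weakly Arf property of $B$ to the triple $\alpha = x/a$, $\beta = y/a$, $\gamma = z/a$: here $\alpha \in W(B)$, and $\beta/\alpha = y/x$, $\gamma/\alpha = z/x$ belong to $\overline{B} = \overline{A}$. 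The conclusion $\beta\gamma/\alpha \in B$ unwinds to $yz/(ax) \in I/a$, i.e., $yz/x \in I \subseteq A$, as required.

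The step I expect to require the most care is the upgrade in $(1)\Rightarrow(2)$ from $yz/x \in B$ to $yz/x \in A$: a priori $B$ is strictly larger than $A$, and simply invoking the weakly Arf condition on $x,y,z$ viewed in $B$ only places the quotient in $B$. The key trick is to divide everything by $a$ first, apply weakly Arf in $B$ to the elements $x/a, y/a, z/a$, and then observe that multiplying the output of $B$ by $a$ lands inside $aB = I$, which is contained in $A$. This use of the stability $aB = I$ is what makes the two formulations genuinely equivalent.
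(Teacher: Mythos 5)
Your proof is correct and follows essentially the same route as the paper's: both proofs use the identification $B = I/a$ (coming from $I^2 = aI$) together with $\rmQ(B) = \rmQ(A)$, $\overline{B} = \overline{A}$ to translate a triple $\alpha = x/a,\ \beta = y/a,\ \gamma = z/a$ in $B$ into a triple $x, y, z \in I \subseteq A$, and in the direction $(1)\Rightarrow(2)$ the key step is exactly the one you flag, namely that $yz/(ax) \in B$ forces $yz/x \in aB = I \subseteq A$. The only cosmetic divergence is that in $(2)\Rightarrow(1)$ you show $yz/x \in I$ via $yz/x \in a\overline A \cap A = \overline{(a)} = I$, whereas the paper writes $yz/x = x\cdot(yz/x^2) \in x\overline A \cap A = \overline{(x)} \subseteq \overline I = I$; both are valid and interchangeable.
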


\begin{proof}
Since $I^2 = aI$, we have 
$$
A \subseteq B = I:I =  \frac{I}{a} \subseteq \overline{A}.
$$
Hence $B$ is a birational extension of $A$ and $\overline{B} = \overline{A}$. 
 
$(2) \Rightarrow (1)$ Let $\alpha, \beta, \gamma \in B$ such that $\alpha \in W(B)$ and $\beta/\alpha, \gamma/\alpha \in \overline{B}$. Write $\alpha = x/a, \beta= y/a$, and $\gamma = z/a$, where $x, y, z \in I$. Then $x \in W(A)$ and 
$$
\frac{y}{x} = \frac{\beta}{\alpha}, \ \  \frac{z}{x} = \frac{\gamma}{\alpha} \ \in \  \overline{B} = \overline{A}.
$$
It then follows that $yz \in x A$. Since $yz/x = x\cdot (yz/x^2) \in x \overline{A}\cap A = \overline{(x)} \subseteq \overline{I} = I$, we get 
$$
\frac{\beta \gamma}{\alpha} = \frac{yz}{ax} = \frac{yz/x}{a}\ \in\ \frac{I}{a} = B
$$
and hence $B$ is a weakly Arf ring. 

$(1) \Rightarrow (2)$ Let $x, y, z \in A$ such that $x \in I \cap W(A)$ and $y/x, z/x \in \overline{A}$. Then $y, z \in x \overline{A} \cap A = \overline{(x)} \subseteq \overline{I} = I$. Hence $x/a, y/a, z/a \in B$. Since $x/a \in W(B)$, the weakly Arf property for $B$ implies $yz/ax \in B$, so that $yz \in x I \subseteq x A$. This completes the proof. 
\end{proof}

As a direct consequence, we have the following.

\begin{cor}\label{9.2}
$A$ is a weakly Arf ring if and only if $A^I$ is a weakly Arf ring for every $I \in \Lambda (A)$.  
\end{cor}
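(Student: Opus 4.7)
The plan is to deduce the corollary as an immediate application of Proposition \ref{9.1}, using Theorem \ref{2.4} to supply its stability hypothesis $I^2 = aI$. For each $I \in \Lambda(A)$ I fix a presentation $I = \overline{(a)}$ with $a \in W(A)$, and then the equivalence $(1) \Leftrightarrow (2)$ of Proposition \ref{9.1} links the weakly Arf property of $A^I$ to a restricted form of the weakly Arf property of $A$ itself.

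For the forward direction, assume $A$ is weakly Arf. Given $I = \overline{(a)} \in \Lambda(A)$, the implication $(1) \Rightarrow (2)$ of Theorem \ref{2.4} (whose proof in fact produces the equality $I^2 = aI$ for this particular $a$) places us in the setting of Proposition \ref{9.1}. Condition $(2)$ of Proposition \ref{9.1} is then immediate from the weakly Arf hypothesis on $A$, so $(2) \Rightarrow (1)$ of Proposition \ref{9.1} gives that $A^I = I\colon I$ is weakly Arf.

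Conversely, assume $A^I$ is weakly Arf for every $I \in \Lambda(A)$, and take $x \in W(A)$ together with $y, z \in A$ satisfying $y/x, z/x \in \overline{A}$. Setting $I = \overline{(x)} \in \Lambda(A)$, the implication $(1) \Rightarrow (2)$ of Proposition \ref{9.1} applied with $a = x$ yields $yz/x \in A$, which is exactly what is needed to conclude that $A$ is a weakly Arf ring. The one point needing care is to justify the hypothesis $I^2 = xI$ used to invoke Proposition \ref{9.1} in this direction; I would extract it by applying the implication $(1) \Rightarrow (2)$ of Theorem \ref{2.4} to the integrally closed ideal $\overline{xA^I}$ inside the weakly Arf ring $A^I$ and then descending the resulting stability to $I$ via the equality $\overline{xA^I} \cap A = I$. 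This last step is the only obstacle in an otherwise essentially formal argument.
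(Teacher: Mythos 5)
The forward direction of your argument is correct and is precisely the paper's: Theorem \ref{2.4} gives $I^2 = aI$ for $I = \overline{(a)}$, and then the implication $(2) \Rightarrow (1)$ of Proposition \ref{9.1} shows that $A^I = I:I$ is weakly Arf.

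The converse, however, has a genuine gap that you flagged yourself, and it cannot be repaired along the lines you suggest. You propose to deduce the stability $I^2 = xI$ for $I = \overline{(x)}$ from the fact that $A^I$ is weakly Arf, by invoking Theorem \ref{2.4} inside $A^I$ and ``descending.'' That inference is false. Take $A = k[[t^4, t^5, t^6]]$, which is not weakly Arf, and $x = t^4$. Then $I = \overline{(t^4)} = (t^4, t^5, t^6) = \m$, the blow-up $A^{\m} = A\!\left[\m/t^4\right] = k[[t]]$ is a DVR and hence weakly Arf, yet $\m^2 \neq t^4\m$: indeed $t^{11} = t^5\cdot t^6 \in \m^2$ but $t^{11}\notin t^4\m = (t^8,t^9,t^{10})$, since none of $t,t^2,t^3$ lies in $A$. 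So the single piece of the hypothesis you actually use --- that $A^I$ is weakly Arf for this one $I$ --- is too weak to produce $I^2 = xI$; consequently you cannot place yourself in the setting of Proposition \ref{9.1}, and the argument breaks. (Note also that even granting stability, your descent relies on $\overline{xA^I}\cap A = I$, which would need separate justification since $A^I$ need not have the same integral closure behaviour as $A$ before stability is known.)

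What you are missing is the paper's one-line converse: since $1\in W(A)$ and the integral closure of the unit ideal is the unit ideal, one has $A = \overline{(1)} \in \Lambda(A)$, and $A^A = A:A = A$. Thus the hypothesis, applied to $I = A$, says directly that $A$ itself is weakly Arf.
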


\begin{proof}
Suppose that $A$ is a weakly Arf ring. For each $I \in \Lambda(A)$, if we write $I = \overline{(a)}$ for some $a \in W(A)$, then $I^2 = a I$; see Theorem \ref{2.4}. In particular, $I$ is stable, i.e., $A^I = I:I$. Hence, by Proposition \ref{9.1}, $A^I = I:I$ is a weakly Arf ring. The converse holds, because $A \in \Lambda(A)$. 
\end{proof}

\begin{cor}\label{9.3}
For each $a \in W(A)$, we set $I = \overline{(a)}$. Then the following conditions are equivalent.
\begin{enumerate}[$(1)$]
\item $I^2 = aI$ and $B=I:I$ is a weakly Arf ring.
\item For each $b \in I \cap W(A)$, $J^2 = bJ$ and $J:J$ is a weakly Arf ring, where $J = \overline{(b)}$.  
\end{enumerate}
\end{cor}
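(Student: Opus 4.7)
My plan is to reduce the entire statement to Proposition \ref{9.1} by observing that the containment $J\subseteq I$ lets condition $(2)$ of that proposition pass from $I$ to $J$.

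The implication $(2)\Rightarrow(1)$ is immediate: since $a\in I\cap W(A)$, I simply take $b=a$, so that $J=\overline{(a)}=I$, and the hypothesis specializes to $I^2=aI$ together with $I:I$ being weakly Arf.

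For $(1)\Rightarrow(2)$, fix $b\in I\cap W(A)$ and set $J=\overline{(b)}$. The first key observation is that $J\subseteq I$: indeed $b\in I$ forces $(b)\subseteq I$, hence $J=\overline{(b)}\subseteq \overline{I}=I$. Next, since we are assuming $(1)$, Proposition \ref{9.1} tells us that $I$ satisfies its condition $(2)$, namely: whenever $x\in I\cap W(A)$ and $y,z\in A$ with $y/x,z/x\in\overline{A}$, one has $yz/x\in A$. I would then prove $J^2=bJ$ as follows. For any $y,z\in J=b\overline{A}\cap A$ one has $y/b,z/b\in\overline{A}$, and since $b\in I\cap W(A)$ the property for $I$ yields $yz/b\in A$; because $yz/b=b\cdot(y/b)(z/b)\in b\overline{A}$ as well, we get $yz/b\in b\overline{A}\cap A=J$, i.e.\ $yz\in bJ$. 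The reverse containment $bJ\subseteq J^2$ is automatic from $b\in J$.

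With $J^2=bJ$ in hand, Proposition \ref{9.1} becomes applicable to the ideal $J$, so in order to conclude that $J:J$ is weakly Arf it suffices to verify condition $(2)$ of that proposition for $J$. But this is the punchline of the containment $J\subseteq I$: any $x\in J\cap W(A)$ lies in $I\cap W(A)$, so for $y,z\in A$ with $y/x,z/x\in\overline{A}$, the already established property for $I$ delivers $yz/x\in A$. Thus $J:J$ is weakly Arf, completing the proof. No step here is subtle — the only thing to be careful about is recording that one cannot invoke Proposition \ref{9.1} for $J$ before knowing $J^2=bJ$, so the two parts of condition $(2)$ must be proved in the correct order.
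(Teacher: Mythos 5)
Your proof is correct and follows essentially the same route as the paper's: both directions hinge on the containment $J\subseteq I$, and both deduce $J^2=bJ$ and the weakly Arf property of $J:J$ by transferring condition $(2)$ of Proposition \ref{9.1} from $I$ down to $J$. The paper states the reduction a bit more tersely, but the logical structure is identical, including the small extra step $yz/b = b\cdot(y/b)(z/b)\in b\overline{A}\cap A = J$ needed to upgrade $yz/b\in A$ to $yz/b\in J$.
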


\begin{proof}
$(1) \Rightarrow (2)$ Let $b \in I$ be a non-zerodivisor on $A$. We set $J = \overline{(b)}$. For each $y, z \in J$, we have $y/b, z/b \in \overline{A}$, because $J= \overline{(b)} \subseteq b \overline{A}$. Hence $yz/b \in A$ by Proposition \ref{9.1}. Thus
$$
\frac{yz}{b} = b\cdot \frac{yz}{b^2} \in b \overline{A} \cap A = J.
$$
This yields $yz \in bJ$. Therefore, $J^2 = bJ$. Let us make sure of the weakly Arf property for $J:J$. Let $x, y, z \in A$ such that $x \in J \cap W(A)$ and $y/x, z/x \in \overline{A}$. Since $x \in I \cap W(A)$ and $B$ is weakly Arf, we obtain $yz/x \in A$. Again, by Proposition \ref{9.1}, we see that $J:J$ is a weakly Arf ring. 

$(2) \Rightarrow (1)$ If we choose $b= a$, then the implication holds. 
\end{proof}

\begin{notation}\label{9.4}
We now define $\Gamma(A)$ to be the set of all the proper ideals $I$ in $\Lambda(A)$. We denote by $\Max \Lambda(A)$ the set of all the maximal elements in $\Gamma(A)$ with respect to inclusion. 
\end{notation}

Hence we have the following. 

\begin{cor}\label{9.5}
Consider the following conditions:
\begin{enumerate}[$(1)$]
\item $A$ is a weakly Arf ring. 
\item If $M \in \Max \Lambda(A)$, then $M:M$ is a weakly Arf ring and $M^2 = aM$, $M= \overline{(a)}$ for some $a \in M \cap W(A)$. 
\end{enumerate}
Then the implication $(1) \Rightarrow (2)$ holds and the converse holds if $A$ is Noetherian. 
\end{cor}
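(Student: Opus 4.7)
The plan is to derive both implications as direct consequences of the machinery already set up in this section, in particular Theorem~\ref{2.4}, Corollary~\ref{9.2}, and Corollary~\ref{9.3}.

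For $(1)\Rightarrow(2)$, let $M\in \Max\Lambda(A)$. Since $M\in\Lambda(A)$, write $M=\overline{(a)}$ for some $a\in W(A)$. Because $A$ is weakly Arf, Theorem~\ref{2.4} gives $M^{2}=aM$, so in particular $a\in M$. The ideal $M$ is then stable, so $A^{M}=M:M$. By Corollary~\ref{9.2}, $A^{M}$ is a weakly Arf ring, which is exactly what condition~(2) demands.

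For $(2)\Rightarrow(1)$ under the Noetherian hypothesis, I would apply the criterion in Theorem~\ref{2.4}: it suffices to show that every $I\in\Lambda(A)$ satisfies $I^{2}=bI$ for some $b\in I$. If $I=A$ the claim is trivial, so assume $I\in\Gamma(A)$. Since $A$ is Noetherian, the set $\Gamma(A)$, ordered by inclusion, has a maximal element containing $I$; thus we may choose $M\in\Max\Lambda(A)$ with $I\subseteq M$. Write $I=\overline{(b)}$ for some $b\in W(A)$; note that $b\in I\subseteq M$, so $b\in M\cap W(A)$. By hypothesis there exists $a\in M\cap W(A)$ such that $M=\overline{(a)}$, $M^{2}=aM$, and $M:M$ is a weakly Arf ring. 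Condition~(1) of Corollary~\ref{9.3} therefore holds for $M$, and so condition~(2) of that corollary applies to every $b\in M\cap W(A)$; in particular for our chosen $b$ we obtain $I^{2}=bI$, as required. Then Theorem~\ref{2.4} yields that $A$ is weakly Arf.

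The only delicate point in the argument is the step that produces $M\in\Max\Lambda(A)$ containing a given $I\in\Gamma(A)$; this uses the Noetherian hypothesis in an essential way, which is why the converse implication is stated only for Noetherian $A$. Apart from this, the proof is essentially a bookkeeping combination of Theorem~\ref{2.4}, Corollary~\ref{9.2}, and Corollary~\ref{9.3}, with no further computation required.
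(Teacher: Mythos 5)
Your proof is correct and takes essentially the same approach as the paper: the forward direction is the combination of Theorem~\ref{2.4} (to produce the stable generator) with Corollary~\ref{9.2}, and the converse uses Noetherianity to find a maximal element of $\Gamma(A)$ above a given $I$ and then invokes Corollary~\ref{9.3} to transfer stability from $M$ down to $I$, exactly as in the paper (with the roles of $a$ and $b$ swapped in notation).
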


\begin{proof}
By Corollary \ref{9.2}, we only need to prove the implication $(2) \Rightarrow (1)$. Assume that $A$ is Noetherian. Let $I \in \Lambda(A)$. We write $I = \overline{(a)}$ for some $a \in W(A)$. We will show that $I^2 = a I$. Indeed, note first that we may assume that $I$ is a proper ideal of $A$, i.e., $I \in \Gamma(A)$. As $A$ is Noetherian, we may choose an ideal $M \in \Max \Lambda(A)$ such that $I \subseteq M$. Hence, $M:M$ is a weakly Arf ring and $M^2 = bM$, $M= \overline{(b)}$ for some $b \in M \cap W(A)$. As $I \subseteq M$, we have $a \in M \cap W(A)$, whence $I^2 = a I$ by Corollary \ref{9.3}. Therefore, $A$ is a weakly Arf ring, as desired.
\end{proof}

We explore one example.

\begin{ex}\label{9.6}
Let $k[[X, Y]]$ be the formal power series ring over the field $k = \Bbb Z/(2)$. 
We consider $A = k[[X, Y]]/(XY(X+Y))$. Then $\Max \Lambda(A) = \{(x, y^2), (x^2, y), (x+y, xy)\}$, where $x, y$ denote the images of $X, Y$ in $A$, respectively. Hence, $A$ is a weakly Arf ring, but not an Arf ring. 
\end{ex}

\begin{proof}
We denote by $\m=(x, y)$ the maximal ideal of $A$.
Let us consider the ideals $I_1 =(x, y^2)$, $I_2 =(x^2, y)$, and $I_3 =(x+y, xy)$. Then 
\begin{eqnarray*}
I_1 \!\!&=&\!\! (x) + \m^2 = (x)\oplus ((x+y)y), \\
I_2  \!\!&=&\!\! (y) + \m^2 = (x(x+y))\oplus (y), \\
I_3  \!\!&=&\!\! (x+y) + \m^2 = (x+y)\oplus (xy).
\end{eqnarray*}
Hence, we have the isomorphisms 
\begin{eqnarray*}
I_1 : I_1 \!\!&\cong&\!\! \End_A(I_1) \cong A/((x+y)y) \times A/(x) \\
I_2:I_2 \!\!&\cong&\!\! \End_A(I_2) \cong A/(y) \times A/(x(x+y)) \\ 
I_3 : I_3 \!\!&\cong&\!\!  \End_A(I_3) \cong A/(x+y) \times A/(xy)
\end{eqnarray*}
of $A$-algebras, where $\End_A(I)$ denotes the endomorphism algebra of an ideal $I$ in $A$. The rings $A/(x)$, $A/(y)$, and $A/(x+y)$ are DVRs. Moreover, the multiplicities of $A/((x+y)y)$, $A/(x(x+y))$, and $A/(xy)$ are equal to two; hence they are Arf rings by \cite[Proposition 2.8]{CCGT}. Then, by Proposition \ref{3.8}, $I_i:I_i$ is a weakly Arf ring for every $1 \le i \le 3$.

It is straightforward to check that  $I_1^2 = (x+y^2)I_1$, $I_2^2 = (x^2+y)I_2$, and $I_3^2 = (x+y+ xy)I_3$. Notice that $I_i \cap W(A) \ne \emptyset$ for every $1 \le i \le 3$. Since $A$ is reduced, we get $\overline{A} \cong A/(x) \times A/(y) \times A/(x+y)$ which is a principal ideal ring. This implies the equalities $\overline{I_i} = \overline{I_i}\cdot\overline{A} \cap A = I_i \overline{A} \cap A$. It follows that $I_i$ is integrally closed in $A$ for every $1 \le i \le 3$. 
As $k = \Bbb Z/(2)$, the maximal ideal $\m$ does not have a principal reduction (see e.g., \cite[Example 8.3.2]{SH}, \cite[(14.5)]{HIO}). Hence $\m \not\in \Lambda(A)$, so that $I_i \in \Max \Lambda(A)$, because $\ell_A(A/I_i) = 2$. We then have the following. 

\begin{claim*}
For each $I \in \Gamma(A)$, $I \subseteq I_i$ for some $1 \le i \le 3$. 
\end{claim*}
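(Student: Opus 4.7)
The plan is to exploit the fact that $A$ is a Noetherian local ring with maximal ideal $\m = (x,y)$ whose Zariski cotangent space $\m/\m^2$ is a two-dimensional vector space over the residue field $k = \mathbb{F}_2$. First I would verify by direct calculation that $\m^2 \subseteq I_i$ for each $i = 1, 2, 3$, and in fact $I_1 = (x) + \m^2$, $I_2 = (y) + \m^2$, $I_3 = (x+y) + \m^2$. For example, $xy = x\cdot y \in (x) \subseteq I_1$, while $x^2 = x(x+y) - xy \in I_3$ and $y^2 = y(x+y) - xy \in I_3$. So $I_1, I_2, I_3$ are precisely the ideals of $A$ strictly between $\m^2$ and $\m$, corresponding to the three nonzero elements $\bar x, \bar y, \bar x + \bar y$ of the $\mathbb{F}_2$-vector space $\m/\m^2$.

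Now let $I \in \Gamma(A)$. Being proper, $I \subseteq \m$. The claim's preceding paragraph records that the maximal ideal $\m$ has no principal reduction when $k = \mathbb{F}_2$, so $\m \notin \Lambda(A)$ and in particular $\m \notin \Gamma(A)$; hence $I \subsetneq \m$. Applying Nakayama's lemma to the finitely generated $A$-module $\m/I$, the strict inclusion $I \subsetneq \m$ forces $I + \m^2 \subsetneq \m$, so the image of $I$ under the quotient $\m \twoheadrightarrow \m/\m^2$ is a \emph{proper} $k$-subspace of a two-dimensional $\mathbb{F}_2$-vector space. The decisive point is that there are only four such subspaces: the zero subspace and the three one-dimensional subspaces $k\bar x$, $k\bar y$, $k(\bar x + \bar y)$. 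If the image is zero, then $I \subseteq \m^2 \subseteq I_i$ for every $i$. Otherwise, the image lies in exactly one of $k\bar x, k\bar y, k(\bar x + \bar y)$, and lifting this inclusion shows $I \subseteq (z) + \m^2 = I_i$ for the corresponding $z \in \{x, y, x+y\}$.

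There is no real obstacle beyond carefully using the hypothesis $k = \mathbb{F}_2$, which is what collapses the projective line $\mathbb{P}^1(k)$ of candidate lines in $\m/\m^2$ to just three points. Over a larger residue field the same argument would produce one $I_i$ per nonzero element of $\mathbb{P}(\m/\m^2)$, so the finiteness of $\Max \Lambda(A)$ and the explicit enumeration would fail. This characteristic-sensitivity is, I suspect, exactly the phenomenon that Theorem \ref{ch} isolates, and it is why this example cleanly separates the weakly Arf property from the Arf property.
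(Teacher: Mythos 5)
Your proof is correct. The key point, that $\m/\m^2$ is a two-dimensional $\mathbb{F}_2$-vector space having only three lines and that $I_1, I_2, I_3$ are precisely the pullbacks $(z)+\m^2$ of those lines for $z \in \{x,y,x+y\}$, is the same observation the paper uses. The organization differs in an instructive way, however. The paper fixes a generator $f \in W(A)$ with $I = \overline{(f)}$, does a case analysis on the residues mod $\m$ of the coefficients in an expression $f = x\alpha + y\beta$, concludes $f \in I_i$, and then must invoke the earlier-established fact that each $I_i$ is integrally closed to pass from $(f) \subseteq I_i$ to $I = \overline{(f)} \subseteq \overline{I_i} = I_i$. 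You instead work with the ideal $I$ directly: Nakayama gives $I + \m^2 \subsetneq \m$ (since $I \subsetneq \m$, which you correctly extract from $\m \notin \Lambda(A)$ established in the preceding paragraph), so the image of $I$ in $\m/\m^2$ is a proper $\mathbb{F}_2$-subspace, hence is either $0$ or one of the three lines; lifting gives $I \subseteq (z)+\m^2 = I_i$ outright. This bypasses any appeal to integral closedness of the $I_i$ in the final step, and in fact proves the slightly stronger statement that \emph{every} proper ideal strictly contained in $\m$ lies in some $I_i$. Your concluding remark that this hinges on $|\mathbb{P}^1(k)|$ being small is precisely the sensitivity to the residue field that drives the example, and matches how the paper's argument also collapses the cases of $\alpha, \beta$ to units $\equiv 1 \bmod \m$.
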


\begin{proof}[Proof of Claim]
Let $f \in W(A)$. We set $I = \overline{(f)}$. Then $f \in \m$, so that $f = x\alpha + y \beta$ for some $\alpha, \beta \in A$. As $\m^2 \subseteq I_i$, we may assume $f \not\in \m^2$. Hence, without loss of generality, we may also assume that $\alpha = 1 + \xi$ with $\xi \in \m$. If $\beta \in \m$, then $f \in I_1$. Suppose $\beta \not\in \m$. Then $\beta \equiv 1$ mod $\m$. By setting $\beta=1 + \eta$ for some $\eta \in \m$, we have 
$$
f = x + y + x\xi + y\eta \in (x+y) + \m^2 = I_3.
$$
Therefore, $I = \overline{(f)}\subseteq \overline{I_i} = I_i$ for some $1 \le i \le 3$. 
\end{proof}

We conclude that $\Max \Lambda(A) = \{I_i \mid 1 \le i \le 3\}$. In particular, by Corollary \ref{9.5}, $A$ is a weakly Arf ring. However, because $\m$ is not stable, the ring $A$ is not Arf.
\end{proof}

In what follows, we assume that $A$ is a Noetherian ring. Notice that $\Max \Lambda (A) = \emptyset$ if and only if $A = \rmQ(A)$. Moreover, we have the following, which plays a key in our arguments. 


\begin{prop}\label{9.7}
The following conditions are equivalent. 
\begin{enumerate}[$(1)$]
\item $A$ is integrally closed.
\item If $M \in \Max \Lambda(A)$, then $\mu_A(M) = 1$. 
\end{enumerate}
\end{prop}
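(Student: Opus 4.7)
The plan for $(1) \Rightarrow (2)$ is immediate: if $A = \overline{A}$, then for every $a \in W(A)$ we have $\overline{(a)} = a\overline{A} \cap A = (a)$, so any $M \in \Max\Lambda(A)$, being $\overline{(a)}$ for some non-zerodivisor $a$, is already principal, and $\mu_A(M) = 1$.

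For the converse $(2) \Rightarrow (1)$, I would invoke Lemma \ref{2.8a} to reduce the problem to showing $\overline{(a)} = (a)$ for every $a \in W(A)$, and proceed by Noetherian contradiction. Assuming the set
$$
\calS = \{(x) \mid x \in W(A), \ \overline{(x)} \neq (x)\}
$$
is non-empty, I would pick a maximal element $(a)$ of $\calS$, set $I = \overline{(a)}$, and note $(a) \subsetneq I \in \Gamma(A)$ (the latter because $I = A$ would force $a$ to be a unit, contradicting $\overline{(a)} \neq (a)$). Using Noetherianity once more, enlarge $I$ to some $M \in \Max\Lambda(A)$, and write $M = (c)$ by hypothesis. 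The first technical step is to confirm both that $c \in W(A)$ (since some non-zerodivisor $b \in M = (c)$ yields $b = cs$, so any annihilator of $c$ annihilates $b$) and that $\overline{(c)} = (c)$ (from $(c) \subseteq \overline{(c)} \in \Lambda(A)$ together with the maximality of $M$ in $\Gamma(A)$, ruling out $\overline{(c)} = A$ because that would make $c$ a unit).

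The heart of the argument exploits the factorization $a = cu$ coming from $a \in M = (c)$. Here $u \in W(A)$ automatically, and $u$ cannot be a unit (else $(a) = (c) = \overline{(c)} = \overline{(a)}$, contradicting $(a) \in \calS$), so $(a) \subsetneq (u)$ and the maximality of $(a)$ in $\calS$ forces $\overline{(u)} = (u)$. Finally, picking any $y \in I \setminus (a)$ and using $y \in M = (c)$ to write $y = cv$ with $v \in A$, the inclusion $y = cv \in a\overline{A} = cu\overline{A}$ together with $c$ being a non-zerodivisor in $\overline{A} \subseteq \rmQ(A)$ yields $v \in u\overline{A} \cap A = \overline{(u)} = (u)$, so $y \in (cu) = (a)$, the desired contradiction. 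The main obstacle I anticipate is purely bookkeeping: keeping straight which of $a, c, u$ is a non-zerodivisor and which is a non-unit at each stage, so that all the claimed strict inclusions genuinely hold and the maximality of $(a)$ in $\calS$ can be legitimately applied.
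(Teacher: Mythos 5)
Your proof is correct, and your reverse direction $(2) \Rightarrow (1)$ takes a genuinely different route from the paper's. You run the Noetherian maximality argument in the poset of principal ideals $\{(x) \mid x \in W(A), \overline{(x)} \neq (x)\}$, whereas the paper works in the poset of their integral closures $\{\overline{(x)} \mid x \in W(A), \overline{(x)} \neq (x)\}$; these are not the same partial order, since inclusions of generating elements do not track inclusions of integral closures. Your choice buys a genuine streamlining: after writing $a = cu$ with $M = (c)$ a maximal element of $\Gamma(A)$ containing $\overline{(a)}$, the maximality of $(a)$ in your poset immediately forces $\overline{(u)} = (u)$, which yields $\overline{(a)} = (a)$ in one step via the inclusion $\overline{(a)} \subseteq c\,\overline{(u)}$ that you re-derive directly. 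The paper, maximizing the "wrong" ideal, cannot conclude outright that $\overline{(c)}$ (in its notation, your $(u)$) is integrally closed and must split into two cases, invoking Lemma \ref{9.8} in both branches; you avoid both the case split and the explicit appeal to that lemma.

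One small slip worth correcting: you justify $(a) \subsetneq (u)$ by the fact that $u$ is not a unit, but the strict inclusion actually follows from $c$ being a non-unit (which holds because $M = (c)$ is proper). Indeed, if $(a) = (u)$ then $u = wa = wcu$ for some $w \in A$, hence $u(1 - wc) = 0$; since $u \in W(A)$ this gives $wc = 1$, making $c$ a unit, a contradiction. The observation that $u$ is not a unit is itself correct (and does follow from your parenthetical), but it is not what establishes the strict inclusion needed for the maximality argument.
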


To prove Proposition \ref{9.7}, we need some auxiliaries. 

\begin{lem}\label{9.8}
Let $a, b \in W(A)$ such that $\overline{(a)} \subseteq \overline{(b)} = (b)$. We write $a = bc$ with $c \in W(A)$ and set $I = \overline{(a)}$, $J = \overline{(c)}$. Then the following assertions hold true. 
\begin{enumerate}[$(1)$]
\item $I = bJ \subseteq J$.
\item $I = J$ if and only if $b$ is a unit in $A$.
\item $I^2 = aI$ if and only if $J^2 = cJ$. 
\end{enumerate}
\end{lem}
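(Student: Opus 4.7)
The plan is to establish (1) first and then derive (2) and (3) from it with minimal additional work.

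For (1), I would prove the two inclusions $bJ \subseteq I$ and $I \subseteq bJ$ separately. The first inclusion comes from a direct integral-dependence manipulation: if $x \in J = \overline{(c)}$ satisfies $x^n + \sum_{i=1}^{n} d_i c^i x^{n-i} = 0$ with $d_i \in A$, then multiplying by $b^n$ and using $bc = a$ rewrites this as an integral equation for $bx$ over $(a)$, so $bx \in \overline{(a)} = I$. For the reverse inclusion, the hypothesis $\overline{(a)} \subseteq (b)$ guarantees that every $y \in I$ has the form $y = bz$ with $z \in A$; substituting $y = bz$ and $a = bc$ into an integral equation for $y$ over $(a)$ and cancelling the factor $b^n$ (legitimate since $b \in W(A)$ makes $b^n$ a non-zerodivisor) yields an integral equation for $z$ over $(c)$, so $z \in J$ and $y \in bJ$. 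The containment $bJ \subseteq J$ is automatic because $J$ is an ideal of $A$.

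For (2), the forward direction is trivial: if $b$ is a unit, then $bJ = J$, so (1) gives $I = J$. For the converse, suppose $I = J$. Then by (1) we have $J = bJ$, and since $A$ is Noetherian (the standing assumption in this section), $J$ is finitely generated. The determinant trick then produces $d \in A$ with $d \equiv 1 \pmod{(b)}$ and $dJ = 0$. Because $c \in J \cap W(A)$, the relation $dc = 0$ forces $d = 0$, so $1 \in (b)$, i.e., $b$ is a unit.

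For (3), once (1) is in hand, the identities $I = bJ$ and $a = bc$ give $I^2 = b^2 J^2$ and $aI = b^2 c J$; since $b$ (hence $b^2$) is a non-zerodivisor on $A$, cancelling $b^2$ shows that $I^2 = aI$ holds if and only if $J^2 = cJ$.

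The main technical subtlety is the reverse inclusion in (1): it is precisely there that the hypothesis $\overline{(a)} \subseteq (b)$ is used to write $y = bz$, and the assumption $b \in W(A)$ is needed in order to cancel $b^n$ from the resulting integral equation. Once (1) is verified, statements (2) and (3) are essentially formal consequences.
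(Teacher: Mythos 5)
Your proof is correct and follows the same architecture as the paper's: establish $I = bJ$ in part (1), then derive (2) via the determinant trick applied to $bJ = J$ (using $c \in J \cap W(A)$) and (3) by cancelling $b^2$ from $I^2 = b^2J^2$ and $aI = b^2cJ$. The only variation is in (1), where you work directly with explicit integral equations (multiplying by $b^n$ in one direction, cancelling $b^n$ in the other), while the paper appeals to the characterization $\overline{(a)} = a\overline{A}\cap A$ together with the containment $\overline{(b)}\cdot\overline{(c)}\subseteq\overline{(bc)}$; these are equivalent routes.
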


\begin{proof}
$(1)$ We have $I = \overline{(a)} = \overline{(b)(c)} \supseteq \overline{(b)}\cdot\overline{(c)} = bJ$. For each $x \in I$, let us write $x = b \varphi$ where $\varphi \in A$. Since $I = \overline{(a)} = a \overline{A} \cap A$, we choose $y \in \overline{A}$ such that $x = a y$. Hence 
$$
b\varphi = x = ay = (bc)y = b(cy).
$$
This shows $\varphi = cy \in c \overline{A} \cap A = J$. Therefore $I  = bJ \subseteq J$, as desired.

$(2)$ If $b$ is a unit in $A$, then $J = bJ$. Hence $I = J$. Conversely, suppose that $I = J$. Then $bJ = J$. We can choose $\xi \in (b)$ satisfying $(1 - \xi) J = (0)$. In particular, $(1-\xi) a = 0$. This shows $b$ is a unit in $A$. 

$(3)$ This follows from the fact that $I^2 = b^2 J^2$ and $aI = (b^2c)J = b^2(cJ)$.
\end{proof}


We are now ready to prove Proposition \ref{9.7}

\begin{proof}[Proof of Proposition \ref{9.7}]

$(1) \Rightarrow (2)$ Suppose that $A$ is integrally closed. Let $I \in \Lambda(A)$. Set $I = \overline{(a)}$ for some $a\in W(A)$. Then $I = a\overline{A} \cap A = aA$. In particular, $\mu_A(M) = 1$ for every $M \in \Max \Lambda(A)$.  

$(2) \Rightarrow (1)$ By Lemma \ref{2.8a}, we will show that the ideal $(a)$ is integrally closed in $A$ for every $a \in W(A)$. Indeed, let us consider the set 
$$
\calS = \left\{\overline{(a)} ~\middle|~ a \in W(A), \ (a) \subsetneq \overline{(a)}\right\}
$$
and assume that $\calS \ne \emptyset$. As $A$ is Noetherian, we may choose a maximal element $I$ in $\calS$ with respect to inclusion. Let us write $I = \overline{(a)}$ for some $a \in W(A)$. Then $I$ is a proper ideal in $A$, so there exists $M \in \Max \Lambda(A)$ such that $I \subseteq M$. Let $b\in W(A)$ such that $M = \overline{(b)}$. Since $\mu_A(M)=1$, we obtain $M = (b)$. 
Therefore 
$$
I = \overline{(a)} \subseteq \overline{(b)} = (b).
$$ 
Let us choose $c \in W(A)$ such that $a = bc$. Set $J = \overline{(c)}$. 
If the ideal $(c)$ is integrally closed, i.e., $J = (c)$, then $I = bJ = (bc) = (a)$. This contradicts the assumption that $I \in \calS$. Hence $J \ne (c)$, so $J = \overline{(c)} \in \calS$. The maximality for $I$ ensures that $I = J$. Hence $b$ is a unit in $A$, which makes a contradiction, because $M = (b)$. Consequently, the ideal $(a)$ in $A$ is integrally closed for every $a \in W(A)$, as wanted.
\end{proof}

Hence, by Proposition \ref{9.7}, there exists $M \in \Max \Lambda(A)$ such that $\mu_A(M) \ge 2$, provided $A$ is not integrally closed. By using this phenomenon, we define the following chain of rings between $A$ and $\overline{A}$. 

\begin{defn}\label{9.9}
We define $A_1$ to be $\overline{A}$ if $A$ is integrally closed. Otherwise, if $A \ne \overline{A}$, we define $A_1 = A^M$, where $M \in \Max \Lambda(A)$ such that $\mu_A(M) \ge 2$. Set $A_0 = A$, and for each $n \ge 1$, define recursively $A_n = \left(A_{n-1}\right)_1$.
\end{defn}

\noindent
By definition, we then have a chain of rings
$$
A = A_0 \subseteq A_1 \subseteq \cdots \subseteq A_n \subseteq \cdots \subseteq \overline{A}
$$
by the algebras $A^M$, where $M \in \Max \Lambda(A)$. Let us remark here that $A_1=A^M$ depends on the choice of $M \in \Max \Lambda(A)$, if $A$ is not integrally closed; see Example \ref{9.15}. Hence, the above chain is not uniquely determined in general. However, if the ring $A$ satisfies certain conditions (e.g., $A$ is a numerical semigroup ring), then the chain is uniquely determined, as we show in Corollary \ref{9.14}. 

Notice that, if $A$ is not integrally closed, then $A \ne A_1$. Indeed, choose $M \in \Max\Lambda(A)$ so that $\mu_A(M) \ge 2$ and $A_1 = A^M$. We set $M = \overline{(a)}$ for some $a \in W(A)$. Then, because $(a)$ is a reduction of $M$, we have 
$$
A_1 = A^M = A\left[\frac{M}{a}\right] \supseteq \frac{M}{a} \supseteq A.
$$
If $A = A_1$, then $M = (a)$, which is impossible, because $\mu_A(M) \ge 2$. Hence, $A \ne A_1$ provided $A$ is not integrally closed. 

\begin{rem}\label{9.10}
For a fixed chain 
$A = A_0 \subseteq A_1 \subseteq \cdots \subseteq A_n \subseteq \cdots \subseteq \overline{A}$, 
we see that $\overline{A}$ is finitely generated as an $A$-module if and only if the chain is stable, i.e., $A_n = A_{n+1}$ for some $n \ge 0$, which is equivalent to saying that $A_n = \overline{A}$ for some $n \ge 0$. 
\end{rem}

For a Noetherian local ring $(A, \m)$, we say that $A$ is {\it quasi-unmixed}, if $\Min \widehat{A} = \Assh \widehat{A}$, i.e., $\dim \widehat{A}/Q = \dim A$ for every $Q \in \Min \widehat{A}$, where $\widehat{A}$ denotes the $\m$-adic completion of $A$. The reader may consult \cite{N, Ratliff, Ratliff2, Ratliff3} for basic properties. 

We also recall the notion of degree for modules. Let $A$ be a Noetherian ring with $\dim A >0$, $X$ a finitely generated $A$-module such that $a X = (0)$ for some $a \in W(A)$. We define
$$
\deg_A X = \sum_{\p \in H_1(A)}\ell_{A_{\p}}(X_\p) \in \Bbb Z
$$
and call it {\it the degree of $X$}, where $H_1(A) =\{\p \in \Spec A \mid \dim A_{\p} = 1\}$. 

With this notation, we have the following, which corresponds to ${\rm (i)} \Leftrightarrow {\rm (iii)}$ of \cite[Theorem 2.2]{L} for Arf rings. 

\begin{thm}\label{9.11}
Let $A$ be a Noetherian ring. Consider the following conditions:
\begin{enumerate}[$(1)$]
\item $A$ is a weakly Arf ring.
\item For every $M \in \Max \Lambda(A)$, $M:M$ is a weakly Arf ring and $M^2 = aM$ for some $a \in M$. 
\item  For every chain $A = A_0 \subseteq A_1 \subseteq \cdots \subseteq A_n \subseteq \cdots \subseteq \overline{A}$ obtained from Definition \ref{9.9}, $A_n$ is a weakly Arf ring for every $n \ge 0$.
\item For every chain $A = A_0 \subseteq A_1 \subseteq \cdots \subseteq A_n \subseteq \cdots \subseteq \overline{A}$ obtained from Definition \ref{9.9}, and for every $n \ge 0$ and $N \in \Max \Lambda (A_n)$, $N^2 = bN$ for some $b \in N$. 
\end{enumerate}
Then the implications $(1) \Leftrightarrow (2) \Leftrightarrow (3) \Rightarrow (4)$ hold. If $\dim A=1$, or $A_{\p}$ is quasi-unmixed for every $\p \in \Spec A$, the implication $(4) \Rightarrow (1)$ holds.  
\end{thm}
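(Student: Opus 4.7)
The equivalence $(1) \Leftrightarrow (2)$ is precisely Corollary~\ref{9.5}, with the Noetherianness hypothesis supplying the nontrivial direction. For $(1) \Rightarrow (3)$, I would induct along the chain from Definition~\ref{9.9}: if $A_n$ is weakly Arf and $A_{n+1} = A_n^M$ for some $M \in \Max\Lambda(A_n)$ with $\mu_{A_n}(M) \ge 2$, Theorem~\ref{2.4} gives $M^2 = aM$, so $A_{n+1} = M/a$ is a module-finite (hence Noetherian) extension of $A_n$, and Corollary~\ref{9.2} then yields that $A_{n+1}$ is weakly Arf; the alternative $A_{n+1} = \overline{A_n}$ arises only when $A_n$ is already integrally closed and is immediate. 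The implication $(3) \Rightarrow (1)$ is trivial from $A_0 = A$, and $(3) \Rightarrow (4)$ follows by applying Theorem~\ref{2.4} to each weakly Arf $A_n$.

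For the crucial $(4) \Rightarrow (1)$ under the additional hypothesis, my plan is induction on a well-founded measure $\mu(B)$ attached to Noetherian intermediate rings $A \subseteq B \subseteq \overline{A}$ satisfying condition~$(4)$ (which is inherited along chain-successors by construction), with base case $B$ integrally closed (trivially weakly Arf by Definition~\ref{2.3}). The preliminary step is to show that the chain $A = A_0 \subseteq A_1 \subseteq \cdots$ terminates, equivalently (Remark~\ref{9.10}) that $\overline{A}$ is a module-finite extension of $A$, so that a finite-valued measure is available: in the one-dimensional case, $\mu(B) = \ell_A(\overline{A}/B)$; in the higher-dimensional locally quasi-unmixed case, $\mu(B) = \deg_A(\overline{A}/B)$ using the degree function recalled just before the statement. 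Since $B \subsetneq B^M$ whenever $\mu_B(M) \ge 2$, one has $\mu(B^M) < \mu(B)$, supplying the strict decrease required for the induction.

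With finiteness in hand, the inductive step runs as follows. Given $B$ with $\mu(B) > 0$, Proposition~\ref{9.7} furnishes some non-principal $M \in \Max\Lambda(B)$. To show $B$ is weakly Arf, I invoke Theorem~\ref{2.4} and check that every $I \in \Lambda(B)$ satisfies $I^2 = aI$. Choose $M' \supseteq I$ in $\Max\Lambda(B)$. If $\mu_B(M') \ge 2$, then by $(4)$ we have $M'^2 = bM'$ so $B^{M'} = M':M'$; the inductive hypothesis applied to $B^{M'}$ (which satisfies $(4)$ and has smaller $\mu$) gives that $B^{M'}$ is weakly Arf, and Corollary~\ref{9.3} then yields $I^2 = aI$. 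If instead $M' = bB$ is principal, write $a = bc$ and $J = \overline{(c)}$; Lemma~\ref{9.8}(1),(3) reduces $I^2 = aI$ to $J^2 = cJ$ for the strictly larger ideal $J \supsetneq I$ (Lemma~\ref{9.8}(2)), and iterating in the Noetherian ring $B$ this reduction terminates either at $J = B$ (trivial) or at some $J$ contained in a non-principal maximal, already handled above. In particular, $A = A_0$ is weakly Arf.

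The main obstacle is the preliminary module-finiteness of $\overline{A}$ over $A$ under the stated hypothesis, which is exactly what makes the two cases of the hypothesis necessary. In the one-dimensional case, I would reduce to the local setting (using that condition~$(4)$ combined with Noetherianness produces a tower of stable ideals with module-finite blow-ups) and invoke Nagata's theorem on analytically unramified local rings, the analytic unramifiedness being implicit in the quasi-unmixed local completions that appear. In the locally quasi-unmixed higher-dimensional case, the termination of the chain should follow from Ratliff-type bounds on repeated integral extensions coming from blow-ups of $\Max\Lambda$-ideals, controlled by the degree function on $H_1(A)$. Once this finiteness step is secured, the remainder of the argument is essentially formal bookkeeping combining Lemma~\ref{9.8}, Corollary~\ref{9.3}, and Proposition~\ref{9.7}.
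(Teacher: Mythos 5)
Your treatment of $(1)\Leftrightarrow(2)$, $(1)\Rightarrow(3)$, $(3)\Rightarrow(1)$, and $(3)\Rightarrow(4)$ matches the paper and is fine. For $(4)\Rightarrow(1)$, however, you take a genuinely different route --- an induction on a measure $\mu(B)$ attached to \emph{rings} $B$ between $A$ and $\overline{A}$ --- and this route has a real gap. Your measure ($\ell_A(\overline{A}/B)$ or $\deg_A(\overline{A}/B)$) is only finite when $\overline{A}$ is a finitely generated $A$-module, and you correctly flag this as the ``main obstacle,'' but the hypotheses of the theorem simply do not provide it. Condition~$(4)$ together with $\dim A = 1$ (or local quasi-unmixedness) does \emph{not} force analytic unramifiedness: there are one-dimensional Noetherian local domains with non--module-finite integral closure, and quasi-unmixedness is automatic for such rings, so neither Nagata's theorem nor ``Ratliff-type bounds'' can be invoked to secure termination of the chain. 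Since the theorem is asserted without assuming $\overline{A}$ module-finite, a proof that requires it establishes a weaker statement.

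The paper avoids this entirely by inducting on a measure attached to the \emph{ideal}, not the ring: for a putative counterexample $I\in\Lambda(A)$, it takes $\ell = \deg_A(A/I)$, which is always finite because $A/I$ is annihilated by the nonzerodivisor $a$ and its support in $H_1(A)$ is contained in the finitely many primes of $\Ass(A/(a))$. The maximality of $I$ among counterexamples of degree $\ell$ replaces your iterated Lemma~\ref{9.8} reduction (it forces the chosen $M\in\Max\Lambda(A)$ to be non-principal), and the quasi-unmixed/dim-one hypothesis enters only through Lemma~\ref{9.12} and the Claim $\deg_B(B/L)\le\deg_A(B/L)$, which together give the strict drop $\deg_B(B/L)<\ell$ needed to close the contradiction. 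Once you see this, the rest of your inductive step (Corollary~\ref{9.3} applied to $B^{M'}$, Proposition~\ref{9.7} to find a non-principal maximal) lines up with the paper's bookkeeping; it is really only the choice of induction variable that needs to change.
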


\begin{proof}
$(1) \Leftrightarrow (2)$, $(1) \Rightarrow (3)$ See Corollary \ref{9.2} and Corollary \ref{9.5}. 

$(3) \Rightarrow (1)$, $(3) \Rightarrow (4)$ Obvious.

$(4) \Rightarrow (1)$ We may assume $\dim A >0$. We will show that, for every $I \in \Lambda(A)$, there exists $a \in I$ such that $I^2 = aI$. Suppose the contrary and take a counterexample $I \in \Lambda(A)$ such that $\ell = \deg_AA/I$ is as small as possible. Let us now replace $I$ with a maximal element $I \in \Lambda (A)$, satisfying $I^2 \ne aI$ for every $a \in I$ and $\ell = \deg_AA/I$.  We then have $\ell \ne 0$ and $I \subsetneq A$. Let $M \in \Max \Lambda(A)$ such that $I \subseteq M$ and write $I = \overline{(a)}$, $M = \overline{(b)}$ with $a, b \in W(A)$. Then we see that $M \ne (b)$. In fact, suppose that $M = (b)$. Then, since $\overline{(a)} \subseteq \overline{(b)} =(b)$, if we write $a = bc$ with $c \in W(A)$, then $I = bJ$, where $J = \overline{(c)}$. As $I^2 \ne a I$, we have $J^2 \ne c J$ by Lemma \ref{9.8}. The canonical surjection $A/I \to A/J$ shows $\ell \ge \deg_AA/J$, and by the minimality of $\ell$, we have $\ell = \deg_AA/J$. Hence, the maximality for $I$ guarantees that $I = J$, so that $b$ is a unit in $A$. This makes a contradiction. Thus $M \ne (b)$, as claimed. 
Therefore, because $\mu_A(M) \ge 2$, we can choose $A_1$ to be the algebra $A^M$. We set $B = A_1 = A^M$ and $L = \frac{I}{b}$ in $\rmQ(A)$. By \cite[Lemma 2.3]{L}, we have $L \in \Lambda(B)$ and $L = \overline{(\frac{a}{b})}$.
Look at the isomorphism $B/L =\frac{M}{b}/\frac{I}{b} \cong M/I$ of $A$-modules. We then have an exact sequence
$$
0 \to B/L \to A/I \to A/M \to 0
$$
of $A$-modules, which yields $\deg_AB/L < \deg_AA/I$, because $M \ne A$.

\begin{claim*}
If $\dim A=1$, or $A_{\p}$ is quasi-unmixed for every $\p \in \Spec A$, then 
$$
\deg_BB/L \le \deg_AB/L.
$$
\end{claim*}


Let us note the following result which is essentially due to L. J. Ratliff, Jr \cite{Ratliff}. 


\begin{lem}[cf. {\cite[THEOREM 3.8]{Ratliff}}]\label{9.12}
Suppose that $A_{\p}$ is quasi-unmixed for every $\p \in \Spec A$. Then, for every intermediate ring $B$ between $A$ and $\overline{A}$ and for every $P \in \Spec B$, we have 
$$
\dim B_P = \dim A_{\p}
$$ 
where $\p = P \cap A$.
\end{lem}

\begin{proof}
Notice that $B_\p$ is integral over $A_\p$ and $\rmQ(A_\p) = \rmQ(B_\p)$. We may assume that $(A, \m)$ is a local ring with $\p = \m$. Hence $P \in \Max B$. Set $M = P$ and $d = \height_BM$. Choose a maximal chain
$$
P_0 \subsetneq P_1 \subsetneq \cdots \subsetneq P_d = M
$$
of prime ideals in $B$. By setting $\p_i = P_i \cap A$, we have a chain
$$
\p_0 \subseteq \p_1 \subseteq \cdots \subseteq \p_d = \m
$$
of prime ideals in $A$. As $P_0 \in \Min B$, we get $P_0\rmQ(B) = \p_0\rmQ(A)$, which implies  $\p_0 \in \Min A$. Note that $B/P_0$ is an integral extension over $A/{\p_0}$ and 
$$
(0) \subsetneq {P_1}/{P_0} \subsetneq {P_2}/{P_1} \subsetneq \cdots \subsetneq {P_d}/{P_0} = M/{P_0}
$$
is a maximal chain in $\Spec B/P_0$. Thus, by \cite[(34.6) Theorem]{N}, we get that
$$
(0) \subseteq {\p_1}/{\p_0} \subseteq {\p_2}/{\p_1} \subseteq \cdots \subseteq {\p_d}/{\p_0} = \m/{\p_0}
$$
is a maximal chain in $\Spec A/\p_0$, whence $d = \dim A/\p_0$. Moreover, because $A$ is quasi-unmixed, we conclude that $\dim A = \dim A/\p_0 + \dim A_{\p_0} = d + 0 = d$; see \cite[(34.5)]{N}. This completes the proof.
\end{proof}

We are now ready to prove the claim.

\begin{proof}[Proof of Claim]
Firstly, suppose that $A_{\p}$ is quasi-unmixed for every $\p \in \Spec A$. Then
\begin{eqnarray*}
\deg_AB/L  &=& \sum_{\p \in H_1(A)}\ell_{A_{\p}}(B_{\p}/LB_{\p}) \\ 
                  &\ge& \sum_{\p \in H_1(A)}\ell_{B_{\p}}(B_{\p}/LB_{\p}) \\
                  &=& \sum_{\p \in H_1(A)} \sum_{M \in \Max B_{\p}}\ell_{(B_{\p})_M}((B_{\p}/LB_{\p})_M) \\
                  &=& \sum_{\p \in H_1(A)}\left(\sum_{Q\in H_1(B), \ Q\cap A = \p} \ell_{B_Q}(B_Q/LB_Q)\right) \\
                  &=& \sum_{Q \in H_1(B)} \ell_{B_Q}(B_Q/LB_Q)  \\
                  &=& \deg_BB/L
\end{eqnarray*}
as wanted. If $\dim A=1$, then $\deg_AB/L = \ell_A(B/L) \le \ell_B(B/L) = \deg_BB/L$.
\end{proof}
Therefore, we have 
$$
\deg_BB/L \le \deg_AB/L < \deg_AA/I = \ell
$$
and the minimality for $\ell$ guarantees that 
$$
L^2 = \left(\frac{a}{b}\right)L.
$$
Hence $I^2 = a I$, which makes a contradiction. Thus, for every $I \in \Lambda(A)$, there exists $a \in I$ such that $I^2 = aI$. Therefore, $A$ is a weakly Arf ring. This completes the proof.
\end{proof}

In the rest of this section, we discuss the problem of when the chains defined in Definition \ref{9.9} are uniquely determined. In what follows, let $S = k[t]$ be the polynomial ring over a field $k$, and let $R$ be a core of $S$, i.e., $k \subseteq R \subseteq S$ is an intermediate ring and $t^cS \subseteq R$ for some $c>0$. Then $t^cS = \overline{(t^c)} \in \Lambda(R)$ and $\overline{(t^c)} \subsetneq R$.

Let $I \in \Lambda(R)$ such that $I \ne R$. Set $I = \overline{(a)}$ with $a \in W(R)$. 
We now write $IS = \varphi S$ where $\varphi = t^n f$, $n \ge 0$, $f \in S$, and $f(0)=1$. Since $I = IS \cap R$, we then have $(0) \ne IS \subsetneq S$, and 
$$
I = \varphi S \cap R = (t^n S \cap R) \cap (f S \cap R) =   (t^n S \cap R) \cdot (f S \cap R). 
$$
See \cite[Lemma 3.1]{EGMY} for a proof. 

With this notation we have the following.

\begin{prop}\label{9.13}
The following assertions hold true. 
\begin{enumerate}[$(1)$]
\item If $n = 0$, then $I = (a) = (f)$. 
\item If $n > 0$ and $R = k[H]$, then $I \subseteq t^nS\cap R \subseteq R_+ = \overline{(t^e)}$  
\end{enumerate}
where $H$ denotes a numerical semigroup, $e = \min (H\setminus \{0\})$ stands for the multiplicity of $H$, and $R_+ = tS \cap R$. 
\end{prop}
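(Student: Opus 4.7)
The plan is to prove the two assertions separately.

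For assertion (1), I would first observe that since $S=k[t]$ is integrally closed and $R\subseteq S$ is birational, $\overline{R}=S$, and the standard identity for the integral closure of a principal ideal in a domain gives $I=\overline{(a)}=aS\cap R$; in particular $IS=aS$. Combined with the hypothesis $n=0$, i.e.\ $IS=\varphi S=fS$, this forces $aS=fS$, so $a=cf$ for some unit $c$ of $S$. Because $S^{\times}=k^{\times}$, we have $c\in k^{\times}$, whence $f\in R$ and $(a)=(f)$ as ideals of $R$. It then remains to show $\overline{(a)}=(a)$, equivalently, that multiplication by $a$ is injective on the $R$-module $S/R$.

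I would handle this by a support computation. The key identity is $R[t^{-c}]=S[t^{-1}]$; the inclusion $\subseteq$ is trivial, and conversely both $t=t^{c+1}\cdot t^{-c}$ and $t^{-1}=t^{2c-1}\cdot(t^{-c})^{2}$ lie in $R[t^{-c}]$, since $t^{c+1},t^{2c-1}\in R$. For any $P\in\Max R$ with $P\ne\m=tS\cap R$ I claim $t^{c}\notin P$: indeed $P=\n\cap R$ for some $\n\in\Max S\setminus\{(t)\}$, so $t\notin\n$ and hence $t^{c}\notin\n\cap R=P$. Thus $t^{c}$ is a unit in $R_{P}$, and because $t^{c}\in(R:S)$ one gets $(R:S)R_{P}=R_{P}$, which forces $S\subseteq R_{P}$ and hence $R_{P}=S_{P}=S_{\n}$ (with $\n$ the unique maximal ideal of $S$ lying over $P$, uniqueness coming from the locality of $R_P$). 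In particular $R_{P}$ is a DVR and $(S/R)_{P}=0$, so $\Supp_{R}(S/R)\subseteq\{\m\}$ and therefore $\Ass_{R}(S/R)\subseteq\{\m\}$. Finally, $f(0)=1$ gives $a(0)=c\ne 0$, so $a\notin tS$ and a fortiori $a\notin\m$; hence $a$ is a non-zerodivisor on $S/R$, which yields $aS\cap R=aR$ and completes assertion (1).

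For assertion (2), each inclusion is essentially formal. From $I=\varphi S\cap R\subseteq\varphi S=t^{n}fS\subseteq t^{n}S$ together with $I\subseteq R$ we obtain $I\subseteq t^{n}S\cap R$. Because $n\ge 1$, $t^{n}S\subseteq tS$, and therefore $t^{n}S\cap R\subseteq tS\cap R=R_{+}$. For the identification $R_{+}=\overline{(t^{e})}=t^{e}S\cap R$: the inclusion $\overline{(t^{e})}\subseteq R_{+}$ is immediate since $e\ge 1$. The reverse inclusion uses the hypothesis $R=k[H]$: $R$ is spanned as a $k$-vector space by $\{t^{h}:h\in H\}$, so every element of $R_{+}=tS\cap R$ is a $k$-linear combination of $t^{h}$ with $h\in H\setminus\{0\}$, all of which satisfy $h\ge e=\min(H\setminus\{0\})$; this puts $R_{+}$ inside $t^{e}S\cap R$.

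The principal obstacle, if any, is the localization analysis in assertion (1)---namely, identifying $R_{P}$ with the DVR $S_{\n}$ for every $P\ne\m$ via the conductor relation $t^{c}\in(R:S)$; once this structural fact is in hand, assertion (1) reduces to a brief support argument and assertion (2) is bookkeeping with the semigroup structure.
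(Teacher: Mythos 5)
Your proof is correct, and part (2) matches the paper's argument (which in fact contains a small typo: the chain should read $t^nS\cap R\subseteq tS\cap R$, not $t^nS\cap R\subseteq R\subseteq tS\cap R$; your version is the intended one). For part (1), however, you take a genuinely different route at the decisive step. After both you and the paper deduce from $aS=fS$ that $f=c^{-1}a\in R$ with $c\in k^{\times}$ and hence $(a)=(f)$, the paper immediately concludes $I=\overline{(a)}=(a)$ by citing~\cite[Theorem~2.4~(2)]{EGMY}, which is precisely the statement that $fS\cap R=(f)$ for $f\in R$ with $f(0)\ne 0$. You instead give a self-contained proof of this fact: showing $\Supp_R(S/R)\subseteq\{\m\}$ via the conductor element $t^c\in (R:S)$, deducing $\Ass_R(S/R)\subseteq\{\m\}$, and then using $a(0)\ne 0$ (so $a\notin\m$) to conclude $a$ is a non-zerodivisor on $S/R$, whence $aS\cap R=aR$. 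This buys you independence from the external reference at the cost of a longer argument; note that the identity $R[t^{-c}]=S[t^{-1}]$ you establish at the outset is never actually invoked in the subsequent localization argument, so it could be dropped.
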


\begin{proof}
$(1)$ Suppose $n = 0$. Then $I = fS \cap R$, $f \not\in k$, and $fS = IS = aS$, so that $f = x a$ for some $x \in k\setminus \{0\}$. Thus $f \in R$ and hence $I = (f) = (a)$ by \cite[Theorem 2.4 (2)]{EGMY}.

$(2)$ Since $n > 0$, we have $I \subset t^n \cap R$. If $R = k[H]$ is a semigroup ring of $H$, then
$$
t^nS \cap R \subseteq R \subseteq tS \cap R = R_+ = t^e S \cap R = \overline{(t^e)}
$$
as desired. In particular, $I \subseteq R_+ \in \Max\Lambda(R)$. 
\end{proof}

Consequently, we have the following. 

\begin{cor}\label{9.14}
Let $R = k[H]$ be the semigroup ring of a numerical semigroup $H$, $e = \min (H\setminus \{0\})$, and $R_+ = tS \cap R$. Then the following assertions hold true. 
\begin{enumerate}[$(1)$]
\item $R_+ = \overline{(t^e)} \in \Max \Lambda(R)$, and $\mu_R(R_+) = 1$ if and only if $e=1$.  
\item For every $I \in \Max\Lambda(R)$, we have $I = R_+$, or $\mu_R(I) = 1$. 
\end{enumerate} 
\end{cor}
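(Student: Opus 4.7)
The plan is to dispose of (1) by a direct calculation and then reduce (2) to Proposition \ref{9.13}. For (1), I would first identify $R_+$ with $\overline{(t^e)}$: since $\overline{(t^e)} = t^e S \cap R$ and $e = \min(H\setminus\{0\})$, every monomial generator $t^h$ of $R_+$ (with $h \in H$, $h > 0$) automatically satisfies $h \ge e$, so $t^h \in t^e S$; this gives $R_+ \subseteq t^e S \cap R$, while the opposite inclusion is immediate because any element of $t^e S \cap R$ is a polynomial in $t$ vanishing at $0$ whose coefficients supported on $H$. Hence $R_+ = \overline{(t^e)} \in \Lambda(R)$, and since $R_+$ is a maximal ideal of $R$ (the augmentation $R \to k$ sending $t^h \mapsto 0$ for $h>0$ has kernel $R_+$), it is maximal among proper ideals of $R$, so $R_+ \in \Max\Lambda(R)$.

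For the principality criterion, if $e = 1$ then $H = \mathbb{N}$, so $R = S$ and $R_+ = (t)$ is principal. Conversely, writing $R_+ = (t^{a_1}, \ldots, t^{a_\ell})$ where $\{a_1, \ldots, a_\ell\}$ is the minimal generating set of $H$, the identity $\mu_R(R_+) = \ell$ (read off from $R_+/R_+^2$, which is graded over $R/R_+ = k$) shows that $\mu_R(R_+) = 1$ forces $H = a_1\mathbb{N}$; the condition $\gcd(H) = 1$ built into the definition of a numerical semigroup then forces $a_1 = e = 1$.

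For (2), the plan is to apply Proposition \ref{9.13} directly. Given $I \in \Max\Lambda(R)$, write $I = \overline{(a)}$ with $a \in W(R)$ and factor $IS = \varphi S$ with $\varphi = t^n f$, $n \ge 0$, $f(0) = 1$, as in the discussion preceding Proposition \ref{9.13}. If $n = 0$, part (1) of that proposition immediately gives $I = (f) = (a)$, so $\mu_R(I) = 1$. If $n > 0$, part (2) gives $I \subseteq R_+$; since both $I$ and $R_+$ lie in $\Gamma(R)$ and $I$ is maximal there by hypothesis, we conclude $I = R_+$. The genuine content is already packaged inside Proposition \ref{9.13}, so the only step requiring care is the embedding-dimension computation for $R_+$ in part (1); I do not anticipate a serious obstacle.
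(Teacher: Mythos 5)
Your proof is correct and follows essentially the same route as the paper: the paper leaves Corollary \ref{9.14} as an immediate consequence of Proposition \ref{9.13} (``Consequently, we have the following''), and you fill in exactly the intended details — the identity $R_+ = t^eS \cap R = \overline{(t^e)}$, the observation that $R_+$ is a maximal ideal of $R$ hence maximal in $\Gamma(R)$, the graded Nakayama computation $\mu_R(R_+) = \ell$ for the principality criterion, and the case split $n=0$ versus $n>0$ via Proposition \ref{9.13} for part (2).
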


Therefore, if $R$ is not integrally closed, i.e., $\mu_R(R_+) \ge 2$, we have
$$
R_1 = R^{R_+} = R\left[\frac{R_+}{t^e}\right] = k\left[t^e, t^{a_1-e}, t^{a_2-e}, \ldots, t^{a_{\ell}-e}\right]
$$
because $(t^e)$ is a reduction of $R_+ =(t^{a_1}, t^{a_2}, \ldots, t^{a_{\ell}})$, where $H = \left< a_1, a_2, \ldots, a_{\ell}\right>$ for $\ell>0$ and $0 < a_1, a_2, \ldots, a_{\ell}\in \Bbb Z$ such that $\gcd (a_1, a_2, \ldots, a_{\ell}) = 1$. Hence, $R_1$ is uniquely determined by $R$ and it forms a numerical semigroup ring.  However, $R_1$ is not necessarily unique in general, which we will show in the following example. 

For an $\m$-primary ideal $I$ in a Noetherian local ring $(A, \m)$, recall that $\rme^0_{I}(A)$ denotes the multiplicity of $A$ with respect to $I$.

\begin{ex}\label{9.15}
Let $\ell \ge 2$ and $R = k[t^{\ell} + t^{\ell + 1}] + t^{\ell + 2} S$ in $S =k[t]$. We set $M = tS \cap R$. Then $R$ is a core of $S$ and the following assertions hold true. 
\begin{enumerate}[$(1)$]
\item $t^q \not\in R$ for every $1 \le q \le \ell +1$. 
\item $R:S = t^{\ell +2}S$ and $M \not\in \Lambda(R)$. 
\item Let $I = t^{\ell+2}S$. Then $I \in \Max\Lambda(R)$, $\mu_R(I)>1$, and $R_1 = R^I=S$. 
\item $R$ is strictly closed in $S=\overline{R}$. 
\item Let $a =t^{\ell} + t^{\ell + 1}$ and $I = \overline{(a)}$. Then $I \in \Max\Lambda(R)$, $\mu_R(I)>1$, and $R_1 = R^I = k[t^2, t^3]$. 
\end{enumerate}
\end{ex}
 
\begin{proof}
Notice that $R \ne k[H]$ for any numerical semigroup $H$ and $t^{\ell + 1} \not\in R$. 

$(1)$ Suppose that $t^q \in R$ for some $1 \le q \le \ell$. Then 
$$
t^q = c_1(t^{\ell} + t^{\ell + 1}) + c_2(t^{\ell} + t^{\ell + 1})^2 + \cdots + c_n(t^{\ell} + t^{\ell + 1})^n + t^{\ell + 2} \xi
$$
where $c_i \in k$, $n \gg 0$, and $\xi \in S$. For each $2 \le i \le n$, because $\ell +2  \le 2 \ell \le i\ell$, we get 
$$
t^q = c_1 t^{\ell} +c_1 t^{\ell + 1} + t^{\ell+2} \eta
$$
for some $\eta \in S$. Hence $q = \ell$ and $c_1 = 1$, which make a contradiction. Therefore, $t^q \not\in R$ for every $1 \le q \le \ell +1$. 

$(2)$ Let us write $R:S = t^qS$, where $0 \le q \le \ell + 2$. By $(1)$, we have $q = 0$, or $q = \ell + 2$. If $q = 0$, then $R = S$.  This implies $t \in R$, which is absurd. Hence $q = \ell + 2$, so that $R:S = t^{\ell +2}S$. Suppose that $M \in \Lambda(R)$. Let us write $M = \overline{(b)}$, where $ 0 \ne b \in R$ and $b$ is not a unit in $R$. Then, since $MS = bS$, we get $t^{\ell} = \alpha b$ for some $\alpha \in k \setminus \{0\}$. Thus $t^{\ell}\in R$. This is impossible. Hence, $M \not\in \Lambda(R)$, as claimed. 

$(3)$ We set $c = \ell + 2$ and $I = t^cS$. Then $I = \overline{t^c}$. Since $I \ne (t^c)$, we have $\mu_R(I) >1$. In particular, $I \subsetneq R$. Choose $J \in \Max\Lambda(R)$ such that $I \subseteq J$. By setting $J = \overline{(b)}$ with $b \in R \setminus k$, we then have 
$$
t^c = t^{\ell + 2} \in J \subseteq bS.
$$
Choose $\xi \in S$ such that $t^{\ell + 2} = b  \xi$. We may assume $b =  t^q$ for some $1 \le q \le \ell +2$. Since $t^q =b \in R$, we get $q = \ell +2$. Hence $I = J \in \Max \Lambda(R)$. Moreover, because $(t^c)$ is a reduction of $I$, we conclude that
$$
R_1 = R^I = R\left[\frac{I}{t^c}\right] = S
$$
as desired.

$(4)$ Let $a = t^{\ell} + t^{\ell +1}$ and $f = 1 + t$. We set $N = tS$ and consider $A = R_M \subseteq V= S_N$. Since $M = (a) + t^{\ell + 2} S$, we have $MS = t^{\ell}((f, t^2)S) = t^{\ell}S$.  Let $\m = MA$ and $\n = NV$. Then $\m V = t^{\ell}V = a V$ which shows that $Q=aA$ is a reduction of $\m$. Hence
$$
\rme^0_{\m}(A) = \rme^0_Q(A) = \ell_A(V/QV) = \ell_A(V/\m V)= \ell_A(V/t^{\ell}V) = \ell.
$$
We then have $V  =\sum_{i = 0}^{\ell-1}At^i$. Let $\fka = t^{\ell+2}V$. Since $\m = Q + \fka$, we have $\m^2 = Q\m + \fka^2$. Now, because $\fka^2 = t^{2\ell + 4}V$, $a = t^{\ell}f$, and $f$ is a unit in $V$, we get 
$$
\frac{\fka^2}{a} = \frac{\fka^2}{t^{\ell}} = t^{\ell + 4}V \subseteq \m
$$
whence $\fka^2 \subseteq Q\m$. Therefore $\m^2 = Q\m$, which yields that
$$
B  = A^\m = \frac{\m}{a} = \frac{Q+\fka}{a} = A + t^2V = k + t^2 V
$$
and the Jacobson ideal $J$ of $B$ is $t^2V$. Hence $B^J = V$. Consequently $A$ is an Arf ring by \cite[Theorem 2.2]{L}. Therefore $ R = R^*$. 
 
$(5)$ Let $a = t^{\ell} + t^{\ell +1}$ and $f = 1 + t$. Set $I = \overline{(a)}$. Then $I \ne R$ and $I \in \Lambda (R)$. Since $IS = aS$, we have that
$$
I = a S \cap R = (t^{\ell} S \cap R){\cdot}(fS \cap R) = MK.
$$
where $K = fS \cap R \in \Max R$. Let $J \in \Max\Lambda(R)$ such that $I \subseteq J$. We write $J = \overline{(b)}$, where $b \in R \setminus k$. Since $a \in J \subseteq bS$, we can write $a = b\xi$ for some $\xi \in S$. Hence 
$$
b= \alpha{\cdot} t^{q_1}{\cdot} (1+t)^{q_2}
$$
where $\alpha \in k\setminus\{0\}$, $0 \le q_1 \le \ell$, and $q_2 \in \{0, 1\}$. If $q_2 =0$, then $t^{q_1} \in R$. Thus $q_1=0$ by assertion $(1)$, so that $b \in k$. This is impossible. Therefore, $q_2 =1$. If $q_1 =0$, then $f \in R$. Hence $t\in R$, a contradiction. Thus $q_1 > 0$ and 
$$
J = (t^{q_1}S \cap R){\cdot}(fS \cap R) = MK = I
$$
which shows $I \in \Max \Lambda(R)$. In particular, $I^2 = a I$, because $R$ is weakly Arf. Moreover, because $at^2 \in a S \cap R = \overline{(a)}=I$ and $t^2 \not\in R$, we obtain that $I = \overline{(a)}\ne (a)$. Hence $\mu_R(I)>1$. 
 Because $R^I = I:I$ and $t^{\ell + 2}S \subseteq R$, it is straightforward to check that $t^2, t^3 \in R^I= I:I$. Hence $k[t^2, t^3] \subseteq R^I$. If $R^I = S$, then $t \in R^I$. Thus $t^{\ell +1} +t^{\ell +2} = at \in I R^I \subseteq I \subseteq R$. As $t^{\ell + 2} \in R$, we have $t^{\ell+1} \in R$. This is impossible. Therefore $R^I = k[t^2, t^3]$, as desired. 
\end{proof}


\section{Arf rings versus weakly Arf rings}\label{sec10}

By definition, when $A$ is a Cohen-Macaulay local ring with $\dim A=1$ possessing an infinite residue class field, the ring $A$ is Arf if and only if it is weakly Arf. However, as we have shown in Example \ref{9.6}, a weakly Arf ring is not necessarily Arf. In this section we delve into this example.

\begin{thm}\label{10.1}
Let $(A, \m)$ be a reduced Noetherian local ring. Suppose that the following conditions hold, where $\Ass A =\{P_1, P_2, \ldots, P_n\}~(n >1)$.
\begin{enumerate}[$(1)$]
\item $P_i + P_j = \m$ for every $1 \le i, j \le n$ such that $i \ne j$. 
\item $A/{P_i}$ is integrally closed for every $1 \le i \le n$. 
\item $A$ contains a field $k$ such that the composite map $k \to A \to A/\m$ is bijective. 
\end{enumerate}
Then $A^* = k + \m \overline{A}$. In particular, $\overline{A}/{A^*}$ is a vector space over $A/\m$. 
\end{thm}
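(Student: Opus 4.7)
The plan is to compute $\overline{A}$ and the tensor square $\overline{A} \otimes_A \overline{A}$ explicitly, and then to translate the defining condition $x \otimes 1 = 1 \otimes x$ of $A^{*}$ into a condition on residues modulo $\m$.

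First, I will identify $\overline{A}$. Since $A$ is reduced, $\bigcap_{i=1}^n P_i = 0$ and $\rmQ(A) = \prod_{i=1}^n \rmQ(A/P_i)$. A standard argument, using products of monic-polynomial lifts from each component, shows that $\overline{A} = \prod_{i=1}^n \overline{A/P_i}$ inside $\rmQ(A)$; hypothesis (2) then yields $B := \overline{A} = \prod_{i=1}^n A/P_i$.

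Next, I will analyze $B \otimes_A B$. Since $B = \bigoplus_{i=1}^n A/P_i$ as $A$-modules, tensor product commutes with this finite direct sum and gives $B \otimes_A B = \bigoplus_{i, j} A/(P_i + P_j)$. By hypothesis (1), the off-diagonal summands collapse to $A/(P_i + P_j) = A/\m = k$ for $i \ne j$, while the diagonal summand is $A/P_i$. For $b = (b_1, \ldots, b_n) \in B$, the $(i,j)$-components of $b \otimes 1$ and $1 \otimes b$ identify inside $A/(P_i+P_j) = k$ (for $i \ne j$) with the residues of $b_i$ and $b_j$ under the surjections $A/P_i \twoheadrightarrow A/\m$ and $A/P_j \twoheadrightarrow A/\m$, respectively, while on the diagonal they automatically agree. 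Hence $b \in A^{*}$ if and only if these residues $\overline{b_1}, \ldots, \overline{b_n}$ in $k$ all coincide, i.e., the image of $b$ in $B/\m B = k^n$ lies on the diagonal.

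Finally, hypothesis (3) gives $A = k + \m$, so the preimage in $B$ of the diagonal $k \subseteq k^n$ equals $k + \m B$. This yields $A^{*} = k + \m \overline{A}$, and the \emph{in particular} statement follows from $\overline{A}/A^{*} \cong k^n/k$, a vector space over $A/\m$ of dimension $n-1$. The main technical point I anticipate is verifying carefully the identification of the images of $b \otimes 1$ and $1 \otimes b$ inside each component $A/(P_i + P_j)$; once that bookkeeping is in place, the remaining steps are routine.
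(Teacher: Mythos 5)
Your proof is correct and follows essentially the same approach as the paper's: both identify $\overline{A} = \prod_i A/P_i$, decompose $\overline{A} \otimes_A \overline{A} \cong \bigoplus_{i,j} A/(P_i+P_j)$, invoke hypothesis (1) to collapse the off-diagonal summands to $A/\m = k$, and translate the condition $x \otimes 1 = 1 \otimes x$ into the requirement that the residues of all components of $x$ modulo $\m$ coincide. The paper phrases the decomposition via idempotents $e_i$, but this is only a notational variant of your componentwise bookkeeping.
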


\begin{proof}
Since $A$ is reduced, we have an injection
$$
A \overset{\varphi}{\to} \overline{A} = A/P_1 \times A/P_2 \times \cdots \times A/P_n, \ \ a \mapsto (\overline{a}, \overline{a}, \ldots, \overline{a}).
$$
For each $1 \le i \le n$, we set $e_i = (0, \ldots, 0, 1, 0, \ldots, 0) \in \overline{A}$. We then have 
$$
\overline{A} = \sum_{i=1}^n Ae_i = \bigoplus_{i=1}^n Ae_i 
$$
whence 
$$
\overline{A}\otimes_A \overline{A} = \sum_{i, j} A(e_i \otimes e_j) = \bigoplus_{i, j}  A(e_i \otimes e_j). 
$$ 
Let $\alpha \in \overline{A}$ and write $\alpha = (\overline{a_1}, \overline{a_2}, \ldots, \overline{a_n})$ with $a_i \in A$. We have 
$$
\alpha \otimes 1 = \sum_{i, j}a_i(e_i\otimes e_j), \ \ \  1 \otimes \alpha = \sum_{i, j}a_j(e_i\otimes e_j)
$$
so that $\alpha \otimes 1 = 1 \otimes \alpha$ if and only if $a_i(e_i\otimes e_j) = a_j(e_i\otimes e_j)$ for every $1 \le i, j \le n$ in $Ae_i \otimes_A Ae_j$. The latter condition is equivalent to $a_i \equiv a_j$ mod $\m$ for every $1 \le i, j \le n$ such that $i \ne j$, because $Ae_i \otimes_A Ae_j = A/P_i \otimes_A A/P_j \cong A/\m$. That is, $a_1 \equiv a_2  \equiv \cdots \equiv a_n$ mod $\m$. Therefore, since $k \cong A/\m$, we conclude that $A^* = k + \m \overline{A}$. In particular $\m(\overline{A}/A^*) =(0)$. 
\end{proof}

As a corollary, we get the following.

\begin{cor}\label{10.2}
Under the same hypothesis as in Theorem $\ref{10.1}$, $A$ is strictly closed in $\overline{A}$ if and only if $\m \overline{A} = \m$. When this is the case, $A$ is an Arf ring, provided that $A$ is a Cohen-Macaulay local ring with $\dim A=1$.
\end{cor}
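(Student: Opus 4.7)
The plan is to combine the explicit description of the strict closure from Theorem~\ref{10.1} with the Zariski--Lipman correspondence between Arf rings and strictly closed rings (Theorem~\ref{4.3}).

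First I would spell out what strict closedness means in our setting. By Theorem~\ref{10.1} we already have the equality $A^{*}=k+\m\overline{A}$, and the hypothesis that $k\to A/\m$ is bijective gives $A=k+\m$. So the equivalence $A=A^{*}\iff \m\overline{A}=\m$ reduces to comparing the summand $\m$ with $\m\overline{A}$, keeping in mind the inclusion $\m\subseteq\m\overline{A}$ is automatic.

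Next I would prove the two implications. The direction $\m\overline{A}=\m\Rightarrow A=A^{*}$ is immediate by substitution into $A^{*}=k+\m\overline{A}$. For the converse, assume $A=k+\m\overline{A}$ and pick $x\in\m\overline{A}$; writing $x=c+m$ with $c\in k$ and $m\in\m$ yields $c=x-m\in\m\overline{A}$. The key observation is that $\m\overline{A}$ lies in every maximal ideal of $\overline{A}$, because $\overline{A}$ is integral over the local ring $(A,\m)$ and hence by going-up every maximal ideal of $\overline{A}$ contracts to $\m$. Consequently $\m\overline{A}\ne\overline{A}$, so no nonzero element of $k$ can belong to $\m\overline{A}$; this forces $c=0$ and $x=m\in\m$, giving $\m\overline{A}\subseteq\m$. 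Combined with $\m\subseteq\m\overline{A}$ we conclude $\m\overline{A}=\m$.

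Finally, for the last sentence, suppose $A$ is a one-dimensional Cohen--Macaulay local ring satisfying $\m\overline{A}=\m$. The hypotheses of Theorem~\ref{4.3} are fulfilled (one-dimensional Cohen--Macaulay local, hence semi-local with $A_{M}$ one-dimensional Cohen--Macaulay for the unique $M\in\Max A$), and by the equivalence just proved $A$ is strictly closed in $\overline{A}$. Therefore Theorem~\ref{4.3} yields that $A$ is an Arf ring.

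I do not anticipate a serious obstacle: the only subtle point is the argument that $\m\overline{A}\neq\overline{A}$, which needs the going-up theorem rather than module-finiteness of $\overline{A}/A$ (which is not part of the hypotheses). Everything else is immediate from Theorems~\ref{10.1} and~\ref{4.3}.
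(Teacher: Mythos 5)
Your proof is correct, and the equivalence part follows the same decomposition as the paper (compare $A^{*}=k+\m\overline{A}$ with $A=k\oplus\m$); you fill in, a bit more carefully than the paper does, the detail that $\m\overline{A}$ is a proper ideal of $\overline{A}$ (via lying over for the integral extension $\overline{A}/A$), which is what forces $\m\overline{A}=\m$ once $\m\overline{A}\subseteq A$. The paper compresses this into "$\m\overline{A}\subseteq A$, i.e.\ $\m\overline{A}=\m$."

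For the Arf conclusion you take a slightly different route. You quote Theorem~\ref{4.3} (strictly closed $\iff$ Arf for one-dimensional Cohen--Macaulay semi-local rings) and apply it to the equivalence just established. The paper instead records the intermediate reformulation $\m:\m=\overline{A}$, notes that $\overline{A}$ is then the blow-up $A^{\m}$ and that $(\overline{A})_{M}$ is a DVR for each $M\in\Max\overline{A}$, and invokes Lipman's blow-up characterization \cite[Theorem~2.2]{L}. Both are legitimate: your appeal to Theorem~\ref{4.3} is shorter and stays entirely inside the paper's own machinery, while the paper's argument is more concrete in that it actually identifies the blow-up tower and explains why every ring infinitely near $A$ has minimal multiplicity. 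Nothing needs to be fixed.
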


\begin{proof}
Notice that $A^* = k + \m \overline{A}$. Hence $A = A^*$ if and only if $\m \overline{A}\subseteq A$, i.e., $\m \overline{A} = \m$. This condition is equivalent to $\m:\m = \overline{A}$. If $\dim A = 1$ and $A$ is Cohen-Macaulay, then $A$ is an Arf ring, because $\left(\overline{A}\right)_M$ is a DVR for every $M \in \Max \overline{A}$ and $\overline{A}$ is only the blow-up of $A$; see \cite[Theorem 2.2]{L}.
\end{proof}

Let us revisit Example \ref{9.6}.

\begin{ex}\label{10.3}
Let $B= k[[X, Y]]$ be the formal power series ring over a field $k$ and set $A = B/(XY(X+Y))$. Then we have the following.
\begin{enumerate}[$(1)$]
\item $A$ is a Cohen-Macaulay local ring with $\dim A =1$,
\item $A$ is reduced, and
\item $\Ass A =\{(x), (y), (x+y)\}$, where $x, y$ denote the images of $X, Y$ in $A$, respectively. 
\end{enumerate}
Hence, $A^* = k+\m \overline{A}$. 
\end{ex}

Moreover, we have the following, which gives another proof for the fact that a weakly Arf ring $A$ is not necessarily Arf, even though $A$ is a one-dimensional Cohen-Macaulay local ring.

\begin{thm}\label{10.4}
Let $B= k[[X, Y]]$ be the formal power series ring over a field $k$ and set $A = B/(XY(X+Y))$. Then, for every integrally closed $\m$-primary ideal $I$ of $A$, we have $I = \m$, or $I^2 = a I$ for some $a \in I$, where $\m$ denotes the maximal ideal of $A$. 
\end{thm}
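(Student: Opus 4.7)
The plan is to derive Theorem \ref{10.4} from the weakly Arf property of $A$ established in Example \ref{9.6}: once one knows that every integrally closed $\m$-primary ideal $I$ with $I \ne \m$ lies in $\Lambda(A)$, Theorem \ref{2.4} immediately yields $I^2 = aI$ for some $a \in I$. The structural input is that $A$ is reduced with minimal primes $(x), (y), (x+y)$, so $\overline{A} = R_1 \times R_2 \times R_3$ with each $R_i \cong k[[T_i]]$ a DVR, under an embedding sending $x \mapsto (0, T_2, T_3)$ and $y \mapsto (T_1, 0, -T_3)$. An integrally closed $\m$-primary ideal $I$ satisfies $I = I\overline{A} \cap A$ with $I\overline{A} = (T_1^{n_1}) \times (T_2^{n_2}) \times (T_3^{n_3})$ for some integers $n_i \ge 1$; a direct check shows $\m\overline{A} \cap A = \m$, so $(n_1, n_2, n_3) = (1, 1, 1)$ corresponds exactly to $I = \m$, which the hypothesis excludes.

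To place $I$ in $\Lambda(A)$ under $(n_1, n_2, n_3) \ne (1, 1, 1)$, I would construct $a \in I$ with $v_i(a) = n_i$ for every $i$, where $v_i$ denotes the $T_i$-adic valuation; this forces $a\overline{A} = I\overline{A}$, whence $\overline{(a)} = I$. Writing $a = \sum c_{ij} x^i y^j$ in terms of a lift to $k[[x,y]]$, and letting $\lambda_i : I \to k$ be the linear functional extracting the coefficient of $T_i^{n_i}$ in the $i$-th image, one obtains the explicit formulas $\lambda_1(a) = c_{0, n_1}$, $\lambda_2(a) = c_{n_2, 0}$, and $\lambda_3(a) = \sum_{i+j=n_3}(-1)^j c_{ij}$ (each of these is readily seen to be independent of the chosen lift). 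The desired condition becomes $\lambda_i(a) \ne 0$ for all $i$, and each $\lambda_i$ is automatically nonzero on $I$ because the tuple $(n_1, n_2, n_3)$ is actually attained by $I\overline{A}$.

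The main obstacle is that $k = \mathbb{F}_2$, so no genericity argument is available. Over $\mathbb{F}_2$ the system $\lambda_1(a) = \lambda_2(a) = \lambda_3(a) = 1$ is solvable in $I$ if and only if no odd-cardinality sum $\sum_{i \in S}\lambda_i$ vanishes identically on $I$; since each $\lambda_i$ is individually nonzero, the only possible obstruction is the relation $\lambda_1 + \lambda_2 + \lambda_3 \equiv 0$ on $I$. The key verification is that, using the defining vanishing relations of $I$ (namely $c_{0, j} = 0$ for $j < n_1$, $c_{i, 0} = 0$ for $i < n_2$, and the relations on lower-order coefficients of $a(X, -X)$ coming from $v_3 \ge n_3$), this identity holds only in the excluded case $(1, 1, 1)$, where it degenerates to $c_{01} + c_{10} + (c_{10} + c_{01}) \equiv 0$ in characteristic two; as soon as some $n_i \ge 2$, at least one coefficient appearing in $\lambda_1 + \lambda_2 + \lambda_3$ is unconstrained on $I$, which rules out the relation.

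A short case analysis on the pattern of the $n_i$'s then furnishes explicit witnesses: for tuples with exactly one coordinate $\ge 2$ one takes $a = x + y^n$ for the type $(n, 1, 1)$ with $n \ge 2$, together with symmetric choices for $(1, n, 1)$ and $(1, 1, n)$ (the last requiring a correction by a suitable monomial inside $(xy)$ to achieve $v_3 = n$); for tuples with at least two coordinates $\ge 2$, one uses a seed $x^{n_2} + y^{n_1}$ corrected, when characteristic-two cancellations drive the seed into $P_3$, by an additional term $x^i y^j$ with $i + j = n_3$. The resulting $a \in I$ satisfies $\overline{(a)} = I$, so $I \in \Lambda(A)$, and the weakly Arf property combined with Theorem \ref{2.4} yields $I^2 = aI$, completing the proof.
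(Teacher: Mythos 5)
Your approach is genuinely different from the paper's, but it has a significant gap: it proves the result only for $k = \mathbb{F}_2$, while Theorem~\ref{10.4} is stated (and proved in the paper) for an arbitrary field $k$. Your scheme is: (i) put $I$ into $\Lambda(A)$ via the valuation-theoretic construction of $a \in I$ with $v_i(a) = n_i$ for all $i$, then (ii) invoke the weakly Arf property of $A$ from Example~\ref{9.6} together with Theorem~\ref{2.4} to conclude $I^2 = aI$. Step (ii) is exactly what fails for $|k| \ge 3$. Indeed, when $|k| \ge 3$ one can pick $c, d \in k^\times$ with $c \ne d$, so $a = cx + dy$ has $v_i(a) = 1$ for every $i$, giving $\m = \overline{(a)} \in \Lambda(A)$; since $A$ does not have minimal multiplicity ($\rme(A) = 3 > 2 = \mu_A(\m)$), we have $\m^2 \ne a\m$, so by Theorem~\ref{2.4} the ring $A$ is \emph{not} weakly Arf for $|k| \ge 3$. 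Thus the implication ``$I \in \Lambda(A) \Rightarrow I^2 = aI$'' is simply unavailable there, and you cannot shortcut it: if $I = a\overline{A} \cap A$ and $u = a\alpha, v = a\beta \in I$ with $\alpha, \beta \in \overline{A}$, then $uv/a = a\alpha\beta$, and $a\alpha\beta \in A$ is precisely the weakly Arf condition one would be trying to prove, not a given.

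The paper's proof, by contrast, is a self-contained explicit computation valid for every $k$: it shows $A : \overline{A} = \m^2$, deduces $I \not\subseteq \m^2$ and $\mu_A(I) = 2$, normalizes a non-zerodivisor $f \in I \setminus \m^2$ to the shape $x + \varepsilon y^\ell$, then identifies $I$ as $(x, y^n)$, $(x, y^\ell)$, or $(x+y, y^m)$ according to the case, and in each case exhibits a reduction (or cites Eakin--Sathaye). No appeal to the weakly Arf property is made — this is important structurally, since Corollary~\ref{10.5} \emph{derives} the weakly Arf property from Theorem~\ref{10.4}, so invoking Example~\ref{9.6} here would short-circuit the paper's stated purpose of giving a ``second proof.''

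On the $\mathbb{F}_2$-specific part of your argument: the valuation framework is sound, and the reduction to the single obstruction $\lambda_1 + \lambda_2 + \lambda_3 \equiv 0$ on $I$ is correct. The case analysis you sketch can be made to work — for instance, if $n_3 \ge 2$ take $a = xy^{n_3 - 1}$, if $n_3 = 1$ and $n_1 \ge 2$ take $a = y^{n_1}$, if $n_3 = n_1 = 1$ and $n_2 \ge 2$ take $a = x^{n_2}$; in each case $a \in I$ and $\mu(a) = 1$. But as written your verification that ``at least one coefficient is unconstrained'' is too loose: the coefficients $c_{ij}$ with $i + j \ge 3$ are not individually well-defined modulo $XY(X+Y)$, and the constraints coming from $v_3 \ge n_3$ involve the same low-degree coefficients that appear in $\lambda_1, \lambda_2$, so one must actually exhibit witnesses rather than appeal to an unconstrained coordinate. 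In short: a clean, different proof of the $k = \mathbb{F}_2$ case, but not of the theorem as stated.
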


\begin{proof}
Let $x, y$ be the images of $X$, $Y$ in $A$, respectively.  Suppose the contrary. We choose an integrally closed $\m$-primary ideal $I \in \calF_A$ such that $I \ne \m$, and $I^2 \ne a I$ for every $a \in I$. We then have $I \subsetneq \m$, $\mu_A(I) \ge 2$, and $I = I \overline{A} \cap A$. First, we prove the following. 

 \begin{claim*}
$A:\overline{A} = \m^2$.
 \end{claim*}
 
 \begin{proof}
 Remember that $\overline{A} = A/(x) \times A/(y) \times A/(x+y)$. For each $\alpha \in A$, we have $\alpha \overline{A} \subseteq A$ if and only if there exist $a, b, c \in A$ such that 
\begin{eqnarray*}
&&\alpha - a \in (x), \ \ a \in (y) \cap (x+y) = (y(x+y)), \\
&&\alpha - b \in (y), \ \ b \in (x) \cap (x+y) = (x(x+y)), \\
&&\alpha- c \in (x+y), \ \ \text{and} \ \ c \in (x) \cap (y) =(xy).
\end{eqnarray*}
The latter condition is equivalent to $\alpha \in (x, y^2) \cap (y,x^2) \cap (xy, x+y)=\m^2$, as claimed. 
\end{proof}
 
 Hence $I \not\subseteq \m^2$. Indeed, if $I \subseteq \m^2$, then $I \overline{A} \subseteq A$. Since $\overline{A}$ is a principal ideal ring, there exists $\alpha \in I$ such that $I = I \overline{A} = \alpha \overline{A}$. Thus 
$$
I^2 = (\alpha \overline{A})^2 = \alpha(\alpha \overline{A}) = \alpha I
$$
which makes a contradiction. Therefore $I \not\subseteq \m^2$. Choose $f \in I \setminus \m^2$ such that $f \in W(A)$. Then $f \in \m \setminus \m^2$, so that $I/(f)$ is a non-zero ideal in an Artinian local ring $A/(f)$ and $\m/(f)$ is principal. Therefore, $\mu_A(I) = 2$. By setting $f = x \alpha + y \beta$ with $\alpha, \beta \in A$, because $f \not\in \m^2$, we may assume $\alpha \not\in \m$. Set $f' = x + \beta\alpha^{-1}y$. Since $fA = f'A$, we may also assume that $f = x + \beta y$ for some $\beta \in A$. Remember that $(x) \in \Ass A$ and $f$ is a non-zerodivisor on $A$. This implies $f \not\in (x)$, and hence $\beta \not\in (x)$. Therefore, because $A/(x)$ is a DVR with maximal ideal $\m/(x) = (\overline{y})$, we have
$$
\overline{\beta} = \overline{\tau} \cdot\overline{y}^{\ell} \quad \text{in} \ \ A/(x)
$$
where $\tau \in A$ is a unit in $A$ and $\ell \ge 0$. Hence 
$
\beta = \tau y^{\ell} + x \eta
$
for some $\eta \in A$. This yields the equalities
$$
f = x + \beta y = x + (\tau y^{\ell} + x \eta)y = x (1 + y \eta) + \tau y^{\ell} = (1 + y \eta)(x + \tau'y^{\ell+1})
$$
where $\tau' = \tau(1 + y \eta)^{-1} \in A$. If we set $f' = x + \tau'y^{\ell+1}$, then $fA = f'A$. Hence, without loss of generality, we may assume 
$$ 
f = x + \varepsilon y^{\ell} \quad \text{for some} \ \ \varepsilon \not\in \m, \ \ell>0. 
$$
As $\mu_A(I)=2$, we can write $I = (f, \xi)$ for some $\xi \in A$. 
Again, because $\overline{\xi}$ is a non-zero element in $A/(f)$, we obtain 
$$
\overline{\xi} = \overline{\tau}\cdot \overline{y}^n \quad \text{in} \ \ A/(f)
$$ 
where $\tau \in A$ is a unit in $A$ and $n > 0$. Thus $I = (f, \xi) = (f, y^n)$.  Hence
\begin{center}
$I  = (x + \varepsilon y^{\ell}, y^n)$, where $\varepsilon \in A$ is a unit in $A$ and $\ell, n >0$. 
\end{center}
We now assume that $n \le \ell$. Then $I = (x, y^n)$, so that $I^2 = (x^2, xy^n, y^{2n})$. Note that $n \ge 2$, because $I \ne \m$. Since $xy^n = xy^2\cdot y^{n-2} = -x^2y \cdot y^{n-2} \in (x^2)$, we get $I^2 = (x^2, y^{2n})$. By \cite[Theorem]{Eakin-Sathaye}, $I^2 = a I$ for some $a \in I$ (actually, $I^2 =(x+y^n) I$). This is impossible. Hence $n > \ell~ (\ge 1)$. Remember that $I = I \overline{A} \cap A$ and $\overline{A} = A/(x) \times A/(y) \times A/(x+y)$. We then have
$$
I \overline{A} = (\overline{y}^{\ell}) \oplus (\overline{x}) \oplus (\overline{y} \cdot (1-\overline{\varepsilon} \cdot \overline{y}^{\ell -1}))
$$
so that, if $\ell >1$, then $(1-\overline{\varepsilon} \cdot \overline{y}^{\ell -1})$ is a unit in $A/(x+y)$ and 
$$
I \overline{A} = (\overline{y}^{\ell}) \oplus (\overline{x}) \oplus (\overline{y})
$$
which forms a principal ideal in $\overline{A}$ generated by $(\overline{y}^{\ell}, \overline{x}, \overline{y}) \in \overline{A}$. Hence, for $\alpha \in A$, we have $\alpha \in I$ if and only if 
$$
(\overline{\alpha}, \overline{\alpha}, \overline{\alpha}) = (\overline{y}^{\ell}, \overline{x}, \overline{y})  \cdot (\overline{a}, \overline{b}, \overline{c}) \quad \text{in} \ \ \overline{A} = A/(x) \times A/(y) \times A/(x+y)
$$
for some $a, b, c \in A$. The latter condition is equivalent to saying that 
$$
\alpha \in (x, y^{\ell}) \cap (x, y) \cap (x+y, y) = (x, y^{\ell})
$$
whence $I =  (x, y^{\ell})$. Hence $I^2 = (x^2, xy^{\ell}, y^{2\ell})$, and because $\ell \ge 2$, we conclude that $I^2 = (x^2, y^{2\ell})$. Therefore, by \cite[Theorem]{Eakin-Sathaye}, we get $I^2 = a I$ for some $a\in I$ (in this case, $I^2 = (x+y^{\ell}) I$). This makes a contradiction. Thus $\ell = 1$, and hence
\begin{center}
$I  = (x + \varepsilon y, y^n)$, where $\varepsilon \in A$ is a unit in $A$, $n > 1$, and $f = x+\varepsilon y \in W(A)$. 
\end{center}
Now, we have 
$$
I \overline{A} = (\overline{y}) \oplus (\overline{x}) \oplus ((\overline{\varepsilon -1}) \cdot \overline{y}, \ \overline{y}^n).
$$
Because $(x+y) \in \Ass A$ and $f \in W(A)$, we have $\overline{f} = (\overline{\varepsilon -1}) \cdot \overline{y}$ is a non-zero element in $A/(x+y)$. In particular, $\varepsilon -1 \not\in (x+y)$. If $\varepsilon -1 \not\in \m$, then 
$$
I \overline{A} = (\overline{y}) \oplus (\overline{x}) \oplus (\overline{y}).
$$
By the same argument as above, we get $I = (x, y)\cap (x, y) \cap (x+y, y) = \m$. This is impossible. Hence $\varepsilon -1 \in \m$. Since $A/(x+y)$ is a DVR with maximal ideal $\m/(x+y)$, we can write
$$
\overline{\varepsilon-1} = \overline{\tau} \cdot\overline{y}^{m} \quad \text{in} \ \ A/(x+y)
$$
where $\tau \in A$ is a unit in $A$ and $m > 0$. Hence, $\varepsilon -1 = \tau y^{m} + (x+y)h$ for some $h \in A$. Then $\varepsilon = 1 + (x+y)h + \tau y^{m}$. Hence, by setting $\tau' = \tau(1 + yh)^{-1}$, we have 
\begin{eqnarray*}
I &=& (x + \varepsilon y, y^n) = \left(x + y + (x+y)yh + \tau y^{m+1}, y^n \right) \\
  &=& \left((x+y)(1 + yh) + \tau y^{m + 1}, y^n \right) \\
  &=& \left( (1 + yh)(x + y + \tau' y^{m + 1}), y^n \right) \\
  &=& (x + y + \tau' y^{m + 1}, \ y^n).
\end{eqnarray*}
We now divide into two cases. If $m + 1 \ge n$, then $I = (x + y + \tau' y^{m + 1}, y^n) = (x+y, y^n)$. Otherwise, we assume that $m + 1 <n$. Then
$$
I \overline{A} = (\overline{y}) \oplus (\overline{x}) \oplus \left(\overline{f}, \overline{y^n} \right) =  (\overline{y}) \oplus (\overline{x}) \oplus \left(\overline{\tau}\cdot \overline{y}^{m +1}, \overline{y^n} \right) =  (\overline{y}) \oplus (\overline{x}) \oplus (\overline{y}^{m+1})
$$
so that $I = (x, y) \cap (x, y) \cap (x+y, y^{m + 1}) = (x+y, y^{m+1})$. Hence, in any case, we have
\begin{center}
$I = (x + y, y^n)$ for some $n > 1$. 
\end{center}
Then, because $xy^n + y^{n+1} = (x+y)^2 y^{n-1}$, we get
\begin{eqnarray*}
I^2 &=& \left( (x+y)^2, xy^n + y^{n+1}, y^{2n}\right) = \left( (x+y)^2, (x+y)^2y^{n-1}, y^{2n}\right) \\
     &=& \left( (x+y)^2, y^{2n}\right)
\end{eqnarray*}
which yields $I^2 = a I$ for some $a \in I$ (we can choose $a = x+y + y^n$, or use \cite{Eakin-Sathaye}). Finally we found a contradiction. Hence, $I = \m$, or $I^2 = a I$ for some $a \in I$. This completes the proof.
\end{proof}

Hence, we have the following.

\begin{cor}\label{10.5}
Under the same notation as in Theorem $\ref{10.4}$, if $|k| =2$, then $A$ is weakly Arf, but not an Arf ring.
\end{cor}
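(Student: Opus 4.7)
The plan is to derive Corollary \ref{10.5} immediately from Theorem \ref{10.4} together with the failure of principal reductions for $\m$ when $|k|=2$, as already recorded in Example \ref{9.6}.

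First, I note that $A$ is a one-dimensional Cohen--Macaulay local ring with maximal ideal $\m = (x,y)$, so every $\m$-primary ideal belongs to $\calF_A$. To establish the weakly Arf property, by Theorem \ref{2.4} it suffices to show that $I^2 = aI$ for some $a \in I$ whenever $I \in \Lambda(A)$. Every such $I$ is an integrally closed $\m$-primary ideal in $\calF_A$, so Theorem \ref{10.4} yields either the desired equality or $I = \m$. I would rule out the latter alternative: if $\m = \overline{(a)}$ for some $a \in W(A)$, then $a \in \m$ and $\m \subseteq a\overline{A}\cap A$, and this forces $(a)$ to be a principal reduction of $\m$. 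Since $|k|=2$, this contradicts the fact recorded in Example \ref{9.6} that $\m$ admits no principal reduction. Hence $\m \notin \Lambda(A)$, and $A$ is weakly Arf by Theorem \ref{2.4}.

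For the failure of the Arf property, the integrally closed ideal $\m \in \calF_A$ possesses no principal reduction by the same fact, which directly violates the first defining condition of an Arf ring in Definition \ref{2.2}.

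The main and essentially only substantive step is the input from Example \ref{9.6} that $\m$ has no principal reduction when $|k|=2$. Concretely, a minimal generator $a \equiv \alpha x + \beta y \pmod{\m^2}$ of $\m$ would provide a principal reduction exactly when its image in each of the three DVRs $A/(x)$, $A/(y)$, $A/(x+y)$ is a uniformizer, which amounts to the conditions $\alpha \ne 0$, $\beta \ne 0$, and $\alpha - \beta \ne 0$ in $k$; for $|k|=2$ these three conditions cannot be satisfied simultaneously. Granting this combinatorial obstruction, the corollary is a direct assembly of Theorems \ref{2.4} and \ref{10.4}.
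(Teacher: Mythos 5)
Your proof is correct. For the weakly Arf direction it follows the paper's own route exactly: reduce via Theorem \ref{2.4} to showing that each $I \in \Lambda(A)$ is stable, apply Theorem \ref{10.4} to every such $I$ except possibly $\m$, and rule out $\m \in \Lambda(A)$ because $\m$ admits no principal reduction over the two-element field. The paper cites \cite[Example 8.3.2]{SH} or \cite[(14.5)]{HIO} for the latter fact; your sketch of the obstruction, namely that a principal reduction of $\m$ would need a generator whose image in each of the DVRs $A/(x)$, $A/(y)$, $A/(x+y)$ is a uniformizer, forcing $\alpha$, $\beta$, and $\alpha-\beta$ all nonzero in $k=\mathbb{F}_2$, is a correct elementary substitute. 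The only genuine difference is in the \emph{not Arf} direction: you observe directly that $\m$ is an integrally closed ideal in $\calF_A$ without a principal reduction, so condition $(1)$ of Definition \ref{2.2} fails, whereas the paper computes $\rme(A) = 3 > 2 = \mu_A(\m)$ and invokes the fact that an Arf local ring has minimal multiplicity. Both are valid; yours is a little more economical since it reuses the same absence-of-reduction fact already driving the weakly Arf argument and requires no multiplicity computation.
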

  
\begin{proof}
Let $I \in \Lambda(A)$ and assume that $I \ne A$. Since $|k| =2$, by \cite[Example 8.3.2]{SH} (or \cite[(14.5)]{HIO}), we obtain that, for every $f \in \m$, $(f)$ is not a reduction of $\m$. This implies that $\m \not\in \Lambda(A)$. Hence, $I^2 = a I$ for some $a \in I$ by Theorem \ref{10.4}. That is, $A$ is a weakly Arf ring; see Theorem \ref{2.4}. Since $\rme(A) = 3$ and $\mu_A(\m) =2$, the ring $A$ does not have minimal multiplicity. Therefore, $A$ is not an Arf ring.
\end{proof}

Closing this section we prove the following.

\begin{cor}\label{10.6}
Let $B= k[[X, Y]]$ be the formal power series ring over a finite field $k$ and set $A = B/(Y\prod_{\alpha \in k}(X+\alpha Y))$.  Then $A$ is a weakly Arf ring if and only if $|k| = 2$. 
\end{cor}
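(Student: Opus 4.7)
The direction $|k|=2 \Rightarrow A$ weakly Arf is immediate from Corollary \ref{10.5}: when $k=\Bbb Z/(2)$ one has $\prod_{\alpha\in k}(X+\alpha Y) = X(X+Y)$, so $A = k[[X,Y]]/(XY(X+Y))$. My plan for the converse is to assume $|k|=q\ge 3$ and exhibit an ideal in $\Lambda(A)$ that is not stable; by Theorem \ref{2.4} this forces $A$ to fail the weakly Arf property. The candidate is $I = \m^2$, where $\m=(x,y)$ is the maximal ideal.

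To produce a principal reduction of $\m^2$, I use that $\mathbb{F}_q$ is not quadratically closed to pick an irreducible homogeneous quadratic $g\in k[X,Y]$ (e.g., $X^2-dY^2$ with $d$ a non-square in $k^{\times}$), and set $a=g(x,y)\in\m^2$. The associated graded ring $R=\gr_{\m}(A)=k[X,Y]/(Y\prod_{\alpha\in k}(X+\alpha Y))$ is a Cohen-Macaulay hypersurface of dimension one whose minimal primes are $(Y)$ and the ideals $(X+\alpha Y)$ for $\alpha\in k$. Irreducibility of $g$ means it is not divisible by any of these linear forms, so the initial form $a^{*}=g$ is a non-zerodivisor of $R$, hence $a\in W(A)$. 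Since $\overline{A}$ is the product of $q+1$ DVRs (one for each minimal prime of $A$), I specialize $a$ in each component: in $A/(y)\cong k[[x]]$, $a$ becomes $g(1,0)x^2$, while in $A/(x+\alpha y)\cong k[[y]]$ it becomes $g(-\alpha,1)y^2$. Irreducibility of $g$ ensures $g(1,0)\ne 0$ and $g(-\alpha,1)\ne 0$ for every $\alpha\in k$, so $a\overline{A}=\m^2\overline{A}$. Combined with the componentwise verification that $\m^2\overline{A}\cap A=\m^2$, this gives $\m^2=\overline{(a)}\in\Lambda(A)$.

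Next I show $(\m^2)^2=\m^4\ne a\m^2$. The Hilbert series of $R$ equals $(1-t^{q+1})/(1-t)^2$, so $\dim_k R_n=\min(n+1,q+1)$; for $q\ge 3$ this forces $\dim_k R_2=3$ while $\dim_k R_4\ge 4$. Because $g$ is a non-zerodivisor on $R$, multiplication by $g$ is injective on $R_2$, so $gR_2\subseteq R_4$ has $k$-dimension exactly $3$ and is proper. The image of $a\m^2$ in $\m^4/\m^5\cong R_4$ coincides with $gR_2$, whence $a\m^2+\m^5\subsetneq\m^4$ and hence $a\m^2\subsetneq\m^4$. Since the reduction number $\red_Q(\m^2)$ is independent of the choice of principal reduction $Q$ (Section \ref{sec2}), $\m^2$ is not stable; Theorem \ref{2.4} then rules out the weakly Arf property of $A$.

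The main obstacle is the strict inequality $\dim_k R_2<\dim_k R_4$: this numerical gap is precisely what separates $q\ge 3$ from $q=2$, where $\dim_k R_n=3$ for every $n\ge 2$ and the dimension comparison collapses, matching the fact (Theorem \ref{10.4} and Corollary \ref{10.5}) that every $I\in\Lambda(A)$ is stable when $|k|=2$.
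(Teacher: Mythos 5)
Your proof is correct, but it is genuinely different from the paper's. The paper's argument starts from the ideal $I=(y,z)$, checks $I=\overline{(f)}\in\Lambda(A)$ with $f=y+z$ and $I^2=fI$, then passes to the algebra $I:I\cong A/(z)\times A/(y)$ via Corollary~\ref{9.2}; Proposition~\ref{3.8} gives that $C:=A/(z)=B/(Z)$ is weakly Arf, and J\"ager's Hilfssatz 2 shows every open ideal of $C$ has a principal reduction, so $C$ is Arf and hence has minimal multiplicity, which forces $q=\rme(C)=\mu_C(\m_C)=2$. You bypass all of that: you take $a=g(x,y)$ for an irreducible quadratic form $g$ over $k$ (which exists for every finite field, with the appropriate choice in characteristic $2$), verify $a\in W(A)$ and $\m^2=\overline{(a)}\in\Lambda(A)$ by reading off valuations in the $q+1$ DVR components of $\overline{A}$, and then kill stability of $\m^2$ through the associated graded ring: $\gr_\m(A)$ has Hilbert function $\min(n+1,q+1)$, and since $g$ is a nonzerodivisor on $\gr_\m(A)$, the image of $a\m^2$ in $\m^4/\m^5$ is a $3$-dimensional subspace $gR_2\subsetneq R_4$ whenever $q\ge 3$, so $a\m^2\subsetneq\m^4=(\m^2)^2$. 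Combined with the independence of the reduction number from the choice of principal reduction (or, more directly, the argument of Theorem~\ref{2.4}(1)$\Rightarrow$(2) which yields $(\m^2)^2=a\m^2$ for the specific generator $a$ with $\overline{(a)}=\m^2$), this contradicts the weakly Arf property. Your approach is more elementary and self-contained and makes the role of $q$ visible in a single numerical inequality; the paper's approach leans on the structural results of Sections~\ref{sec3} and~\ref{sec9} and produces the auxiliary weakly Arf ring $A/(z)$ along the way.
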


\begin{proof}
We only prove the `only if' part. Assume that $A$ is a weakly Arf ring. We set $Z = \prod_{\alpha \in k}(X+\alpha Y)$. We denote by $x, y, z$ the images of $X, Y, Z$ in $A$, respectively. Notice that $\overline{A} = A/(y) \times \prod_{\alpha \in k} A/(x + \alpha y)$. Let $I = (y, z)$. Then, by setting $f = y+z$, we have $I = (y, f)$. Hence, because $yf = y^2 + yz = y^2$, we obtain $I^2 = fI$. Note that $f \in I$ forms a non-zerodivisor on $A$, and 
$$
I \overline{A} =(\overline{x}^q) \oplus (\overline{y}) \oplus \cdots  \oplus (\overline{y})  = (\overline{z}) \oplus (\overline{y}) \oplus \cdots  \oplus (\overline{y})  = f\overline{A}
$$ 
where $q$ denotes the cardinality of the field $k$. Since $\overline{A}$ is a principal ideal ring, we get $I \overline{A} = \overline{I} \cdot\overline{A} = \overline{I\overline{A}}$. Hence 
$$
\overline{I} = I \overline{A} \cap A = f\overline{A} \cap A = 
(y, x^q) = (y, z) = I
$$
which shows $I \in \Lambda(A)$, $I = \overline{(f)}$. Hence, by Corollary \ref{9.2}, $I:I$ is a weakly Arf ring, because $I^2 = fI$. On the other hand, note that $(0):_Ay = (z)$, $(0):_Az = (y)$, and $I = (y) \oplus (z)$. Besides, $\Hom_A((y), (z)) = \Hom_A((z), (y)) = (0)$. Therefore we have an isomorphism
$$
I:I \cong \End_A((y)) \times \End_A((z)) \cong A/(z) \times A/(y)
$$
of $A$-algebras. Hence, by Proposition \ref{3.8}, we get $A/(z)$ is a weakly Arf ring. We set $C = B/(Z) \cong A/(z)$. Then $|\Max \overline{C}| = q = |k|$. Therefore, by \cite[Hilfssatz 2]{Jager}, for each $J \in \calF_C$, there exists $\xi \in J$ such that $\xi \overline{C} = J\overline{C}$. In particular, if we take $J$ to be the maximal ideal of $C$, then $J$ contains a parameter ideal $Q=(\xi)$ as a reduction. Hence, because $J \in \Lambda(C)$ and $C$ is weakly Arf, we conclude that $C$ has minimal multiplicity. Consequently, the multiplicity $q = |k|$ is equal to the embedding dimension $2$. That is, $|k| = q = 2$, as desired. 
\end{proof}

To sum up the arguments in this section, we record the following.

\begin{rem}\label{10.7}
Let $B= k[[X, Y]]$ be the formal power series ring over a field $k=\Bbb Z/(2)$ and set $R = B/(XY(X+Y))$. Let $A$ be one of the following rings:
\begin{enumerate}[$(1)$]
\item $A = R \ltimes M$, where $M$ is a finitely generated $\overline{R}$-module which is torsion-free as an $R$-module. 
\item $A = R \times_{R/\m}R$, where $\m$ denotes the maximal ideal of $R$.
\item $A = \prod_{i=1}^n R$, where $n >0$.
\end{enumerate}
Then $A$ is a weakly Arf ring, but not an Arf ring. 
\end{rem}


\section{Examples arising from invariant subrings}\label{sec11}

Throughout this section, unless otherwise specified, let $k$ be a field of characteristic two and $S=k[X, Y]$ the polynomial ring over $k$. Consider $S$ to be a $\Bbb Z$-graded ring via the grading $S_0 =k$ and $X, Y \in S_1$. For the rest of this paper, we denote by $\operatorname{ch}k$ the characteristic of the field $k$. Since $\ch k =2$, we obtain the general linear group
$$
\GL_2(k) = \left\{ 
\left(\begin{smallmatrix}
1 & 0 \\
0 & 1
\end{smallmatrix}\right),
\left(\begin{smallmatrix}
1 & 0 \\
1 & 1
\end{smallmatrix}\right), 
\left(\begin{smallmatrix}
0 & 1 \\
1 & 0
\end{smallmatrix}\right), 
\left(\begin{smallmatrix}
1 & 1 \\
0 & 1
\end{smallmatrix}\right), 
\left(\begin{smallmatrix}
1 & 1 \\
1 & 0
\end{smallmatrix}\right), 
\left(\begin{smallmatrix}
0 & 1 \\
1 & 1
\end{smallmatrix}\right)
\right\}
$$
which is isomorphic to the symmetric group of degree $3$. For each $\sigma \in \GL_2(k)$, we define 
$$
\left(\begin{smallmatrix}
X_1 \\
Y_1
\end{smallmatrix}\right)
= \sigma^{-1}
\left(\begin{smallmatrix}
X \\
Y
\end{smallmatrix}\right).
$$
Then the element $\sigma \in \GL_2(k)$ induces the $k$-automorphism 
$\widehat{\sigma}: S \to S$ of $S$ so that $\widehat{\sigma}(X)=X_1$ and $\widehat{\sigma}(Y)=Y_1$. This gives a group homomorphism $$\rho: \GL_2(k) \to \Aut_k(S)$$ and it assigns the map $\widehat{\sigma}$ to each $\sigma \in \GL_2(k)$. 

\medskip

In this section, let us explore the invariant subring $R^G$ of $R = k[X, Y]/(XY(X+Y))$, where $G$ is a subgroup of $\GL_2(k)$.  We denote by $x, y$ the images of $X, Y$ in $R$, respectively. For a matrix $M$ with the entries in a ring $R$, let ${\rm I}_2(M)$ be the ideal of $R$ generated by $2\times 2$-minors of $M$.

Let $H = \left< 
(\begin{smallmatrix}
1 & 0 \\
1 & 1
\end{smallmatrix})\right>$ be a subgroup of $\GL_2(k)$. This acts linearly on $S$. Since $XY\cdot(X+Y) \in S^H$, the cyclic group $H$ also acts on $R$. 

\begin{thm}\label{15.1}
The following assertions hold true. 
\begin{enumerate}[$(1)$]
\item $R^H = k[x, xy + y^2, xy^2 + y^3]$.
\item $R^H \cong k[X, A, B]/{\rm I}_2
(\begin{smallmatrix}
X & A^2 & B \\
0 & B & A
\end{smallmatrix})
$ as a graded $k$-algebra, where $k[X, A, B]$ denotes the polynomial ring over the field $k$. 
\item $R^H$ is strictly closed in $\overline{R^H}$. In particular, $R^H$ is a weakly Arf ring.
\end{enumerate}
\end{thm}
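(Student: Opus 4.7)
The plan is to analyze $R^H$ via the action of $H$ on the normalization $\overline{R}$ and then exploit the resulting fiber product description. First, since the matrix $\sigma = \bigl(\begin{smallmatrix} 1 & 0 \\ 1 & 1 \end{smallmatrix}\bigr)$ is self-inverse in characteristic two, the induced $k$-algebra automorphism $\widehat\sigma$ sends $x \mapsto x$ and $y \mapsto x+y$, and $H = \langle \widehat\sigma \rangle$ has order two. The ring $R$ is reduced with minimal primes $(x), (y), (x+y)$, so its normalization is
$\overline{R} = R/(x) \times R/(y) \times R/(x+y) \cong k[y] \times k[x] \times k[x]$
via $r \mapsto (r(0,y), r(x,0), r(x,x))$, and $\widehat\sigma$ fixes the first factor while swapping the other two (since it interchanges $(y)$ and $(x+y)$). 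Consequently $\overline{R}^H \cong k[y] \times k[x]$.

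For parts (1) and (2) I would argue via Hilbert series. Since the Hilbert series of $R$ is $(1+t+t^2)/(1-t)$, one has $R_n = \overline{R}_n$ for $n \ge 2$, and hence $\dim_k R_n^H = 2$ in each such degree; direct calculation gives $\dim_k R_0^H = 1$ and $\dim_k R_1^H = 1$ (only $x$ survives, since $ax + by$ is $\widehat\sigma$-invariant iff $b=0$), yielding the Hilbert series $(1+t^2)/(1-t)$ for $R^H$. A routine calculation using $xy(x+y)=0$ and the additivity of Frobenius shows that $x, xy+y^2, xy^2+y^3$ lie in $R^H$ and that the graded map $\phi \colon k[X, A, B] \to R^H$ with $X \mapsto x$, $A \mapsto xy+y^2$, $B \mapsto xy^2+y^3$ (weights $1, 2, 3$) annihilates the three $2\times 2$-minors $XA$, $XB$, $A^3-B^2$. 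The quotient $S = k[X,A,B]/(XA, XB, A^3-B^2)$ decomposes as $k$-modules into $k[X] + k[A,B]/(A^3-B^2)$ with intersection $k$, so its Hilbert series is $\tfrac{1}{1-t} + \bigl(1 + \tfrac{t^2}{1-t}\bigr) - 1 = (1+t^2)/(1-t)$. This matches the Hilbert series of $R^H$, so $\phi$ descends to the asserted isomorphism and in particular shows $R^H = k[x, xy+y^2, xy^2+y^3]$.

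For part (3), the same degree-wise analysis refines to the identification $R^H \cong k[y^2, y^3] \times_k k[x]$ (the fiber product over $k$ via the constant-term maps), because the first-factor image of every invariant has vanishing coefficient of $y$. By Proposition \ref{4.2}, strict closedness is a local condition on $X_1(R^H)$; at the minimal primes it is automatic, and at every maximal ideal other than the unique graded one $\mathfrak{m}$ the ring $R^H$ is regular. It therefore suffices to show $(R^H)_\mathfrak{m}$ is strictly closed, which by Theorem \ref{4.3} is equivalent to it being an Arf ring. Passing to the $\mathfrak{m}$-adic completion we obtain $\widehat{(R^H)_\mathfrak{m}} \cong k[[y^2, y^3]] \times_k k[[x]]$, and by Theorem \ref{6.10} this fiber product is Arf because both components are: $k[[x]]$ is a DVR, and $k[[y^2, y^3]]$ has multiplicity two, hence is Arf by \cite[Proposition 2.8]{CCGT}. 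Since the Arf property descends from the completion to $(R^H)_\mathfrak{m}$, strict closedness follows, and the weakly Arf property is then a direct consequence.

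The main obstacle I expect is the precise identification of $R^H$ with the fiber product $k[y^2, y^3] \times_k k[x]$ rather than with the a priori larger $k[y] \times_k k[x]$; the point is that the embedding $R \hookrightarrow \overline{R}$ is not surjective in degree one, so degree-one invariants are more constrained than the naive fiber product alone predicts. The remaining work is routine Hilbert series bookkeeping and careful invocation of the fiber-product theorems already developed in the paper.
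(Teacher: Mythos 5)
Your argument for parts (1)--(2) has a genuine gap. Equality of Hilbert series for $S = k[X,A,B]/(XA, XB, A^3-B^2)$ and $R^H$, together with the existence of a graded $k$-algebra map $\bar\phi \colon S \to R^H$, does not by itself force $\bar\phi$ to be an isomorphism: a graded map between rings with identical Hilbert series can simultaneously have nonzero kernel and proper image. You must first establish either injectivity or surjectivity of $\bar\phi$ independently, and your proposal does neither. The paper establishes surjectivity in part (1) by exhibiting, for $n\ge 4$, $x^{n-1}y + y^n = (xy+y^2)^a(xy^2+y^3)^b$ whenever $n = 2a+3b$ (a numerical-semigroup observation), and then injectivity in part (2) by a length count: $\dim_k R^H/(x^2+a) = 4$ forces $\ker\overline{\varphi}/(X^2+A)\ker\overline{\varphi}=(0)$ by Nakayama. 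The cleanest fix compatible with your route is to note that $S$ is reduced with exactly two minimal primes $(\bar X)$ and $(\bar A, \bar B)$; since $\bar\phi$ kills neither $x$ nor $xy+y^2$, its kernel contains no minimal prime and hence is zero, after which the Hilbert series match does give surjectivity and both (1) and (2) follow.

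For part (3) you take a genuinely different route from the paper. You identify $R^H \cong k[y^2,y^3]\times_k k[x]$, complete at the graded maximal ideal, and invoke Theorem~\ref{6.10} to get the Arf property, then descend along the faithfully flat completion and globalize via Proposition~\ref{4.2}. This is a clean application of the fiber-product machinery from Section~\ref{sec6}; you should cite Lipman's result on faithfully flat descent of the Arf property at the step where you pass back from $\widehat{(R^H)_{\fkm}}$ to $(R^H)_{\fkm}$. The paper instead proves the more general Proposition~\ref{15.2}, valid over an arbitrary field, by explicitly computing the blow-up at the graded maximal ideal and showing it equals $\overline{R}$; that approach is more self-contained and characteristic-free, whereas yours is slicker once the fiber-product presentation from (2) is in hand.
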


\begin{proof}
$(1)$ The homogeneous component $R_n$ of $R$ is spanned by
\begin{center}
$R_0 = k$, $R_1 = \left<x, y\right>$, and $R_n = \left<x^n, x^{n-1}y, y^n \right>$,
\end{center}
because $x^{n-1}y = x^{n-2}y^2 = \cdots = xy^{n-1}$ in $R$ for every $n \ge 2$. 
Denote by $[[R]] = \sum_{n=0}^{\infty}\dim_kR_n \lambda^n \in \Bbb Z[[\lambda]]$ the Hilbert series of $R$. Then 
$$
[[R]] = \frac{1-\lambda^3}{(1-\lambda)^2} = \frac{1+ \lambda + \lambda^2}{1-\lambda} = 1 + 2 \lambda + 3 \lambda + \cdots + 3 \lambda^n + \cdots
$$
which shows that $\{x^n, x^{n-1}y, y^n\}$ is a $k$-basis of $R_n~(n \ge 2)$. We are now computing the homogeneous component of $R^H$. Obviously, $\left(R^H\right)_0 = k$. Suppose that $n =1$. For each $\alpha \in R_1$, we can write $\alpha = ax + by$, where $a, b \in k$. We then have $\alpha \in R^H$ if and only if $ax + b(x+y) = ax + by$, which implies $a+b = a$. Hence $b = 0$, so that $\left(R^H\right)_1 = \left<x\right>$. We assume that $n \ge 2$. Consider $\alpha \in R_n$ and write $\alpha = ax^n + bx^{n-1}y + c y^n$, where $a, b, c \in k$. Then, $\alpha \in R^H$ if and only if 
$$
ax^n + bx^{n-1}(x+y) + c(x+y)^n = ax^n + bx^{n-1}y + c y^n
$$
which is equivalent to 
$$
bx^n + c \left[x^n + \binom{n}{1} x^{n-1}y + \cdots + \binom{n}{n-1} xy^{n-1}\right] = 0.
$$
Hence $(b+ c)x^n + c(2^n-2) x^{n-1}y = 0$, so that $b+c =0$. Consequently, we conclude that 
$$
\left(R^H\right)_n = \left<x^n, x^{n-1}y+y^n \right>.
$$
Therefore, $R^H$ is generated by $x$ and $\{x^{n-1}y + y^n\}_{n\ge2}$ as a $k$-algebra. Besides, for $n \ge 4$, let us write $n = 2 a + 3 b$ with $a, b \ge 0$. If $a = 0$ and $b > 0$, then $y^n + x^{n-1}y = y^{3b} + x^{3b-1}y = (x^2y+y^3)^b$. If $a > 0$ and $b=0$, then  $y^n + x^{n-1}y =  y^{2a} + x^{2a-1}y = (xy + y^2)^a$. Finally if $a, b >0$, then 
$$
y^n + x^{n-1}y = (y^{2a} + x^{2a-1}y)(y^{3b} + x^{3b-1}y) = (xy + y^2)^a \cdot (x^2y+y^3)^b.
$$
Therefore, $R^H = k[x, xy+y^2, x^2y + y^3]$, as desired.

$(2)$ Note that $\dim R^H =1$ and $x^2 + xy + y^2 \in R$ is a non-zerodivisor on $R$. Hence $R^H$ is a Cohen-Macaulay ring. Set $a = xy + y^2$ and $b = x^2 y + y^3$ in $R^H$. 
Let 
$$\psi : R^H/(x^2+a) \to R/(x^2 + a) \cong k[X, Y]/ (X^2 + XY + Y^2, X^2Y + XY^2)
$$ be the $k$-algebra map such that $\psi(\overline{x}) = \overline{X}$ and $\psi(\overline{y}) = \overline{Y}$. Then $\{1, \overline{X}, \overline{X^2}, \overline{X^2Y}\}$ forms a $k$-basis of $\Im \psi$, whence 
$\dim_kR^H/(x^2+a) \ge 4$.
On the other hand, let $U = k[X, A, B]$ be the polynomial ring over $k$ and consider the $k$-algebra map 
$$\varphi: U \to R^H=k[x, a, b]
$$ so that $\varphi(X) = x$, $\varphi(A) = a$, and $\varphi(B) = b$. Since $xa  = 0$, $xb = 0$, and $a^3 = b^2$, the map $\varphi$ induces the surjection $\overline{\varphi} : U/(XA, XB, A^3 - B^2) \to R^H$. Remember that $x^2 + a  = x^2 + xy + y^2$ is a non-zerodivisor on $R$. By setting $K = \Ker \overline{\varphi}$, we obtain the exact sequence
$$
0 \to K/(X^2+A)K \to U/(XA, XB, A^3-B^2, X^2+A) \to R^H/(x^2+a) \to 0
$$
of $U$-modules. Notice that the middle term $U/(XA, XB, A^3-B^2, X^2+A)$ of the sequence is isomorphic to $k[X, B]/(X^3, XB, B^2)$. By computing the dimension as a $k$-vector space, we conclude that $\dim_k R^H/(x^2+a) \le 4$. Hence $\dim_k R^H/(x^2+a) = 4$ and we have an isomorphism 
$$
U/(XA, XB, A^3-B^2, X^2+A) \cong R^H/(x^2+a).
$$
This yields $K/(X^2+A)K=(0)$, so that $K = (0)$. Therefore we obtain the isomorphism
$$
R^H \cong U/(XA, XB, A^3 - B^2) = k[X, A, B]/ {{\rm I}_2(
\begin{smallmatrix}
X & A^2 & B \\
0 & B & A
\end{smallmatrix})}
$$
of graded $k$-algebras. 

$(3)$ This follows from Proposition \ref{15.2} below.
\end{proof}

\begin{prop}\label{15.2}
Let $k$ be an arbitrary field (not necessarily of characteristic two). Let $U = k[X, Y, Z]$ be the polynomial ring over $k$. Set $R = U/{\rm I}_2(
\begin{smallmatrix}
X & Y^2 & Z \\
0 & Z & Y
\end{smallmatrix}
)$. Then $R$ is strictly closed in $\overline{R}$. In particular, $R$ is a weakly Arf ring. 
\end{prop}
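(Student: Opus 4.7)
The plan is to identify $R$ as a fiber product and then combine the fiber-product machinery from Section \ref{sec6} with the localization criterion for strict closedness from Section \ref{sec4}. Set $R_1 = k[X]$ and $R_2 = k[Y,Z]/(Y^3 - Z^2) \cong k[t^2, t^3]$ (via $Y = t^2$, $Z = t^3$). The defining ideal of $R$ is $(XY, XZ, Y^3 - Z^2)$, so the $k$-algebra map $R \to R_1 \times R_2$ sending $x \mapsto (X, 0)$, $y \mapsto (0, Y)$, $z \mapsto (0, Z)$ is a well-defined injection. Matching $k$-bases shows that its image is exactly the fiber product $R_1 \times_k R_2$ taken with respect to the augmentations $X \mapsto 0$ and $(Y, Z) \mapsto (0, 0)$. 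Consequently $\overline{R} = k[X] \times k[t]$ is module-finite over $R$, and $R$ is reduced with minimal primes $(x)$ and $(y, z)$; since $x + y$ avoids both of these, every maximal ideal of $R$ contains a non-zerodivisor, so $R$ is Cohen-Macaulay of dimension one and in particular satisfies $(S_1)$.

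By Proposition \ref{4.2}, to prove $R = R^*$ it suffices to prove $R_P = (R_P)^*$ for every $P \in X_1(R)$. Minimal-prime localizations are fields and hence trivially strictly closed. For a height-one prime $P \ne \m := (x, y, z)$, the vanishing relations $xy = xz = 0$ force either $x$, or both of $y$ and $z$, to become zero in $R_P$; hence $R_P$ is a localization of $R_1$ or of $R_2$ at one of its maximal ideals. Such a localization is either a DVR (in the $R_1$ case) or a one-dimensional Cohen-Macaulay local ring of multiplicity two (in the $R_2$ case, Arf by \cite[Proposition 2.8]{CCGT}). Either way $R_P$ is an Arf local ring, hence strictly closed by Theorem \ref{4.3}.

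The principal case is $P = \m$. Here I would verify that localization respects the fiber product: the multiplicative set $R \setminus \m$ projects onto the complements of $(X) \subseteq R_1$ and $(Y, Z) \subseteq R_2$, producing an isomorphism $R_\m \cong (R_1)_{(X)} \times_k (R_2)_{(Y, Z)}$. Both factors are then one-dimensional Cohen-Macaulay Arf local rings with common residue field $k$, so Theorem \ref{6.10} yields that $R_\m$ is Arf, and Theorem \ref{4.3} then yields that $R_\m$ is strictly closed. Combining this with the previous step, Proposition \ref{4.2} gives $R = R^*$, and the weakly Arf conclusion follows from the $(1) \Rightarrow (2)$ implication of Theorem \ref{Zariski-Lipman}. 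The main obstacle is the bookkeeping at $\m$ — confirming that the localization genuinely inherits the fiber-product structure so that Theorem \ref{6.10} applies verbatim.
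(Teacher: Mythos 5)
Your argument is correct, but it takes a genuinely different route from the paper. Both proofs begin with the primary decomposition $\fka = (X, Y^3-Z^2) \cap (Y, Z)$ and the resulting exact sequence $0 \to R \to k[X] \times k[t^2,t^3] \to k \to 0$, and both handle height-one primes away from $\m=(x,y,z)$ by observing that $R_P$ there has multiplicity at most two. The divergence is at $\m$: the paper directly computes the blow-up $R^{\m}=\m/(x+y)$ inside $\rmQ(R)$, shows $R^{\m}=\overline R = k[x]\times k[t]$, and concludes $R_\m$ is Arf from Lipman's Theorem 2.2; you instead read the exact sequence as exhibiting $R$ as the fiber product $k[X]\times_k k[t^2,t^3]$, check that localization commutes with this fiber product at $\m$ so that $R_\m\cong k[X]_{(X)}\times_k k[t^2,t^3]_{(t^2,t^3)}$, and then invoke Theorem \ref{6.10} to get the Arf property. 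Your route avoids any explicit blow-up computation and makes the structure of the example more transparent, at the cost of having to justify the localization-vs.-fiber-product compatibility; this does work, since the multiplicative set $R\setminus\m$ maps onto $k[X]\setminus(X)$ and $k[t^2,t^3]\setminus(t^2,t^3)$ respectively, so localizing the defining exact sequence at $\m$ reproduces the defining sequence of the localized fiber product. Two small inaccuracies worth fixing: the localizations of $R$ at height-one primes $P\ne\m$ lying over $(x)$ are in fact DVRs rather than multiplicity-two rings (the only singular point of $k[t^2,t^3]$ pulls back to $\m$ itself), though this only strengthens your conclusion; and the statement that the non-zerodivisor $x+y$ implies Cohen-Macaulayness is not a direct inference — what you actually need, namely $(S_1)$ for Proposition \ref{4.2}, follows from $R$ being reduced.
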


\begin{proof}
We consider $U$ as a $\Bbb Z$-graded ring under the grading $U_0 =k$, $X \in U_1$, $Y \in U_2$, and $Z \in U_3$. Set $\fka = {\rm I}_2(
\begin{smallmatrix}
X & Y^2 & Z \\
0 & Z & Y
\end{smallmatrix}
) = (XY, XZ, Y^3 - Z^2)$. Then $\fka$ is a graded ideal of $U$ with $\height_U\fka = 2$. In particular, $R$ is a Cohen-Macaulay ring with $\dim R=1$. Since $\fka = (X, Y^3 - Z^2) \cap (Y, Z)$, we then have an exact sequence
$$
0 \to R \overset{\xi}{\to} U/(Y, Z) \times U/(X, Y^3-Z^2) \to U/(X, Y, Z) \to 0
$$
of $U$-modules. Note that $U/(Y, Z) =k[x]$ is a DVR and $U/(X, Y^3-Z^2) = k[y, z] \cong k[t^2, t^3] \subseteq k[t]$, where $x, y, z$ denote the images in the corresponding rings and $t$ is an indeterminate over $k$. 
Let $M = (x, y, z) \in \Max R$, where $x, y, z$ denotes, again, the images in $R$. Then $M^2 = (x+y)M$ and $\mu_R(M) =3$. Hence
$$
R^M = \frac{M}{x+y} = R + \left<\frac{x}{x+y}, \frac{z}{x+y}\right> \quad \text{in} \ \ \rmQ(R)
$$
because $M = (x+y, x, z)$. Since $\xi(x) = (x, 0)$, $\xi(z) = (0, z)= (0, t^3)$, $\xi(x+y) = (x, y) = (x, t^2)$ via the identification $U/(X, Y^3-Z^2) = k[y, z] = k[t^2, t^3]$, we obtain
$$
\frac{x}{x+y} = \left(\frac{1}{x}, \frac{1}{t^2}\right)\cdot(x,0) = (1, 0), \ \ \frac{z}{x+y} =\left(\frac{1}{x}, \frac{1}{t^2}\right)\cdot(0,t^3) = (0, t) 
$$
which yield that $R^M = \left< 1, (1, 0), (0, t)\right>$. Since $(0, 1) \in R^M$, we have 
$$
R^M \supseteq R(1, 0) + R(0, 1) = k[x] \times k[y, z] = k[x] \times k[t^2, t^3].  
$$
Now, because $k[t] = k[t^2, t^3] + k[t^2, t^3]t$ and $(0, t) \in R^M$, we see that
$$
R^M \subseteq k[x] \times k[t] = \overline{R}
$$
whence $R^M = \overline{R}$. Therefore, $R_M$ is an Arf ring. 
Let $N \in \Max U$ such that $N \supseteq \fka$ and $N \ne (X, Y, Z)$. We then have either $N \supsetneq (X, Y^3-Z^2)$ and $N \not\supseteq (Y, Z)$, or $N \supsetneq (Y, Z)$ and $N \not\supseteq (X, Y^3-Z^2)$. Hence the multiplicity of $R_N$ is at most $2$, that is, $R_N$ is an Arf ring. Therefore, $R$ is strictly closed in $\overline{R}$. 
\end{proof}

In what follows, let $G = \left<(
\begin{smallmatrix}
0 & 1\\
1 & 1
\end{smallmatrix})
\right>$ be the subgroup of $\GL_2(k)$. Then, similarly for the subgroup $H$ of $\GL_2(k)$, it acts linearly on $S$. 


\begin{thm}\label{15.3}
The following assertions hold true. 
\begin{enumerate}[$(1)$]
\item $S^G = k[X^2 + XY + Y^2, XY(X+Y), X^3 + X^2Y + Y^3]$. 
\item $S^G \cong k[A, B, C]/(A^3 - (B^2 + BC + C^2))$ as a graded $k$-algebra, where $k[A, B, C]$ denotes the polynomial ring over the field $k$. 
\end{enumerate}
\end{thm}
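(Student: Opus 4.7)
The plan is to verify that $a$, $b$, $c$ are $G$-invariant and satisfy the stated cubic relation, then to establish $S^G = k[a,b,c]$ by matching Hilbert series, and finally to recover the presentation (2) via a dimension argument on integral domains.

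First, since $\sigma^2 = \bigl(\begin{smallmatrix} 1 & 1 \\ 1 & 0 \end{smallmatrix}\bigr)$ and $\sigma^3 = I$ in characteristic two, the element $\sigma$ has order $3$ and $\widehat{\sigma}$ sends $X \mapsto X+Y$, $Y \mapsto X$. A direct substitution (using $2 = 0$ and $(X+Y)^2 = X^2 + Y^2$ in $k$) shows that $a = X^2 + XY + Y^2$, $b = XY(X+Y)$, and $c = X^3 + X^2 Y + Y^3$ are each fixed by $\widehat{\sigma}$, so $k[a,b,c] \subseteq S^G$. A further expansion of the relevant monomials yields the relation
$$
a^3 \,=\, b^2 + bc + c^2,
$$
which produces a graded surjection $\varphi \colon U := k[A, B, C]/(A^3 - B^2 - BC - C^2) \twoheadrightarrow k[a, b, c]$ with $\deg A = 2$ and $\deg B = \deg C = 3$.

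The crucial step is to show $k[a,b,c] = S^G$ by comparing Hilbert series. The formation of $G$-invariants commutes with the base change $k \hookrightarrow \overline{k}$, and over $\overline{k}$ the element $\widehat{\sigma}$ has order $3$ coprime to $\mathrm{char}\, k$, so it is diagonalizable on $S_1 \otimes \overline{k}$ with eigenvalues $\omega, \omega^2 \in \mathbb{F}_4$ (the primitive cube roots of unity, roots of $t^2 + t + 1$). Consequently $S_n \otimes \overline{k}$ decomposes into one-dimensional weight spaces indexed by pairs $(i, j)$ with $i + j = n$ and weight $\omega^{i + 2j}$; the invariants are precisely those with $i \equiv j \pmod 3$. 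Grouping by the common residue $r \in \{0, 1, 2\}$ of $i, j \pmod 3$ and summing the geometric series gives
$$
H(S^G, \lambda) \,=\, \sum_{r=0}^{2} \frac{\lambda^{2r}}{(1 - \lambda^3)^2} \,=\, \frac{1 + \lambda^2 + \lambda^4}{(1 - \lambda^3)^2} \,=\, \frac{1 - \lambda^6}{(1 - \lambda^2)(1 - \lambda^3)^2},
$$
which is precisely $H(U, \lambda)$ obtained from the resolution $0 \to k[A,B,C](-6) \to k[A,B,C] \to U \to 0$.

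Finally, $A^3 - B^2 - BC - C^2$ is irreducible in $k[A, B, C]$ (as a cubic in $A$ over the UFD $k[B, C]$, whose constant term $-(B^2 + BC + C^2)$ has degree two and is therefore not a cube), so $U$ is a two-dimensional graded integral domain. The subring $k[a,b,c] \subseteq S$ is likewise a two-dimensional domain: both $X$ and $Y$ satisfy the monic integral equation
$$
T^3 + aT + b \,=\, (T - X)(T - Y)(T - (X+Y)) \,=\, 0
$$
(the factorization in characteristic two), so $S$ is integral over $k[a,b]$ and the pair $a, b$ is algebraically independent. Hence $\varphi$ is a graded surjection between two-dimensional domains, its kernel is zero, and $\varphi$ is an isomorphism; combined with the Hilbert series identity $H(k[a,b,c]) = H(U) = H(S^G)$ and the inclusion $k[a,b,c] \subseteq S^G$, this forces $k[a,b,c] = S^G$, proving (1) and (2) simultaneously. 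The main technical obstacle is the Hilbert series of $S^G$ in characteristic two: naive characteristic-zero Molien cannot be applied directly, since the lift of $\sigma$ to $\mathbb{Z}$ has infinite order (a Fibonacci-type matrix), and one must instead exploit the character decomposition over the algebraic closure, which is valid precisely because $|G|$ is coprime to $\mathrm{char}\, k$.
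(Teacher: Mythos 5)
Your proof is correct, and it differs from the paper's in two substantive places, both of which are worth noting.

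\emph{Hilbert series of $S^G$.} The paper first shows $S^G$ is Cohen--Macaulay (because $|G|$ is invertible in $k$), observes that $a, b$ is a homogeneous system of parameters, uses $S^G/(a,b)S^G \cong \bigl[S/(a,b)S\bigr]^G$, and explicitly finds the $k$-basis $\{1, x^2y\}$ of $\bigl[S/(a,b)S\bigr]^G$ to get $\bigl[\bigl[S^G\bigr]\bigr] = (1+\lambda^3)/\bigl((1-\lambda^2)(1-\lambda^3)\bigr)$. You instead base-change to $\overline{k}$ (legitimate since invariants commute with flat base change), diagonalize $\widehat{\sigma}$ using that $|G| = 3$ is prime to $\operatorname{ch} k = 2$, and sum the resulting weight-space decomposition indexed by $i \equiv j \pmod 3$. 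A routine rational-function identity shows the two expressions agree. Your route is more conceptual and automatic once one realizes the action is diagonalizable over $\overline{k}$; the paper's avoids base change and directly exhibits the degree-$3$ invariant $x^2y$, which also makes the Cohen--Macaulay structure of $S^G$ transparent.

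\emph{Isomorphism $U \cong k[a,b,c]$.} The paper sets $L = \operatorname{Ker}\overline{\varphi}$, reduces modulo $B$, identifies $U/(A^3 - (B^2+BC+C^2), B) \cong k[t^2,t^3]$ as a one-dimensional domain, compares with $\dim R/(b) = 1$ to conclude $L/BL = 0$, and finishes with graded Nakayama. You instead argue that $A^3 - B^2 - BC - C^2$ is irreducible over the UFD $k[B,C]$ (its ``constant term'' in $A$ has $B,C$-degree $2$, hence is not a cube), so $U$ is a two-dimensional graded domain; $k[a,b,c] \subseteq S$ is a two-dimensional domain since $X,Y$ are integral over $k[a,b]$ via $T^3 + aT + b = 0$; and a graded surjection between finitely generated $k$-algebra domains of equal Krull dimension must be injective (the kernel is a prime $P$ with $\dim U/P = \dim U$, hence $\operatorname{ht} P = 0$ by the dimension formula). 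Both arguments are standard and correct. Your closing remark about the failure of naive Molien lifting (the integral lift $\bigl(\begin{smallmatrix} 0 & 1 \\ 1 & 1 \end{smallmatrix}\bigr)$ has infinite order) is accurate and a nice point; the decisive fact in both approaches is that $|G|$ is a unit in $k$.
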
 

\begin{proof}
Set 
$a = X^2 + XY + Y^2$, $b = XY(X+Y) = X^2Y + XY^2$, and $c = X^3 + X^2Y + Y^3$.
We then have $a, b, c \in S^G$ and $a^3 = b^2 + bc + b^2$. Let $R = k[a, b, c]$. Then $R \subseteq S^G$. Let 
$$
f(t) = (t-X)(t-(X+Y)) (t-Y) = t^3 -at +b \in S[t]
$$ be the polynomial in $S$. Then $f(X) = f(Y) = 0$, so that $S$ is integral over $R$; hence $\dim R =2$. Let $U = k[A, B, C]$ be the polynomial ring over $k$, and consider $U$ as a $\Bbb Z$-graded ring with the grading $U_0 = k$, $A \in U_2$, and $B, C \in U_3$. Let 
$$
\varphi: U \to R
$$ be the $k$-algebra map so that $\varphi(A) = a$, $\varphi(B) =b$, and $\varphi(C) = c$. Then $\varphi$ is a surjective,   graded homomorphism, and $A^3 - (B^2 + BC +C^2) \in \Ker \varphi$. Hence we get a surjection 
$$
\overline{\varphi} : U/(A^3 - (B^2 + BC +C^2)) \to R
$$
induced by the map $\varphi$. Set $L = \Ker \overline{\varphi}$. We get the exact sequence
$$
0 \to L/BL \to U/(A^3 - (B^2 + BC +C^2), B) \to R/(b) \to 0
$$
of $U$-modules. Notice that
$$
U/(A^3 - (B^2 + BC +C^2), B) \cong k[A, C]/(A^3 - C^2) \cong k[t^2, t^3] \ (\subseteq k[t])
$$
is an integral domain of dimension one, where $t$ denotes an indeterminate over $k$. Therefore, because $\dim R/(b) = 1$, we have 
$$
U/(A^3 - (B^2 + BC +C^2), B) \cong R/(b)
$$
which yields $L/BL=(0)$. Hence $L =(0)$. Finally we get the isomorphism 
$$
R \cong U/(A^3 - (B^2 + BC +C^2))
$$
of graded $k$-algebras, and the Hilbert series $[[R]]$ of $R$ is given by 
$$
[[R]] = \frac{1-\lambda^6}{(1- \lambda^2)(1-\lambda^3)(1-\lambda^3)} = \frac{1 + \lambda^3}{(1-\lambda^2)(1-\lambda^3)}.
$$
Notice that $S^G$ is a Cohen-Macaulay ring, because the order of $G$ is invertible in $k$. Since $a, b \in S^G$ is a homogeneous system of parameters in $S^G$, we have an isomorphism
$$
S^G/(a, b)S^G \cong \left[S/(a,b)S\right]^G.
$$ 
The $k$-basis of $S/(a, b)S$ is $\{1, X, Y, X^2, XY, X^2Y\}$, which yields that $\left[S/(a,b)S\right]^G = k + k\cdot x^2y$, where $x, y$ stand for the images of $X, Y$ in $S/(a, b) S$. Hence 
$
[[S^G/(a, b)S^G]] = [[\left[S/(a,b)S\right]^G]] = 1 + \lambda^3
$
so that 
$$
[[S^G]] = \frac{1+\lambda^3}{(1-\lambda^2)(1-\lambda^3)}.
$$
Therefore $R = S^G$ as desired. 
\end{proof}

Consequently we reach the following. 

\begin{cor}\label{15.4}
Let $R = k[X, Y]/(XY(X+Y))$. Then the following assertions hold true. 
\begin{enumerate}[$(1)$]
\item $R^G \cong k[t^2, t^3]$ as a graded $k$-algebra, where $t$ denotes an indeterminate over $k$. 
\item $R^G$ is strictly closed in $\overline{R^G}$. 
\end{enumerate}
\end{cor}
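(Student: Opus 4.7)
The plan is to deduce both parts as direct corollaries of Theorem \ref{15.3}, using the fact that $XY(X+Y)$ is a $G$-invariant element of $S$. First I would verify that taking $G$-invariants commutes with killing this element: since $|G|=3$ is a unit in $k$ (because $\mathrm{char}\,k=2$), the Reynolds operator makes $(-)^G$ exact, but one does not even need this---because $XY(X+Y)$ is a nonzerodivisor of $S$ lying in $S^G$, the elementary observation $\widehat\sigma(f)=f\Rightarrow \widehat\sigma(g)=g$ for $f=XY(X+Y)\cdot g$ shows $(XY(X+Y))S\cap S^G=(XY(X+Y))S^G$. Hence
$$R^G\;\cong\;S^G/(XY(X+Y))S^G.$$

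Now I would invoke the presentation from Theorem \ref{15.3}: under the isomorphism $S^G\cong k[A,B,C]/(A^3-(B^2+BC+C^2))$ corresponding to $A\leftrightarrow X^2+XY+Y^2$, $B\leftrightarrow XY(X+Y)$, $C\leftrightarrow X^3+X^2Y+Y^3$, quotienting by $B$ on both sides gives
$$R^G\;\cong\;k[A,C]/(A^3-C^2).$$
The map $A\mapsto t^2$, $C\mapsto t^3$ identifies the right-hand side with the numerical semigroup ring $k[t^2,t^3]$ as a graded $k$-algebra, proving (1).

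For (2), I would observe that $R^G\cong k[t^2,t^3]$ is a core of $S=k[t]$, with $t^3S\subseteq R^G$ and unique singular maximal ideal $\fkm=tS\cap R^G=(t^2,t^3)$. The localization $(R^G)_\fkm$ is a one-dimensional Cohen-Macaulay local ring of multiplicity two, hence is an Arf ring by \cite[Proposition 2.8]{CCGT}. Applying Theorem \ref{8.2}(2) (or equivalently Proposition \ref{4.9} together with Theorem \ref{Zariski-Lipman}) then yields that $R^G$ is strictly closed in $\overline{R^G}=k[t]$.

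The only step requiring genuine care is the identification $R^G\cong S^G/(XY(X+Y))S^G$; once that is secured, (1) is a line of algebra from Theorem \ref{15.3}, and (2) is a standard application of the multiplicity-two criterion together with the local-global characterization of strict closedness for cores already established earlier in the paper.
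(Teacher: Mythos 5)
Your proof is correct and follows essentially the same route as the paper: part (1) is obtained by passing the presentation of Theorem \ref{15.3} through the isomorphism $[S/(b)S]^G \cong S^G/bS^G$ (with $b = XY(X+Y)$), and part (2) applies Theorem \ref{8.2} after noting $k[t^2,t^3]$ is weakly Arf (for which you helpfully supply the multiplicity-two argument the paper leaves implicit). One small caution on part (1): the nonzerodivisor computation yields only $bS\cap S^G=bS^G$, i.e.\ injectivity of $S^G/bS^G\hookrightarrow R^G$; surjectivity of $S^G\to R^G$ does require the Reynolds operator (invertibility of $|G|=3$ in $\mathrm{char}\,2$), so the aside ``one does not even need this'' overstates the shortcut, even though you did record the relevant fact first.
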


\begin{proof}
$(1)$ This follows from $\left[S/(b)S\right]^G \cong S^G/bS^G$. 

$(2)$ Since $R^G \cong k[t^2, t^3]$ and $k[t^2, t^3]$ is a weakly Arf ring,  $R^G$ is strictly closed by Theorem \ref{8.2}. 
\end{proof}


\section{Examples arising from determinantal rings} \label{sec12}

The aim of this section is to prove Theorem \ref{ch}, which shows that the Arf property depends on the characteristic of the base ring.
In what follows, let $U =k[[X, Y, Z]]$ denote the formal power series ring over a field $k$, and $\fka = {\rm I}_2(
\begin{smallmatrix}
X & Y & Z \\
Y & Z & X
\end{smallmatrix})$. Recall that  ${\rm I}_2(
M)$ is the ideal of $U$ generated by $2\times 2$-minors of a matrix $M$. 
Set 
$$
A = U/\fka=k[[x, y, z]]
$$
where $x, y, z$ stand for the images of $X, Y, Z$ in $A$, respectively. Notice that $A$ is a Cohen-Macaulay local ring with $\dim A =1$. 

\begin{thm}\label{ch}
The following assertions hold true.
\begin{enumerate}[$(1)$]
\item If $\ch k =3$, then $A$ is not an Arf ring. 
\item If $\ch k \ne 3$ and there exists $\alpha \in k$ such that $\alpha \ne 1$, $\alpha^3 =1$, then $A$ is an Arf ring.  
\end{enumerate}
\end{thm}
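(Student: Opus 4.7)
The plan is to study the first blow-up $B := A^{\fkm} = \fkm:\fkm$ at the maximal ideal $\fkm = (x,y,z)$ and to invoke Lipman's characterization (\cite[Theorem 2.2]{L}, cited throughout Section~\ref{sec9}): for a one-dimensional Cohen-Macaulay local ring, being Arf is equivalent to having minimal multiplicity together with all iterated blow-ups having minimal multiplicity.

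First I will record characteristic-free computations. The defining relations give $y^3 = y \cdot y^2 = y(xz) = x(yz) = x \cdot x^2 = x^3$ and, symmetrically, $z^3 = x^3$. Putting $u = y/x,\ v = z/x$ in $\rmQ(A)$, one obtains $u^3 = v^3 = 1$ and $uv = x^2/x^2 = 1$, so $v = u^2$. Moreover $\fkm^2 = (x^2, y^2, z^2, xy, yz, xz) = (x^2, xy, xz) = x\fkm$, so $A$ has minimal multiplicity and $B = x^{-1}\fkm = A + Au + Au^2$ with $\fkm B = xB$. A short syzygy check (if $a + bu + cu^2 \in \fkm B$ for $a, b, c \in A$, clearing $x^2$ forces $ax + by + cz \in \fkm^2$; since $x,y,z$ minimally generate $\fkm$, each of $a,b,c$ lies in $\fkm$) shows $\{1, u, u^2\}$ is a minimal $A$-generating set of $B$, whence $B/\fkm B \cong k[T]/(T^3 - 1)$ as $k$-algebras.

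For part (1), $\ch k = 3$, I observe $T^3 - 1 = (T-1)^3$, so $B/\fkm B$ is local Artinian of length $3$; hence $B$ itself is local with maximal ideal $\fkM = (x, u-1)B$. Since $B \subseteq \rmQ(A)$ and $x$ is a non-zerodivisor on $A$, the ring $B$ is Cohen--Macaulay of dimension one. The relation $(u-1)^3 = u^3 - 1 = 0$ forces $\fkM^3 = x\fkM^2$, so $(x)$ is a reduction of $\fkM$ and $\rme(B) = \ell_B(B/xB) = \dim_k k[T]/(T-1)^3 = 3$. Yet $u - 1$ is a nonzero nilpotent in $B$, so $B$ is not a discrete valuation ring, forcing $\mu_B(\fkM) = 2$. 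Thus $\rme(B) = 3 > 2 = \mu_B(\fkM)$: the blow-up $B$ does not have minimal multiplicity, and Lipman's criterion prevents $A$ from being Arf.

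For part (2), with $\ch k \ne 3$ and $\alpha \in k$ a primitive cube root of $1$, the three $k$-algebra maps $\varphi_i: A \to k[[X]]$ defined by $(x,y,z) \mapsto (X, \alpha^i X, \alpha^{2i} X)$ for $i = 0,1,2$ assemble into $\varphi: A \to k[[X]]^3$. A Hilbert-function comparison (both $A$ and $\varphi(A)$ have Hilbert polynomial $3n+1$ with respect to the $\fkm$-adic filtration) shows $\varphi$ is injective, so $A$ is reduced; since $k[[X]]^3$ is integrally closed and module-finite over $A$, one concludes $\overline{A} = k[[X]]^3$. The Vandermonde matrix
\[
V = \begin{pmatrix} 1 & 1 & 1 \\ 1 & \alpha & \alpha^2 \\ 1 & \alpha^2 & \alpha \end{pmatrix}
\]
encoding the images of $1, u, u^2$ in $\overline{A}$ is invertible over $k \subseteq A$, so the orthogonal idempotents $e_1, e_2, e_3 \in \overline{A}$ are $k$-linear combinations of $1, u, u^2$, and hence lie in $B$. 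This forces $B = \bigoplus_i e_i B$, and each summand equals $k[[X]]$ because it contains the full image of $A \to A/\fkp_i = k[[X]]$. Therefore $B = \overline{A}$, a product of DVRs, so $B$ is regular (trivially of minimal multiplicity at every maximal ideal) and every further blow-up is trivial. Combined with the minimal multiplicity of $A$ itself, Lipman's criterion yields that $A$ is Arf. The principal technical obstacles will be (a) the syzygy argument identifying $B/\fkm B$ with $k[T]/(T^3 - 1)$ in the preliminaries, (b) verifying in part (1) that $u-1$ is genuinely a nonzero element of $B$ (hence $B$ fails to be a DVR), and (c) in part (2) the Hilbert-function comparison that establishes $\fka = \bigcap_i \fkp_i$ together with the Vandermonde-based extraction of the idempotents $e_i$ from $B$.
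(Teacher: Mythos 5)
Your proposal is correct and follows the same overall strategy as the paper: show $\fkm^2 = x\fkm$, analyze the single blow-up $B = A^{\fkm}$ with $B/\fkm B \cong k[T]/(T^3-1)$, and invoke Lipman's Theorem 2.2. The local arguments are presented somewhat differently, and both work. For part (1) the paper pins down $B \cong k[[X,S]]/(S^3)$ by showing the kernel of $k[[X,S]]\twoheadrightarrow B$ is exactly $(S^3)$ (ruling out $(S)$ and $(S^2)$ by an ideal-membership check in $\fka$); you instead compute $\rme(B)=\ell(B/xB)=3$ directly and use that $u-1$ is a nonzero nilpotent to force $\mu_B(\fkM)=2$ — slightly more economical, and both reduce to the syzygy check you flag, which is fine since $x,y,z$ is a $k$-basis of $\fkm/\fkm^2$. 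For part (2) the paper shows $|\Max B|=3$ by a two-sided inequality and then uses $3=\rme^0_{xA}(A)=\sum_i\ell(B_i/XB_i)$ to conclude each $B_i$ is a DVR; your Vandermonde/Lagrange extraction of the idempotents $e_i$ from $k$-linear combinations of $1,u,u^2$ gets the decomposition $B=\prod e_iB$ more directly and is a pleasant alternative. One small caveat: the Hilbert-function comparison you invoke to show $\varphi$ is injective is left vague (and the Hilbert polynomial is $3n-2$, not $3n+1$); the paper's route through the associativity formula $3=\rme(A)=\sum_{\fkp}\ell(A_{\fkp})\,\rme(A/\fkp)\ge|\Ass A|\ge 3$ is the cleaner way to get $\Ass A=\{\fkp_\alpha\}$, $A$ reduced, and $\overline{A}=k[[X]]^3$, and you may want to substitute it for the handwave.
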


\begin{proof}
Let $\m = (x, y, z)$ be the maximal ideal of $A$. Then $\m^2 = x\m = y\m = z\m$. Hence $x, y, z\in W(A)$ and $A$ has minimal multiplicity $3$. Set $t = y/x \in \rmQ(A)$. We then have 
$$
t = \frac{y}{x} = \frac{z}{y} = \frac{x}{z}
$$
so that $y = tx$, $z = ty$, and $x = tz$. Hence $t^3 = 1$ in $\rmQ(A)$. Let $B = A^\m$ denote the blow-up of $A$ at $\m$. Then, because $\m^2 = x\m$, we have
$$
B = A^{\m} = \frac{\m}{x} = \left<1, t, t^2\right> = A[t]
$$
where the third equality follows from $\m = (x, tx, t^2x)$.

$(1)$ Suppose that $\ch k =3$. Set $s = t-1 \in B$. Then $s^3 = 0$ and $B = A[s]$. 
Let $M$ be the maximal ideal of $B$. We then have $s \in M$ and $M \cap A = \m$. Hence
$$
M \supseteq \m B + sB = (x, tx, t^2x, s)B = (x, s)B
$$
which implies the surjection $A/\m \to B/(x,s)B \ne (0)$. Actually, it is a bijection, and hence $B/(x,s)B$ is a field. Because of the surjection $B/(x,s)B \to B/M$, we have the isomorphisms
$$
A/\m \cong B/(x,s)B \cong B/M.
$$ 
Thus, $M = \m B+sB = (x,s)B$. Therefore, we obtain that $B$ is a local ring with maximal ideal $M = (x, s)B$ and $A/\m \cong B/M$. 

Let $k[[X, S]]$ be the formal power series ring over $k$ and $\varphi : k[[X, S]] \to B$ be the $k$-algebra map defined by $\varphi(X) = x$ and $\varphi(S) = s$. Then $\varphi$ is a surjective, graded ring homomorphism, and $S^3 \in \Ker \varphi$. Since $B$ is a Cohen-Macaulay local ring with $\dim B =1$, we get 
\begin{center}
$\Ker \varphi = (S^n)$ for some $1 \le n \le 3$.
\end{center}
If $n \ne 3$, then $S^2 \in \Ker \varphi$. That is, $s^2 = 0$ in $B$. Since $0=s^2 = t^2 + t+1$, by multiplying $x^2$, we have $y^2 + xy + x^2 = 0$ in $A$. Hence $X^2 + XY + Y^2 \in \fka$, so that 
$$
X^2 + XY + Y^2 = a (X^2 -YZ) + b (Y^2-XZ) + c(Z^2 -XY)
$$
for some $a, b, c \in k$, which makes a contradiction, because $a = 1 = 0$. Therefore, $n=3$ and we have the isomorphism
$$
B \cong k[[X, S]]/(S^3)
$$
of $k$-algebras. Hence, the blow-up $B=A^{\m}$ does not have minimal multiplicity, so that $A$ is not an Arf ring by \cite[Theorem 2.2]{L}. 

$(2)$ Suppose $\ch k \ne 3$ and there exists $\alpha \in k$ such that $\alpha \ne 1$ and $\alpha^3 = 1$. We consider $\Lambda=\{\alpha \in k \mid \alpha^3=1\}$. We then have $|\Lambda| = 3$, because the polynomial $f(t) = t^3 -1 \in k[t]$ does not have a double root. For each $\alpha \in \Lambda$, we define
$$
P_{\alpha} = (Y-\alpha X, Z - \alpha^2X) \subseteq U. 
$$
Then $P_{\alpha} + (X) = (X, Y, Z)$, which implies $Y-\alpha X, Z - \alpha^2X, X$ is a regular system of parameters in $U$. Since $U/P_{\alpha}$ is a DVR, $P_{\alpha} \in \Spec U$ and $\height_{U}P_{\alpha} = 2$. Let $k[[X]]$ denote the formal power series ring over $k$ and $\varphi : U \to k[[X]]$ be the $k$-algebra map so that $\varphi(X) = X$, $\varphi(Y) = \alpha X$, and $\varphi(Z) = \alpha^2 X$. Then $\Ker \varphi = P_{\alpha}$ and $\fka \subseteq P_{\alpha}$. 


\begin{claim*}
With the above notation, we have
$\bigcap_{\alpha \in \Lambda}P_{\alpha} = \fka$ and, for each $\alpha, \beta \in \Lambda$, $P_{\alpha} \ne P_{\beta}$ if $\alpha \ne \beta$. 
\end{claim*}

\begin{proof}[Proof of Claim]
If $P_{\alpha} = P_{\beta}$ for some $\alpha, \beta \in \Lambda$ such that $\alpha \ne \beta$. Then $Y-\alpha X, Y-\beta X \in P_{\alpha}$. Hence $(\alpha -\beta) X \in P_{\alpha}$, showing $X \in P_{\alpha}$. Thus $\height_U P_{\alpha} = 3$, which makes a contradiction. Hence $\{P_{\alpha}\}_{\alpha \in \Lambda}$ is distinct. Let us make sure of the first assertion. To do this, for each $\alpha \in \Lambda$, let 
$$
\p_{\alpha} = (y-\alpha x, z- \alpha^2 x) \subseteq A = U/\fka
$$
which is an associated prime ideal of $A$, because $\p_{\alpha} \in \Min A$. Since $\{P_{\alpha}\}_{\alpha \in \Lambda}$ is distinct, $|\Lambda| =3$, and $\Ass A \supseteq \{\p_{\alpha} \mid \alpha \in \Lambda\}$, we have $|\Ass A| \ge 3$. On the other hand, we get
$$
3 = \mu_A(\m) = \rme^0_{\m}(A) = \sum_{\p \in \Ass A}\ell_{A_{\p}}(A_{\p})\cdot \rme^0_{\m/\p}(A/\p) \ge |\Ass A| \ge 3
$$
where the second equality follows from the fact that $A$ has minimal multiplicity. This  shows $|\Ass A| = 3$ and hence $\Ass A = \{\p_{\alpha} \mid \alpha \in \Lambda\}$. Besides, for each $\p \in \Ass A$, we have that $A_{\p}$ is a field and $A/\p$ is a DVR. Consequently, $\bigcap_{\alpha \in \Lambda} \p_{\alpha} = (0)$ in $A$, and therefore $\bigcap_{\alpha \in \Lambda}P_{\alpha} = \fka$, as desired. 
\end{proof}

In particular, by the above claim, $A$ is a reduced ring, so that 
$$
\overline{A} = U/P_{\alpha} \times U/P_{\beta} \times U/P_{\gamma}
$$
where $\Lambda = \{\alpha, \beta, \gamma\}$. We now consider $A$ in $\overline{A}$ via the injection
$$
A \to \overline{A} = U/P_{\alpha} \times U/P_{\beta} \times U/P_{\gamma} = k[x] \times k[x] \times k[x]
$$
where $x$ denotes the images in the corresponding rings. Thus, we identify $x \in A$ with $(x, x, x)$, $y \in A$ with $(\alpha x, \beta x, \gamma x)$, and $z \in A$ with $(\alpha^2 x, \beta^2 x, \gamma^2 x)$. Hence 
$$
t = \frac{y}{x} = (\alpha, \beta, \gamma) \in \overline{A}
$$
so that $t - \alpha = (\alpha, \beta, \gamma) - (\alpha, \alpha, \alpha) = (0, \beta - \alpha, \gamma - \alpha)$. Therefore
$$
(t-\alpha)\overline{A} = (0) \times U/P_{\beta} \times U_{P_{\gamma}} \subsetneq \overline{A}.
$$
Choose $N \in \Max \overline{A}$ such that $(t-\alpha) \overline{A} \subseteq N$. Since $N \supseteq \m \overline{A} + (t-\alpha) \overline{A}$, we obtain 
$$
M \supseteq \m B = (t -\alpha )B
$$
where $M = N \cap B$. Now, since $B= A[t] = A[t-\alpha]$, we get $A/\m \cong B/\left[\m B + (t -\alpha )B\right]$. Moreover, because $B/\left[\m B + (t -\alpha )B\right] \to B/M$ is surjective, it is a bijection. Hence $A/\m \cong B/M$ and $M = \m B + (t-\alpha)B$. For each $\alpha \in \Lambda$, we set
$$
M_{\alpha} = \m B + (t-\alpha )B
$$
which is a maximal ideal of $B$. Then $\{M_{\alpha}\}_{\alpha \in \Lambda}$ is distinct. Indeed, if $M_{\alpha} = M_{\beta}$ for some $\alpha, \beta \in \Lambda$ such that $\alpha \ne \beta$, then because $t-\alpha, t-\beta \in M_{\alpha}$, we have $0 \ne \alpha -\beta \in M_{\alpha}$. This is impossible. Hence, the elements of $\{M_{\alpha}\}_{\alpha \in \Lambda}$ are distinct from each other. In particular, $|\Max B| \ge 3$. In the meantime, for each $M \in \Max B$, we can choose $N \in \Max \overline{A}$ such that $N \cap B = M$. Since $|\Max \overline{A}| = 3$, we conclude that $|\Max B| \le 3$. Hence, $|\Max B| = 3$ and $\Max B = \{M_{\alpha}, M_{\beta}, M_{\gamma}\}$ (Here $\Lambda=\{\alpha, \beta, \gamma\}$). Therefore, because  $B$ is complete (since $A = \widehat{A}$), we choose local rings $B_1$, $B_2$, and $B_3$ such that
$$
B \cong B_1 \times B_2 \times B_3.
$$
We then have the equalities
$$
3 = \rme^0_{\m}(A) = \rme^0_{xA}(A) = \rme^0_{xA}(B) = \ell_A(B/XB) = \sum_{i =1}^3\ell_A(B_i/XB_i). 
$$
Hence $\ell_A(B_i/XB_i) = 1$ for every $1 \le i \le 3$, which shows $\ell_{B_i}(B_i/XB_i) = 1$. Therefore, $B_i$ is a DVR with maximal ideal $XB_i$. Thus, $B \cong B_1 \times B_2 \times B_3$ is regular. Therefore $B = \overline{B} = \overline{A}$, because $\rmQ(B) = \rmQ(A)$. This implies that $A$ is an Arf ring, since $B$ has minimal multiplicity and $B = \overline{A}$. This completes the proof.
\end{proof}

Hence, the Arf property depends on the characteristic of the field $k$. Consequently we have the following.

\begin{cor}
Suppose that $k$ is an algebraically closed field. Then $A$ is an Arf ring if and only if $\ch k \ne 3$. 
\end{cor}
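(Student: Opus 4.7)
The plan is to deduce the equivalence directly from Theorem \ref{ch}, which already handles both directions under slightly different hypotheses; the task is simply to verify that when $k$ is algebraically closed and $\ch k \neq 3$, the hypothesis of part (2) of that theorem is automatically fulfilled.

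First, I would dispatch the ``only if'' direction as a pure restatement: if $\ch k = 3$, Theorem \ref{ch}(1) asserts that $A$ is not an Arf ring, so whenever $A$ is Arf we must have $\ch k \neq 3$. No further computation is needed here.

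For the ``if'' direction, assume $\ch k \neq 3$. I would apply Theorem \ref{ch}(2), which reduces the problem to producing an element $\alpha \in k$ with $\alpha^3 = 1$ and $\alpha \neq 1$. Consider the polynomial $f(t) = t^3 - 1 = (t-1)(t^2 + t + 1) \in k[t]$. Since $k$ is algebraically closed, $t^2 + t + 1$ splits over $k$, so it possesses a root $\alpha \in k$. Because $\ch k \neq 3$, we have $f'(1) = 3 \neq 0$, so $1$ is a simple root of $f$ and therefore $\alpha \neq 1$. Thus $\alpha$ satisfies the required conditions, and Theorem \ref{ch}(2) yields that $A$ is an Arf ring.

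There is essentially no obstacle here; the whole content of the corollary lies in Theorem \ref{ch}, and the role of algebraic closedness is only to guarantee a primitive cube root of unity in $k$ whenever $\ch k \neq 3$. The only subtle point worth flagging explicitly is the separability argument showing $\alpha \neq 1$, which is where the hypothesis $\ch k \neq 3$ genuinely enters (in characteristic $3$ one would have $t^3 - 1 = (t-1)^3$, and the construction would collapse).
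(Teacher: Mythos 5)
Your proof is correct and takes the same route the paper intends: the paper gives no separate argument, treating the corollary as an immediate consequence of Theorem \ref{ch}, and your proposal simply supplies the standard verification that an algebraically closed field of characteristic $\neq 3$ contains a primitive cube root of unity.
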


In the rest of this section, let us consider $R = k[X, Y, Z]/{\rm I}_2 (
\begin{smallmatrix}
X & Y & Z \\
Y & Z & X
\end{smallmatrix})$. We denote by $x, y, z$ the images of $X, Y, Z$ in $R$, respectively.

\begin{cor}
Suppose that $\ch k \ne 3$ and there exists $\alpha \in k$ such that $\alpha \ne 1$, $\alpha^3 =1$. Then $R$ is strictly closed in $\overline{R}$.
\end{cor}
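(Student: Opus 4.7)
The plan is to reduce to a local condition via Proposition~\ref{4.2} and then handle each prime separately, parallelling the argument of Theorem~\ref{ch}(2) at the irrelevant maximal ideal. First I would record that, under the hypothesis, $R$ is a one-dimensional reduced Cohen--Macaulay ring: the minimal prime decomposition $\fka = \bigcap_{\zeta^3=1}(Y-\zeta X,\,Z-\zeta^2 X)$ established in the proof of Theorem~\ref{ch}(2) transfers verbatim to the polynomial setting, yielding an embedding $R \hookrightarrow \overline{R} = k[x]\times k[x]\times k[x]$ in which the three minimal primes $P_1,P_\alpha,P_\beta$ meet only at the irrelevant maximal ideal $\m_R=(x,y,z)R$. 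Being one-dimensional, reduced, and Cohen--Macaulay, $R$ satisfies $(S_1)$, so Proposition~\ref{4.2} reduces the claim to verifying that $R_P$ is strictly closed in $\overline{R_P}$ for every $P\in X_1(R)=\Spec R$.

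The primes other than $\m_R$ are straightforward. If $P$ is minimal, $R_P$ is a field. If $P$ is maximal and $P\neq\m_R$, the corresponding geometric point lies on exactly one of the three lines (since they meet only at the origin); so a unique $P_i$ is contained in $P$ while the other two minimal primes generate the unit ideal after localization, giving $R_P\cong (R/P_i)_P$. Since $R/P_i\cong k[x]$ is a PID, this is a DVR and hence strictly closed.

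The crux is $P=\m_R$. I would show that $L:=R_{\m_R}$ is itself an Arf ring by directly imitating the second half of the proof of Theorem~\ref{ch}(2) on the localization. The relations $x^2=yz$, $y^2=xz$, $z^2=xy$ give $\m^2=x\m$ in $L$, so the blow-up is
$$
B \;=\; L^{\m} \;=\; \m/x \;=\; L\oplus Lt\oplus Lt^2,
$$
where $t=y/x$, $t^2=z/x$ and $t^3=yz/x^2=1$. As in the proof of Theorem~\ref{ch}(2), the ideals $M_\zeta:=\m B+(t-\zeta)B$ for $\zeta^3=1$ are three pairwise distinct maximal ideals of $B$ (distinctness from $\zeta-\zeta'\in M_\zeta\cap k=0$), and they exhaust $\Max B$ because every maximal of $B$ lies under some maximal of $\overline{L}=k[x]_{(x)}^{\,3}$, of which there are exactly three. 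The multiplicity equality
$$
3\;=\;\rme(L)\;=\;\rme_{xL}(L)\;=\;\ell_L(B/xB)\;=\;\sum_{M\in\Max B}\ell_{B_M}(B_M/xB_M)
$$
then forces each summand to equal $1$, so each $B_M$ is a DVR. Consequently $B$ is a one-dimensional semilocal regular Noetherian ring, hence normal, and since $\overline{B}=\overline{L}$ we conclude $B=\overline{L}$. This exhibits $L^{\m}=\overline{L}$ together with minimal multiplicity, so $L$ is an Arf ring by Lipman's criterion \cite[Theorem~2.2]{L}. Theorem~\ref{4.3} then yields that $L=R_{\m_R}$ is strictly closed in $\overline{R_{\m_R}}$, and Proposition~\ref{4.2} finishes the proof.

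The main obstacle is the step $B=\overline{L}$ in the non-complete setting: in Theorem~\ref{ch}(2) completeness of $A$ produced the product decomposition $B\cong B_1\times B_2\times B_3$ automatically, whereas here this is replaced by the observation that a one-dimensional reduced semilocal Noetherian ring all of whose localizations are DVRs coincides with its own normalization. Everything else is a faithful translation of the complete-case argument to the localized graded setting.
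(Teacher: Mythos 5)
Your proof is correct, but it handles the crux -- the irrelevant maximal ideal $\m_R = (x,y,z)$ -- by a route genuinely different from the paper's. The paper simply invokes Theorem~\ref{ch}(2), which concerns the complete ring $A = k[[X,Y,Z]]/\fka \cong \widehat{R_{\m_R}}$, and then descends Arf-ness from $\widehat{R_{\m_R}}$ to $R_{\m_R}$ (using Lipman's fact, cited in the Introduction, that the Arf property commutes with completion). You instead re-run the whole Theorem~\ref{ch}(2) argument directly on $L = R_{\m_R}$, which forces you to confront the one place the proof used completeness: there, completeness produced the splitting $B \cong B_1 \times B_2 \times B_3$ of the blow-up into local factors. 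Your replacement -- $B$ is a one-dimensional reduced semilocal Noetherian ring whose localizations at maximal ideals are all DVRs, hence $B$ is normal, hence $B = \overline{L}$ -- is correct and cleanly avoids completion altogether, at the cost of re-proving rather than citing. The two approaches also diverge at the non-irrelevant maximals: the paper computes $U[1/X]/\fka U[1/X] \cong \widetilde{k}\times\widetilde{k}\times\widetilde{k}$ explicitly, whereas you argue from the minimal-prime decomposition that each such $P$ lies on exactly one of the three lines (since $P_\alpha + P_\beta = (X,Y,Z)$ for $\alpha\neq\beta$), so $R_P \cong (R/P_\alpha)_P$ is a localization of $k[x]$. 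Both are valid; yours is geometrically motivated, the paper's is a direct computation.

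One small caveat: the claim that the minimal-prime decomposition of $\fka$ "transfers verbatim" from $k[[X,Y,Z]]$ to $k[X,Y,Z]$ is a bit quick. The inclusion $\fka \subseteq \bigcap_\alpha P_\alpha$ is a direct check, and the multiplicity argument of Theorem~\ref{ch}(2)'s Claim shows the two agree after localizing at $(X,Y,Z)$. To finish one must observe that all minimal primes of $\fka$ are homogeneous (hence contained in $(X,Y,Z)$), and $R$ is unmixed since it is Cohen--Macaulay, so the equality $\fka = \bigcap_\alpha P_\alpha$ can indeed be checked at $(X,Y,Z)$. This is routine, but worth a sentence in a full write-up.
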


\begin{proof}
Let $U=k[X, Y, Z]$ be the polynomial ring over the field $k$ and $\fka = {\rm I}_2 (
\begin{smallmatrix}
X & Y & Z \\
Y & Z & X
\end{smallmatrix})$. Notice that $R$ is Cohen-Macaulay and of dimension one. Thanks to Theorem \ref{ch} (2), it is enough to show that, for every $\p \in \Max R$ such that $\p \ne (x, y, z)$, $R_{\p}$ is regular. To do this, let $P \in \Spec U$ such that $P \supseteq \fka$ and $P \ne (X, Y, Z)$. Then $X \not\in P$. Let us consider
$\widetilde{U} = U\left[\frac{1}{x}\right]$ in $\rmQ(U)$.
Then 
$$
\widetilde{U} = U\left[\frac{1}{x}\right] = k\left[X, \frac{1}{X}\right]\left[\frac{Y}{X}, \frac{Z}{X}\right] = \widetilde{k}[Y_1, Z_1]
$$
is a polynomial ring over $\widetilde{k}$, where $\widetilde{k} = k\left[X, \frac{1}{X}\right]$, $Y_1 =\frac{Y}{X}$, and $Z_1 = \frac{Z}{X}$. Besides, $\fka \widetilde{U} = (1- Y_1Z_1, Y_1^2 - Z_1, Z_1^2 - Y_1)$. Hence we get the isomorphism
$$
\widetilde{U}/\fka \widetilde{U} \cong \widetilde{k}[Y_1]/(Y_1^3-1, Y_1^4-Y_1) \cong \widetilde{k}[Y_1]/(Y_1^3-1)
$$
of $\widetilde{k}$-algebras. 
We consider $\Lambda=\{\alpha \in k \mid \alpha^3=1\}$ and set $\Lambda =\{\alpha, \beta, \gamma\}$. Because 
$$
Y_1^3 = (Y_1 -\alpha)(Y_1 -\beta)(Y_1 -\gamma)
$$
in $k[Y_1]$, we have the isomorphisms
\begin{eqnarray*}
\widetilde{U}/\fka \widetilde{U} &\cong& \widetilde{k}[Y_1]/(Y_1^3-1) \cong \widetilde{k} \otimes_k \left(k[Y_1]/(Y_1^3-1)\right) \\
&\cong& \widetilde{k} \otimes_k \left(k[Y_1]/(Y_1-\alpha) \times k[Y_1]/(Y_1-\beta) \times k[Y_1]/(Y_1-\gamma)\right) \\
&\cong& \widetilde{k} \otimes_k \left(k \times k \times k\right) \\
&\cong& \widetilde{k} \times \widetilde{k} \times \widetilde{k} 
\end{eqnarray*}
of $\widetilde{k}$-algebras. As $X \not\in P$, we conclude $R_P$ is regular. Hence $R$ is strictly closed in $\overline{R}$.
\end{proof}


\section{Direct summands of weakly Arf rings} \label{sec13}

Some of the rings of invariants could be strictly closed in their integral closures; see Theorem \ref{15.1} and Corollary \ref{15.4}. In this section, we investigate this phenomenon. 

For commutative rings $A$ and $B$, we denote by $B/A$ a ring extension, i.e., $A$ is a subring of $B$. 
Let $T = W(A)$ be the set of all non-zerodivisors on $A$. 
We begin with the basic definition. 

\begin{defn}\label{11.1}
We say that the extension $B/A$ satisfies {\it the condition $(\sharp)$}, if $W(A) \subseteq W(B)$, and for every $x \in \rmQ(B)$, there exists $t \in W(A)$ such that $tx \in B$. 
\end{defn}

In particular,  if $B/A$ satisfies $(\sharp)$, then $\rmQ(B) = T^{-1}B$. For the converse, we have the following lemmata. 


\begin{lem}\label{11.3}
Suppose that $B$ is an integral domain, and $B$ is integral over $A$. Then the extension $B/A$ satisfies $(\sharp)$.
\end{lem}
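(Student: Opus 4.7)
The plan is to show both parts of condition $(\sharp)$ directly, exploiting the fact that a subring of an integral domain is itself an integral domain.

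First, I would observe that since $A$ is a subring of the integral domain $B$, the ring $A$ is itself an integral domain. Indeed, if $a, a' \in A$ satisfy $a a' = 0$ in $A$, then $a a' = 0$ in $B$, forcing $a = 0$ or $a' = 0$. Consequently $W(A) = A \setminus \{0\}$ and $W(B) = B \setminus \{0\}$, and every nonzero element of $A$ remains nonzero (hence a non-zerodivisor) in $B$. This yields the inclusion $W(A) \subseteq W(B)$, which is the first half of $(\sharp)$.

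For the second half, I would take an arbitrary $x \in \rmQ(B)$ and write $x = b/c$ with $b \in B$ and $c \in W(B) = B \setminus \{0\}$. The goal is to produce $t \in W(A)$ such that $tx \in B$. Since $B$ is integral over $A$, the element $c$ satisfies an integral equation over $A$; I would choose one of minimal degree, say
$$
c^n + a_1 c^{n-1} + \cdots + a_{n-1} c + a_n = 0 \qquad (a_i \in A).
$$
The key observation is that $a_n \ne 0$: if $a_n = 0$, then $c \bigl(c^{n-1} + a_1 c^{n-2} + \cdots + a_{n-1}\bigr) = 0$ in $B$, and since $B$ is a domain and $c \ne 0$, we would obtain an integral equation for $c$ of degree $n-1$, contradicting minimality.

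Given that $a_n \ne 0$, the integral equation rewrites as $a_n = -c \bigl(c^{n-1} + a_1 c^{n-2} + \cdots + a_{n-1}\bigr)$, so that $a_n/c \in B$. Therefore $a_n \cdot x = b \cdot (a_n/c) \in B$, and by the first step $a_n \in A \setminus \{0\} = W(A)$, so $t := a_n$ does the job. I do not anticipate any real obstacle here; the only subtlety is recognizing that the constant term of a minimal integral relation must be nonzero, a standard trick that works cleanly because $B$ is assumed to be a domain.
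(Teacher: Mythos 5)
Your proof is correct, and it is a more elementary, self-contained version of the paper's argument. The paper localizes at $T = W(A)$, notes that $T^{-1}B$ is an integral domain that is integral over the field $T^{-1}A = \rmQ(A)$, and then \emph{cites} the standard fact that a domain integral over a field is itself a field, so $T^{-1}B = \rmQ(B)$, which is exactly the second half of $(\sharp)$. You instead reprove that standard fact in place: given $x = b/c$, you take an integral relation for $c$ over $A$ of minimal degree, argue (using that $B$ is a domain and $c \ne 0$) that the constant term $a_n$ is nonzero, solve for $a_n/c \in B$, and conclude $a_n x \in B$ with $a_n \in W(A)$. This is the textbook proof of the fact the paper invokes, so the mathematical content coincides; the difference is only whether one quotes the lemma or inlines its proof. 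Your version has the small advantage of making visible exactly where the domain hypothesis on $B$ is used (to ensure the constant term is nonzero), while the paper's version is shorter and dovetails with Lemma~\ref{11.4}, where essentially the same localization argument is run with an Artinian total ring of fractions in place of a field.
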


\begin{proof}
We have $T \subseteq W(B)$. Hence $B \subseteq T^{-1}B \subseteq \rmQ(B)$. Since $T^{-1}B$ is an integral extension over $\rmQ(A) = T^{-1}A$, we obtain that $T^{-1}B$ is a field. Thus $T^{-1}B = \rmQ(B)$, so that $B/A$ satisfies the condition $(\sharp)$.
\end{proof}

\begin{lem}\label{11.4}
Let $B$ be a Noetherian ring, which is integral over $A$. Suppose that the following conditions hold true. 
\begin{enumerate}[$(1)$]
\item For every $P \in \Ass B$, $P \cap A \in \Ass A$.
\item $\rmQ(A)$ is an Artinian ring. 
\end{enumerate}
Then the extension $B/A$ satisfies $(\sharp)$.
\end{lem}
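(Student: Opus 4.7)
The plan is to first establish $W(A) \subseteq W(B)$ directly from hypothesis (1), and then exploit the Artinian property of $\rmQ(A)$ to show that $T^{-1}B$ (where $T = W(A)$) is itself Artinian, from which the clearing-of-denominators condition readily follows.

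First I would handle the inclusion $W(A) \subseteq W(B)$. Suppose $t \in W(A)$ but $t \notin W(B)$. Then there exists $P \in \Ass B$ with $t \in P$. By hypothesis (1), $\fkp := P \cap A \in \Ass A$, and clearly $t \in \fkp$; this contradicts $t \in W(A) = A \setminus \bigcup_{\fkq \in \Ass A} \fkq$. Hence $W(A) \subseteq W(B)$, and in particular every element of $T := W(A)$ is a non-zerodivisor on $B$, so the natural map $T^{-1}B \to \rmQ(B)$ is well-defined and injective.

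Next I would argue that $T^{-1}B$ is Artinian. Since $B$ is integral over $A$, the localization $T^{-1}B$ is integral over $T^{-1}A = \rmQ(A)$. By the Cohen--Seidenberg theorems, $\dim T^{-1}B = \dim \rmQ(A) = 0$ (the latter using hypothesis (2)). Because $B$ is Noetherian, so is $T^{-1}B$; a Noetherian ring of Krull dimension zero is Artinian, so $T^{-1}B$ is Artinian. In an Artinian ring every non-zerodivisor is a unit.

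Finally I would verify the clearing-of-denominators property. Let $x \in \rmQ(B)$, written as $x = b/s$ with $b \in B$ and $s \in W(B)$. The element $s/1$ is a non-zerodivisor in $T^{-1}B$: if $(s/1)(b'/t') = 0$ in $T^{-1}B$, then $t'' s b' = 0$ in $B$ for some $t'' \in T$; since $t'' \in W(A) \subseteq W(B)$ and $s \in W(B)$, this forces $b' = 0$. Consequently $s/1$ is a unit in $T^{-1}B$, so there exist $b'' \in B$ and $t \in T$ with $sb''/t = 1$ in $T^{-1}B$, i.e., there exists $t' \in T$ such that $t'(sb'' - t) = 0$ in $B$. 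Setting $\tau = t' t \in T$, we obtain $\tau = s(t' b'') \in sB$, whence $\tau x = \tau b / s = b(t' b'') \in B$, completing the proof.

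The main obstacle is the interplay between the two hypotheses: (1) is exactly what forces $W(A) \subseteq W(B)$ (which is needed both to make sense of the map $T^{-1}B \to \rmQ(B)$ and to cancel the auxiliary element $t''$ above), while (2), combined with integrality and the Noetherian hypothesis on $B$, is what makes $T^{-1}B$ Artinian so that $s/1$ becomes invertible. Neither hypothesis is enough by itself.
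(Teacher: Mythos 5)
Your proof is correct and follows essentially the same route as the paper: deduce $W(A)\subseteq W(B)$ from hypothesis (1), use integrality over the Artinian ring $\rmQ(A)$ together with Noetherianness of $B$ to conclude $T^{-1}B$ is Artinian, and then invert non-zerodivisors to obtain $T^{-1}B=\rmQ(B)$. You merely spell out two steps the paper leaves implicit — that $s/1$ is a non-zerodivisor in $T^{-1}B$, and that Noetherianness (not just zero-dimensionality) is needed for the Artinian conclusion.
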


\begin{proof}
Take a non-zerodivisor $f \in T$ on $A$. If we assume that $f \in P$ for some $P \in \Ass B$, then $f \in P \cap A \in \Ass A$. This makes a contradiction. Hence $T \subseteq W(B)$, so that $B \subseteq T^{-1}B \subseteq \rmQ(B)$. Since $T^{-1}B$ is an integral extension over $\rmQ(A) = T^{-1}A$, $T^{-1}B$ is an Artinian ring. For each $f \in W(B)$, it is a unit in $T^{-1}B$. Therefore $T^{-1}B = \rmQ(B)$. Hence $B/A$ satisfies the condition $(\sharp)$.
\end{proof}

With this notation we have the following. 

\begin{thm}\label{11.5}
Suppose that $B$ is integral over $A$, $A$ is a direct summand of $B$ as an $A$-module, and the extension $B/A$ satisfies $(\sharp)$. If $B$ is strictly closed in $\overline{B}$, then so is $A$ in $\overline{A}$. In particular, $A$ is a weakly Arf ring.  
\end{thm}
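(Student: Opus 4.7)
The plan is to show $A^{*} = A$ by constructing a natural comparison map $\Phi : \overline{A} \otimes_A \overline{A} \to \overline{B} \otimes_B \overline{B}$, tracking the defining identity of $A^{*}$ through $\Phi$ to deduce every $x \in A^{*}$ lies in $B^{*} = B$, and then descending to $A$ using the $A$-linear splitting of $A \hookrightarrow B$. The ``in particular'' clause will follow from the tensor identity already used in the proof of Theorem \ref{Zariski-Lipman}.

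First I would exploit the condition $(\sharp)$ to obtain the containment $\overline{A} \subseteq \overline{B}$. Setting $T = W(A)$, the condition $(\sharp)$ yields $\rmQ(B) = T^{-1}B$, and combined with $W(A) \subseteq W(B)$ this forces $\rmQ(A) = T^{-1}A \subseteq T^{-1}B = \rmQ(B)$. Since $B$ is integral over $A$, every element of $\overline{A} \subseteq \rmQ(A) \subseteq \rmQ(B)$ is integral over $B$, so $\overline{A} \subseteq \overline{B}$. The inclusion $\overline{A} \hookrightarrow \overline{B}$ is $A$-linear and $A$-balanced (since $A \subseteq B$), so it induces a well-defined $A$-algebra homomorphism
$$
\Phi : \overline{A} \otimes_A \overline{A} \longrightarrow \overline{B} \otimes_B \overline{B}, \qquad \alpha \otimes \beta \longmapsto \alpha \otimes \beta.
$$

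Next, given $x \in A^{*}$, the identity $x \otimes 1 = 1 \otimes x$ in $\overline{A} \otimes_A \overline{A}$ is carried by $\Phi$ to $x \otimes 1 = 1 \otimes x$ in $\overline{B} \otimes_B \overline{B}$, so $x \in B^{*} = B$. To descend to $A$, I would fix an $A$-linear retraction $\pi : B \to A$ of the inclusion $A \hookrightarrow B$. Writing $x = a/t$ with $a \in A$ and $t \in T$ (possible because $x \in \overline{A} \subseteq \rmQ(A)$), the equality $tx = a$ holds in $\rmQ(B)$, and as $t, x, a \in B$ with $B \hookrightarrow \rmQ(B)$, it holds in $B$. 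Applying the $A$-linear map $\pi$ gives $t\pi(x) = \pi(tx) = \pi(a) = a = tx$ in $B$; since $t \in W(B)$, we conclude $\pi(x) = x$, and therefore $x = \pi(x) \in A$.

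Finally, for the ``in particular'' statement, I would reuse the tensor computation from the proof of Theorem \ref{Zariski-Lipman}: for any $x, y, z \in A$ with $x \in W(A)$ and $y/x, z/x \in \overline{A}$, one has $(yz/x) \otimes 1 = 1 \otimes (yz/x)$ in $\overline{A} \otimes_A \overline{A}$, so $yz/x \in A^{*} = A$ and $A$ is weakly Arf. I expect the main (though mild) obstacle to be the clean construction of $\Phi$ and the careful bookkeeping of ambient rings during the descent step; beyond that, $(\sharp)$ and the direct-summand hypothesis do all of the work.
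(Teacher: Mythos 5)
Your proof is correct and follows essentially the same route as the paper: use $(\sharp)$ to get $\rmQ(A) \subseteq \rmQ(B)$ and hence $\overline{A} \subseteq \overline{B}$, build the comparison map $\overline{A}\otimes_A\overline{A} \to \overline{B}\otimes_B\overline{B}$ to push the identity $x\otimes 1 = 1\otimes x$ into $\overline{B}\otimes_B\overline{B}$ and conclude $A^* \subseteq B^* = B$, then apply the $A$-linear retraction to land in $A$. The only cosmetic difference is that you spell out the ``in particular'' clause via the tensor identity, whereas the paper simply invokes the (already established) implication strictly closed $\Rightarrow$ weakly Arf.
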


\begin{proof}
Since $A$ is a direct summand of $B$, we choose an $A$-linear map $\rho: B \to A$ such that $\rho(a) = a$ for each $a \in A$. The condition $(\sharp)$ of $B/A$ guarantees that $\rmQ(A) \subseteq \rmQ(B)$. Hence, $\overline{A} \subseteq \overline{B}$. We now consider a homomorphism $\varphi: \overline{A}\otimes_A \overline{A} \to \overline{B} \otimes_B \overline{B}$ of additive groups such that $\varphi(x \otimes y) = x \otimes y$ for each $x, y \in \overline{A}$.
Let $x \in A^*$. Then $x \in \overline{A} \subseteq \overline{B}$ and 
\begin{center}
$x\otimes1 = 1 \otimes x$ \  in \ $\overline{B} \otimes_B \overline{B}$. 
\end{center}
This implies $x \in B^* = B$. As $x \in \rmQ(A) = T^{-1}A$, we write $x = a/t$ where $a \in A$ and $t \in T$. Then $a = \rho(a) = \rho(tx) = t\rho(x)$, so that $x = a/t = \rho(x) \in A$. Therefore $A$ is strictly closed in $\overline{A}$, as desired.  
\end{proof}

\begin{cor}
Let $R$ be a commutative ring, and let $G$ be a finite group whose order is invertible. Suppose that the extension $R/R^G$ satisfies $(\sharp)$, where $R^G$ denotes the invariant subring of $R$. If $R$ is strictly closed in $\overline{R}$, then so is $R^G$ in $\overline{R^G}$. In particular, $R^G$ is a weakly Arf ring. 
\end{cor}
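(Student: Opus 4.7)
The plan is to deduce the corollary directly from Theorem \ref{11.5} by taking $A = R^G$ and $B = R$. Three of the four hypotheses of Theorem \ref{11.5} are either given outright or follow immediately: the extension $R/R^G$ satisfies $(\sharp)$ by assumption, and $R$ is strictly closed in $\overline{R}$ by assumption. It therefore remains to verify that $R$ is integral over $R^G$ and that $R^G$ is a direct summand of $R$ as an $R^G$-module.

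For integrality, I would use the classical argument: for each $r \in R$, the monic polynomial
$$
f_r(T) \ = \ \prod_{g \in G}\bigl(T - g(r)\bigr) \ \in \ R[T]
$$
has coefficients invariant under the action of $G$ (its coefficients are elementary symmetric polynomials in the finite $G$-orbit of $r$), hence $f_r(T) \in R^G[T]$, and $f_r(r)=0$. Thus $R$ is integral over $R^G$. For the direct summand property, I would use the Reynolds operator: since $|G|$ is invertible in $R$, the map
$$
\rho : R \longrightarrow R^G, \qquad \rho(r) \ = \ \frac{1}{|G|}\sum_{g \in G} g(r)
$$
is well-defined, $R^G$-linear, and satisfies $\rho(a)=a$ for every $a \in R^G$. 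Hence $\rho$ splits the inclusion $R^G \hookrightarrow R$, so $R^G$ is a direct summand of $R$ as an $R^G$-module.

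With all four hypotheses of Theorem \ref{11.5} verified, that theorem yields that $R^G$ is strictly closed in $\overline{R^G}$, and its final clause further gives that $R^G$ is a weakly Arf ring, completing the proof.

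I do not anticipate any serious obstacle here: all the substantive work has been done in Theorem \ref{11.5}, and this corollary is essentially a packaging of its hypotheses in the invariant-theoretic setting. The only subtle point is that one needs $R^G$ to be a subring of $R$ in the usual sense so that $\rmQ(R^G) \subseteq \rmQ(R)$ is meaningful; this is precisely what the $(\sharp)$ hypothesis guarantees via Definition \ref{11.1}, and no further checking is required.
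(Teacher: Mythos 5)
Your proof is correct and follows the same route as the paper: verify integrality of $R$ over $R^G$, use the Reynolds operator $\rho(r) = \frac{1}{|G|}\sum_{g\in G} g(r)$ to exhibit $R^G$ as a direct summand of $R$, and then invoke Theorem \ref{11.5}. The paper states the integrality without writing out the monic polynomial $\prod_{g\in G}(T - g(r))$, but that is precisely the standard argument it has in mind.
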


\begin{proof}
Let $n$ denote the order of $G$. Then an $R^G$-linear map
$$
\rho : R \to R^G, \ \ a \mapsto \frac{1}{n}\sum_{\sigma \in G} \sigma(a)
$$
gives a split monomorphism, so that $R^G$ is a direct summand of $R$ as an $R^G$-module. Since the order of $G$ is finite, $R$ is integral over $R^G$. Hence, the assertion follows from Theorem \ref{11.5}.
\end{proof}

Consequently, by Lemma \ref{11.3}, we get the following.

\begin{cor}
Let $R$ be an integral domain, and let $G$ be a finite group such that the order of $G$ is invertible in $R$. If $R$ is strictly closed in $\overline{R}$, then so is $R^G$ in $\overline{R^G}$. In particular, $R^G$ is a weakly Arf ring. 
\end{cor}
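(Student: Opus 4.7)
The plan is to deduce this immediately from the preceding corollary, using Lemma~\ref{11.3} to supply the hypothesis $(\sharp)$. First I would observe that $R$ is integral over the invariant subring $R^G$: for each $r \in R$, the monic polynomial $\prod_{\sigma \in G}(t - \sigma(r)) \in R^G[t]$ has coefficients fixed by $G$ and annihilates $r$, so every element of $R$ satisfies an integral relation over $R^G$. Combined with the standing hypothesis that $R$ is an integral domain, Lemma~\ref{11.3} then guarantees that the ring extension $R/R^G$ satisfies condition $(\sharp)$ of Definition~\ref{11.1}.

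With $(\sharp)$ in hand, and with $|G|$ invertible in $R$ so that the Reynolds operator $\rho(a) = |G|^{-1}\sum_{\sigma \in G}\sigma(a)$ realizes $R^G$ as a direct $R^G$-module summand of $R$, the preceding corollary applies verbatim: the strict closedness of $R$ in $\overline{R}$ transfers to the strict closedness of $R^G$ in $\overline{R^G}$. The concluding \emph{in particular} clause, namely that $R^G$ is weakly Arf, is then the general implication ``strictly closed $\Rightarrow$ weakly Arf'' used already in the proof of Theorem~\ref{Zariski-Lipman}, coming from the identity
$$
\frac{yz}{x} \otimes 1 = \frac{y}{x} \otimes \left(x \cdot \frac{z}{x}\right) = \left(\frac{y}{x}\cdot x\right) \otimes \frac{z}{x} = 1 \otimes \frac{yz}{x}
$$
in $\overline{R^G}\otimes_{R^G}\overline{R^G}$ for any $x \in W(R^G)$ and $y,z \in R^G$ with $y/x, z/x \in \overline{R^G}$.

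There is essentially no substantive obstacle: the statement is a direct specialization of the preceding corollary to the integral-domain setting, once one supplies $(\sharp)$, and Lemma~\ref{11.3} does exactly this. The only mild subtlety worth flagging is the compatibility between the two total rings of fractions: because $R$ is a domain, $\mathrm{Q}(R)$ is a field and $\mathrm{Q}(R^G) \subseteq \mathrm{Q}(R)$, so that $\overline{R^G} \subseteq \overline{R}$ inside the same ambient field — this is what makes the tensor product manipulation at the level of $R^G$ sit cleanly inside the tensor product at the level of $R$, and is implicit in the proof of Theorem~\ref{11.5} which the preceding corollary invokes.
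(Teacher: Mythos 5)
Your proof is correct and follows exactly the paper's route: the paper states this corollary as an immediate consequence of Lemma~\ref{11.3} combined with the preceding corollary, and you supply precisely the right details (the integrality of $R$ over $R^G$ via the $G$-orbit polynomial, the Reynolds operator for the splitting, and Lemma~\ref{11.3} delivering condition $(\sharp)$). The closing observation that strict closedness implies weakly Arf matches the implication $(1)\Rightarrow(2)$ in the proof of Theorem~\ref{Zariski-Lipman}.
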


Let us note one more consequence of Theorem \ref{11.5}.

\begin{cor}\label{11.6}
Suppose that $B$ is integral over $A$, $A$ is a direct summand of $B$ as an $A$-module, and $B$ is a Noetherian ring. Moreover, we assume that for every $P \in \Ass B$, $P \cap A \in \Ass A$ and $A$ satisfies $(S_1)$. If $B$ is strictly closed in $\overline{B}$, then so is $A$ in $\overline{A}$. 
\end{cor}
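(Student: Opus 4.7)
The plan is to derive this from Corollary 11.6 by reducing it to Theorem \ref{11.5} via Lemma \ref{11.4}. Indeed, Theorem \ref{11.5} already gives the conclusion provided one knows that the extension $B/A$ satisfies the condition $(\sharp)$ of Definition \ref{11.1}, and Lemma \ref{11.4} supplies exactly such a criterion: it requires $B$ to be Noetherian and integral over $A$ (both assumed here), the contraction condition $P \cap A \in \Ass A$ for $P \in \Ass B$ (assumed here), and the Artinianness of $\rmQ(A)$. Thus the entire task reduces to verifying that $\rmQ(A)$ is Artinian under the standing assumption that $A$ satisfies $(S_1)$.

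First I would observe that $A$ is Noetherian, which is part of the convention for the Serre condition $(S_1)$ adopted in this paper (alternatively, since $A$ is a direct summand of $B$ as an $A$-module and $B$ is Noetherian, any ascending chain $I_1 \subseteq I_2 \subseteq \cdots$ of ideals of $A$ expands to a chain in $B$ which stabilizes; the splitting yields $I_j B \cap A = I_j$, so the original chain stabilizes). Next I would use $(S_1)$ to conclude that every associated prime of $A$ is minimal, i.e., $\Ass A = \Min A$, and in particular $\Ass A$ is a finite set since $A$ is Noetherian.

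The key step is then the following: let $T = W(A) = A \setminus \bigcup_{P \in \Ass A} P$. The primes of $\rmQ(A) = T^{-1}A$ correspond bijectively to primes $Q \in \Spec A$ disjoint from $T$, i.e., primes with $Q \subseteq \bigcup_{P \in \Ass A} P$. Since $\Ass A$ is finite, prime avoidance forces $Q \subseteq P$ for some $P \in \Ass A$; but $Q$ itself contains some minimal prime $P'$, and $P' \subseteq Q \subseteq P$ with both $P,P'$ minimal gives $Q = P$. Hence $\Spec \rmQ(A)$ is a finite set consisting of primes that are simultaneously maximal and minimal, so $\rmQ(A)$ is a zero-dimensional Noetherian ring, i.e., Artinian.

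With this in hand, Lemma \ref{11.4} applies to give that $B/A$ satisfies $(\sharp)$, and then Theorem \ref{11.5} immediately yields that $A$ is strictly closed in $\overline{A}$. There is no substantive obstacle; the main point is the standard commutative-algebra observation that a Noetherian ring satisfying $(S_1)$ has an Artinian total ring of fractions, after which the result is a clean composition of the two prior results.
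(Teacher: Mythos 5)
Your proof is correct and follows essentially the same route as the paper: establish that $A$ is Noetherian (the paper uses the direct-summand argument you mention), observe that $(S_1)$ gives $\Ass A = \Min A$ and hence $\rmQ(A)$ is Artinian, then invoke Lemma \ref{11.4} to get condition $(\sharp)$ and conclude via Theorem \ref{11.5}. You supply a more detailed justification of the Artinianness of $\rmQ(A)$ than the paper, which simply asserts it, but the logic is the same.
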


\begin{proof}
Since $A$ is a direct summand of $B$, $A$ is Noetherian. The total ring of fractions $\rmQ(A)$ of $A$ is Artinian, because $\Min A = \Ass A$. By Lemma \ref{11.4}, the extension $B/A$ satisfies $(\sharp)$. Hence, the assertion follows from Theorem \ref{11.5}.
\end{proof}

We now apply Corollary \ref{11.6} to the idealization $A=R \ltimes M$ of a finitely generated torsion-free module $M$ over a Noetherian ring $R$.

\begin{thm}
Let $R$ be a Noetherian ring, and let $M$ be a finitely generated $R$-module such that $M$ is torsion-free.
Suppose that $R$ satisfies $(S_1)$. If $A=R \ltimes M$ is strictly closed in $\overline{A}$, then so is $R$ in $\overline{R}$.
\end{thm}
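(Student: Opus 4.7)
The plan is to deduce the theorem from Corollary~\ref{11.6}, applied to the ring extension $R \subseteq A = R \ltimes M$, so that $R$ plays the role of the base ring (``$A$'') and the idealization $A = R \ltimes M$ plays the role of the larger ring (``$B$'') in that corollary. Once the hypotheses of Corollary~\ref{11.6} are verified, strict closedness of $A$ in $\overline{A}$ will immediately transfer to strict closedness of $R$ in $\overline{R}$.

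First I would check the four easy hypotheses. The idealization is integral over $R$ because every element $(r, x) \in A$ satisfies $((r, x) - (r, 0))^2 = (0, x)^2 = 0$ with $(r,0) \in R$. The canonical $R$-module decomposition $A = R \oplus M$ exhibits $R$ as a direct summand of $A$, the splitting being the first projection $(r, x) \mapsto r$. Since $R$ is Noetherian and $M$ is finitely generated, $A$ is Noetherian as well. Finally, $R$ satisfies $(S_1)$ and $A$ is strictly closed in $\overline{A}$ by hypothesis.

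The one non-trivial verification is the associated-prime condition: for every $P \in \Ass A$, one has $P \cap R \in \Ass R$. Every prime of $A$ has the form $\fkp \times M$ for a unique $\fkp \in \Spec R$, and $(\fkp \times M) \cap R = \fkp$, so the task reduces to showing that $\fkp \times M \in \Ass A$ implies $\fkp \in \Ass R$. Applying $\Ass_A(-)$ to the short exact sequence $0 \to 0 \times M \to A \to R \to 0$ of $A$-modules (both outer terms being annihilated by the nilpotent ideal $0 \times M$, hence carrying canonical $R$-module structures) yields $\Ass_A A \subseteq \{\fkp \times M : \fkp \in \Ass_R R \cup \Ass_R M\}$. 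Thus it suffices to prove $\Ass_R M \subseteq \Ass_R R$. For $\fkp = (0 :_R m) \in \Ass_R M$, the torsion-freeness of $M$ forces every element of $\fkp$ to be a zero-divisor on $R$, so by prime avoidance $\fkp \subseteq \fkq$ for some $\fkq \in \Ass R$; the $(S_1)$ hypothesis gives $\Ass R = \Min R$, so $\fkq$ is a minimal prime, and therefore $\fkp = \fkq \in \Ass R$. This is the main (mild) obstacle in the argument: both the torsion-freeness of $M$ and the $(S_1)$ hypothesis on $R$ are essential precisely at this step, and removing either would break the reduction to Corollary~\ref{11.6}.
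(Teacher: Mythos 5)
Your proposal is correct and follows the same route as the paper: both reduce the statement to Corollary~\ref{11.6}, whose only nontrivial hypothesis for $R \subseteq A = R \ltimes M$ is that $P \cap R \in \Ass R$ for every $P \in \Ass A$. The paper verifies this by localizing at $P \cong \fkp \times M$ and producing from $(S_1)$ and prime avoidance a non-zerodivisor in $\fkp$ (which torsion-freeness of $M$ promotes to a non-zerodivisor on $A_P$, contradicting $\depth A_P = 0$), while you pass through the short exact sequence $0 \to 0 \times M \to A \to R \to 0$ and show $\Ass_R M \subseteq \Ass R$ directly -- two formulations of the same computation using the same three ingredients ($(S_1)$, torsion-freeness, prime avoidance).
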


\begin{proof}
For each $P \in \Ass A$, we choose $\fkp \in \Spec R$ such that $P = \fkp \times M$. We then have an isomorphism 
$$
A_P \cong R_{\fkp} \ltimes M_{\fkp}
$$ of $R_{\fkp}$-algebras. We show that $\fkp \in \Ass R$. Indeed,  assume that $\fkp \not\in \Ass R$. Then, because $R$ satisfies $(S_1)$, $\fkp$ contains a non-zerodivisor on $M$. Hence $a$ is a non-zerodivisor on $M_{\fkp}$. This makes a contradiction. Hence $\fkp \in \Ass R$, as desired. 
\end{proof}

In the rest of this section, unless otherwise specified, we maintain the following.

\begin{setting}
Let $B$ be a Noetherian ring, and let $A$ be an arbitrary subring of $B$. We assume that $B$ is integral over $A$ and $A$ is a direct summand of $B$ as an $A$-module. 
\end{setting}

\begin{defn}\label{11.7}
We say that $B$ satisfies {\it the condition $(C)$}, if the following conditions are satisfied: 
\begin{enumerate}[$(1)$]
\item $d = \dim B < \infty$, and 
\item For every maximal chain $P_0 \subsetneq P_1 \subsetneq \cdots \subsetneq P_n$ in $\Spec B$, we have $n=d$.
\end{enumerate}
\end{defn}
\noindent
Notice that if $B$ satisfies $(C)$, then $\dim B_M = \dim B$ for every $M\in \Max B$.

\begin{lem}\label{11.8}
If $B$ satisfies $(C)$, then $P\cap A \in \Min A$ for every $P\in \Min B$. 
\end{lem}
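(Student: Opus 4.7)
The plan is to combine Cohen--Seidenberg theory with condition $(C)$ to pin down $\dim A/(P\cap A)$ and then rule out non-minimality of $P\cap A$ in $A$ by a dimension count. The direct summand hypothesis will not be needed in this particular lemma, although the integrality of $B$ over $A$ is essential throughout.

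First I would record that since $B$ is integral over $A$, the Cohen--Seidenberg ``going-up'' and ``lying-over'' theorems give $\dim A = \dim B = d$. Next, fix $P \in \Min B$ and set $\fkp = P \cap A$. The key observation is that any maximal chain of prime ideals in $\Spec B$ beginning at $P$ is automatically a \emph{maximal} chain in $\Spec B$ (we cannot extend it below, because $P$ is already minimal), so by condition $(C)$ such a chain has length exactly $d$. This yields $\dim B/P = d$. Since $A/\fkp \hookrightarrow B/P$ is again an integral extension, Cohen--Seidenberg applied to this extension gives $\dim A/\fkp = \dim B/P = d$.

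Finally, I would argue by contradiction: suppose $\fkp \notin \Min A$, so that there exists $\fkq \in \Spec A$ with $\fkq \subsetneq \fkp$. Prepending $\fkq$ to any saturated chain from $\fkp$ to a maximal ideal of $A$ of length $\dim A/\fkp = d$ yields a chain in $\Spec A$ of length at least $d+1$, whence $\dim A \ge d+1$, contradicting $\dim A = d$. Therefore $\fkp \in \Min A$, as claimed.

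The only step that requires care is the assertion $\dim B/P = d$; this is where condition $(C)$ is used in an essential way, since in general a minimal prime of a Noetherian ring need not satisfy $\dim B/P = \dim B$. Everything else is a direct application of standard integral-extension theory, so I do not expect any serious obstacle beyond making sure the chain-extension argument is stated correctly.
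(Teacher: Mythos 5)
Your proof is correct and follows essentially the same route as the paper's: both use condition $(C)$ to pin down the length of a saturated chain above $P$ as $d=\dim B$, and then apply Cohen--Seidenberg theory to convert this into a dimension statement in $A$ that forces $P\cap A$ to be minimal. The only cosmetic difference is that the paper contracts an explicit chain from $P$ to a maximal ideal $M$ and computes $\dim A_{\m}$ for $\m=M\cap A$, whereas you compute $\dim A/(P\cap A)$ directly from the equality $\dim B/P=\dim A/(P\cap A)$ for integral extensions.
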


\begin{proof}
For each $P \in \Min B$, we choose a maximal ideal $M$ in $B$ such that $P \subseteq M$. Let us consider a maximal chain 
$$
P = P_0 \subsetneq P_1 \subsetneq \cdots \subsetneq P_n = M
$$
of prime ideals in $B$ starting from $P$, and ending with $M$. We then have $n = \dim B$. Hence, by setting $\m = M \cap A$, we see that 
$$
\dim A = \dim B = n  \le \dim A_{\m},
$$
so that $\dim A_{\m} = n$. In particular, $P \cap A \in \Min A$.
\end{proof}

Hence we get the following.

\begin{prop}\label{11.9}
Suppose that $B$ satisfies $(C)$. For each $n \in \Bbb Z$, if $B$ satisfies $(S_n)$, then so does $A$.
\end{prop}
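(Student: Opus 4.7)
The plan is to show, for an arbitrary $\p \in \Spec A$, the inequality $\depth A_\p \ge \min\{n, \dim A_\p\}$ by building an $A_\p$-regular sequence of that length inside $\p A_\p$. The key technical input is that the splitting $\rho : B \to A$ survives localization and quotienting by elements of $A$, so at every stage the quotient $A_\p/(x_1,\dots,x_i)A_\p$ is an $A_\p$-direct summand of $B_\p/(x_1,\dots,x_i)B_\p$, where $B_\p = (A\setminus \p)^{-1}B$. Via this splitting, any sequence in $A_\p$ which is $B_\p$-regular is automatically $A_\p$-regular: if $xy = 0$ in $A_\p$ with $x$ regular on $B_\p$, already $y=0$ in $B_\p$ and so in $A_\p$, and the higher stages are analogous using the induced splittings of the quotients. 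Therefore it suffices to construct the sequence $B_\p$-regularly inside $\p A_\p$.

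First I would observe that $A$ is Noetherian (an ascending chain of ideals of $A$ extends to a chain in $B$ which is strict because $IB \cap A = I$ by the splitting), and, by going-up, lift $\p$ to some $P \in \Spec B$ with $P \cap A = \p$. The next step is to pin down the dimension: condition $(C)$ gives $\dim B = \height P + \dim B/P$, integrality gives $\dim B/P = \dim A/\p$ and $\dim A = \dim B$, and combining these with the standard inequalities $\dim A_\p + \dim A/\p \le \dim A$ and $\dim B_P \le \dim A_\p$ (the latter by incomparability) forces $\dim A_\p = \dim B_P$ for \emph{every} prime $P$ of $B$ lying over $\p$. Consequently $(S_n)$ for $B$ yields $\depth B_{P'} \ge \min\{n, \dim A_\p\} =: k$ for each such $P'$.

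Next I would identify the primes of $B_\p$ lying over $\p A_\p$ as exactly the finitely many maximal ideals $P_1 B_\p,\dots,P_r B_\p$ of $B_\p$ (finiteness coming from the fact that $B_\p/\p B_\p$ is Noetherian and integral over the field $A_\p/\p A_\p$, hence Artinian), and then build the sequence by induction on $k$. The base case $k=0$ is trivial. For the inductive step, given a $B_\p$-regular sequence $x_1,\dots,x_i \in \p A_\p$ with $i<k$, I seek $x_{i+1}\in \p A_\p$ whose image is regular on $B_\p/(x_1,\dots,x_i)B_\p$; by prime avoidance inside $A_\p$, it is enough to show that $\p A_\p \not\subseteq P' \cap A_\p$ for every $P' \in \Ass_{B_\p}(B_\p/(x_1,\dots,x_i)B_\p)$. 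If the inclusion held, $P'$ would contract to $\p A_\p$ and hence equal $P_j B_\p/(x_1,\dots,x_i)B_\p$ for some $j$; but $x_1,\dots,x_i$ remain regular after flat localization to $B_{P_j}$, so $\depth B_{P_j}/(x_1,\dots,x_i)B_{P_j} = \depth B_{P_j} - i \ge k - i \ge 1$, a contradiction.

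The principal obstacle is exactly this last step: extracting the regular element from the subring $A_\p$ rather than merely from $B_\p$. This is where condition $(C)$ earns its keep, by forcing $\dim B_{P_j} = \dim A_\p$ so that $(S_n)$ on $B$ supplies depth lower bounds large enough to rule out the offending contracted associated primes. Once the regular sequence of length $k$ in $\p A_\p$ is produced, its $A_\p$-regularity is automatic from the splitting, completing the proof.
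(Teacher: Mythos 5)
Your proof is correct, but it takes a genuinely different route from the paper's. The paper argues by induction on $n$: the base case $n=1$ is handled by lifting an associated prime $\fkp$ of $A$ to some $P\in\Ass B=\Min B$ and invoking Lemma \ref{11.8} to conclude $\fkp\in\Min A$, while the inductive step picks a prime where $(S_n)$ fails, takes a non-zerodivisor $f\in\fkp\cap W(A)$, passes to the extension $A/fA\subseteq B/fB$, checks (this is the part of the paper's proof that uses $(C)$) that condition $(C)$ is inherited by $B/fB$, and derives a contradiction from the $(S_{n-1})$ case. Your argument instead fixes a single prime $\fkp$, extracts the dimension equality $\dim B_{P'}=\dim A_\fkp$ for every $P'$ lying over $\fkp$ directly from $(C)$ (via the chain identity $\dim B=\height P'+\dim B/P'$ together with integrality and incomparability), and then constructs a $B_\fkp$-regular sequence of length $\min\{n,\dim A_\fkp\}$ inside $\fkp A_\fkp$ by prime avoidance, using the depth bounds from $(S_n)$ on $B$ at each stage to rule out associated primes contracting to $\fkp A_\fkp$; the splitting $\rho$ then transports $B_\fkp$-regularity to $A_\fkp$-regularity, since it forces $(x_1,\dots,x_i)B_\fkp\cap A_\fkp=(x_1,\dots,x_i)A_\fkp$ at every stage. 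Both proofs lean on $(C)$, but in different guises: the paper needs $(C)$ to be stable under cutting down by $f$, whereas you spend it once, up front, to equate heights in $B$ with heights in $A$. The paper's inductive framework keeps the bookkeeping short; your direct construction is more explicit about exactly where the depth of $B$ is being consumed and avoids the separate $n=1$ base case, at the cost of a somewhat longer argument.
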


\begin{proof}
We may assume $n >0$. Suppose that $n=1$. We will show that $\Min A = \Ass A$. Indeed, let $\fkp \in \Ass A$. Since $\fkp \in \Ass_A B$, we can take $P \in \Ass B$ such that $P \cap A = \fkp$. Remember that $B$ satisfies $(S_1)$, i.e., $\Min B = \Ass B$. By Lemma \ref{11.8}, we have $\fkp = P \cap A \in \Min A$, so $A$ satisfies $(S_1)$. Suppose that $n \ge 2$ and the assertion holds for $n-1$. We assume that $A$ does not satisfy $(S_n)$. Then there exists $\fkp \in \Spec A$ such that $\depth A_{\fkp} < \min \{n, \dim A_{\fkp}\}$. Hence $\depth A_{\fkp} \le n-1$ and $\dim A_{\fkp} \ge n$. Since $B$ satisfies $(S_{n-1})$, so does $A$. Therefore, we have 
$$
\depth A_{\fkp} \ge \min \{n-1, \dim A_{\fkp}\} = n-1
$$
which yields that $\depth A_{\fkp} = n-1 >0$. Hence $\fkp \not\in \Ass A$. Now, because $A$ satisfies $(S_1)$, there exists $f \in \fkp$ such that $f \in W(A)$. Then $f \in W(B)$. The splitting monomorphism $A \to B$ gives rise to a split morphism
$$
0 \to A/fA \to B/fB
$$
of $A/fA$-modules, which is also injective. Notice that $B/fB$ is integral over $A/fA$ and $B/fB$ satisfies the condition $(C)$. In fact, because $f\in W(B)$, $\dim B/fB = d-1$, where $d = \dim B$. We now take $P_1 \in \Min_B B/fB$. Choose a maximal chain in $\Spec B$ from $P_1$:
$$
fB \subseteq P_1 \subsetneq P_2 \subsetneq \cdots \subsetneq P_n.
$$
Since $f$ is a non-zerodivisor on $B$, we see that $\height_B P_1 = 1$, whence $P_1$ contains a minimal prime $P_0$. Thus $n = d$, which implies that $B/fB$ satisfies the condition $(C)$. By induction arguments, $A/fA$ satisfies $(S_{n-1})$. Therefore
$$
\depth (A/fA)_{\fkp}\ge \min \{n-1, \dim (A/fA)_{\fkp}\}
$$
which is absurd, because $\depth A_{\fkp} = n-1$ and $\dim A_{\fkp} \ge n$. Consequently, $A$ satisfies $(S_n)$ condition. This completes the proof.
\end{proof}

We summarize some consequences.

\begin{cor}\label{11.10}
Suppose that $B$ satisfies $(C)$. If $B$ is Cohen-Macaulay, then so is $A$.
\end{cor}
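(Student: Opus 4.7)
The plan is to deduce this directly from Proposition \ref{11.9}. The only work is to package the Cohen-Macaulay property as an $(S_n)$-condition for sufficiently large $n$ and then transfer it from $B$ to $A$.

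First I would observe that since $B$ satisfies $(C)$, we have $d := \dim B < \infty$, and because $B$ is integral over $A$ (so lying-over and going-up apply), $\dim A = \dim B = d$ as well. Consequently, for every $\fkp \in \Spec A$, $\dim A_\fkp \le d$.

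Next, since $B$ is Cohen-Macaulay, $B$ satisfies Serre's condition $(S_n)$ for every $n \ge 0$; in particular it satisfies $(S_d)$. Proposition \ref{11.9} then transfers this to $A$, so $A$ satisfies $(S_d)$. This means that for every $\fkp \in \Spec A$,
$$
\depth A_\fkp \ge \min\{d,\ \dim A_\fkp\} = \dim A_\fkp,
$$
where the equality uses $\dim A_\fkp \le d$. Combined with the standard inequality $\depth A_\fkp \le \dim A_\fkp$, we get equality for every prime, i.e., $A$ is Cohen-Macaulay.

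There is no substantive obstacle here; the proof is essentially a one-line invocation of Proposition \ref{11.9} together with the observation that the condition $(C)$ pins down $\dim A$ to be finite and equal to $\dim B$, which is what allows us to promote $(S_d)$ to full Cohen-Macaulayness. The only point that warrants a line of justification is the finiteness and equality of dimensions, which follows from integrality of $B/A$ and condition $(C)$ on $B$.
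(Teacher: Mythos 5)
Your proof is correct and is the deduction the paper intends: the corollary is stated without proof precisely because it is meant to follow immediately from Proposition~\ref{11.9}. You correctly observe that integrality of $B/A$ gives $\dim A = \dim B = d < \infty$ (so $\dim A_\fkp \le d$ for every $\fkp$), that Cohen--Macaulayness of $B$ gives $(S_d)$, and that Proposition~\ref{11.9} transfers $(S_d)$ to $A$, forcing $\depth A_\fkp = \dim A_\fkp$ for all $\fkp$; no gaps.
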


\begin{cor}\label{11.11}
Suppose that $B$ satisfies $(C)$. Then the following assertions hold true. 
\begin{enumerate}[$(1)$]
\item If $B$ satisfies $(S_1)$ and $B$ is strictly closed in $\overline{B}$, then $A$ is strictly closed in $\overline{A}$.
\item If $B$ is a weakly Arf ring satisfying $(S_2)$ and $B_P$ is an Arf ring for every $P \in \Spec B$ with $\height_B P = 1$, then $A$ is a weakly Arf ring, and $A_\p$ is an Arf ring for every $\p \in \Spec A$ with $\height_A \p = 1$.
\end{enumerate}
\end{cor}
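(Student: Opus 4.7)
The plan is to derive both parts as immediate applications of the results already set up in the section, namely Proposition \ref{11.9} (transfer of Serre's conditions under $(C)$), Corollary \ref{11.6} (transfer of strict closedness down to the direct summand), and Theorem \ref{Zariski-Lipman} (the equivalence, under $(S_2)$, between strict closedness and the combination ``weakly Arf plus height-one localizations are Arf'').

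For part (1), the strategy is simply to check that the hypotheses of Corollary \ref{11.6} are met and then to invoke it directly. The nontrivial inputs are: (a) $A$ satisfies $(S_1)$, and (b) for every $P \in \Ass B$ we have $P \cap A \in \Ass A$. Condition (a) follows from Proposition \ref{11.9} applied with $n=1$, since $B$ satisfies $(S_1)$ and $(C)$. For condition (b), $B$ satisfying $(S_1)$ gives $\Ass B = \Min B$, and Lemma \ref{11.8} then ensures $P\cap A \in \Min A \subseteq \Ass A$, where the last inclusion uses (a). Corollary \ref{11.6} therefore yields that $A$ is strictly closed in $\overline{A}$.

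For part (2), the plan is a three-step bootstrap: first use Theorem \ref{Zariski-Lipman} to convert the hypotheses on $B$ into the statement that $B$ is strictly closed in $\overline{B}$; next transfer strict closedness down from $B$ to $A$ by applying part (1), whose $(S_1)$ hypothesis on $B$ follows from $(S_2)$; finally, apply Theorem \ref{Zariski-Lipman} in the other direction to $A$. For the last step to run, we need $A$ itself to satisfy $(S_2)$, and this is precisely Proposition \ref{11.9} applied with $n=2$ (using $(C)$). Combining, Theorem \ref{Zariski-Lipman} applied to the strictly closed ring $A$ satisfying $(S_2)$ outputs exactly the conclusion: $A$ is weakly Arf and $A_{\p}$ is Arf for every $\p \in \Spec A$ with $\height_A\p = 1$.

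There is no real obstacle here; the whole point is that the machinery has been laid out so that both assertions reduce to bookkeeping. The only place where a genuine idea enters is in the verification of hypothesis (b) of Corollary \ref{11.6} for part (1), and that idea is already packaged inside Lemma \ref{11.8}, which is where the chain condition $(C)$ is used decisively to pull minimal primes of $B$ back to minimal primes of $A$. Everything else is a chain of implications through Proposition \ref{11.9} and Theorem \ref{Zariski-Lipman}.
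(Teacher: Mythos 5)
Your proof is correct and follows the paper's route exactly: Proposition \ref{11.9} and Lemma \ref{11.8} feed Corollary \ref{11.6} for part (1), and part (2) is obtained by sandwiching part (1) between two applications of Theorem \ref{Zariski-Lipman}, using Proposition \ref{11.9} with $n=2$ to get $(S_2)$ for $A$. One small misattribution worth flagging: the inclusion $\Min A \subseteq \Ass A$ holds automatically for any Noetherian ring and does not rely on $(S_1)$ (which actually supplies the reverse inclusion $\Ass A \subseteq \Min A$), but this does not affect the validity of the argument.
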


\begin{proof}
$(1)$ By Proposition \ref{11.9}, $A$ satisfies $(S_1)$. Hence the assertion follows from Corollary \ref{11.6} and Lemma \ref{11.8}.

$(2)$ Since $A$ is a direct summand of $B$, we see that $A$ is Noetherian. Besides, $A$ satisfies $(S_2)$ by Proposition \ref{11.9}. By Theorem \ref{Zariski-Lipman}, we conclude that $B$ is strictly closed in $\overline{B}$. Therefore, by assertion $(1)$, $A$ is strictly closed in $\overline{A}$. Again, by Theorem \ref{Zariski-Lipman}, we get the required assertions.
\end{proof}

For a Noetherian semi-local ring $B$ such that $B_M$ is a one-dimensional Cohen-Macaulay local ring for every $M \in \Max B$, the ring $B$ satisfies $(C)$.
Hence we have the following. 

\begin{thm}\label{11.12}
Suppose that $B$ is a Noetherian semi-local ring such that $B_M$ is a one-dimensional Cohen-Macaulay local ring for every $M \in \Max B$. If $B$ is an Arf ring, then so is $A$. 
\end{thm}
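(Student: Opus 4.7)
The plan is to reduce the Arf property for $A$ to the Zariski--Lipman strict-closedness criterion (Theorem \ref{4.3}) applied in both directions, using Corollary \ref{11.11}(1) as the bridge to transfer strict closedness from $B$ down to its direct summand $A$. Concretely, I would first deduce from $B$ being Arf that $B$ is strictly closed in $\overline{B}$ via Theorem \ref{4.3}; then use Corollary \ref{11.11}(1) to conclude $A$ is strictly closed in $\overline{A}$; and finally apply Theorem \ref{4.3} once more, in the reverse direction, to promote this back to the Arf property for $A$.

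Before this pipeline can be engaged, I must verify that $A$ fits the standing hypotheses of Definition \ref{2.2}, i.e., $A$ is a Noetherian semi-local ring whose localization at each maximal ideal is a one-dimensional Cohen--Macaulay local ring. Noetherianness of $A$ is the standard argument: since $A$ is a direct summand of the Noetherian ring $B$ as an $A$-module, every ideal $I$ of $A$ satisfies $IB \cap A = I$, so any ascending chain of ideals in $A$ extends to a stabilizing chain in $B$ and contracts back. Semi-locality follows because $B$ is integral over $A$, so lying-over/going-up provides a surjection $\Max B \twoheadrightarrow \Max A$, $M \mapsto M \cap A$, and $\Max B$ is finite. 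The Cohen--Macaulay part is where the already established descent machinery earns its keep: since $B$ is semi-local and one-dimensional with $B_M$ Cohen--Macaulay at every $M \in \Max B$, every maximal chain in $\Spec B$ has length one, so $B$ satisfies condition $(C)$ of Definition \ref{11.7}; Corollary \ref{11.10} then yields that $A$ is Cohen--Macaulay, and since $\dim A = \dim B = 1$ by integrality, this forces $A_{\m}$ to be one-dimensional Cohen--Macaulay for every $\m \in \Max A$.

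With these preliminary properties in hand, I turn to the main step. Because $B$ is Arf, Theorem \ref{4.3} gives that $B$ is strictly closed in $\overline{B}$. The ring $B$ also satisfies $(S_1)$, being one-dimensional Cohen--Macaulay, and I have already observed that it satisfies $(C)$. Thus the hypotheses of Corollary \ref{11.11}(1) are in force for the pair $(A,B)$, and it delivers precisely the conclusion that $A$ is strictly closed in $\overline{A}$. Applying Theorem \ref{4.3} to $A$ in the reverse direction then yields that $A$ is an Arf ring, completing the argument.

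The only place where genuine work is concentrated is the verification that the Cohen--Macaulay and semi-local structure of $B$ actually passes down to $A$; but this is exactly what Corollary \ref{11.10} supplies, and condition $(C)$ on $B$ is automatic from the one-dimensional Cohen--Macaulay semi-local hypothesis. Once that check is done, the remaining steps are a direct assembly of Theorem \ref{4.3} and Corollary \ref{11.11}(1), with no further delicate reasoning about principal reductions of integrally closed ideals required, since all that information is already encoded in the strict closedness criterion.
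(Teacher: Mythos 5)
Your strategy matches the paper's exactly: $B$ Arf implies $B$ is strictly closed in $\overline{B}$ by Theorem \ref{4.3}; strict closedness descends to $A$ by Corollary \ref{11.11}(1), using that $B$ satisfies $(S_1)$ and condition $(C)$; and then Theorem \ref{4.3} applied to $A$ gives that $A$ is Arf. The preliminary checks that $A$ is Noetherian and semi-local, and that $B$ satisfies $(C)$, are all correct.

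There is, however, a gap in the passage from ``$A$ is Cohen--Macaulay and $\dim A = \dim B = 1$'' to ``$A_{\m}$ is a one-dimensional Cohen--Macaulay local ring for every $\m \in \Max A$.'' This inference is not automatic: a Cohen--Macaulay ring of dimension one can have height-zero maximal ideals (for instance $A = k \times k[t]$), so $\dim A = 1$ alone does not rule out the possibility that some $A_{\m}$ is Artinian, in which case Theorem \ref{4.3} cannot be invoked for $A$. The paper opens its proof with exactly this verification: if $\p \in \Max A$ had $\height_A \p = 0$, then $\p \in \Ass A$; since $A$ is a direct summand of $B$, there exists $P \in \Ass B$ with $P \cap A = \p$; integrality then forces $P \in \Max B$, whence $\height_B P = 1$, contradicting $P \in \Ass B \subseteq \Min B$. (One could also argue via lying over and incomparability: a maximal ideal $P$ of $B$ lying over $\m$ has height one, so a minimal prime $Q \subsetneq P$ contracts to a prime strictly contained in $\m$, giving $\height_A \m \ge 1$.) Once this step is supplied, the rest of your argument goes through as written.
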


\begin{proof}
To show $A$ is an Arf ring, we need to confirm $\height_A \fkp = 1$ for every $\fkp \in \Max A$. Suppose that there exists $\fkp \in \Max A$ such that $\height_A\fkp = 0$. Then $\fkp \in \Ass A$. We can choose $P \in \Ass B$ such that $P \cap A = \fkp$. Since $B$ is integral over $A$, we see that $P \in \Max B$. Hence, by our assumption, $\height_B P = 1$ which makes a contradiction. Thus $\height_A \fkp \ge 1$ for every $\fkp \in \Max A$, whence $\height_A \fkp = 1$. In particular, $A_{\fkp}$ is a one-dimensional Cohen-Macaulay local ring for every $\fkp \in \Max A$. Since $B$ is an Arf ring, by Theorem \ref{4.3}, $B$ is strictly closed in $\overline{B}$. Therefore, $A$ is strictly closed in $\overline{A}$, and hence $A$ is an Arf ring. 
\end{proof}

Finally we reach the goal of this section. 

\begin{cor}\label{Final}
Let $R$ be a Noetherian semi-local ring such that $R_M$ is a one-dimensional Cohen-Macaulay local ring for every $M \in \Max R$. Suppose that $R$ is an Arf ring. Then, for every finite subgroup $G$ of $\Aut R$ such that the order of $G$ is invertible in $R$, $R^G$ is an Arf ring. 
\end{cor}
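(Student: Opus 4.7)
The plan is to apply Theorem \ref{11.12} directly with $B = R$ and $A = R^G$. The hypotheses of Theorem \ref{11.12} concerning $B$ (Noetherian semi-local with $R_M$ a one-dimensional Cohen-Macaulay local ring for every $M \in \Max R$, and Arf) are granted by the statement, so all that remains is to verify the three structural conditions that the section maintains on the pair $(A,B)$: $B$ is Noetherian, $B$ is integral over $A$, and $A$ is a direct summand of $B$ as an $A$-module.

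First I would check integrality of $R$ over $R^G$. For any $r \in R$, the monic polynomial
$$
f(X) = \prod_{\sigma \in G}(X - \sigma(r)) \in R[X]
$$
has coefficients fixed by $G$ (the elementary symmetric functions in $\{\sigma(r)\}_{\sigma \in G}$ are permuted trivially by any $\tau \in G$), hence lies in $R^G[X]$; and $f(r) = 0$. Thus $R$ is integral over $R^G$, and in particular $R^G \subseteq R$ is a module-finite extension since $R$ is Noetherian and finitely generated over itself.

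Next I would establish the direct summand property via the Reynolds operator. Since $n = |G|$ is invertible in $R$, the $R^G$-linear map
$$
\rho : R \longrightarrow R^G, \qquad r \longmapsto \frac{1}{n}\sum_{\sigma \in G}\sigma(r),
$$
is well-defined (its image is $G$-invariant) and satisfies $\rho(a) = a$ for every $a \in R^G$. Hence the inclusion $R^G \hookrightarrow R$ splits as a map of $R^G$-modules, so $R^G$ is a direct summand of $R$.

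With these two verifications in place, Theorem \ref{11.12} applies verbatim and yields that $R^G$ is an Arf ring. There is no essential obstacle here; the genuine content was already absorbed into Theorem \ref{11.12} (which in turn rested on Theorem \ref{4.3} and the descent of strict closedness under split integral extensions established in Corollary \ref{11.6}). The only mild subtlety worth double-checking is that the finite group $G \subseteq \Aut R$ indeed acts on $R$ by ring automorphisms fixing $R^G$ pointwise, so that $\rho$ is $R^G$-linear and the symmetric-function coefficients of $f(X)$ truly land in $R^G$; both are formal.
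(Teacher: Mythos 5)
Your proof is correct and takes exactly the approach the paper intends: verify the standing hypotheses of Section~\ref{sec13} for the pair $(A,B)=(R^G,R)$ (integrality via $\prod_{\sigma\in G}(X-\sigma(r))$, the direct summand property via the Reynolds operator $\rho=\frac{1}{|G|}\sum_\sigma\sigma$), and then invoke Theorem~\ref{11.12}; this is precisely the argument the paper already gave in the corollary following Theorem~\ref{11.5}. One small caveat: your parenthetical claim that $R$ is module-finite over $R^G$ ``since $R$ is Noetherian and finitely generated over itself'' is not a valid justification (integral plus Noetherian does not immediately give module-finite), but Theorem~\ref{11.12} only needs integrality and the direct summand property, so this remark is harmless and can simply be dropped.
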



\vspace{0.5em}

\begin{ac}
The authors would like to thank Joseph Lipman for valuable comments. 
\end{ac}



\end{document}